\date{}
\numberwithin{figure}{section}
\numberwithin{table}{section}
\numberwithin{footnote}{section}
\theoremstyle{plain}
\newtheorem{thm}{Theorem}[section]
\newtheorem*{thm*}{Theorem}
\theoremstyle{remark}
\newtheorem{rem}{Remark}[section]
\newtheorem{alg}{Algorithm}[section]
\numberwithin{equation}{section}
\def\cA{\mathcal{A}}
\def\cB{\mathcal{B}}
\def\cL{\mathcal{L}}
\def\cM{\mathcal{M}}
\def\cO{\mathcal{O}}
\def\cX{\mathcal{X}}
\def\bn{\mathbf{n}}
\def\bx{\mathbf{x}}
\def\bJ{\mathbf{J}}
\def\bQ{\mathbf{Q}}
\def\bX{\mathbf{X}}
\def\BN{\mathbb{N}}
\def\bsO{\boldsymbol{O}}
\def\half{\frac{1}{2}}
\begin{document}
	\title{Onsager Principle-Based Domain Embedding for Thermodynamically Consistent Cahn-Hilliard Model in Arbitrary Domain}
	\author{Wenkai Yu\footnote{Department of Mathematics, Hong Kong University of Science and Technology, Clear Water Bay, Kowloon, Hong Kong, P. R. China}, Qi Wang\footnote{Department of Mathematics, University of South Carolina, Columbia, SC 29208, USA}, Zhen Zhang\footnote{Southern University of Science and Technology, Shenzhen, P. R. China} and Tiezheng Qian\footnote{Department of Mathematics, Hong Kong University of Science and Technology, Clear Water Bay, Kowloon, Hong Kong, P. R. China}\;\footnote{To whom correspondence should be addressed. Email: maqian@ust.hk}}
	
	\maketitle
	
	\begin{abstract}
		The original Cahn-Hilliard model in an arbitrary domain with two prescribed boundary conditions is extended to a Cahn-Hilliard-type model in a larger, regular domain with homogeneous Neumann boundary conditions. The extension is based on the Onsager principle-based domain embedding (OPBDE) method, which has been developed as a systematic domain embedding framework to ensure thermodynamic consistency. By introducing a modified conservation law, the flux at the boundary of the original domain is incorporated into the conservation law as a source term. Our variational approach demonstrates that, even without a prior knowledge on the specific form of the rate of free energy pumped into the system, the Onsager principle remains an effective instrument in deriving the constitutive equation of the extended system. This approach clarifies the intrinsic structure of the extended model in the perspectives of free energy and its dissipation. Asymptotic analysis is carried out for the extended OPBDE Cahn-Hilliard model, demonstrating that the original Cahn–Hilliard model, including its boundary conditions, can be fully recovered. To validate our approach, a structure-preserving numerical scheme is developed to discretize the extended model. Numerical results show that the OPBDE Cahn-Hilliard model is accurate, effective, and robust, highlighting the capability of the OPBDE method in handling gradient flow problems in arbitrary domain geometries.
	\end{abstract}
	
	\noindent {Keywords:} Arbitrary domain, Onsager principle-based domain embedding method, Cahn-Hilliard model, Thermodynamically consistent model, Asymptotic analysis
	
	\section{Introduction}
	In recent decades, gradient flows have emerged as a powerful and versatile framework to study the evolution of systems driven by the minimization of free energy functionals. In a gradient flow, the state of a system evolves in the direction of the steepest descent of a free energy landscape and therefore approaches equilibrium progressively. This framework not only provides insight into the underlying dynamics of various physical processes, but also offers a rigorous mathematical formulation to describe various complex phenomena, especially in fluid dynamics and materials science \cite{Cahn1958FreeEnergyNonuniform,Cahn1959FreeEnergyNonuniform,Allen1979MicroscopicTheoryAntiphase,Gurtin1996TwophaseBinaryFluids,Hong2021EnergyproductionratePreservingNumerical}.
	
	Phase field models represent a significant category of gradient flows. These models employ phase variables to describe different phases of multi-phase systems, where thin transition layers are introduced for the interfaces between different phases. When a phase field variable represents a non-conserved order parameter in the bulk region, the Allen-Cahn model can be developed. When a boundary condition is needed at the solid boundary, \cite{Yu2025OnsagerPrinciple-BasedDomain} shows that different forms of boundary conditions correspond to different interfacial free energies. Moreover, a dissipative process can exist at the boundary and be described by a dynamical boundary condition. When the local relaxation at the boundary becomes extremely fast, the dynamical boundary condition can further reduce to other boundary conditions corresponding to different interfacial energies.
	
	When a conserved order parameter is represented by a phase field variable in the bulk region, the Cahn-Hilliard model is formulated. Widely used to describe the phase separation and diffusion processes, the Cahn-Hilliard model has been extensively studied for decades with numerous applications  \cite{Novick-Cohen2008Chapter4Cahn,Provatas2011PhaseFieldMethodsMaterials,Weber2019PhysicsActiveEmulsions,Yang2017LinearUnconditionallyEnergy,Patzold1995NumericalSimulationPhase,Shen2021ThermodynamicallyConsistentAlgorithms,Xu2019EfficientLinearSchemes,Yang2018EfficientSchemesUnconditionally,Shen2021UnconditionallyPositivityPreserving}. Mathematically, it can be viewed as a variational model that can be derived using an $H^{-1}$ norm variation of the free energy functional \cite{Fife2000ModelsPhaseSeparation,Cowan2005CahnHilliardEquationGradient}.
	The Allen-Cahn model and the Cahn-Hilliard model can be derived from the same free energy functional, except that the Allen-Cahn model corresponds to the $L^2$ norm variation. Therefore, the generic form of the free energy, first given in \cite{Yu2025OnsagerPrinciple-BasedDomain}, is adopted here:
	\begin{equation}
		F[\phi] = \int_{\Omega_1}[f(\phi)+\half K|\nabla \phi|^2]d\bx + \int_{\partial \Omega_1}[\half \alpha (\phi-h_1)^2-h_2\phi]ds,
	\end{equation}
	where $\phi$ is the phase field which usually denotes the volume or mass fraction, $K$ is a positive parameter quantifying the free energy cost due to the inhomogeneity of $\phi$, and $f(\phi)$ is the bulk free energy, typically a double well potential. The standard Cahn-Hilliard model in domain $\Omega_1$ with the dynamic boundary condition is given by \cite{Qian2003MolecularScaleContact,Jing2022ThermodynamicallyConsistentModels,Jing2023ThermodynamicallyConsistentDynamic}
	\begin{subequations}\label{CH1}
		\begin{empheq}[left = \empheqlbrace]{align}
			&\phi_t = - \nabla\cdot \bJ = \nabla \cdot (M \nabla \mu), \quad \mu = \frac{\delta F}{\delta \phi}, \text{ in } \Omega_1, \label{CH1.1}\\
			&\phi_t = -\Gamma\mathcal{L}(\phi) = - \Gamma[\alpha(\phi-h_1)+\bn_1\cdot K\nabla \phi-h_2], \text{ on } \partial\Omega_1, \label{CH1.2}\\
			&\bJ \cdot \bn_1 = - M \nabla \mu \cdot \bn_1 = h_3, \text{ on } \partial \Omega_1. \label{CH1.3}
		\end{empheq}
	\end{subequations}
	where $\mu = \frac{\delta F}{\delta \phi}$ is chemical potential, $\bn_1$ is the unit outward normal of $\partial \Omega_1$, $M > 0$ is the bulk mobility, $\Gamma > 0$ is the boundary mobility (i.e., a rate coefficient), and $\bJ$ is the flux. The dynamic boundary condition \eqref{CH1.2} can reduce to other boundary conditions under different limits for $\Gamma$ and $\alpha$:
	\begin{itemize}
		\item If $\Gamma \rightarrow +\infty$, \eqref{CH1.2} reduces to
		\begin{equation}
			\alpha(\phi-h_1)+\bn_1\cdot K\nabla \phi = h_2,
		\end{equation}
		which is the Robin boundary condition.
		\item If $\Gamma \rightarrow +\infty$ and $\alpha = 0$, \eqref{CH1.2} reduces to
		\begin{equation}
			\bn_1\cdot K\nabla \phi = h_2,
		\end{equation}
		which is the Neumann boundary condition.
		\item If $\Gamma \rightarrow +\infty$ and $\alpha \rightarrow + \infty$, \eqref{CH1.2} reduces to
		\begin{equation}
			\phi = h_1,
		\end{equation}
		which is the Dirichlet boundary condition.
	\end{itemize}
	
	Real-world problems typically involve irregular domains. The complex domain geometry can make the application of a conventional numerical method  intricate. Substantial modification may be needed and sometime the whole method may still fail. To deal with the challenge of solving PDE models in arbitrary domains, several methods have been proposed recently. Within the finite element framework, the key feature of these methods is the use of a partition of unity \cite{Melenk1996PartitionUnityFinite,Duarte1996HpCloudsHp,Oden1998NewCloudbasedHp,Duarte2000GeneralizedFiniteElement}. On the other hand, finite difference methods often require localized modifications of the numerical schemes near the boundaries, as seen in the immersed boundary method \cite{Peskin2002ImmersedBoundaryMethod,OBrien2018VolumeoffluidGhostcellImmersed}, immersed interface method \cite{Chen2008FastFiniteDifference,Li2003OverviewImmersedInterface}, and the smoothed boundary method \cite{Bueno-Orovio2006FourierEmbeddedDomain,Bueno-Orovio2006SpectralMethodsPartial,Bueno-Orovio2006SpectralSmoothedBoundary,Yu2012ExtendedSmoothedBoundary}. A new development in this area has recently been made by Li et al. \cite{Li2009SolvingPDESComplex}, who introduced a general method for solving PDEs in complex geometries subject to three types of boundary conditions. The core idea is to reformulate the problem within a larger, regular domain using the weak formulation, facilitated by an auxiliary phase field function. Known as the diffuse domain method (DDM) \cite{Li2009SolvingPDESComplex}, this approach has been applied and developed since its inception  \cite{Teigen2009DiffuseinterfaceApproachModeling,Teigen2011DiffuseinterfaceMethodTwophase,Aland2010TwophaseFlowComplex,Chen2014TumorGrowthComplex,Chen2019TumorGrowthCalcification,Yu2020HigherorderAccurateDiffusedomain,Lowengrub2016NumericalSimulationEndocytosis,Guo2021DiffuseDomainMethod}.
	
	For gradient flows in arbitrary domains, e.g., those formulated as phase field models, an important issue is whether the gradient flow structure can still be preserved in the extended or modified models. It is worth emphasizing that the extended models are desired to maintain the thermodynamic consistency possessed by the original model. When the DDM is used to derive the extended model, the formulation is directly obtained through the weak solution approach, without explicitly considering the free energy, dissipation or other structures of the extended system.  Recently, Yu et al. introduced a new approach called the Onsager Principle-Based Domain Embedding (OPBDE) method \cite{Yu2025OnsagerPrinciple-BasedDomain}. This method has been applied to develop an extended Allen-Cahn-type model with thermodynamic consistency fully preserved. A key aspect of this approach is the application of the Onsager principle \cite{Onsager1931ReciprocalRelationsIrreversible,Onsager1931ReciprocalRelationsIrreversiblea,Qian2006VariationalApproachMoving,Wang2021GeneralizedOnsagerPrinciple,Xu2017HydrodynamicBoundaryConditions}, which is of fundamental importance to the description of linear thermodynamics. It is anticipated that this method can be applied to extend other complex models in arbitrary domains.
	
	In the present work, we consider a general Cahn-Hilliard-type model that is thermodynamically consistent and supplemented with a dynamic boundary condition of $\phi$ and an inhomogeneous Neumann boundary condition of $\mu$ in an arbitrary irregular domain. There are two main objectives to accomplish. The first is to develop an extended Cahn-Hilliard model in a larger, regular domain using the OPBDE method. Here the Variational Domain Embedding Method (VDEM) is utilized as the original model is dissipative and excludes reversible processes. Thermodynamic consistency is naturally ensured by utilizing the VDEM. The second objective is to allow the existence of flux across the boundary of the original model system. This requires us to reexamine the framework of modeling in which the variational Onsager principle applied. To accomplish the two main objectives outlined above, resolution of several issues are necessary. Firstly, the conservation law needs to be modified in order to work in the extended domain \cite{Zhou2025NewPhaseFieldModel}. Secondly, caused by the flux across the boundary of the original model system, the rate of the free energy pumped into the system needs to be clarified and properly introduced in the VDEM. Thirdly, guided by the Onsager principle, a structure-preserving numerical scheme needs to be developed for the thermodynamically consistent extended model.
	
	The paper is organized as follows. In section 2, the original model with a dynamic boundary condition is extended to a larger, regular domain with thermodynamic consistency being preserved. In section 3, the OPBDE modeling for closed systems is compared to that for open systems regarding the application of the variational Onsager principle. In section 4, an asymptotic analysis is presented for the extended OPBDE Cahn-Hilliard model, with the original model being recovered at the leading order in the original domain. In section 5, an energy dissipation-rate-preserving numerical scheme is developed for the extended model which is thermodynamically consistent. In section 6, numerical simulations are carried out for coarsening dynamics in phase separation and droplet spreading on substrates. Numerical results from the extended OPBDE model are compared with those from the original model and the DDM model, showing remarkable agreement and therefore demonstrating the efficacy and robustness of the numerical scheme in implementing the extended model. Finally, concluding remarks are given in section 7.
	
	\section{The OPBDE Cahn-Hilliard model with a dynamic boundary condition} \label{Sec Dynamic}
	We begin from the model defined in an arbitrary, piecewise smooth domain $\Omega_1$, where the free energy functional is given by
	\begin{equation}
		F(\phi) = \int_{\Omega_1}[f(\phi)+\frac{K}{2}|\nabla \phi|^2]d\bx+\int_{\partial \Omega_1} [\half \alpha (\phi-h_1)^2-h_2\phi]ds,
	\end{equation}
	from which we have
	\begin{equation}
		\delta F = \int_{\Omega_1}\mu \delta \phi d\bx+\int_{\partial \Omega_1}[\alpha(\phi-h_1)+\bn_1\cdot K\nabla \phi-h_2]\delta \phi ds,
	\end{equation}
	where the chemical potential, $\mu$, in the bulk region is defined by
	\begin{equation}
		\mu = \frac{\delta F}{\delta \phi} = f^{\prime}(\phi)-K\nabla^2 \phi.
	\end{equation}
	The rate of change of free energy $F$ is calculated as
	\begin{equation}
		\begin{split}
			F_t
			&=\int_{\Omega_1} [f^{\prime}(\phi)-K\nabla^2 \phi]\phi_t d\bx+\int_{\partial \Omega_1} [\alpha(\phi-h_1)+\bn_1\cdot K\nabla \phi-h_2]\phi_t ds\\
			&=\int_{\Omega_1} \mu \phi_t d\bx+\int_{\partial \Omega_1} \cL(\phi)\phi_t ds,
		\end{split}
	\end{equation}
	where operator $\cL$ at the boundary is defined by
	\begin{equation}
		\cL(\phi)=\alpha(\phi-h_1)+\bn_1\cdot K\nabla \phi-h_2.
	\end{equation}
	The local equilibrium boundary condition,
	\begin{equation}
		\cL(\phi)=0,
	\end{equation}
	means that the equilibration at the boundary is extremely fast so that the dynamics reaches equilibrium instantly. When the equilibration at $\partial\Omega_1$ is no longer extremely fast, the local equilibrium boundary condition is invalidated and must be replaced by a dynamic boundary condition
	\begin{equation}
		\phi_t = - \Gamma\cL(\phi),
	\end{equation}
	which is in the linear response regime and will be derived later. The conservation law of the phase field variable $\phi$ is expressed as
	\begin{equation} \label{Conservation law}
		\phi_t = -\nabla \cdot \bJ,   \text{ in } \Omega_1,
	\end{equation}
	where $\bJ$ is the flux (also called the current density). With the conservation law of $\phi$, $F_t$ can be expressed as follows
	\begin{equation}
		\begin{split}
			F_t&=\int_{\Omega_1}-\mu \nabla \cdot \bJ d\bx +\int_{\partial \Omega_1} \cL(\phi)\phi_t ds\\
			&=\int_{\Omega_1} \nabla \mu \cdot \bJ d\bx + \int_{\partial \Omega_1}\cL(\phi)\phi_t - \mu \bJ\cdot \bn_1 ds.
		\end{split}
	\end{equation}
	
	For $\bJ \cdot \bn_1 = h_3(\bx)$ at $\partial \Omega_1$, which is the inflow-outflow condition, we introduce
	\begin{equation} \label{form of W_t}
		W_t = -\int_{\partial\Omega_1} \mu h_3 ds,
	\end{equation}
	which is the rate of the free energy pumped into the system across the boundary. To apply the variational Onsager principle, we consider the Rayleighian defined by
	\begin{equation}
		R = F_t-W_t+\Phi_{F},
	\end{equation}
	where the dissipation functional $\Phi_{F}$ is a half the rate of free energy dissipation, which is given by
	\begin{equation} \label{Dissipation rate}
		\Phi_{F} = \int_{\Omega_1} \frac{\bJ^2}{2M} d\bx+\int_{\partial \Omega_1}\frac{\phi_t^2}{2\Gamma} ds,
	\end{equation}
	which includes a boundary integral, representing the dissipation taking place at the boundary.
	
	Taking the variation of $R$ with respect to $\bJ$ in $\Omega_1$ and $\phi_t$ at $\partial \Omega_1$, we have
	\begin{equation}
		\begin{split}
			&\frac{\delta R}{\delta \bJ} = 0 \implies \bJ = -M \nabla \mu, \text{ in }\Omega_1, \\
			&\frac{\delta R}{\delta \phi_t} = 0 \implies \phi_t = -\Gamma \cL(\phi), \text{ at }\partial\Omega_1.
		\end{split}
	\end{equation}
	The Cahn-Hilliard model with a dynamic boundary condition is summarized as follows:
	\begin{subequations} \label{CH2}
		\begin{empheq}[left = \empheqlbrace]{align}
			&\phi_t = - \nabla \cdot \bJ = \nabla \cdot (M \nabla \mu ), \quad \text{ in } \Omega_1, \label{CH2.1}\\
			&\mu = f^{\prime}(\phi)-K\nabla^2 \phi , \quad \text{ in } \Omega_1,\label{CH2.2}\\
			&\phi_t = -\Gamma\cL(\phi), \text{ at } \partial\Omega_1,\label{CH2.3}\\
			&\bJ \cdot \bn_1 = -M \nabla \mu \cdot \bn_1 = h_3, \text{ at } \partial\Omega_1. \label{CH2.4}
		\end{empheq}
	\end{subequations}
	The energy dissipation law associated with the model is
	\begin{equation}
		F_t-W_t = -2\Phi_F = -\int_{\Omega_1} \frac{\bJ^2}{M} d\bx-\int_{\partial\Omega_1}\frac{\phi_t^2}{\Gamma} ds.
	\end{equation}

	\begin{figure}[H]
		\centering
		\includegraphics[width=0.5\textwidth]{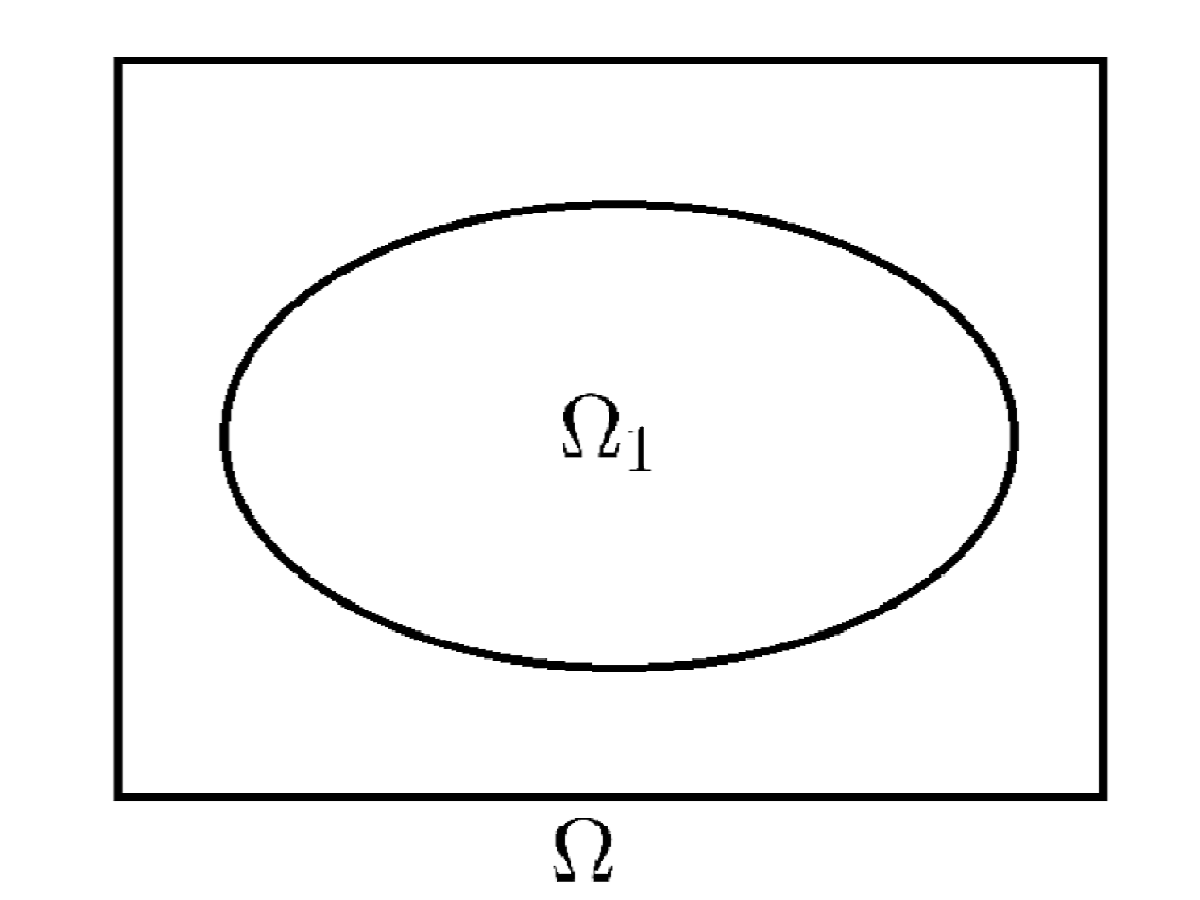}
		\caption{Given domain $\Omega_1$ and its embedded domain $\Omega$.}
	\end{figure}
	
	Next, we extend the model to a larger ``regular'' domain, $\Omega \supset \Omega_1$. This extended model is expected to reproduce the original model in $\Omega_1$ along with the two boundary conditions. We use $\psi$ to denote the characteristic function of domain $\Omega_1$, defined by
	\begin{equation}
		\psi(\bx) =
		\begin{cases}
			1, & \bx \in \Omega_1\backslash\partial\Omega_1, \\
			\half, & \bx \in \partial\Omega_1, \\
			0, & \hbox{otherwise}.
		\end{cases}
	\end{equation}
	The $\delta$ function for piecewise smooth boundary $\partial \Omega_1$, denoted by $\delta_{\partial \Omega_1}$, is represented by $\delta_{\partial \Omega_1} = -\bn_1 \cdot \nabla \psi = \frac{\nabla \psi}{|\nabla \psi|} \cdot\nabla\psi = |\nabla \psi|$. Extensions of $h_1(\bx)$, $h_2(\bx)$ and $h_3(\bx)$ to the neighborhood of $\partial \Omega_1$ are constants in the normal direction as the actual values required by the model are concentrated in a neighborhood of $\partial\Omega_1$.
	
	Multiplying $\psi$ on both sides of \eqref{Conservation law} and incorporating $\bJ \cdot \bn_1 = h_3$ at $\partial \Omega_1$, we obtain a modified conservation law
%we extend  \eqref{Conservation law} to $\Omega$ to produce equation \eqref{Modified conservation law1}, which reduces to \eqref{Conservation law} in $\Omega_1$ and $\bJ \cdot \bn_1 = h_3$ at $\partial \Omega_1$, and (ii) $0=0$ in $\Omega\backslash\bar{\Omega}_1$:
	\begin{equation} \label{Modified conservation law1}
		\psi\phi_t = -\psi\nabla \cdot \bJ = -\nabla\cdot(\psi \bJ) + \nabla \psi \cdot \bJ = -\nabla\cdot(\psi \bJ)-|\nabla \psi|\bJ\cdot \bn_1 = -\nabla\cdot(\psi \bJ)-|\nabla \psi|h_3.
	\end{equation}
	
	\begin{rem}
		When \eqref{Modified conservation law1} is examined from an integral perspective, the following equations are obtained:
		\begin{equation}
			\begin{cases}
				\int_{\Omega_1^-}\psi \phi_t d\bx = \int_{\Omega_1^-}[-\nabla \cdot (\psi \bJ)-|\nabla \psi|h_3 ]d\bx = -\int_{\partial \Omega_1^-}\bJ \cdot \bn_{\partial \Omega_1^-} ds, \\
				\int_{\Omega_1}\psi \phi_t d\bx = \int_{\Omega_1}[-\nabla \cdot (\psi \bJ)-|\nabla \psi|h_3 ]d\bx = -\half\int_{\partial \Omega_1}\bJ \cdot \bn_{\partial \Omega_1} ds - \half\int_{\partial \Omega_1}h_3 ds = - \int_{\partial \Omega_1}h_3 ds, \\
				\int_{\Omega_1^+}\psi \phi_t d\bx = \int_{\Omega_1^+}[-\nabla \cdot (\psi \bJ)-|\nabla \psi |h_3] d\bx = -\int_{\partial \Omega_1}h_3 ds,
			\end{cases}
		\end{equation}
		where $\Omega_1^- \subset \Omega_1 \subset \Omega_1^+$ and $\bn_{\partial \Omega_1^-}$ is the unit outward normal of $\partial \Omega_1^-$. When $\Omega_1^- \rightarrow \Omega_1$, we have $\bJ \cdot \bn_{\partial \Omega_1^-} \rightarrow h_3$. Therefore, in the integral form, the rate of change of the volume integral of $\psi\phi$ is {\it continuous} as the domain of integration expands. The rate of change depends only on the inflow at $\partial\Omega_1$ when the integration domain contains $\partial \Omega_1$, consistent with the original model in $\Omega_1$.
	\end{rem}
		
	In practice, it is computationally convenient to employ a smooth approximation of $\psi$. Motivated by the previous studies \cite{Yu2025OnsagerPrinciple-BasedDomain,Ratz2006PDEsSurfacesaDiffuse}, we approximate $\psi$ by using the following phase field function :
	\begin{equation}
		\psi_{\varepsilon}(\bx) = \half(1-\tanh (\frac{3r(\bx)}{\varepsilon})) = \frac{1}{e^{\frac{6r(\bx)}{\varepsilon}}+1}, \quad \bx \text{ in } \Omega.
	\end{equation}
	where $r(\bx)$ denotes the signed distance from point $\bx$ to boundary $\partial \Omega_1$. The value of $r(\bx)$ is negative in $\Omega_1$ and positive in $\Omega \backslash \Omega_1$, and $\varepsilon$ measures the width of the diffuse interface.
	
	We define the free energy, $\tilde{F}$, in extended domain $\Omega$ as follows:
	\begin{equation}
		\tilde{F}(\tilde{\phi}) = \int_{\Omega} \left\{\psi_{\varepsilon} [f(\tilde{\phi})+\frac{K}{2}|\nabla \tilde{\phi}|^2]+|\nabla \psi_{\varepsilon}|[\half(\tilde{\phi}-h_1)^2-h_2\tilde{\phi}]\right\}d\bx,
	\end{equation}
	where $\tilde{\phi}$ is the phase field variable in the extended model. The corresponding chemical potential $\tilde{\mu}$ is defined by
	\begin{equation}
		\tilde{\mu} = \frac{\delta \tilde{F}}{\delta \tilde{\phi}} = \psi_{\varepsilon} f^{\prime}(\tilde{\phi})-K\nabla \cdot(\psi_{\varepsilon} \nabla \tilde{\phi})+|\nabla \psi_{\varepsilon}|[\alpha (\tilde{\phi}-h_1)-h_2].
	\end{equation}
	The rate of change of the free energy is given by
	\begin{equation} \label{rate1}
		\begin{split}
			\tilde{F}_t
			&=\int_{\Omega}\left\{\psi_{\varepsilon} f^{\prime}(\tilde{\phi})-K\nabla \cdot(\psi_{\varepsilon} \nabla \tilde{\phi})+|\nabla \psi_{\varepsilon}|[\alpha(\tilde{\phi}-h_1)-h_2]\right\}\tilde{\phi}_t d\bx+\int_{\partial \Omega} \bn \cdot \psi_{\varepsilon} K \nabla \tilde{\phi} \tilde{\phi}_t ds\\
			&=\int_{\Omega} \tilde{\mu} \tilde{\phi}_t d\bx+\int_{\partial \Omega} \bn \cdot \psi_{\varepsilon} K \nabla \tilde{\phi} \tilde{\phi}_t ds,
		\end{split}
	\end{equation}
	where $\bn$ is the unit outward normal of the extended domain $ \Omega$.
	
	We introduce $P = \psi_{\varepsilon} \tilde{\phi}$ and then obtain
	\begin{equation}
		\frac{\delta \tilde{F}}{\delta P} = \frac{\delta \tilde{F}}{\delta \tilde{\phi}}\frac{\delta \tilde{\phi} }{\delta P} = \tilde{\mu}\frac{1}{\psi_{\varepsilon}} = \chi_{\varepsilon}\tilde{\mu},
	\end{equation}
	where $\chi_{\varepsilon} = \frac{1}{\psi_{\varepsilon}}$. Motivated by \eqref{Modified conservation law1}, we arrive at a modified conservation law in $\Omega$ for $P$:
	\begin{equation} \label{Modified conservation law2}
	P_t = -\nabla\cdot \bQ - |\nabla \psi_{\varepsilon}|h_3,
	\end{equation}
	which includes a source term along $\partial \Omega_1$ due to the prescribed inflow-outflow boundary condition in the original model. Here $\bQ$ is the flux corresponding to $P$ in $\Omega$, and then $\tilde{\bJ} = \chi_{\varepsilon}\bQ$ can be introduced as the counterpart of $\bJ$ in $\Omega$. 
	
	Substituting \eqref{Modified conservation law2} into \eqref{rate1}, we have
	\begin{equation} \label{rate2}
		\begin{split}
			\tilde{F}_t
			&=\int_{\Omega} \chi_{\varepsilon}\tilde{\mu} P_t d\bx+\int_{\partial \Omega} \bn \cdot \psi_{\varepsilon} K \nabla \tilde{\phi} \tilde{\phi}_t ds \\
			&=\int_{\Omega} \frac{\delta \tilde{F}}{\delta P}\left(- \nabla \cdot \bQ - |\nabla \psi_{\varepsilon}|h_3\right)d\bx+\int_{\partial \Omega} \bn \cdot \psi_{\varepsilon} K \nabla \tilde{\phi} \tilde{\phi}_t ds\\
			&=\int_{\Omega} \left(\nabla \frac{\delta \tilde{F}}{\delta P}\cdot \bQ - \frac{\delta \tilde{F}}{\delta P}|\nabla \psi_{\varepsilon}|h_3\right) d\bx+\int_{\partial \Omega} \bn \cdot \left(\psi_{\varepsilon} K \nabla \tilde{\phi} \tilde{\phi}_t - \frac{\delta \tilde{F}}{\delta P}\bQ\right) ds.
		\end{split}
	\end{equation}
	We prescribe the homogeneous Neumann boundary condition $\bn \cdot K \nabla \tilde{\phi} = 0$ and the no flux boundary condition $\bQ \cdot \bn = 0$, which is equivalence to $\tilde{\bJ} \cdot \bn = 0$ at $\partial \Omega$. Now we consider the Rayleighian 
	\begin{equation}\label{the_Rayleighian_tilde_R}
		\tilde{R} = \tilde{F}_t - \tilde{W}_t + \Phi_{\tilde{F}},
	\end{equation}
	where $\tilde{W}_t$ is the rate of the free energy pumped into the system across $\partial \Omega_1$, and $\Phi_{\tilde{F}}$ is the dissipation functional in the extended domain. When the extended model needs to describe the dynamic boundary condition and the inflow-outflow condition at $\partial\Omega_1$, it is not viable to have prior knowledge regarding the specific form of $\tilde{W}_t$. But this does not prevent us from deriving a local constitutive equation below. We consider that $\tilde{W}_t$ satisfies
	\begin{equation} \label{Intrinsic property}
		\frac{\delta \tilde{W}_t}{\delta \tilde{\bJ}} = \psi_{\varepsilon}\frac{\delta \tilde{W}_t}{\delta \bQ} = 0, \quad \frac{\delta \tilde{W}_t}{\delta \tilde{\phi}_t} = \psi_{\varepsilon}\frac{\delta \tilde{W}_t}{\delta P_t} = 0.
	\end{equation}
This is to state that the presence of the normal flux $h_3$, which is extrinsic, does not influence the form of the local constitutive equation for $\bQ$ or $\tilde{\bJ}$, which is to be derived as an intrinsic property of the extended system. 
% This is to say that the form of the local constitutive equation for $\bQ$ or $\tilde{\bJ}$
Note that the normal flux $h_3$, originally introduced at $\partial\Omega_1$, is prescribed and hence not treated as a variable. 
As to the dissipation functional, $\Phi_{\tilde{F}}$ is obtained by extending the dissipation functional in \eqref{Dissipation rate} as follows
	\begin{equation}
		\begin{split}
			\Phi_{\tilde{F}} 
			&= \int_{\Omega} (\psi_{\varepsilon} \frac{\tilde{\bJ}^2}{2M}+|\nabla \psi_{\varepsilon}| \frac{\tilde{\phi}_t^2}{2\Gamma})d\bx \\
			&= \int_{\Omega} (\frac{\bQ^2}{2\psi_{\varepsilon}M}+|\nabla \psi_{\varepsilon}| \frac{P_t^2}{2\psi_{\varepsilon}^2\Gamma})d\bx \\
			&= \int_{\Omega} \left[\frac{\bQ^2}{2\psi_{\varepsilon}M}+|\nabla \psi_{\varepsilon}| \frac{(\nabla\cdot \bQ + |\nabla \psi_{\varepsilon}|h_3)^2}{2\psi_{\varepsilon}^2\Gamma}\right]d\bx .
		\end{split}
	\end{equation}
	We note that $\bQ$ is the only flux variable in $\Phi_{\tilde{F}}$. Applying the variational Onsager principle, we arrive at
	\begin{equation} \label{Constitutive equation1}
		\begin{split}
			\frac{\delta \tilde{R}}{\delta \bQ} = 0 &\implies \nabla \frac{\delta \tilde{F}}{\delta P} + \frac{\bQ}{\psi_{\varepsilon}M}-\nabla\left(|\nabla \psi_{\varepsilon}| \frac{\nabla \cdot \bQ + |\nabla \psi_{\varepsilon}|h_3}{\psi_{\varepsilon}^2\Gamma}\right) = 0\\
			&\implies \bQ = -\psi_{\varepsilon}M\nabla\left (\frac{\delta \tilde{F}}{\delta P} +|\nabla \psi_{\varepsilon}| \frac{P_t}{\psi_{\varepsilon}^2\Gamma}\right) , \quad \text{ in } \Omega.
		\end{split}
	\end{equation}
	Then
	\begin{equation} \label{Transport equation1}
		P_t = -\nabla\cdot \left[- \psi_{\varepsilon} M \nabla\left(\frac{\delta \tilde{F}}{\delta P} +|\nabla \psi_{\varepsilon}| \frac{P_t}{\psi_{\varepsilon}^2\Gamma}\right)\right]- |\nabla\psi_{\varepsilon}|h_3 = \nabla\cdot \left[\psi_{\varepsilon} M \nabla\left(\frac{\delta \tilde{F}}{\delta P} +|\nabla \psi_{\varepsilon}| \frac{P_t}{\psi_{\varepsilon}^2\Gamma}\right)\right]- |\nabla\psi_{\varepsilon}|h_3.
	\end{equation}
	Introducing 
	\begin{equation} \label{mu_star}
		\mu_* = \psi_{\varepsilon}\left(\frac{\delta \tilde{F}}{\delta P} +|\nabla \psi_{\varepsilon}| \frac{P_t}{\psi_{\varepsilon}^2\Gamma}\right),
	\end{equation} 
	we have $\bQ = -\psi_{\varepsilon}M\nabla(\chi_{\varepsilon} \mu_*)$, which gives
	\begin{equation} \label{Constitutive equation2}
		\tilde{\bJ} = - M\nabla(\chi_{\varepsilon} \mu_*), %\quad  \text{ for } \tilde{\bJ} = \chi_{\varepsilon}\bQ,
	\end{equation}
	\begin{equation} \label{Transport equation2}
		\tilde{\phi}_t = \chi_{\varepsilon} \nabla\cdot [\psi_{\varepsilon} M \nabla(\chi_{\varepsilon} \mu_*)]-\chi_{\varepsilon} |\nabla\psi_{\varepsilon}|h_3. %\quad  \text{ for } \tilde{\phi} = \chi_{\varepsilon}P.
	\end{equation}
	With the help of the above equations, the rate of change of free energy $\tilde{F}$ is given by
	\begin{equation} \label{rate3}
		\begin{split}
			\tilde{F}_t
			&=\int_{\Omega} [\nabla(\chi_{\varepsilon} \mu_*)\cdot\psi_{\varepsilon} \tilde{\bJ}-\nabla(\chi_{\varepsilon} |\nabla \psi_{\varepsilon}|\Gamma^{-1}\tilde{\phi}_t)\cdot\psi_{\varepsilon} \tilde{\bJ}-\chi_{\varepsilon} \tilde{\mu}|\nabla \psi_{\varepsilon}|h_3]d\bx \\
			&= \int_{\Omega}[-\psi_{\varepsilon} \frac{\tilde{\bJ}^2}{M} + \chi_{\varepsilon} |\nabla \psi_{\varepsilon}|\Gamma^{-1}\tilde{\phi}_t \nabla \cdot (\psi_{\varepsilon} \tilde{\bJ}) - \chi_{\varepsilon} \tilde{\mu}|\nabla \psi_{\varepsilon}|h_3] d\bx \\
			&=\int_{\Omega} [-\psi_{\varepsilon} \frac{\tilde{\bJ}^2}{M} - |\nabla \psi_{\varepsilon}|\Gamma^{-1}\tilde{\phi}_t (\tilde{\phi}_t+\chi_{\varepsilon} |\nabla \psi_{\varepsilon}|h_3) - \chi_{\varepsilon} \tilde{\mu}|\nabla \psi_{\varepsilon}|h_3] d\bx \\
			&=\int_{\Omega}[-\psi_{\varepsilon} \frac{\tilde{\bJ}^2}{M} - |\nabla \psi_{\varepsilon}|\frac{\tilde{\phi}_t^2}{\Gamma} - \chi_{\varepsilon}(\tilde{\mu} + |\nabla \psi_{\varepsilon}|\Gamma^{-1}\tilde{\phi}_t) |\nabla \psi_{\varepsilon}|h_3]d\bx \\
			&=-\int_{\Omega} (\psi_{\varepsilon} \frac{\tilde{\bJ}^2}{M}+|\nabla \psi_{\varepsilon}|\frac{\tilde{\phi}_t^2}{\Gamma})d\bx - \int_{\Omega} \chi_{\varepsilon}\mu_* |\nabla \psi_{\varepsilon}|h_3d\bx. \\
			&=-2\Phi_{\tilde{F}} - \int_{\Omega} \chi_{\varepsilon}\mu_* |\nabla \psi_{\varepsilon}|h_3d\bx.
		\end{split}
	\end{equation}
	Combining $\tilde{F}_t$ in \eqref{rate3} with the energy law based on the variational Onsager principle
	\begin{equation} \label{Energy dissipation law}
		\tilde{F}_t - \tilde{W}_t = -2 \Phi_{\tilde{F}},
	\end{equation}
	we obtain $\tilde{W}_t$ as follows
	\begin{equation}
		\tilde{W}_t = - \int_{\Omega} \chi_{\varepsilon}\mu_* |\nabla \psi_{\varepsilon}|h_3d\bx.
	\end{equation}
	Below we show that $\tilde{W}_t$ is a functional of $\tilde{\phi}$, $\psi_{\varepsilon}$ and $h_3$ to validate equation \eqref{Intrinsic property}, which ensures that the local constitutive equation \eqref{Constitutive equation1} derived for $\bQ$ is an intrinsic property that is independent of the extrinsic and prescribed $h_3$.
	
	We define two operators: $\cA = - \chi_{\varepsilon} \nabla [\psi_{\varepsilon} M \nabla(\chi_{\varepsilon} \cdot)]$ and $\cB = |\nabla \psi_{\varepsilon}|\Gamma^{-1}$. It follows that \eqref{Transport equation1} can be formally written as follows:
	\begin{equation} \label{Transport equation3}
		\begin{split}
			\tilde{\phi}_t
			&= -\left\{1 -\chi_{\varepsilon} \nabla \cdot [\psi_{\varepsilon} M \nabla (\chi_{\varepsilon} |\nabla \psi_{\varepsilon}|\Gamma^{-1} \cdot )]\right\}^{-1}\left\{- \chi_{\varepsilon} \nabla [\psi_{\varepsilon} M \nabla(\chi_{\varepsilon}\tilde{\mu})] +\chi_{\varepsilon}|\nabla \psi_{\varepsilon}|h_3\right\} \\
			&= - (1 + \cA\cB)^{-1}\cA\tilde{\mu} - (1 + \cA\cB)^{-1}\chi_{\varepsilon}|\nabla \psi_{\varepsilon} |h_3.
		\end{split}
	\end{equation}
	Introducing  $\beta = - (1 + \cA\cB)^{-1}\chi_{\varepsilon}|\nabla \psi_{\varepsilon} |h_3$ with the boundary condition, $\bn \cdot M \nabla (\cB \beta) = 0$, we have
	\begin{equation}
		\beta = - (\chi_{\varepsilon}|\nabla\psi_{\varepsilon}|h_3 + \cA \cB\beta),
	\end{equation}
	and
	\begin{align}
		\begin{array}{rcl}
			\tilde{W}_t
			&=& - \int_{\Omega} \chi_{\varepsilon}\mu_* |\nabla \psi_{\varepsilon}|h_3d\bx \\
			&=& \int_{\Omega}(\tilde{\mu}+\cB\tilde{\phi}_t)(\beta + \cA\cB\beta)d\bx \\
			&=& \int_{\Omega}(\tilde{\mu}\beta + \cB\tilde{\phi}_t\beta + \mu_*\cA\cB\beta)d\bx \\
			&=& \int_{\Omega}(\tilde{\mu}\beta + \cA\mu_*\cB\beta + \tilde{\phi}_t\cB\beta)d\bx \\
			&=& \int_{\Omega}[\tilde{\mu}\beta + (\cA\mu_* + \tilde{\phi}_t)\cB\beta]d\bx \\
			&=& \int_{\Omega}(\tilde{\mu}\beta - \chi_{\varepsilon}|\nabla \psi_{\varepsilon} |h_3\cB\beta) d\bx,
		\end{array}
	\end{align}
	which expresses $\tilde{W}_t$ as a functional of $\tilde{\phi}$, $\psi_{\varepsilon}$ and $h_3$ with the help of operators. 
Had we known this expression beforehand, we would have derived the constitutive equation \eqref{Constitutive equation1} as well. Such a derivation would avoid the explicit use of equation \eqref{Intrinsic property} and achieve full self-consistency in the sense that the known expression of $\tilde{W}_t$, used as an input into the Rayleighian $\tilde{R}$ in \eqref{the_Rayleighian_tilde_R}, will be identical to that obtained later from the energy law \eqref{Energy dissipation law}.

	\begin{rem}
		We note that $\beta$ can be solved from the equation $(1 + \cA\cB)\beta = -\chi_{\varepsilon}|\nabla \psi_{\varepsilon}|h_3$ with the boundary condition. This is an elliptic type equation. The Lax-Milgram theorem can be used to show that there exists a unique solution.
	\end{rem}
	
	\begin{rem}
		When $\Gamma \rightarrow +\infty$, $\beta = - \chi_{\varepsilon}|\nabla \psi_{\varepsilon}|h_3$ and $\cB = 0$, then 
		\begin{equation}
			\tilde{W}_t = -\int_{\Omega}\chi_{\varepsilon}\tilde{\mu}|\nabla \psi_{\varepsilon} |h_3 d\bx
=-\int_{\Omega}\displaystyle\frac{\delta\tilde F }{\delta P}|\nabla \psi_{\varepsilon} |h_3 d\bx,
		\end{equation}
		which appears explicitly in equation \eqref{rate2}.
	\end{rem}
	The OPBDE Cahn-Hilliard model is summarized as follows:
	\begin{subequations} \label{OPBDE CH}
		\begin{empheq}[left = \empheqlbrace]{align}
			&\tilde{\phi}_t = -\chi_{\varepsilon}\nabla\cdot(\psi_{\varepsilon} \tilde{\bJ})-\chi_{\varepsilon}|\nabla \psi_{\varepsilon}|h_3 = \chi_{\varepsilon} \nabla \cdot [\psi_{\varepsilon} M \nabla (\chi_{\varepsilon} \mu_*)]-\chi_{\varepsilon} |\nabla \psi_{\varepsilon} | h_3, \quad \text{ in } \Omega, \\
			&\mu_* = \psi_{\varepsilon} f^{\prime}(\tilde{\phi})-K\nabla \cdot(\psi_{\varepsilon} \nabla \tilde{\phi})+|\nabla \psi_{\varepsilon}|[\alpha (\tilde{\phi}-h_1)-h_2 +\Gamma^{-1} \tilde{\phi}_t], \quad \text{ in } \Omega,\\
			&\bn \cdot K \nabla \tilde{\phi} = 0, \quad \tilde{\bJ} \cdot \bn = -M \nabla (\chi_{\varepsilon} \mu_*) \cdot \bn = 0 , \text{ at } \partial\Omega.
		\end{empheq}
	\end{subequations}
	
	\begin{rem}
		We notice that $\tilde{W}_t$ can be decomposed into two parts:
		\begin{equation} \label{Divided W_t}
			\tilde{W}_t = - \int_{\Omega} \chi_{\varepsilon}\mu_* |\nabla \psi_{\varepsilon}|h_3d\bx = - \int_{\Omega} [f^{\prime}(\tilde{\phi}) - K\nabla^2\tilde{\phi}]|\nabla \psi_{\varepsilon}|h_3d\bx - \int_{\Omega} \chi_{\varepsilon} |\nabla \psi_{\varepsilon}|(\cL(\tilde{\phi}) + \Gamma^{-1}\tilde{\phi}_t)|\nabla \psi_{\varepsilon}|h_3d\bx.
		\end{equation}
		According to equation \eqref{asymptotic_dynamic_BC} from the asymptotic analysis in Sec. \ref{Sec Asymptotic}, we note that the last term  in \eqref{Divided W_t}, 
		\begin{equation}
			-\int_{\Omega} \chi_{\varepsilon} |\nabla \psi_{\varepsilon}|(\cL(\tilde{\phi}) + \Gamma^{-1}\tilde{\phi}_t)|\nabla \psi_{\varepsilon}|h_3d\bx ,
		\end{equation}
		equals zero at the leading order.   
   		Furthermore, the other term, $- \int_{\Omega} [f^{\prime}(\tilde{\phi}) - K\nabla^2\tilde{\phi}]|\nabla \psi_{\varepsilon}|h_3d\bx$ in \eqref{Divided W_t} is identical to $W_t$ in \eqref{form of W_t} as $-\int_{\Omega} [f^{\prime}(\tilde{\phi}) - K\nabla^2\tilde{\phi}]|\nabla \psi_{\varepsilon}|h_3d\bx = -\int_{\partial\Omega_1}[f^{\prime}(\tilde{\phi}) - K\nabla^2\tilde{\phi}]h_3ds$ (with $|\nabla \psi_{\varepsilon}|\to \delta_{\partial \Omega_1}$ for $\varepsilon \to 0$). Therefore,  the rate of change of the free energy, $F_t$, the dissipation functional $\Phi_{F}$, and the rate of the free energy pumped into the system $W_t$ in the original model can all be consistently represented in the extended model.
	\end{rem}
	
	\begin{rem}
		In equation \eqref{Transport equation3}
		\begin{equation}
			\tilde{\phi}_t = - (1 + \cA\cB)^{-1}\cA\tilde{\mu} - (1 + \cA\cB)^{-1}\chi_{\varepsilon}|\nabla \psi_{\varepsilon} |h_3,
		\end{equation}
		if we define the operator, $\cM = (1+\cA\cB)^{-1}\cA >0$, which is the mobility operator for the extended system, then its inverse, usually called the damping coefficient, is
		\begin{equation}
			\cM^{-1} = \cA^{-1} + \cB = \{-\chi_{\varepsilon} \nabla \cdot [\psi_{\varepsilon} M \nabla (\chi_{\varepsilon} \cdot )]\}^{-1} + |\nabla \psi_{\varepsilon}|\Gamma^{-1}.
		\end{equation}
		We have
		\begin{equation}
			\begin{split}
				\tilde{F}_t
				&=\int_{\Omega} \tilde{\mu}\tilde{\phi}_t d\bx+\int_{\partial \Omega} \bn \cdot K \nabla \tilde{\phi}\tilde{\phi}_t ds\\
				&=\int_{\Omega}-\tilde{\mu}\cM\tilde{\mu}d\bx + \int_{\Omega}\tilde{\mu}\beta d\bx \\
				&=\int_{\Omega}(-\tilde{\mu}\cM\tilde{\mu} + \chi_{\varepsilon}|\nabla \psi_{\varepsilon}|h_3\cB\beta) d\bx + \int_{\Omega}(\tilde{\mu}\beta - \chi_{\varepsilon}|\nabla \psi_{\varepsilon}|h_3\cB\beta)d\bx \\
				&=\int_{\Omega}(-\tilde{\mu}\cM\tilde{\mu} + \chi_{\varepsilon}|\nabla \psi_{\varepsilon}|h_3\cB\beta) d\bx + \tilde{W}_t,
			\end{split}
		\end{equation}
		which gives $\Phi_{\tilde{F}}$ in another form:
		\begin{equation}
			\Phi_{\tilde{F}} = \int_{\Omega}[\half(\tilde{\phi}_t - \beta)\cM^{-1}(\tilde{\phi}_t - \beta) - \half\chi_{\varepsilon}|\nabla \psi_{\varepsilon}|h_3\cB\beta] d\bx = \int_{\Omega} (\psi_{\varepsilon} \frac{\tilde{\bJ}^2}{2M}+|\nabla \psi_{\varepsilon}| \frac{\tilde{\phi}_t^2}{2\Gamma})d\bx
		\end{equation}
		according to energy law \eqref{Energy dissipation law}. In fact, if $\tilde{\phi}_t$ is instead used as the only flux variable of the system, the same extended model can be derived. Details are presented in Appendix I.
		
		\begin{rem}
			In addition, we can derive the extended model by applying a Lagrange multipliers' method. Details are presented in Appendix II.
		\end{rem}
	\end{rem}

	\section{Onsager principle in open vs closed systems}
	If a model is said to be thermodynamically consistent in a closed or periodic system in a domain $\Omega$, we typically have
	\begin{equation} \label{close system}
		F_t = -2 \Phi_{F}.
	\end{equation}
	This corresponds to our model with $h_3 = 0$.
	
	For an open system that includes inflow and outflow, i.e., $h_3 \neq 0$, \eqref{close system} is no longer valid. It is apparent that we need to introduce another rate for the free energy pumped into the system. For the models considered with infinite $\Gamma$, $W_t$ and $\tilde{W}_t$ appear explicitly in $F_t$ or $\tilde{F}_t$, and we can easily find them. However, for the models considered with finite $\Gamma$, though $W_t$ remains the same, it is challenging to predict in advance the form of $\tilde{W}_t$ in the extended domain. But this does not prevent us from constructing the corresponding Rayleighian $\tilde{R}$. The variational approach can proceed under the intrinsic property from thermodynamic laws that $\tilde{W}_t$ is independent of the flux variable in the system. Therefore, the constitutive equation, \eqref{Constitutive equation2}, can still be obtained by applying the Onsager principle without knowing the explicit form of $\tilde{W}_t$. Finally, the specific form of $\tilde{W}_t$ is validated by the energy law,
		\begin{equation}
			\tilde{F}_t - \tilde{W}_t = 2 \Phi_{\tilde{F}}.
		\end{equation}
	In the next section, we will show the validity of the OPBDE Cahn-Hilliard model through asymptotic analysis. Specifically, we show that as $\varepsilon \rightarrow 0$, the OPBDE Cahn-Hilliard model reduces to the original model in $\Omega_1$ and recovers the prescribed boundary conditions at $\partial \Omega_1$.

	\section{Asymptotic analysis} \label{Sec Asymptotic}
	In this section, we analyze the asymptotic limits of the OPBDE Cahn-Hilliard model \eqref{OPBDE CH} as the interface width $\varepsilon \to 0$. We expand the unknowns in powers of $\varepsilon$ in regions close to and far from the boundary in the inner and outer expansions, respectively. Firstly, we introduce a local coordinate system for the inner expansion. Define $\bX: S \rightarrow \mathbb{R}^{n}$ as a parametric representation of $\partial \Omega_{1}$, where $S$ is an oriented manifold of dimension $n-1$, and define $\omega = \{\bx \in \Omega:|r(\bx)|<d, \; 0\leq d \ll 1\}$. Then $\forall \bx\in \omega$, we write $\bx = \bX(s)+r(\bx) \mathbf{n}(s)$, where $s$ is the coordinate at $S$. We rewrite $\tilde{\phi}$, $\mu_*$, $\psi_{\varepsilon}$, $\chi_{\varepsilon}$, and $h_i$ (for $i = 1, 2, 3$) in the new coordinate system as follows:
	\begin{equation}
		\begin{aligned}
			&\tilde{\phi}(s,r) = \tilde{\phi}(\bx) = \tilde{\phi}(\bX(s)+r(\bx) \bn(s)), & \bx \in \omega, \\
			&\mu_{*}(s,r) = \mu_{*}(\bx) = \mu_{*}(\bX(s)+r(\bx) \bn(s)), & \bx \in \omega,\\
			&\psi_{\varepsilon}(s,r) = \psi_{\varepsilon}(\bx) = \psi_{\varepsilon}(\bX(s)+r(\bx) \bn(s)), & \bx \in \omega,\\
			&\chi_{\varepsilon}(s,r) = \chi_{\varepsilon}(\bx) = \chi_{\varepsilon}(\bX(s)+r(\bx) \bn(s)), & \bx \in \omega,\\
			&h_i(s,r) = h_i(\bx) = h_i(\bX(s)+r(\bx) \bn(s)), \quad i = 1,2,3, & \bx \in \omega.\\
		\end{aligned}
	\end{equation}
	For the outer expansion, we focus on the region with $r < 0$ (in $\Omega_1$) and $\psi_{\varepsilon} = \chi_{\varepsilon} = 1$. We expand $\tilde{\phi}_{\varepsilon}$, $\mu_{*,\varepsilon}$, and $h_i$ (for $i = 1, 2, 3$) in powers of $\varepsilon$:
	\begin{equation}
		\begin{split}
			&\tilde{\phi}(s,r) = \tilde{\phi}_0(s,r)+\varepsilon\tilde{\phi}_1(s,r)+\cdots,\\
			&\mu_{*,}(s,r) = \mu_{*,0}(s,r)+\varepsilon \mu_{*,1}(s,r)+\cdots,\\
			&h_i(s,r) = h_{i,0}(s,r)+\varepsilon h_{i,1}(s,r)+\cdots, \quad i = 1,2,3.
		\end{split}
	\end{equation}
	For the inner expansion, the stretched variable $z =\frac {r}{\varepsilon}$ is introduced, and the derivatives are scaled as follows:
	\begin{equation}
		\begin{split}
			&\nabla = \frac{1}{\varepsilon}\bn_1 \partial_z+\frac{1}{1+\varepsilon z \kappa}\nabla_s,\\
			&\nabla^2 = \frac{1}{\varepsilon^2}\partial_{zz}+\frac{1}{\varepsilon}\frac{\kappa}{1+\varepsilon z \kappa}\partial_z+ \frac{1}{1+\varepsilon z \kappa}\nabla_s\cdot(\frac{1}{1+\varepsilon z \kappa}\nabla_s)
		\end{split}
	\end{equation}
	where $\kappa = \nabla_s\cdot\bn_1$ is the mean curvature of $\partial \Omega_1$. We use $\Phi$, $\bar{\mu}_*$, $\Psi$, $\cX$, and $H_i$ (for $i = 1,2,3$) to represent the functions in $\omega$:
	\begin{equation}
		\begin{aligned}
			&\Phi(s,z) = \tilde{\phi}(s,r),\quad &&\bar{\mu}_*(s,z) = \mu_*(s,r),\quad \Psi(s,z) = \psi_{\varepsilon}(s,r), \\
			&\cX(s,z) = \chi_{\varepsilon}(s,r),&&H_i(s,z) = h_i(s,r), \quad i = 1,2,3.
		\end{aligned}
	\end{equation}
	These variables are expanded in powers of $\varepsilon$:
	\begin{equation}
		\begin{split}
			&\Phi(s,z) = \Phi_0(s,z)+\varepsilon\Phi_1(s,z)+\cdots,\\
			&\bar{\mu}_*(s,z) = \bar{\mu}_{*,0}(s,z)+\varepsilon \bar{\mu}_{*,1}(s,z)+\cdots,\\
			&\Psi = \Psi_0 = \half(1-\tanh(3z)) = \frac{1}{e^{6z}+1},\\
			&\cX = \cX_0 = \frac{e^{6z}+1}{\delta e^{6z}+1}, \\
			&H_i(s,z) = H_{i,0}(s,z)+\varepsilon H_{i,1}(s,z)+\cdots, \quad i = 1,2,3.
		\end{split}
	\end{equation}
	It is assumed that $h_i$, $i = 1,2,3$, are independent of $z$ in $\omega$, and hence $H_{i,j}$, $i = 1,2,3$, are independent of $z$ as well, i.e.,
	\begin{equation}
		\begin{split}
			H_i(s) = H_{i,0}(s)+\varepsilon H_{i,1}(s)+\cdots.
		\end{split}
	\end{equation}
	
	Now we consider the matching conditions. For simplicity, the generic label $\hat{V}$ is used to represent the outer expansions of $\tilde{\phi}$, $\mu_*$, $\psi_{\varepsilon}$, $\chi_{\varepsilon}$, and $h_i$ (for $i = 1,2,3$), and the generic label $\bar{V}$ is used to represent the inner expansions of $\Phi$, $\bar{\mu}_*$, $\Psi$, $\cX$, and $H_i$ (for $i = 1,2,3$). In the overlapping region where both expansions are valid, we apply the matching conditions
	\begin{equation}
		\lim_{z\rightarrow \pm\infty} \bar{V}(s,z) \simeq \lim_{r\rightarrow \pm 0} \hat{V}(\bX+r \bn_1),
	\end{equation}
	which lead to the following specific matching conditions \cite{Fife1988DynamicsInternalLayers,Caginalp1988DynamicsLayeredInterfaces,Pego1989FrontMigrationNonlinear,Holmes2013IntroductionPerturbationMethods}:
	\begin{subequations} \label{Matching condition}
		\begin{align}
			&\lim_{z\rightarrow \pm\infty}\bar{V}_{0} = \lim_{r\rightarrow \pm 0}\hat{V}_0, \label{Matching condition1}\\
			&\lim_{z\rightarrow \pm\infty}\partial^i_z\bar{V}_{0} = 0,\quad i = 1,2,\cdots, \label{Matching condition2} \\
			&\lim_{z\rightarrow \pm\infty}\partial_z \bar{V}_{1} = \lim_{r\rightarrow \pm 0}\bn_1 \cdot \nabla\hat{V}_0. \label{Matching condition3}
		\end{align}
	\end{subequations}
	
	In the following, an asymptotic analysis based on model \eqref{OPBDE CH} will be carried out to derive the dynamic boundary condition in the leading order. It can reduce to the Robin boundary condition when $\Gamma \rightarrow \infty$, and the Robin condition can reduce to the Neumann boundary condition when $\alpha = 0$ and the Dirichlet boundary condition when $\alpha \rightarrow \infty$.
	
	\begin{itemize}
		\item Outer expansion: At the leading order $\bsO(1)$, we obtain
		\begin{subequations}
			\begin{empheq}[left = \empheqlbrace]{align}
				&\tilde{\phi}_{0,t} = \nabla \cdot ( M \nabla \mu_{*,0}), \\
				&\mu_{*,0} = f^{\prime}(\tilde{\phi}_{0})-K\nabla^2\tilde{\phi}_{0},
			\end{empheq}
		\end{subequations}
		in which \eqref{CH2.1} and \eqref{CH2.2} in the Cahn-Hilliard model are recovered.
		\item Inner expansion:
		At the leading order $\bsO(\varepsilon^{-2})$ , we obtain
		\begin{subequations}
			\begin{empheq}[left = \empheqlbrace]{align}
				&\cX_0 \partial_z \left[\Psi_0 M \partial_z (\cX_0 \bar{\mu}_{*,0})\right] = 0,\\
				&- K \partial_z(\Psi_0 \partial_z \Phi_0) = 0.
			\end{empheq}
		\end{subequations}
		Using an integration and the matching condition \eqref{Matching condition2} for $i=1$, we have
		\begin{subequations}
			\begin{empheq}[left = \empheqlbrace]{align}
				&\partial_z (\cX_0 \bar{\mu}_{*,0}) = 0,\\
				&\partial_z \Phi_0 = 0.
			\end{empheq}
		\end{subequations}
		At the next order $\bsO(\varepsilon^{-1})$, we obtain
		\begin{subequations}
			\begin{empheq}[left = \empheqlbrace]{align}
				&\cX_0 \partial_z \left[\Psi_0 M \partial_z (\cX_0 \bar{\mu}_{*,1})\right]+\cX_0\partial_z\Psi_0 H_{3,0} = 0,\\
				&- K \partial_z(\Psi_0 \partial_z \Phi_1)-[\alpha(\Phi_0-H_{1,0})-H_{2,0}+\Gamma^{-1}\Phi_{0,t}] \partial_z \Psi_0 = 0.
			\end{empheq}
		\end{subequations}
		Integrating the above equations from $-\infty$ to $+\infty$, we have
		\begin{subequations}
			\begin{empheq}[left = \empheqlbrace]{align}
				&\lim\limits_{z \rightarrow -\infty} M \partial_z (\cX_0 \bar{\mu}_{*,1}) = \lim\limits_{z \rightarrow -\infty}-H_{3,0},\\
				&		\lim\limits_{z \rightarrow -\infty} \alpha(\Phi_0-H_{1,0})+K\partial_z\Phi_1-H_{2,0} = \lim\limits_{z \rightarrow -\infty} -\Gamma^{-1}\Phi_{0,t}.
			\end{empheq}
		\end{subequations}
		Using the matching conditions \eqref{Matching condition1} and \eqref{Matching condition3}, we have
		\begin{subequations}
			\begin{empheq}[left = \empheqlbrace]{align}
				&\lim\limits_{r \rightarrow 0^-} \bn_1 \cdot M \nabla \mu_{*,0} = \lim\limits_{z \rightarrow -\infty} -h_{3,0},\\
				&\lim\limits_{r \rightarrow 0^-}\alpha(\tilde{\phi}_0-h_{1,0})+\bn_1 \cdot K\nabla \tilde{\phi}_0-h_{2,0} = \lim\limits_{r \rightarrow 0^-}-\Gamma^{-1}\tilde{\phi}_{0,t},   \label{asymptotic_dynamic_BC}
			\end{empheq}
		\end{subequations}
		in which \eqref{CH2.3} and \eqref{CH2.4} are recovered.
	\end{itemize}
	
	In summary, the asymptotic analysis presented above has established the connection between the OPBDE Cahn-Hilliard model and the original model defined in an arbitrary domain. For the extended model in a regular domain, a thermodynamically consistent structure-preserving numerical scheme can be developed.
	
	\section{Numerical algorithm}
	In this section, we present a structure-preserving numerical scheme for the OPBDE Cahn-Hilliard model \eqref{OPBDE CH}. For simplicity, we drop the subscript $\varepsilon$ in $\psi_{\varepsilon}$ and $\chi_{\varepsilon}$ in the presentation below. We briefly introduce the EQ method \cite{Zhao2022GeneralFrameworkDerive,Zhao2018GeneralStrategyNumerical,Xu2019EfficientLinearSchemes} to linearize the model by introducing an auxiliary variable
	\begin{equation}
		q(\bx, t) = \sqrt{2f(\tilde{\phi})+2A},
	\end{equation}
	where $A>0$ is a constant large enough to make $q$ a well-defined real-valued function for all $\tilde{\phi} \in (-\infty, \infty)$.
	We denote $g(\tilde{\phi}) = \frac{\partial q}{\partial \tilde{\phi}} = \frac{ f^{\prime}(\tilde{\phi})}{\sqrt{2f(\tilde{\phi})+2A}}$. The free energy of the OPBDE Cahn-Hilliard model \eqref{OPBDE CH} can be written as
	\begin{equation}
		\bar{F} = \int_{\Omega}\psi (\half q^2+\half K\|\nabla \tilde{\phi}\|^2 - A)+|\nabla \psi|[\half(\tilde{\phi}-h_1)^2-h_2\tilde{\phi}] d\bx.
	\end{equation}
	Below we present a finite difference scheme with 2nd order central difference in space and Crank-Nicolson discretization in time.
	
	To simplify the presentation, we introduce the following notations. For the extended regular domain $\Omega = [-\half L_x,\half L_x]\times[-\half L_y,\half L_y]$, we divide it into a rectangular mesh with mesh size $\Delta_x = L_x/N_x$ and $\Delta_y = L_y/N_y$, where $N_x$ and $N_y$ are the numbers of the grids in the $x$ and $y$ directions, respectively.
	
	We define the interpolation and extrapolation of variables as follows
	\begin{equation}
		\begin{aligned}
			& (\bullet)^{n+\frac{1}{2}} = \frac{1}{2}(\bullet)^{n+1}+\frac{1}{2}(\bullet)^n, n \in \BN, \\
			& \bar{(\bullet)}^{n+\frac{1}{2}} = \frac{3}{2}(\bullet)^n-\frac{1}{2}(\bullet)^{n-1}, n \in \BN\backslash\{0\}, \quad \bar{(\bullet)}^{n+\frac{1}{2}} = (\bullet)^n, n = 0.
		\end{aligned}
	\end{equation}
	The average and difference operators are defined by $A_x$, $A_y$, $D_x$, $D_y$, $D_h$:
	\begin{equation}
		\begin{aligned}
			&A_x(\bullet)_{i+\half,j} = \frac{1}{2}[(\bullet)_{i+1,j}+(\bullet)_{i,j}], \quad A_y(\bullet)_{i,j+\half} = \frac{1}{2}[(\bullet)_{i,j+1}+(\bullet)_{i,j}]; \\
			&D_x(\bullet)_{i+\frac{1}{2},j} = \frac{1}{\Delta x}[(\bullet)_{i+1,j}-(\bullet)_{i,j}], \quad D_y(\bullet)_{i,j+\frac{1}{2}} = \frac{1}{\Delta y}[(\bullet)_{i,j+1}-(\bullet)_{i,j}];\\
			&D_h(\bullet,\star)_{i,j} = D_x[A_x (\bullet) D_x (\star)]_{i,j}+ D_y[A_y (\bullet) D_y (\star)]_{i,j}.
		\end{aligned}
	\end{equation}
	Here $i$ and $j$ can take either integer or half-integer values.
	
	Based on the above definitions, we define the discrete 2D weighted inner products as follows
	\begin{equation}
		(u,v) = \Delta x \Delta y \sum_{i = 1}^{N_x}\sum_{j = 1}^{N_y}u_{i,j}v_{i,j}.
	\end{equation}
	
	For the OPBDE Cahn-Hilliard model \eqref{OPBDE CH}, the corresponding full discrete scheme is presented as follows.
	
	\begin{alg}[Second Order Energy-Dissipation-Rate-Preserving Scheme] \label{Algorithm} Given $\tilde{\phi}_{i,j}^n,q_{i,j}^n$, we can update $\tilde{\phi}_{i,j}^{n+1}$ via
		\begin{subequations} \label{EQ scheme}
			\begin{empheq}[left = \empheqlbrace]{align}
				&\frac{\tilde{\phi}_{i,j}^{n+1}-\tilde{\phi}_{i,j}^n}{\Delta t} = \chi_{i,j} D_h(\psi\bar{M}^{{n+\frac{1}{2}}}, \chi\mu_{*}^{{n+\frac{1}{2}}})_{i,j}-\half \chi_{i,j}
				\left\{|D_x\psi_{i-\half,j}|A_x h_{3,i-\half,j}+ \right. \nonumber\\
				&\left.|D_x\psi_{i+\half,j}|A_x h_{3,i+\half,j} +|D_y\psi_{i,j-\half}|A_y h_{3,i,j-\half} +|D_y\psi_{i,j+\half}|A_y h_{3,i,j+\half}\right\}, \label{EQ scheme1} \\
				&\mu_{*, i,j}^{n+\frac{1}{2}} = \psi_{i,j}\bar{g}_{i,j}^{^{n+\frac{1}{2}}} q_{i,j}^{^{n+\frac{1}{2}}}-K D_h(\psi,\tilde{\phi}^{^{n+\frac{1}{2}}})_{i,j}+\half \left\{|D_x\psi_{i-\half,j}|A_x[\alpha(\tilde{\phi}^{n+\half}-h_{1})-h_2 \right. \nonumber \\
				&+\overline{\Gamma^{-1}}^{n+\half}\frac{\tilde{\phi}^{n+1}-\tilde{\phi}^n}{\Delta t}]_{i-\half,j} +|D_x\psi_{i+\half,j}|A_x[\alpha(\tilde{\phi}^{n+\half}-h_{1})-h_2+ \nonumber\\
				& \overline{\Gamma^{-1}}^{n+\half}\frac{\tilde{\phi}^{n+1}-\tilde{\phi}^n}{\Delta t}]_{i+\half,j}+|D_y\psi_{i,j-\half}|A_x[\alpha(\tilde{\phi}^{n+\half}-h_{1})-h_2 +\overline{\Gamma^{-1}}^{n+\half}\frac{\tilde{\phi}^{n+1}-\tilde{\phi}^n}{\Delta t}]_{i,j-\half} \nonumber\\
				&\left. + |D_y\psi_{i,j+\half}|A_x[\alpha(\tilde{\phi}^{n+\half}-h_{1})- h_2+\overline{\Gamma^{-1}}^{n+\half}\frac{\tilde{\phi}^{n+1}-\tilde{\phi}^n}{\Delta t}]_{i,j+\half}\right\},\label{EQ scheme2} \\
				&q_{i,j}^{n+1}-q_{i,j}^n = \bar{g}_{i,j}^{^{n+\frac{1}{2}}}(\tilde{\phi}_{i,j}^{n+1}-\tilde{\phi}_{i,j}^n).\label{EQ scheme3}
			\end{empheq}
		\end{subequations}
		All variables are discretized at the cell-center points with the following boundary conditions:
		\begin{equation} \label{BC DIS}
			\begin{split}
				\tilde{\phi}_{i,0}^k = \tilde{\phi}_{i,1}^k, \quad \tilde{\phi}_{i,N_y}^k = \tilde{\phi}_{i,N_y+1}^k, \quad, \forall i\in \{1,2,\cdots,N_x\}, k = n,n+1;\\
				\tilde{\phi}_{0,j}^{k} = \tilde{\phi}_{1,j}^{k}, \quad \tilde{\phi}_{N_x,j}^{k} = \tilde{\phi}_{N_x+1,j}^{k}, \quad \forall j\in \{1,2,\cdots,N_y\}, k = n,n+1;\\
				\bar{M}_{i,\half}^{k-\half}\mu_{i,0}^{k} = \bar{M}_{i,\half}^{k-\half}\mu_{i,1}^{k}, \quad \bar{M}_{i,N_y+\half}^{k-\half}\mu_{i,N_y}^{k} = \bar{M}_{i,N_y+\half}^{k-\half}\mu_{i,N_y+1}^{k}, \forall i\in \{1,2,\cdots,N_x\}, k = n,n+1;\\
				\bar{M}_{\half,j}^{k-\half}\mu_{0,j}^{k} = \bar{M}_{\half,j}^{k-\half}\mu_{1,j}^{k}, \quad \bar{M}_{N_x+\half,j}^{k-\half}\mu_{N_x,j}^{k} = \bar{M}_{N_x+\half,j}^{k-\half}\mu_{N_x+1,j}^{k}, \forall j\in \{1,2,\cdots,N_y\}, k = n,n+1.
			\end{split}
		\end{equation}
        \begin{rem}
            To facilitate the numerical algorithm, we need to incorporate a boundary condition of $\psi$. Since $\psi$ is almost zero near $\partial\Omega$, we set $\bn \cdot \nabla \psi = 0$ at $\partial \Omega$, which can be discretized in the same way as $\bn \cdot \nabla \tilde\phi = 0$. 
        \end{rem}
	\end{alg}
	
	\begin{thm}\label{Thm1}
		If $h_3 = 0$, i.e., there is no flux pumped into the system, the solution of algorithm \ref{Algorithm} satisfies the discrete volume conservation law
		\begin{equation}
			(\psi\phi^{n+1},1) = (\psi\phi^{n},1) = (\psi\phi^{0},1), \quad n = 1,2,\cdots.
		\end{equation}
	\end{thm}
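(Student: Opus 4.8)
The plan is to prove the one-step identity $(\psi\tilde\phi^{n+1},1)=(\psi\tilde\phi^n,1)$ from the update \eqref{EQ scheme1} and then close by induction on $n$. Setting $h_3=0$ removes the entire boundary source term in \eqref{EQ scheme1}, leaving
\begin{equation}
\frac{\tilde{\phi}_{i,j}^{n+1}-\tilde{\phi}_{i,j}^n}{\Delta t}=\chi_{i,j}\,D_h(\psi\bar{M}^{n+\frac12},\chi\mu_{*}^{n+\frac12})_{i,j}.
\end{equation}
Multiplying by $\psi_{i,j}$ and using $\psi_{i,j}\chi_{i,j}=1$ at every grid point (legitimate since $\psi_\varepsilon>0$ everywhere) cancels the weight against $\chi$. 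Because $\psi$ is time-independent, taking the discrete inner product with the constant grid function $1$ then yields
\begin{equation}
\frac{1}{\Delta t}\big[(\psi\tilde{\phi}^{n+1},1)-(\psi\tilde{\phi}^n,1)\big]=\big(D_h(\psi\bar{M}^{n+\frac12},\chi\mu_{*}^{n+\frac12}),1\big),
\end{equation}
so the claim reduces to showing that the discrete divergence $D_h$ of the flux integrates to zero against $1$.

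Next I would establish a discrete divergence theorem for $D_h$ paired with the constant $1$. Writing the face fluxes $F_{i+\frac12,j}=A_x(\psi\bar{M})_{i+\frac12,j}D_x(\chi\mu_*)_{i+\frac12,j}$ and $G_{i,j+\frac12}=A_y(\psi\bar{M})_{i,j+\frac12}D_y(\chi\mu_*)_{i,j+\frac12}$, the definition gives $D_h(\cdot,\cdot)_{i,j}=D_x(F)_{i,j}+D_y(G)_{i,j}$, where $D_x(F)_{i,j}=\Delta x^{-1}(F_{i+\frac12,j}-F_{i-\frac12,j})$. Summing over $i$ telescopes the $x$-part to the two vertical-boundary fluxes $F_{N_x+\frac12,j}-F_{\frac12,j}$, and summing over $j$ telescopes the $y$-part to $G_{i,N_y+\frac12}-G_{i,\frac12}$; every interior face contribution cancels in pairs. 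Hence $\big(D_h(\psi\bar{M}^{n+\frac12},\chi\mu_{*}^{n+\frac12}),1\big)$ equals a weighted sum of discrete fluxes supported entirely on $\partial\Omega$.

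Finally I would invoke the discrete boundary conditions \eqref{BC DIS}. The ghost-cell reflection imposed there on $\bar{M}\mu$ is precisely the discretization of the no-flux condition $\tilde{\bJ}\cdot\bn=-M\nabla(\chi\mu_*)\cdot\bn=0$ at $\partial\Omega$, and it forces each boundary face flux $F_{\frac12,j},F_{N_x+\frac12,j},G_{i,\frac12},G_{i,N_y+\frac12}$ to vanish. The right-hand side is then zero, so $(\psi\tilde{\phi}^{n+1},1)=(\psi\tilde{\phi}^n,1)$ for every $n\ge0$, and a trivial induction gives $(\psi\phi^{n+1},1)=(\psi\phi^{n},1)=\cdots=(\psi\phi^{0},1)$. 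I expect the boundary step to be the main obstacle: the reflection in \eqref{BC DIS} is stated for $\bar{M}\mu$, whereas the telescoped terms are the face fluxes $A_x(\psi\bar{M})D_x(\chi\mu_*)$, so one must verify carefully, tracking the averaging operators $A_x,A_y$ in the mobility weight, that the prescribed ghost values indeed annihilate these fluxes at $\partial\Omega$. The interior telescoping and the $\psi\chi=1$ cancellation are otherwise routine.
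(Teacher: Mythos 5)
Your proposal is correct and follows essentially the same route as the paper's proof: cancel $\chi$ against the weight $\psi$ using $\psi\chi=1$, telescope the discrete divergence $D_h$ to boundary face fluxes, kill those with the ghost-cell conditions \eqref{BC DIS}, and conclude by induction. The caveat you raise about matching the reflected quantity $\bar M\mu$ in \eqref{BC DIS} to the face flux $A_x(\psi\bar M)D_x(\chi\mu_*)$ is a fair observation about the paper's notation (it is resolved by the accompanying reflection condition $\bn\cdot\nabla\psi=0$), but the paper glosses over it in exactly the same way.
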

	\begin{proof} We briefly prove this theorem. Firstly, taking the inner product of \eqref{EQ scheme1} with $\psi$, we get
		\begin{equation}
			(\psi\frac{\tilde{\phi}^{n+1}-\tilde{\phi}^n}{\Delta t},1) = (1,  D_h(\psi\bar{M}^{{n+\frac{1}{2}}}, \chi\mu_{*}^{{n+\frac{1}{2}}})).
		\end{equation}
		Expanding the right-hand side of the above equation, we have
		\begin{equation} \label{eqn1}
			\begin{split}
				&(1, D_h(\psi\bar{M}^{{n+\frac{1}{2}}}, \chi\mu_{*}^{{n+\frac{1}{2}}})) \\
				= & \sum_{i = 1}^{N_x}\sum_{j = 1}^{N_y}\left[D_x(A_x (\psi\bar{M}^{{n+\frac{1}{2}}}) D_x (\chi\mu_{*}^{{n+\frac{1}{2}}}))_{i,j}+ D_y(A_y (\psi\bar{M}^{{n+\frac{1}{2}}}) D_y (\chi\mu_{*}^{{n+\frac{1}{2}}}))_{i,j}\right]\\
				= & \sum_{j = 1}^{N_y}\frac{\left[A_x (\psi\bar{M}^{{n+\frac{1}{2}}}) D_x (\chi\mu_{*}^{{n+\frac{1}{2}}})\right]_{N_x+\half,j}-\left[A_x (\psi\bar{M}^{{n+\frac{1}{2}}}) D_x (\chi\mu_{*}^{{n+\frac{1}{2}}})\right]_{\half,j}}{\Delta x} \\
				+&\sum_{i = 1}^{N_x}\frac{\left[A_y (\psi\bar{M}^{{n+\frac{1}{2}}}) D_y (\chi\mu_{*}^{{n+\frac{1}{2}}})\right]_{i,N_y+\half}-\left[A_y (\psi\bar{M}^{{n+\frac{1}{2}}}) D_y (\chi\mu_{*}^{{n+\frac{1}{2}}})\right]_{i,\half}}{\Delta y}.
			\end{split}
		\end{equation}
		Using the boundary conditions \eqref{BC DIS}, we find that the right-hand side of equation \eqref{eqn1} is equal to 0, which means
		\begin{equation} \label{eqn2}
			(\psi\frac{\tilde{\phi}^{n+1}-\tilde{\phi}^n}{\Delta t},1) = 0 \implies (\psi \tilde{\phi}^{n+1},1) = (\psi \tilde{\phi}^{n},1), \quad \forall \; n = 0, 1, 2 \cdots.
		\end{equation}
		This gives the discrete volume conservation law
		\begin{equation}
			(\psi \tilde{\phi}^{n+1},1) = (\psi \tilde{\phi}^{n},1) = \cdots = (\psi \tilde{\phi}^{0},1).
		\end{equation}
	\end{proof}
	
	\begin{thm}\label{Thm2}
		The solution of algorithm \ref{Algorithm} satisfies the discrete energy law
		\begin{small}
			\begin{multline}\label{DIS EDL}
				\frac{\tilde{F}^{n+1}-\tilde{F}^n}{\Delta t} = - \left[(D_x(\chi\mu_*^{n+\half}), A_x(\psi\bar{M}^{n+\half})D_x(\chi\mu_*^{n+\half})) + (D_y(\chi\mu_*^{n+\half}),A_y(\psi\bar{M}^{n+\half})D_y(\chi\mu_*^{n+\half}))\right] \\
				- \sum_{i = 1}^{N_x}\sum_{j=1}^{N_y} \half \left\{|D_x\psi_{i-\half,j}|\left[\overline{\Gamma^{-1}}^{n+\half}(A_x\frac{\tilde{\phi}^{n+1} - \tilde{\phi}^n}{\Delta t})^2\right]_{i-\half,j}
				+|D_x\psi_{i+\half,j}|\left[\overline{\Gamma^{-1}}^{n+\half}(A_x\frac{\tilde{\phi}^{n+1} - \tilde{\phi}^n}{\Delta t})^2\right]_{i+\half,j}\right.\\
				\left.+|D_y\psi_{i,j-\half}|\left[\overline{\Gamma^{-1}}^{n+\half}(A_y\frac{\tilde{\phi}^{n+1} - \tilde{\phi}^n}{\Delta t})^2\right]_{i,j-\half} +|D_y\psi_{i,j+\half}|\left[\overline{\Gamma^{-1}}^{n+\half}(A_y\frac{\tilde{\phi}^{n+1} - \tilde{\phi}^n}{\Delta t})^2\right]_{i,j+\half}\right\} -\sum_{i = 1}^{N_x}\sum_{j = 1}^{N_y} \\
				\half \mu_{*,j}^{n+\half}\chi_{i,j} \left\{|D_x\psi_{i-\half,j}| h_{3,i-\half,j}+|D_x\psi_{i+\half,j}| h_{3,i+\half,j} +|D_y\psi_{i,j-\half}| h_{3,i,j-\half} +|D_y\psi_{i,j+\half}| h_{3,i,j+\half}\right\}.
			\end{multline}
		\end{small}
		for any $\Delta t >0$, where the discrete energy is defined as
		\begin{multline}
			\tilde{F}^n =\sum_{i = 1}^{N_x}\sum_{j=1}^{N_y}\left\{\psi_{i,j}|q_{i,j}^n|^2 + \half K \left[A_x \psi_{i+\half,j} (\frac{\tilde{\phi}_{i+1,j}^{n}-\tilde{\phi}_{i,j}^{n}}{\Delta x})^2 + A_y \psi_{i,j+\half} (\frac{\tilde{\phi}_{i,j+1}^{n+1}-\tilde{\phi}_{i,j}^{n+1}}{\Delta y})^2\right]\right. \\
			+ \half \left\{|D_x\psi_{i-\half,j}|[\half\alpha (A_x\tilde{\phi}^{n} - h_{1})^2 - h_{2}A_x\tilde{\phi}^n]_{i-\half,j} +|D_x\psi_{i+\half,j}|[\half\alpha (A_x\tilde{\phi}^{n} - h_{1})^2 - h_{2}A_x\tilde{\phi}^n]_{i+\half,j}\right.\\
			\left.\left. + |D_y\psi_{i,j-\half}|[\half\alpha (A_y\tilde{\phi}^{n} - h_{1})^2 - h_{2}A_y\tilde{\phi}^n]_{i,j-\half} +|D_y\psi_{i,j+\half}|[\half\alpha (A_y\tilde{\phi}^{n} - h_{1})^2 - h_{2}A_y\tilde{\phi}^n]_{i,j+\half}\right\}\right\}.
		\end{multline}
		Therefore, the scheme is unconditionally energy stable.
	\end{thm}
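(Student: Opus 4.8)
The plan is to reproduce, at the fully discrete level, the continuous energy computation \eqref{rate3}. Concretely, I will first rewrite the telescoping difference $\tilde F^{n+1}-\tilde F^n$ as a discrete pairing of $\mu_*^{n+\half}$ with the increment $\tilde\phi^{n+1}-\tilde\phi^n$ minus the boundary $\overline{\Gamma^{-1}}$ contribution, and then insert the discrete transport equation \eqref{EQ scheme1} to expose the three groups on the right-hand side of \eqref{DIS EDL} --- the bulk dissipation, the boundary dissipation, and the $h_3$-work term --- exactly as the continuous $-2\Phi_{\tilde F}-\int_\Omega\chi\mu_*|\nabla\psi|h_3\,d\bx$ splits in \eqref{rate3}.

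First I would expand $\tilde F^{n+1}-\tilde F^n$ term by term, applying the difference-of-squares identity $a^2-b^2=(a+b)(a-b)$ to each quadratic. For the potential part this combines with the EQ relation \eqref{EQ scheme3}, $q^{n+1}-q^n=\bar g^{n+\half}(\tilde\phi^{n+1}-\tilde\phi^n)$, to give $\psi\bar g^{n+\half}q^{n+\half}(\tilde\phi^{n+1}-\tilde\phi^n)$, reproducing the first term of $\mu_{*}^{n+\half}$ in \eqref{EQ scheme2}. For the gradient energy it gives $K A_x\psi\,D_x\tilde\phi^{n+\half}D_x(\tilde\phi^{n+1}-\tilde\phi^n)$ together with its $y$-analogue; a discrete summation by parts then moves the difference operator and recovers the term $-K D_h(\psi,\tilde\phi^{n+\half})$, the boundary stencils at $\partial\Omega$ dropping out because the conditions in \eqref{BC DIS} force $D_x\tilde\phi=D_y\tilde\phi=0$ there. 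The boundary-energy terms weighted by $|D_x\psi|,|D_y\psi|$ are treated identically and yield the $\alpha(\tilde\phi^{n+\half}-h_1)-h_2$ part of $\mu_*^{n+\half}$. Collecting the three contributions expresses $\tilde F^{n+1}-\tilde F^n$ as the discrete pairing of $\mu_*^{n+\half}$ with $\tilde\phi^{n+1}-\tilde\phi^n$ minus a remainder $(\star)$, where $(\star)$ is exactly the sum of the $\overline{\Gamma^{-1}}(\tilde\phi^{n+1}-\tilde\phi^n)/\Delta t$ boundary pieces of $\mu_*^{n+\half}$ paired against $\tilde\phi^{n+1}-\tilde\phi^n$, since $\mu_*^{n+\half}$ exceeds the discrete variational derivative of $\tilde F$ by precisely those pieces.

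Next I would insert the discrete transport equation \eqref{EQ scheme1}. Writing $\nu:=\chi\mu_*^{n+\half}$ and using $\psi\chi=1$, that equation states $\psi(\tilde\phi^{n+1}-\tilde\phi^n)/\Delta t=D_h(\psi\bar M^{n+\half},\nu)-\half\{|D_x\psi|A_xh_3+\cdots\}$ at each node, so the pairing of $\mu_*^{n+\half}$ with $\tilde\phi^{n+1}-\tilde\phi^n$ equals $\Delta t$ times the pairing of $\nu$ with $D_h(\psi\bar M^{n+\half},\nu)$ minus the $h_3$-term. A second summation by parts on the $\nu$--$D_h$ pairing, whose $\partial\Omega$ boundary terms vanish by the discrete flux conditions $\bar M D\mu=0$ in \eqref{BC DIS} (i.e.\ $\tilde{\bJ}\cdot\bn=0$), produces $-[(D_x\nu,A_x(\psi\bar M^{n+\half})D_x\nu)+(D_y\nu,A_y(\psi\bar M^{n+\half})D_y\nu)]$, the bulk group of \eqref{DIS EDL}, while the remainder is the $h_3$-work group. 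Dividing $(\star)$ by $\Delta t$ and regrouping the half-integer-indexed sums edge by edge recovers the middle (boundary) group. Since $\psi\ge0$, $\bar M>0$ and $\overline{\Gamma^{-1}}\ge0$, the bulk and boundary groups are nonnegative; when $h_3=0$ the work term vanishes and $\tilde F^{n+1}\le\tilde F^n$ for every $\Delta t>0$, which is the claimed unconditional energy stability.

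I expect the main obstacle to be this final regrouping. Turning the cross terms $\sum_{i,j}(\tilde\phi^{n+1}-\tilde\phi^n)_{i,j}\,\half|D_x\psi_{i\pm\half,j}|A_x[\overline{\Gamma^{-1}}(\tilde\phi^{n+1}-\tilde\phi^n)/\Delta t]_{i\pm\half,j}$ into the exact perfect squares $\overline{\Gamma^{-1}}(A_x(\tilde\phi^{n+1}-\tilde\phi^n)/\Delta t)^2$ of \eqref{DIS EDL} requires that the face average $A_x$ distribute over the product of the coefficient $\overline{\Gamma^{-1}}$ and the increment, so that ``average of a product'' agrees with ``product of averages'' at each face; the same care is needed for the $\alpha$- and $h_2$-boundary terms and for the $y$-direction. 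Verifying this alignment, and confirming that every stencil touching $\partial\Omega$ genuinely drops out under \eqref{BC DIS}, is the delicate bookkeeping of the argument; the bulk summation-by-parts and the EQ manipulations are by comparison routine.
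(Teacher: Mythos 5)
Your proposal is correct and follows essentially the same route as the paper's proof: the paper tests \eqref{EQ scheme1} with $\mu_*^{n+\half}$ and \eqref{EQ scheme3} with $\frac{1}{\Delta t}\psi q^{n+\half}$, substitutes \eqref{EQ scheme2}, and uses summation by parts with \eqref{BC DIS} together with the difference-of-squares/EQ telescoping to land on \eqref{DIS EDL}, which is your computation read from the scheme toward the energy rather than from the energy toward the scheme. The face-averaging subtlety you flag (needing $A_x[\overline{\Gamma^{-1}}\,w]=\overline{\Gamma^{-1}}A_xw$ to form the perfect squares) is real but is passed over silently in the paper as well, so it is a point of care rather than a divergence in method.
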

	
	\begin{proof} Firstly, taking the inner products of \eqref{EQ scheme1} with $\mu_*^{n+{\frac{1}{2}}}$ and also \eqref{EQ scheme3} with $ \frac{1}{\Delta t}\psi q^{n+{\frac{1}{2}}}$, we get
		\begin{equation}
			\begin{split}
				&(\mu_*^{^{n+\frac{1}{2}}},\frac{\tilde{\phi}^{n+1}-\tilde{\phi}^n}{\Delta t}) = (\mu_*^{^{n+\frac{1}{2}}}, \chi D_x[A_x(\psi\bar{M}^{^{n+\frac{1}{2}}})D_x(\chi\mu_*^{^{n+\frac{1}{2}}})]) \\
				&+ (\mu_*^{^{n+\frac{1}{2}}}, \chi D_y[A_y(\psi\bar{M}^{^{n+\frac{1}{2}}})D_y(\chi\mu_*^{^{n+\frac{1}{2}}})])- \sum_{i = 1}^{N_x}\sum_{j = 1}^{N_y}\half \mu_{*,j}^{n+\half}\chi_{i,j} \left\{|D_x\psi_{i-\half,j}|h_{3,i-\half,j} \right.\\
				&\left. +|D_x\psi_{i+\half,j}| h_{3,i+\half,j}+|D_y\psi_{i,j-\half}|h_{3,i,j-\half} +|D_y\psi_{i,j+\half}|h_{3,i,j+\half}\right\},\\
				&(\psi q^{n+{\frac{1}{2}}},\frac{q^{n+1}-q^n}{\Delta t}) = (\psi q^{n+{\frac{1}{2}}}, \bar{g }^{^{n+\frac{1}{2}}}\frac{\tilde{\phi}^{n+1}-\tilde{\phi}^n}{\Delta t}).
			\end{split}
		\end{equation}
		Adding up the above two equations and substituting \eqref{EQ scheme2} into the sum, we obtain
		\begin{multline}
			(\psi q^{n+{\frac{1}{2}}},\frac{q^{n+1}-q^n}{\Delta t})+(- K D_h(\psi,\tilde{\phi}^{^{n+\frac{1}{2}}}),\frac{\tilde{\phi}^{n+1}-\tilde{\phi}^n}{\Delta t})+\sum_{i = 1}^{N_x}\sum_{j = 1}^{N_y}\half \left \{|D_x\psi_{i-\half,j}\right.\\
			A_x[\alpha(\tilde{\phi}^{n+\half}-h_{1})-h_2+\overline{\Gamma^{-1}}^{n+\half}\frac{\tilde{\phi}^{n+1}-\tilde{\phi}^n}{\Delta t}]_{i-\half,j} +|D_x\psi_{i+\half,j}|A_x[\alpha(\tilde{\phi}^{n+\half}-h_{1})-h_2\\
			+ \overline{\Gamma^{-1}}^{n+\half}\frac{\tilde{\phi}^{n+1}-\tilde{\phi}^n}{\Delta t}]_{i+\half,j}+ |D_y\psi_{i,j-\half}|A_y[\alpha(\tilde{\phi}^{n+\half}-h_{1})-h_2+\overline{\Gamma^{-1}}^{n+\half}\frac{\tilde{\phi}^{n+1}-\tilde{\phi}^n}{\Delta t}]_{i,j-\half}\\
			\left.+ |D_y\psi_{i,j+\half}|A_y[\alpha(\tilde{\phi}^{n+\half}-h_{1})- h_2+\overline{\Gamma^{-1}}^{n+\half}\frac{\tilde{\phi}^{n+1}-\tilde{\phi}^n}{\Delta t}]_{i,j+\half}\right\}\frac{\tilde{\phi}_{i,j}^{n+1}-\tilde{\phi}_{i,j}^n}{\Delta t} \\
			= (\mu_*^{^{n+\frac{1}{2}}}, \chi D_x[A_x(\psi\bar{M}^{^{n+\frac{1}{2}}})D_x(\chi\mu_*^{^{n+\frac{1}{2}}})]) + (\mu_*^{^{n+\frac{1}{2}}}, \chi D_y[A_y(\psi\bar{M}^{^{n+\frac{1}{2}}})D_y(\chi\mu_*^{^{n+\frac{1}{2}}})]) -\sum_{i = 1}^{N_x}\sum_{j = 1}^{N_y}\\
			\half \mu_{*,j}^{n+\half}\chi_{i,j}\left\{|D_x\psi_{i-\half,j}|h_{3,i-\half,j}
			+|D_x\psi_{i+\half,j}|h_{3,i+\half,j}+|D_y\psi_{i,j-\half}|h_{3,i,j-\half} +|D_y\psi_{i,j+\half}|h_{3,i,j+\half}\right\}.
		\end{multline}
		With the help of the boundary conditions \eqref{BC DIS}, we arrive at the following equations :
		\begin{small}
			\begin{multline} \label{eqn3}
				\sum_{i = 1}^{N_x}\sum_{j = 1}^{N_y}\frac{\psi_{i,j} |q_{i,j}^{n+1}|^2-\psi_{i,j} |q_{i,j}^n|^2}{\Delta t}+\sum_{i = 1}^{N_x}\sum_{j = 1}^{N_y} \half K \left[A_x \psi_{i+\half,j} \frac{(\frac{\tilde{\phi}_{i+1,j}^{n+1}-\tilde{\phi}_{i,j}^{n+1}}{\Delta x})^2-(\frac{\tilde{\phi}_{i+1,j}^{n}-\tilde{\phi}_{i,j}^{n}}{\Delta x})^2}{\Delta t} \right. \\
				\left.+ A_y \psi_{i,j+\half} \frac{(\frac{\tilde{\phi}_{i,j+1}^{n+1}-\tilde{\phi}_{i,j}^{n+1}}{\Delta y})^2-(\frac{\tilde{\phi}_{i,j+1}^{n+1}-\tilde{\phi}_{i,j}^{n+1}}{\Delta y})^2 }{\Delta t}\right]+ \sum_{i = 1}^{N_x}\sum_{j=1}^{N_y} \half \left\{|D_x\psi_{i-\half,j}|[\half\alpha\frac{(A_x\tilde{\phi}^{n+1} - h_{1})^2 - (A_x\tilde{\phi}^{n} - h_{1})^2 }{\Delta t} \right.\\
				- \frac{h_2A_x(\tilde{\phi}^{n+1}-\tilde{\phi}^n)}{\Delta t} +\overline{\Gamma^{-1}}^{n+\half}(A_x\frac{\tilde{\phi}^{n+1} - \tilde{\phi}^n}{\Delta t})^2]_{i-\half,j} + |D_x\psi_{i+\half,j}|[\half\alpha\frac{(A_x\tilde{\phi}^{n+1} - h_{1})^2 - (A_x\tilde{\phi}^{n} - h_{1})^2 }{\Delta t} \\
				- \frac{h_2A_x(\tilde{\phi}^{n+1}-\tilde{\phi}^n)}{\Delta t} +\overline{\Gamma^{-1}}^{n+\half}(A_x\frac{\tilde{\phi}^{n+1} - \tilde{\phi}^n}{\Delta t})^2]_{i+\half,j} + |D_x\psi_{i,j-\half}|[\half\alpha\frac{(A_y\tilde{\phi}^{n+1} - h_{1})^2 - (A_y\tilde{\phi}^{n} - h_{1})^2 }{\Delta t} \\
				- \frac{h_2A_y(\tilde{\phi}^{n+1}-\tilde{\phi}^n)}{\Delta t} +\overline{\Gamma^{-1}}^{n+\half}(A_y\frac{\tilde{\phi}^{n+1} - \tilde{\phi}^n}{\Delta t})^2]_{i,j-\half} + |D_y\psi_{i,j+\half}|[\half\alpha\frac{(A_y\tilde{\phi}^{n+1} - h_{1})^2 - (A_y\tilde{\phi}^{n} - h_{1})^2 }{\Delta t} \\
				\left.- \frac{h_2A_y(\tilde{\phi}^{n+1}-\tilde{\phi}^n)}{\Delta t} +\overline{\Gamma^{-1}}^{n+\half}(A_y\frac{\tilde{\phi}^{n+1} - \tilde{\phi}^n}{\Delta t})^2]_{i,j+\half}\right\} = -\left[(D_x(\chi\mu_*^{n+\half}),A_x(\psi\bar{M}^{n+\half})D_x(\chi\mu_*^{n+\half})) \right.\\
				\left.+ (D_y(\chi\mu_*^{n+\half}),A_y(\psi\bar{M}^{n+\half})D_y(\chi\mu_*^{n+\half}))\right] - \sum_{i = 1}^{N_x}\sum_{j = 1}^{N_y}\half \mu_{*,j}^{n+\half}\chi_{i,j}
				\left\{|D_x\psi_{i-\half,j}|h_{3,i-\half,j} \right. \\
				\left.+|D_x\psi_{i+\half,j}|h_{3,i+\half,j}+|D_y\psi_{i,j-\half}|h_{3,i,j-\half} +|D_y\psi_{i,j+\half}|h_{3,i,j+\half}\right\}.
			\end{multline}
		\end{small}
		Defining
		\begin{multline}
			\tilde{F}^n =\sum_{i = 1}^{N_x}\sum_{j=1}^{N_y}\left\{\psi_{i,j}|q_{i,j}^n|^2 + \half K \left[A_x \psi_{i+\half,j} (\frac{\tilde{\phi}_{i+1,j}^{n}-\tilde{\phi}_{i,j}^{n}}{\Delta x})^2 + A_y \psi_{i,j+\half} (\frac{\tilde{\phi}_{i,j+1}^{n+1}-\tilde{\phi}_{i,j}^{n+1}}{\Delta y})^2\right]\right. \\
			+ \half \left\{|D_x\psi_{i-\half,j}|[\half\alpha (A_x\tilde{\phi}^{n} - h_{1})^2 - h_{2}A_x\tilde{\phi}^n]_{i-\half,j} +|D_x\psi_{i+\half,j}|[\half\alpha (A_x\tilde{\phi}^{n} - h_{1})^2 - h_{2}A_x\tilde{\phi}^n]_{i+\half,j}\right.\\
			\left.\left. + |D_y\psi_{i,j-\half}|[\half\alpha (A_y\tilde{\phi}^{n} - h_{1})^2 - h_{2}A_y\tilde{\phi}^n]_{i,j-\half} +|D_y\psi_{i,j+\half}|[\half\alpha (A_y\tilde{\phi}^{n} - h_{1})^2 - h_{2}A_y\tilde{\phi}^n]_{i,j+\half}\right\}\right\}
		\end{multline}
		and substituting $\tilde{F}^n$ into \eqref{eqn3}, we obtain
		\begin{small}
			\begin{multline}
				\frac{\tilde{F}^{n+1}-\tilde{F}^n}{\Delta t} = - \left[(D_x(\chi\mu_*^{n+\half}), A_x(\psi\bar{M}^{n+\half})D_x(\chi\mu_*^{n+\half})) + (D_y(\chi\mu_*^{n+\half}),A_y(\psi\bar{M}^{n+\half})D_y(\chi\mu_*^{n+\half}))\right] \\
				- \sum_{i = 1}^{N_x}\sum_{j=1}^{N_y} \half \left\{|D_x\psi_{i-\half,j}|\left[\overline{\Gamma^{-1}}^{n+\half}(A_x\frac{\tilde{\phi}^{n+1} - \tilde{\phi}^n}{\Delta t})^2\right]_{i-\half,j}
				+|D_x\psi_{i+\half,j}|\left[\overline{\Gamma^{-1}}^{n+\half}(A_x\frac{\tilde{\phi}^{n+1} - \tilde{\phi}^n}{\Delta t})^2\right]_{i+\half,j}\right.\\
				\left.+|D_y\psi_{i,j-\half}|\left[\overline{\Gamma^{-1}}^{n+\half}(A_y\frac{\tilde{\phi}^{n+1} - \tilde{\phi}^n}{\Delta t})^2\right]_{i,j-\half} +|D_y\psi_{i,j+\half}|\left[\overline{\Gamma^{-1}}^{n+\half}(A_y\frac{\tilde{\phi}^{n+1} - \tilde{\phi}^n}{\Delta t})^2\right]_{i,j+\half}\right\} -\sum_{i = 1}^{N_x}\sum_{j = 1}^{N_y} \\
				\half \mu_{*,j}^{n+\half}\chi_{i,j} \left\{|D_x\psi_{i-\half,j}| h_{3,i-\half,j}+|D_x\psi_{i+\half,j}| h_{3,i+\half,j} +|D_y\psi_{i,j-\half}| h_{3,i,j-\half} +|D_y\psi_{i,j+\half}| h_{3,i,j+\half}\right\}.
			\end{multline}
		\end{small}
		which is the discrete energy law \eqref{DIS EDL}.
	\end{proof}
	
	\section{Results and discussion}
	In this section, we simulate the OPBDE Cahn-Hilliard model \eqref{OPBDE CH} and make comparisons with the results from the original Cahn-Hilliard model in $\Omega_1$ and also those from the extended Cahn-Hilliard model using the DDM method (named the DDM Cahn-Hilliard model) in $\Omega$ \cite{Li2009SolvingPDESComplex}. Two examples are presented below: the first shows the coarsening dynamics, and the second shows the dynamics of droplet spreading. 
		
	In the actual computation, although $\psi_{\varepsilon}$ remains positive throughout the entire domain, an excessively large $\chi_{\varepsilon} = \frac{1}{\psi_{\varepsilon}}$ in $\Omega\backslash\Omega_1$ can render the computation numerically unstable. Therefore, we adopt the following form: 
	\begin{equation}
		\chi_{\varepsilon} = \frac{1}{\psi_{\varepsilon} + (1-\psi_{\varepsilon})10^{-6}}.
	\end{equation}
	This modification primarily affects volume conservation in the computation as the truely conserved volume is $\int_{\Omega}\frac{1}{\chi_{\varepsilon}}\tilde{\phi}d\bx$. However, we still present $\int_{\Omega}\psi_{\varepsilon}\tilde{\phi}d\bx$ in numerical results because it is the physically meaningful quantity.

	\subsection{Coarsening dynamics}
	The simulation is to show the coarsening dynamics governed by the Cahn-Hilliard model with the homogeneous Neumann condition $\bn_1 \cdot K\nabla \phi = 0$ and the no flux boundary condition $\bn_1 \cdot M\nabla \mu = 0$ applied at $\partial \Omega_1$ in the original model. We use $f(\phi) = \frac{1}{4}(\phi^2-1)^2$, $A = 0$, $K = 10^{-4}$, $M = 0.01$, $\Delta x = \Delta y = \frac{1}{128}$, and $\Delta t = 10^{-5}$. We simulate the extended model using $\varepsilon = 10^{-2}$ and $2\times10^{-3}$.
	
	We first simulate the coarsening dynamics of the Cahn-Hilliard model in the original domain $\Omega_1 = [-0.5,0.5]\times [-0.5,0.5]$. In the extended domain $\Omega = [-0.625,0.625]\times [-0.625,0.625]$, we carry out simulations using our OPBDE Cahn-Hilliard model \eqref{OPBDE CH} and the DDM Cahn-Hilliard model. For the original model, the initial value for $\phi$ is given by
	\begin{equation}
		\phi_{0} = 0.001 \times rand(x,y),
	\end{equation}
	where $rand(x,y)$ generates random numbers in $[-1,1]$. For the two extended models, the initial value for $\tilde{\phi}$ is
    \begin{equation}
        \tilde{\phi}_0 = \psi \times
        \begin{cases}
          \phi_0, & \mbox{if $(x,y) \in \Omega_1$},\\
          0, & \mbox{otherwise}.
        \end{cases}
    \end{equation}
	
	\begin{figure}[H]
		\centering
		\subfigure[]{
			\includegraphics[width = 1.5in]{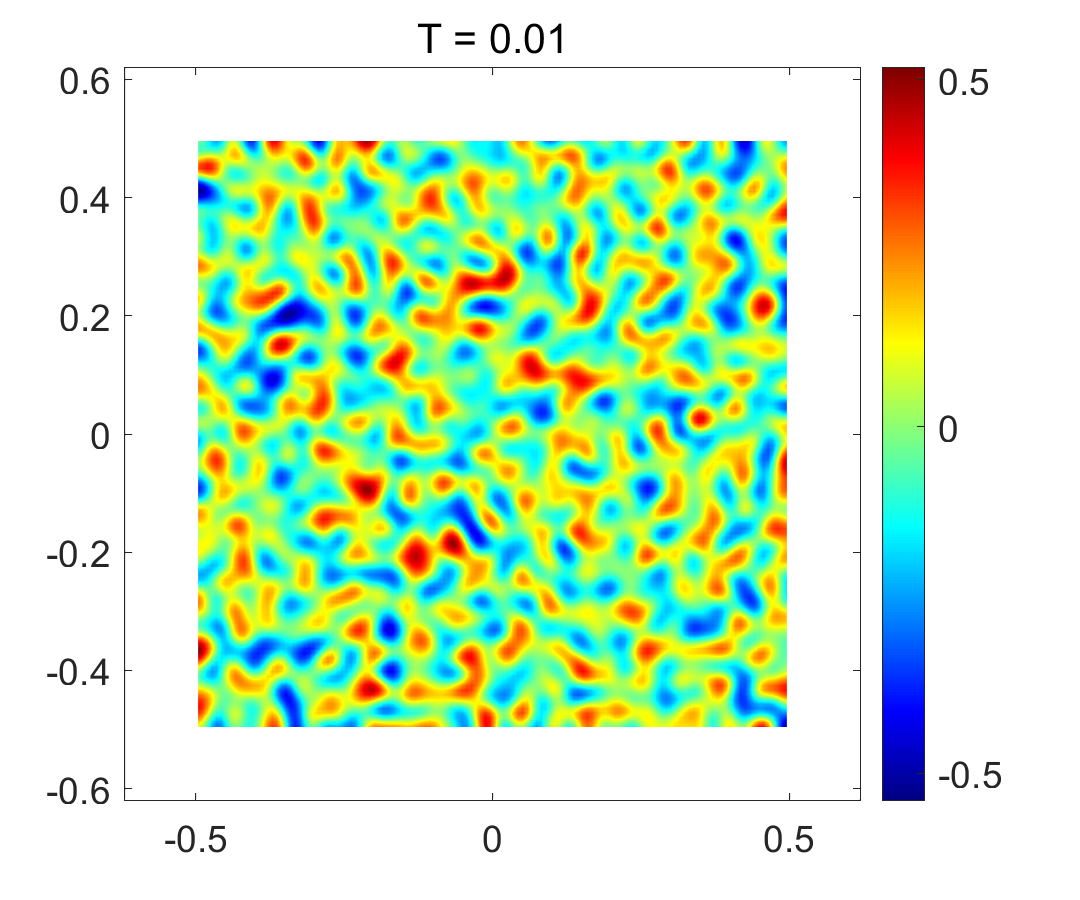}
			\includegraphics[width = 1.5in]{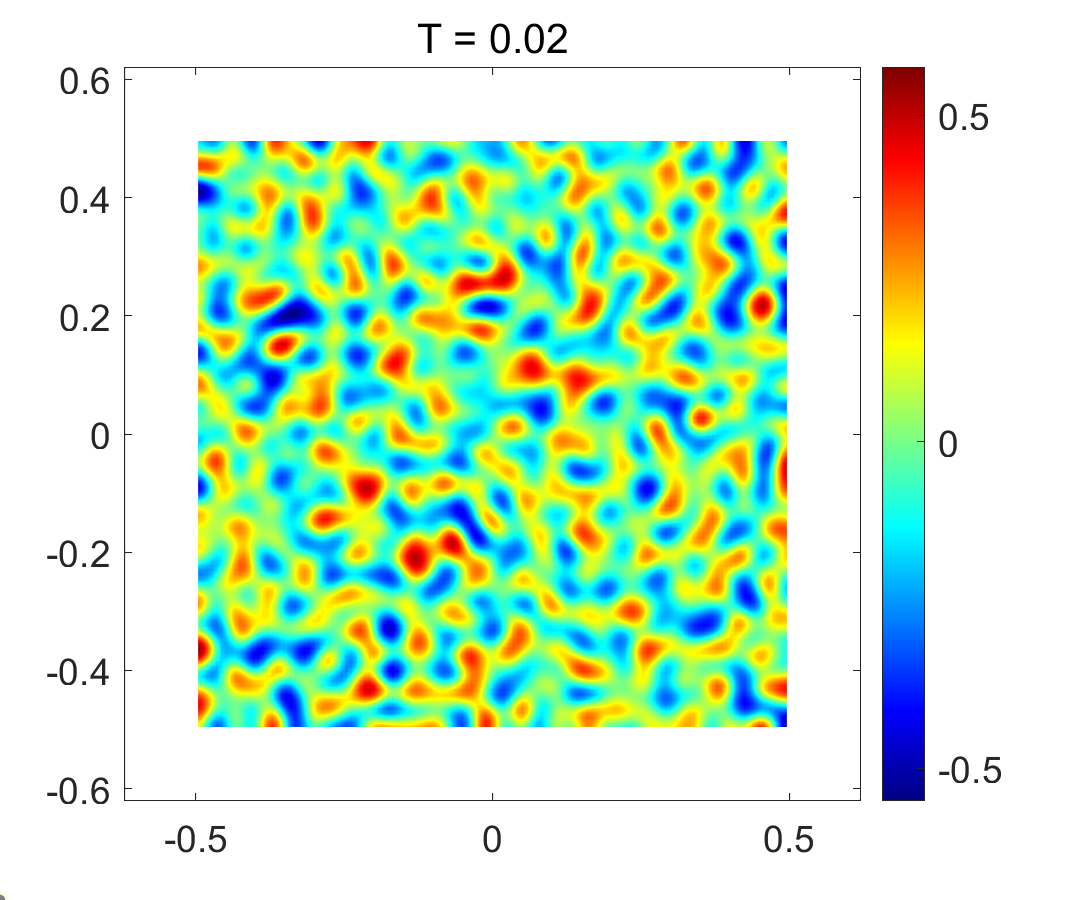}
			\includegraphics[width = 1.5in]{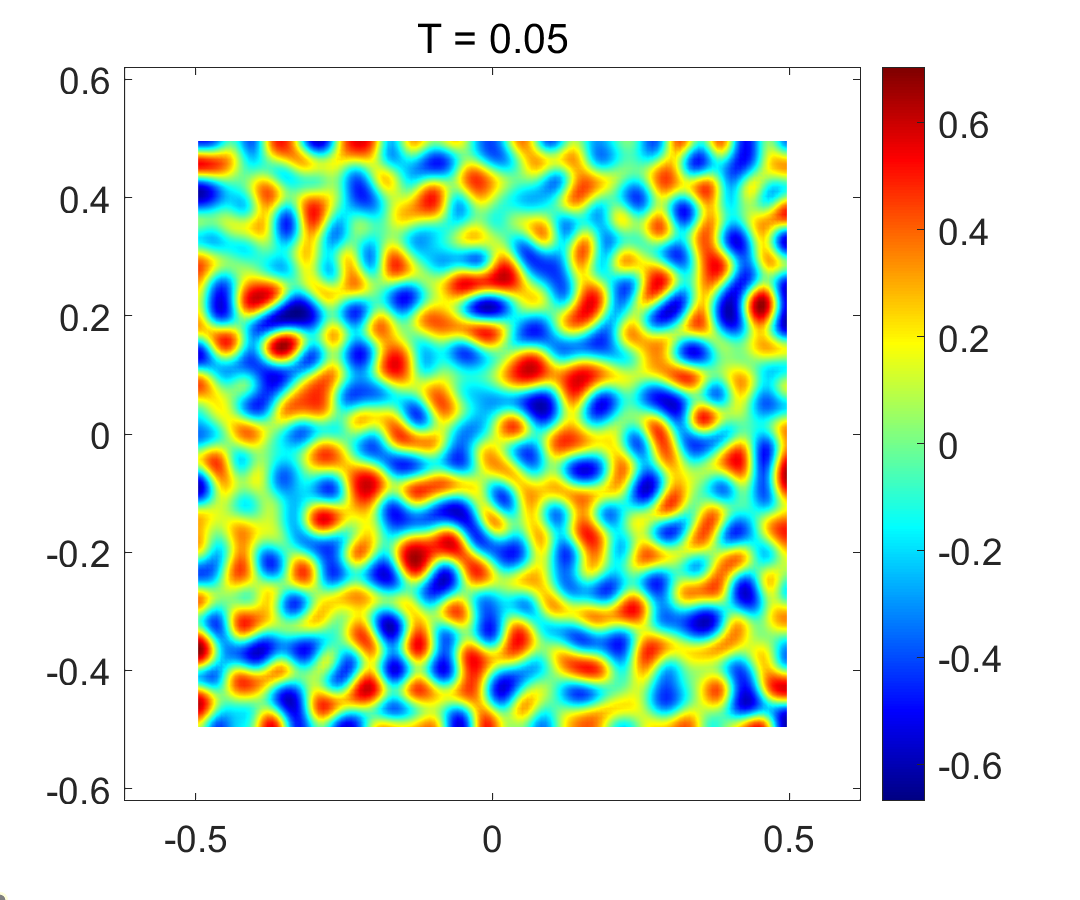}
			\includegraphics[width = 1.5in]{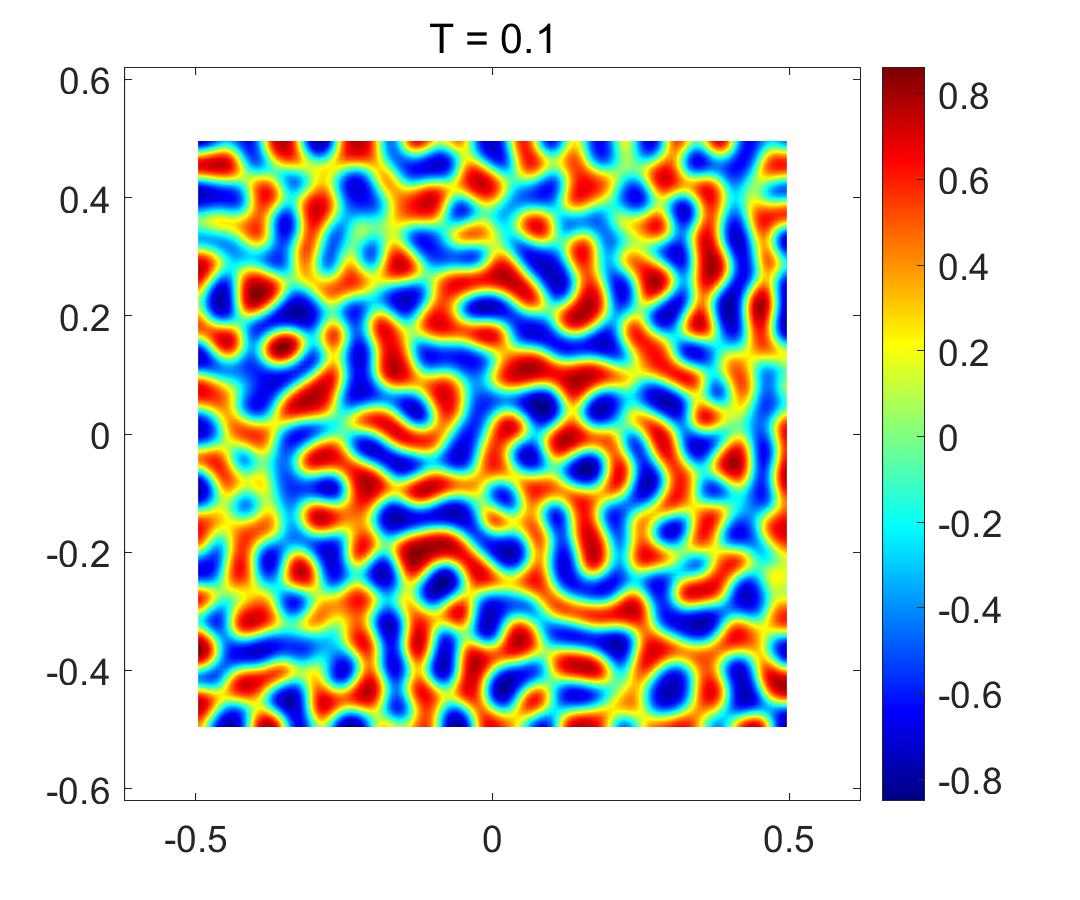}
		}
		\subfigure[]{
			\includegraphics[width = 1.5in]{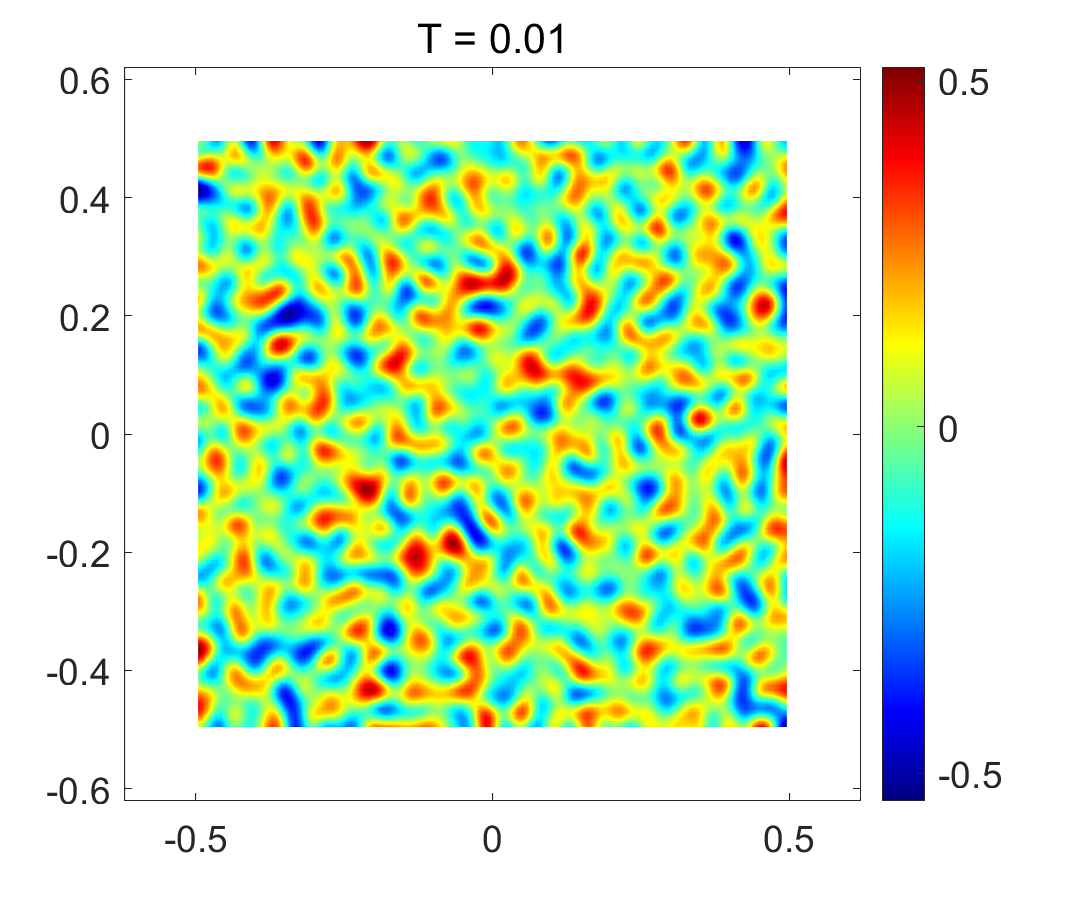}
			\includegraphics[width = 1.5in]{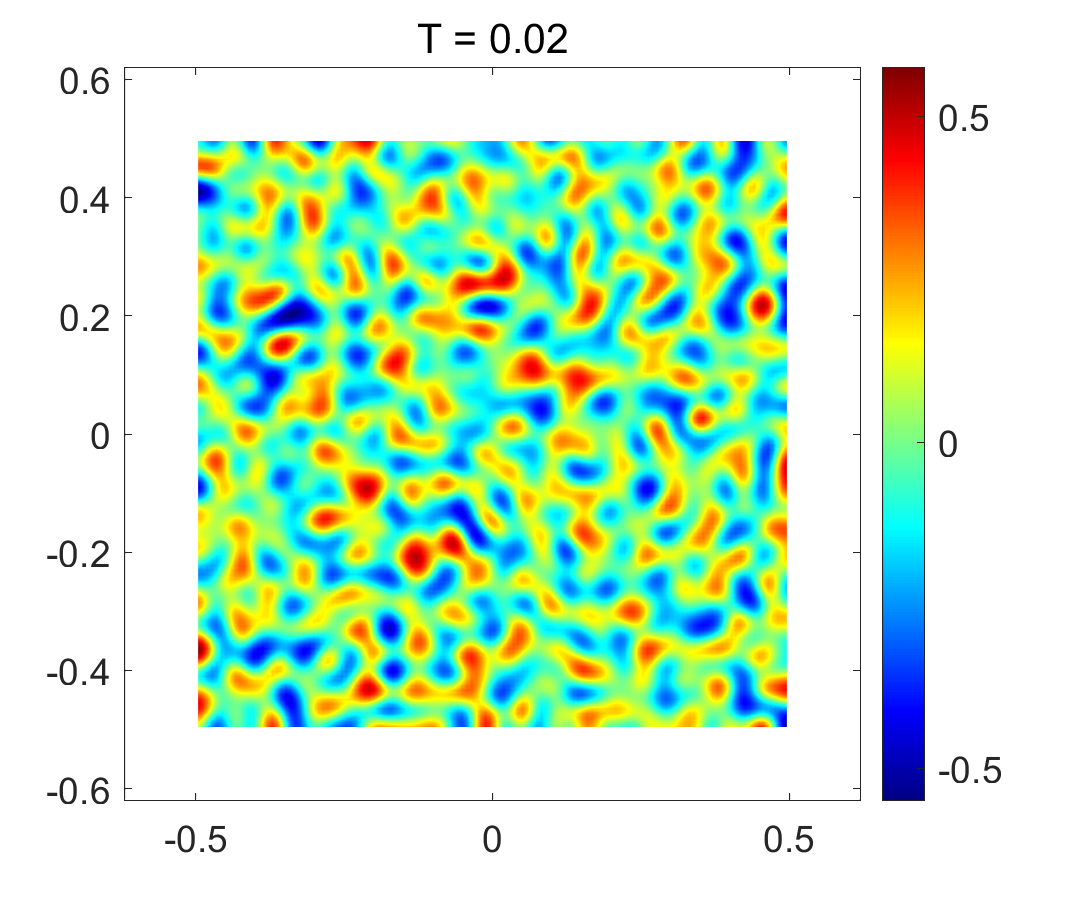}
			\includegraphics[width = 1.5in]{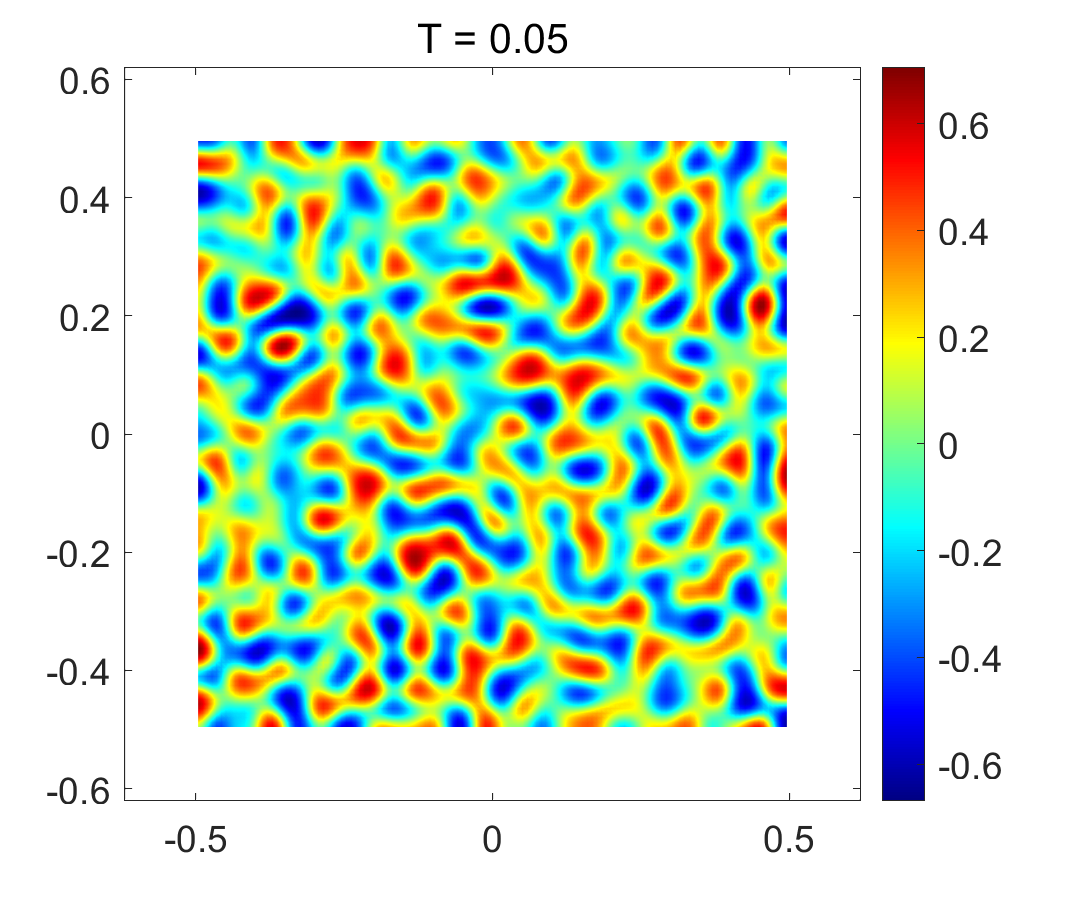}
			\includegraphics[width = 1.5in]{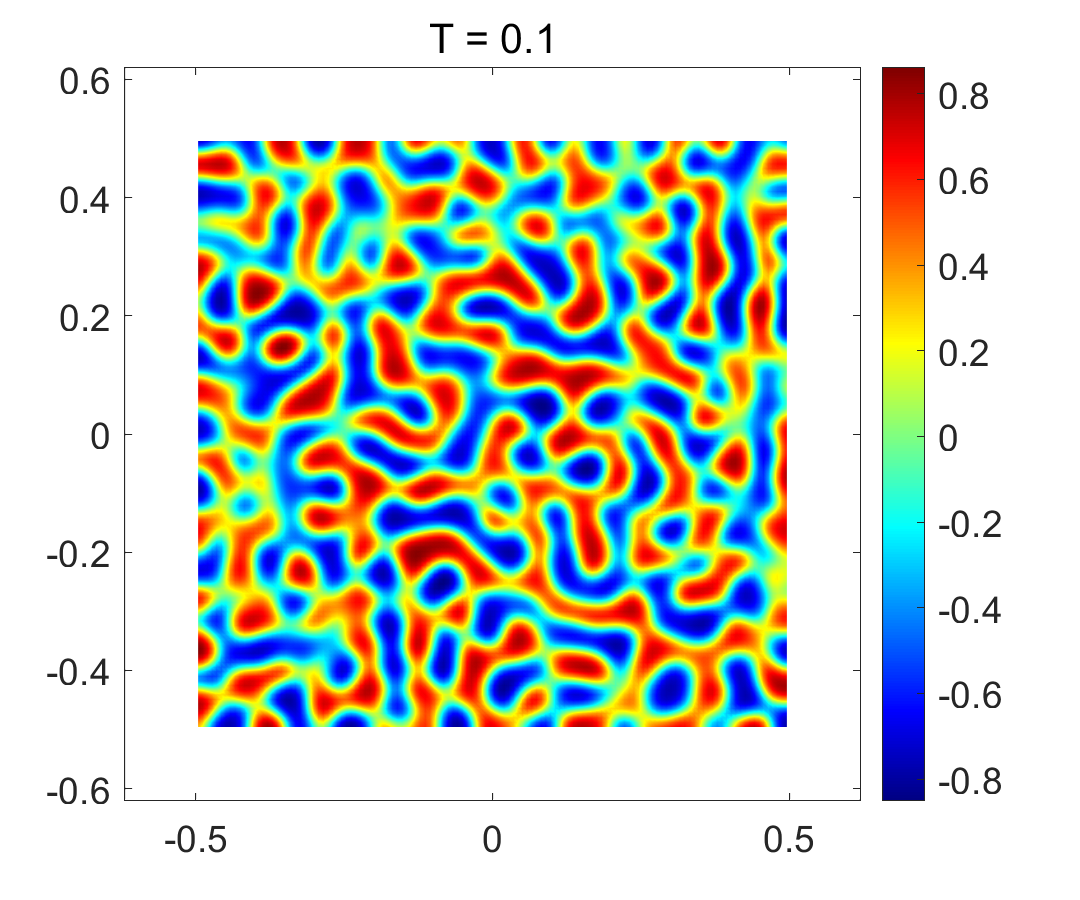}
		}
		\subfigure[]{
			\includegraphics[width = 1.5in]{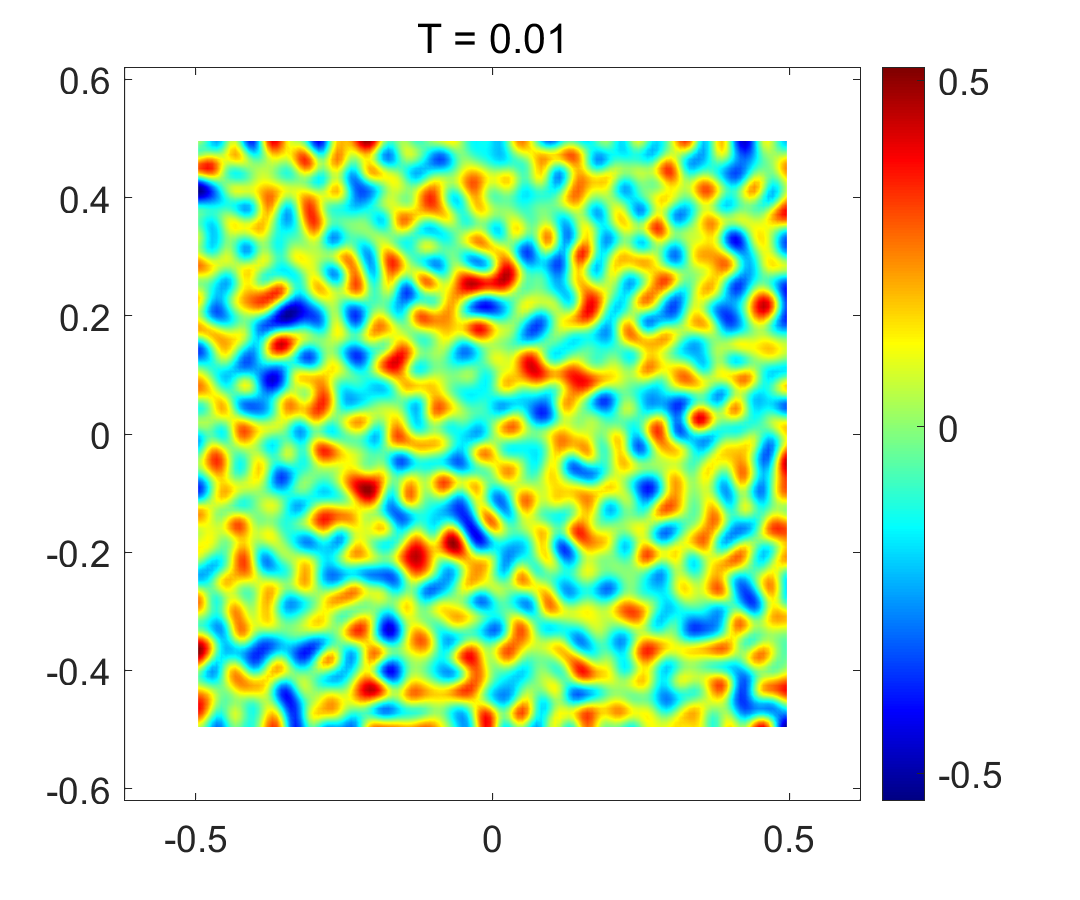}
			\includegraphics[width = 1.5in]{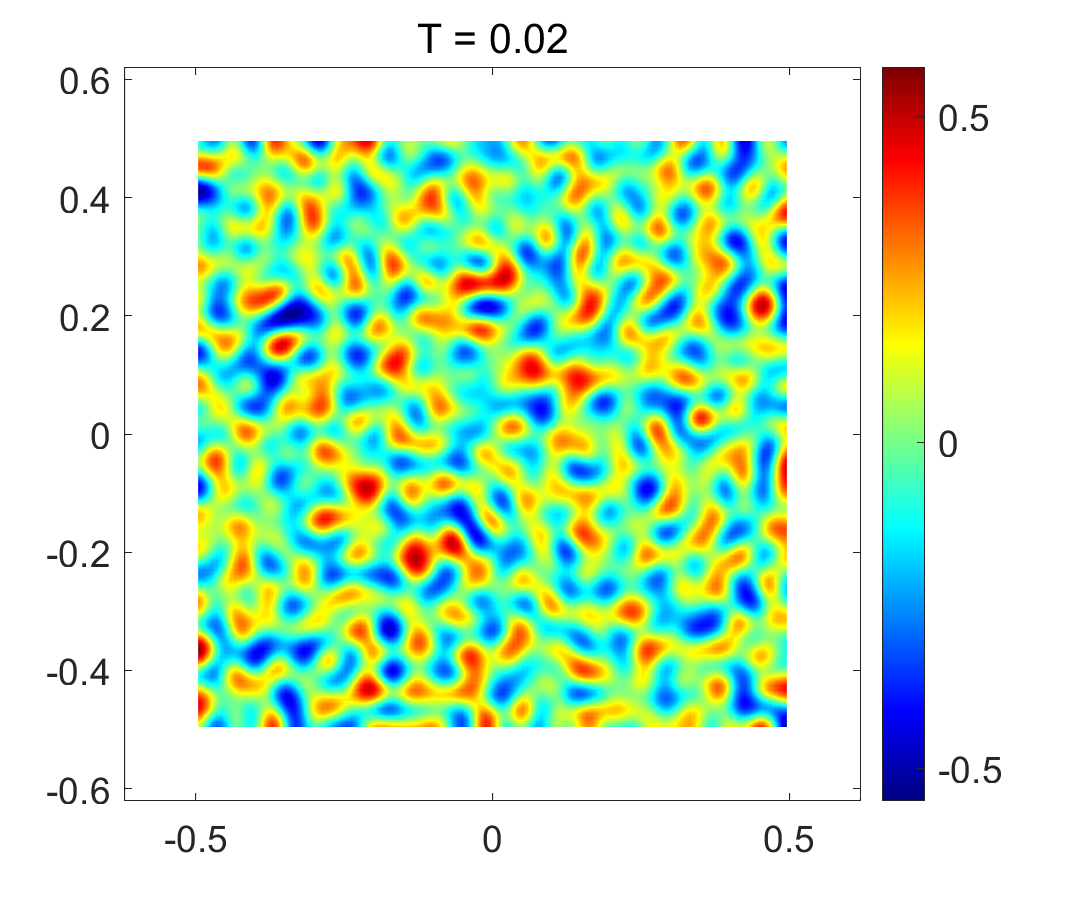}
			\includegraphics[width = 1.5in]{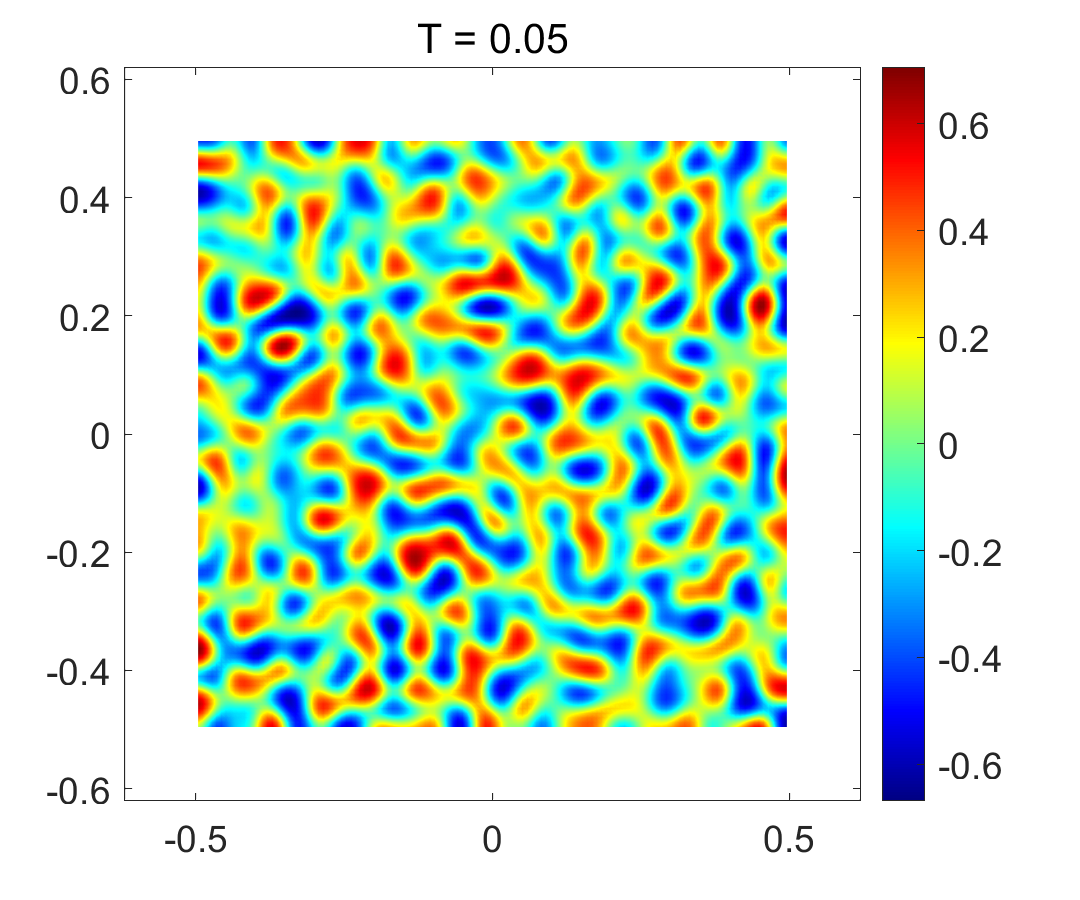}
			\includegraphics[width = 1.5in]{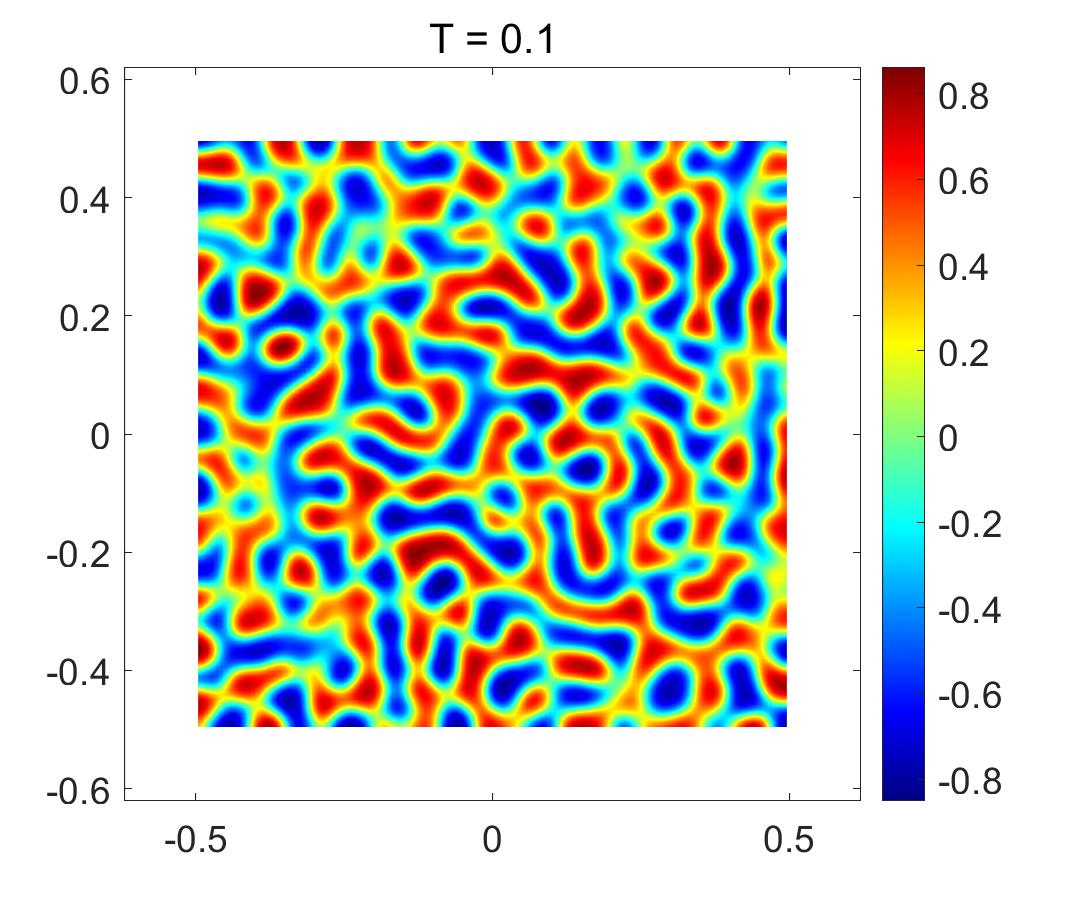}
		}
		\subfigure[]{
			\includegraphics[width = 1.5in]{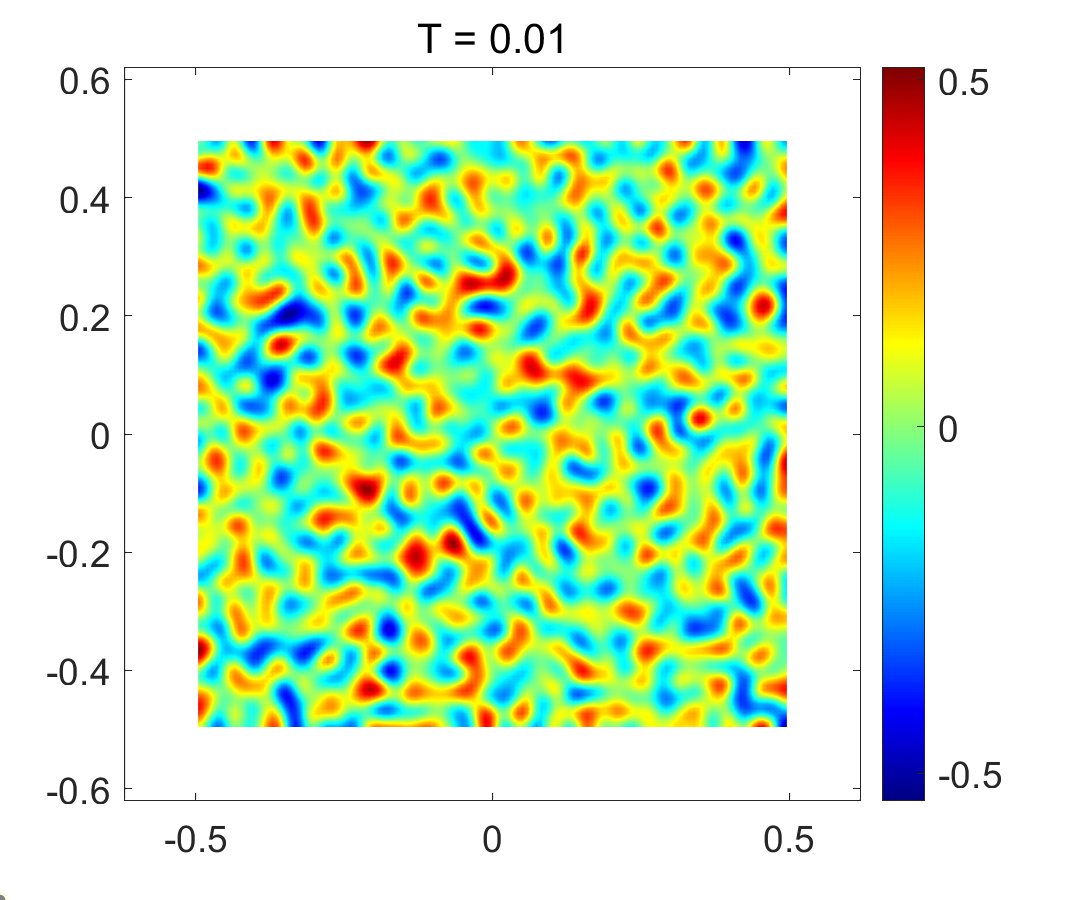}
			\includegraphics[width = 1.5in]{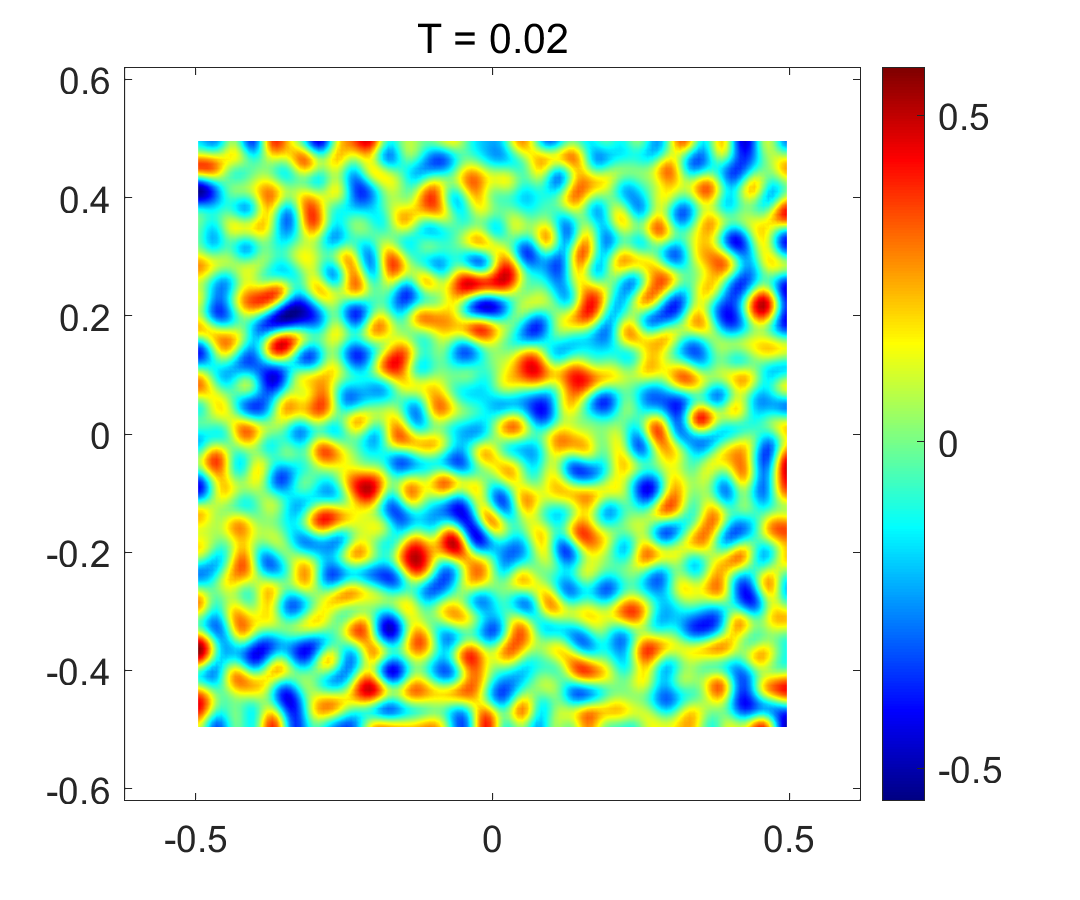}
			\includegraphics[width = 1.5in]{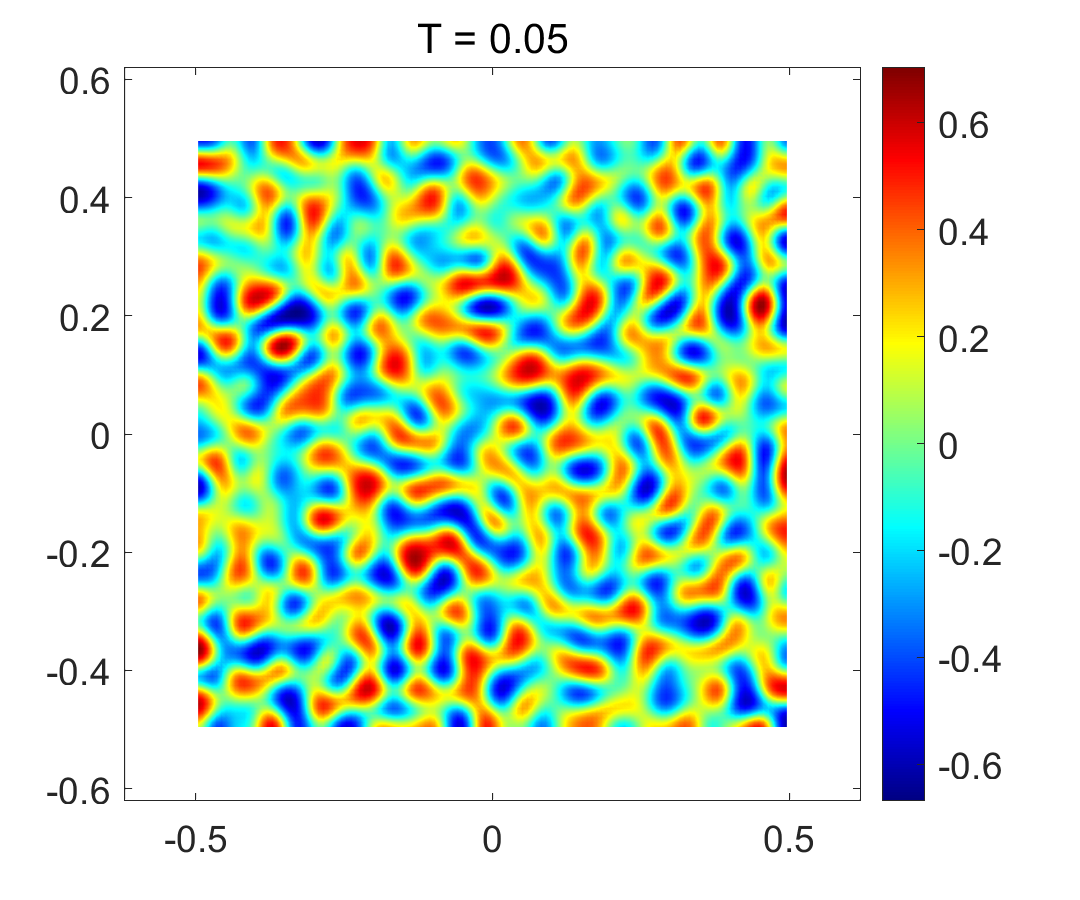}
			\includegraphics[width = 1.5in]{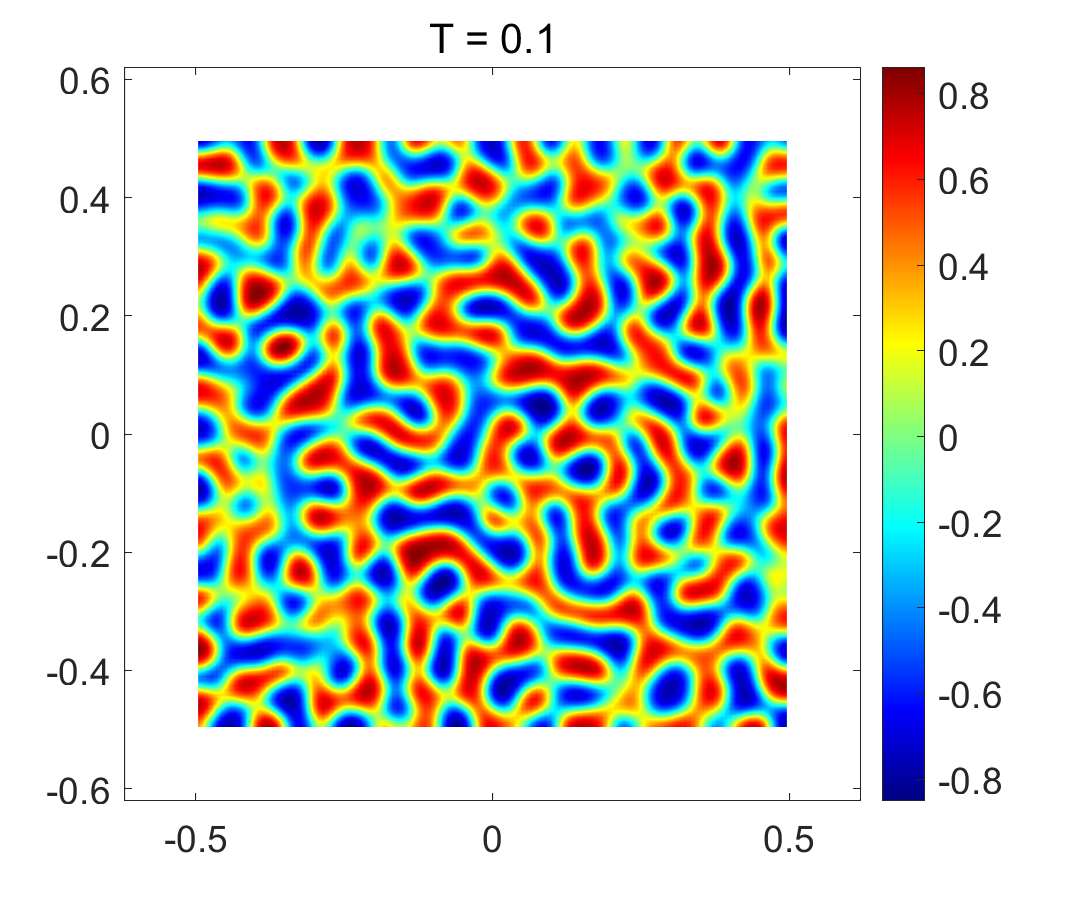}
		}
		\subfigure[]{
			\includegraphics[width = 1.5in]{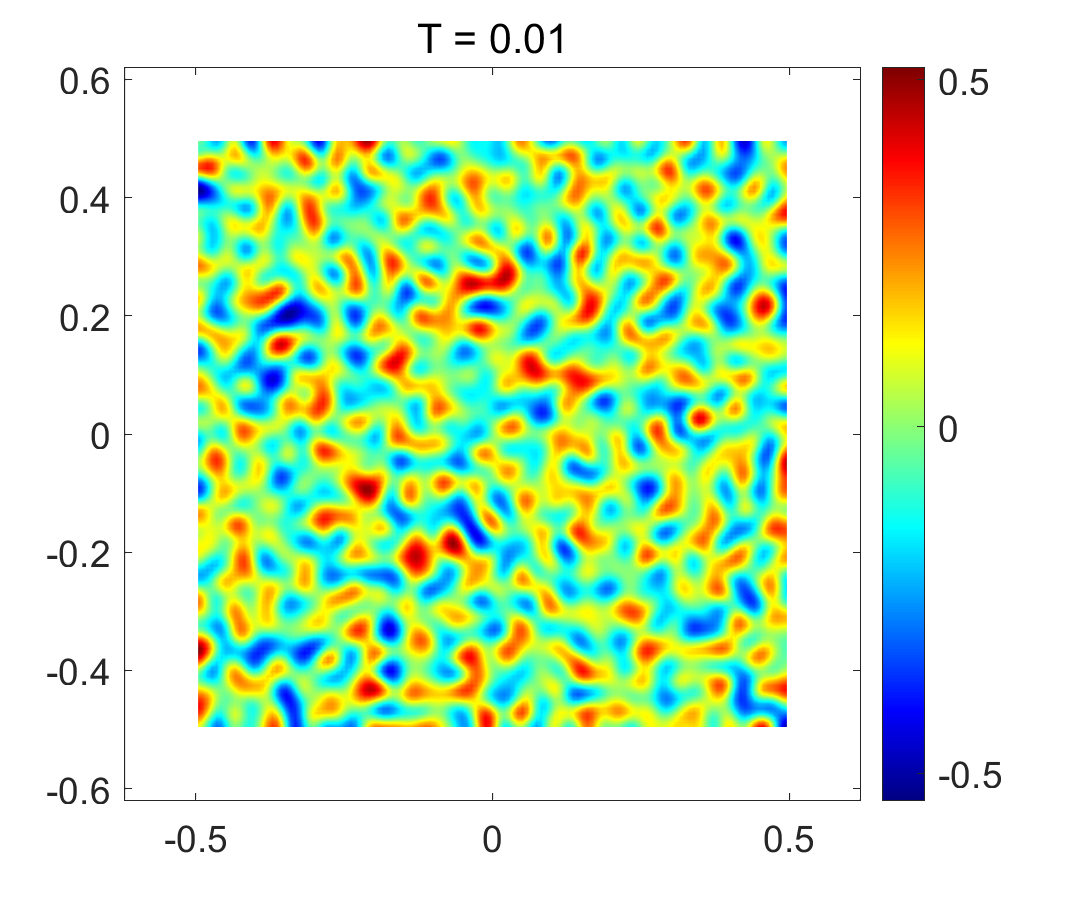}
			\includegraphics[width = 1.5in]{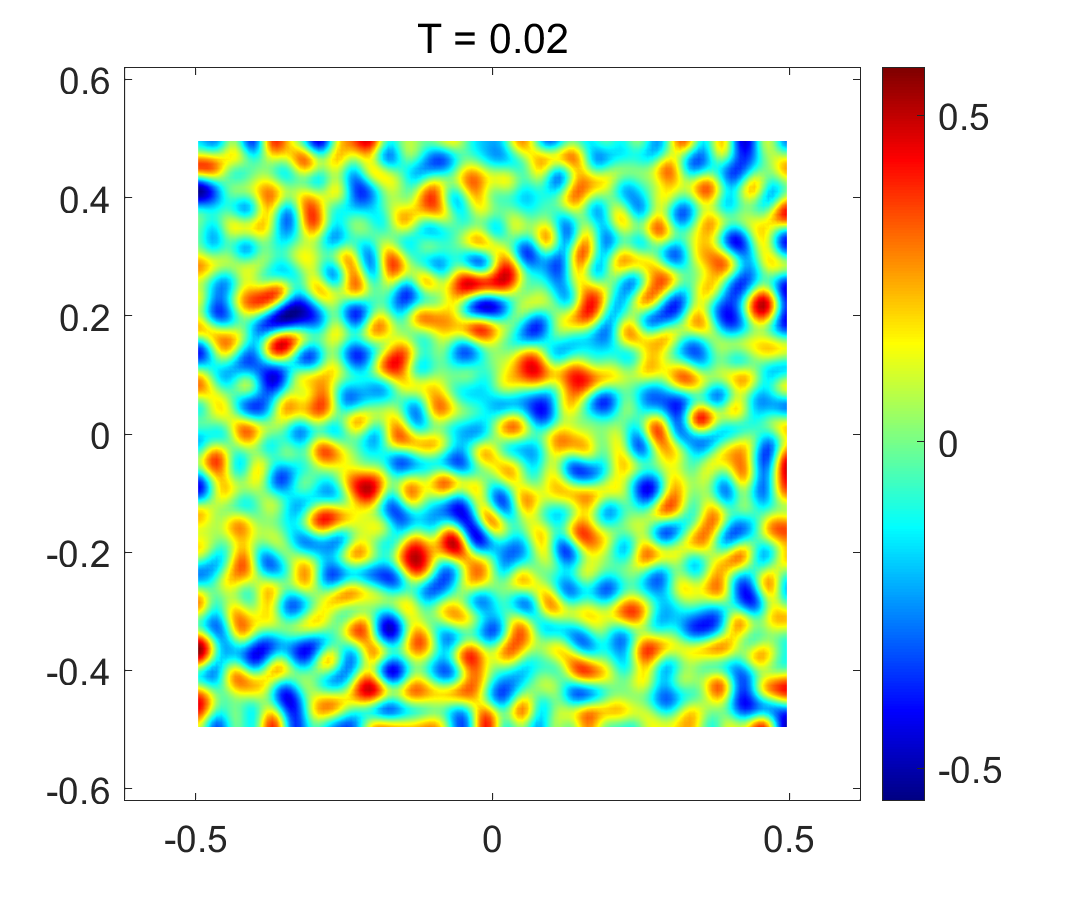}
			\includegraphics[width = 1.5in]{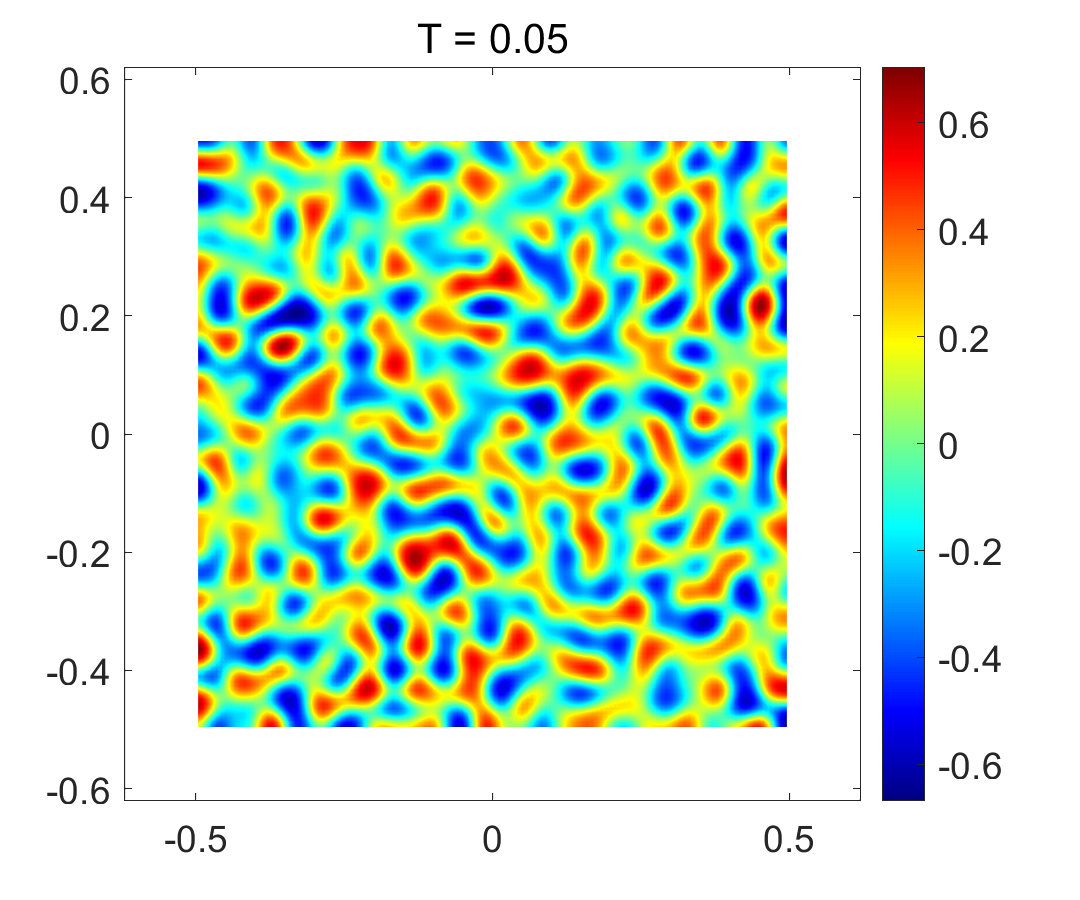}
			\includegraphics[width = 1.5in]{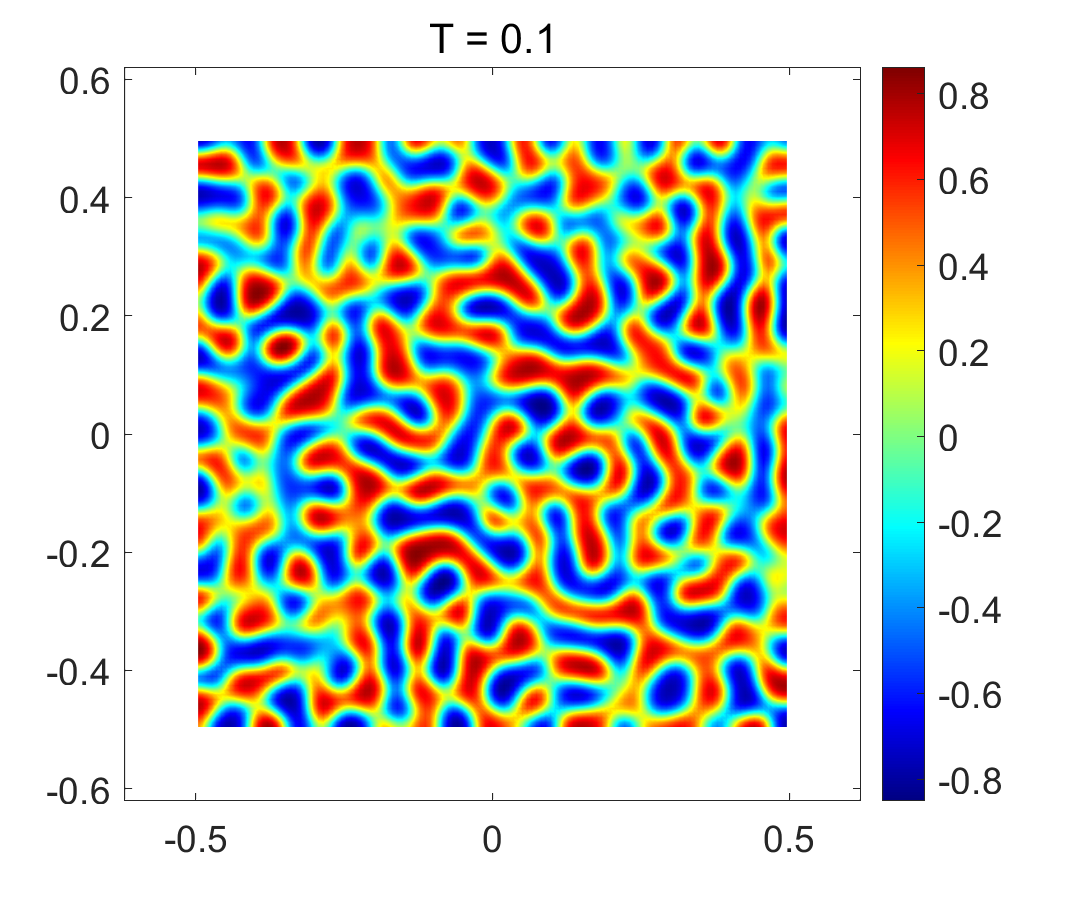}
		}
		\caption{Coarsening dynamics simulated using the Cahn-Hilliard model, the OPBDE Cahn-Hilliard model, and the DDM Cahn-Hilliard model, respectively. Profiles of $\phi$ and $\tilde{\phi}$ are shown at time instants $T = 0.01$, $0.02$, $0.05$, $0.1$. (a) The Cahn-Hilliard model results for $\phi$ simulated in $\Omega_1$; (b) The OPBDE Cahn-Hilliard model results for $\tilde{\phi}$ simulated in $\Omega$ with $\varepsilon = 10^{-2}$; (c) The DDM Cahn-Hilliard model results for $\tilde{\phi}$ simulated in $\Omega$ with $\varepsilon = 10^{-2}$; (d) The OPBDE Cahn-Hilliard model results for $\tilde{\phi}$ simulated in $\Omega$ with $\varepsilon = 2\times 10^{-3}$; (e) The DDM Cahn-Hilliard model results for $\tilde{\phi}$ simulated in $\Omega$ with $\varepsilon = 2\times10^{-3}$.}
		\label{Coarsening1}
	\end{figure}
	
	Figure \ref{Coarsening1} shows the coarsening dynamics produced from the three Cahn-Hilliard type models. Comparison among the solution profiles shows that they are almost identical.
	
	\begin{small}
		\begin{table}[H]
			\centering
			\setlength{\tabcolsep}{5pt}
			\begin{small}
				\begin{tabular}{ccccc}
					\toprule[1.5pt]
					& OPBDE Cahn-Hilliard & DDM Cahn-Hilliard & OPBDE Cahn-Hilliard & DDM Cahn-Hilliard \\
					&$\varepsilon = 10^{-2}$&$\varepsilon = 10^{-2}$&$\varepsilon = 2\times10^{-3}$&$\varepsilon = 2\times10^{-3}$\\
					\midrule[1pt]
					T = 0.01 & 0.0012 & 0.0012 & $2.9265\times 10^{-7}$ & $3.0149\times 10^{-7}$ \\
					T = 0.02 & 0.0013 & 0.0013 & $3.2299\times 10^{-7}$ & $3.3981\times 10^{-7}$ \\
					T = 0.05 & 0.0017 & 0.0017 & $4.6023\times 10^{-7}$ & $4.9291\times 10^{-7}$ \\
					T = 0.1  & 0.0027 & 0.0027 & $7.1844\times 10^{-7}$ & $7.6415\times 10^{-7}$ \\
					\bottomrule[1.5pt]
				\end{tabular}
			\end{small}
			\caption{Errors in $L^2$ norm for the two extended models versus the original Cahn-Hilliard model at $T = 0.01$, $0.02$, $0.05$, $0.1$ for interface width $\varepsilon = 10^{-2}$ and $2\times10^{-3}$, respectively.}
			\label{Error1}
		\end{table}
	\end{small}
	The table \ref{Error1} presents the $L^2$ error between the two extended models and the original Cahn-Hilliard model. It is observed that the error decreases with the decreasing width parameter $\varepsilon$. For $\varepsilon = 2\times10^{-3}$, the error between the OPBDE Cahn-Hilliard model and the original Cahn-Hilliard model is on the order of $\cO (10^{-7})$, which is actually smaller than the order of truncation error $\cO(\Delta x^2) \approx \cO(10^{-4})$ out of the discrete model. This demonstrates that the OPBDE Cahn-Hilliard performs well.
	
	\begin{figure}[H]
		\centering
		\subfigure[]{\includegraphics[width = 0.3 \textwidth]{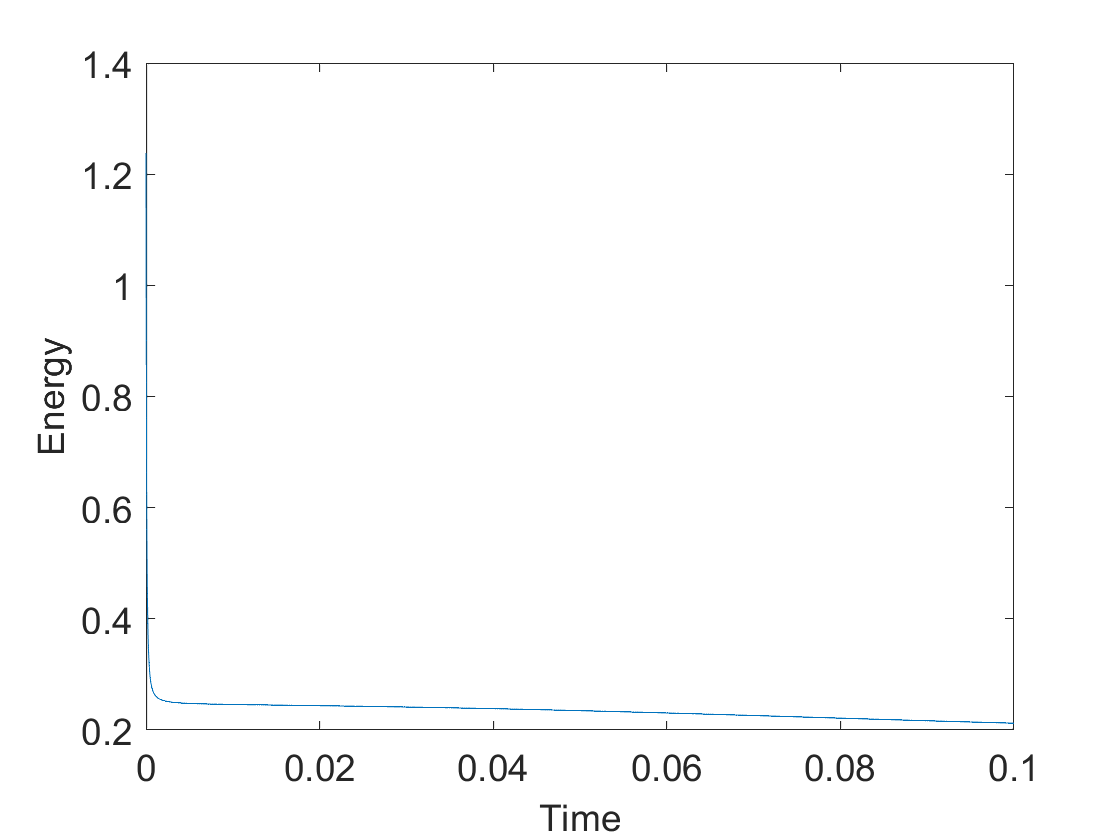}}
		\subfigure[]{\includegraphics[width = 0.3 \textwidth]{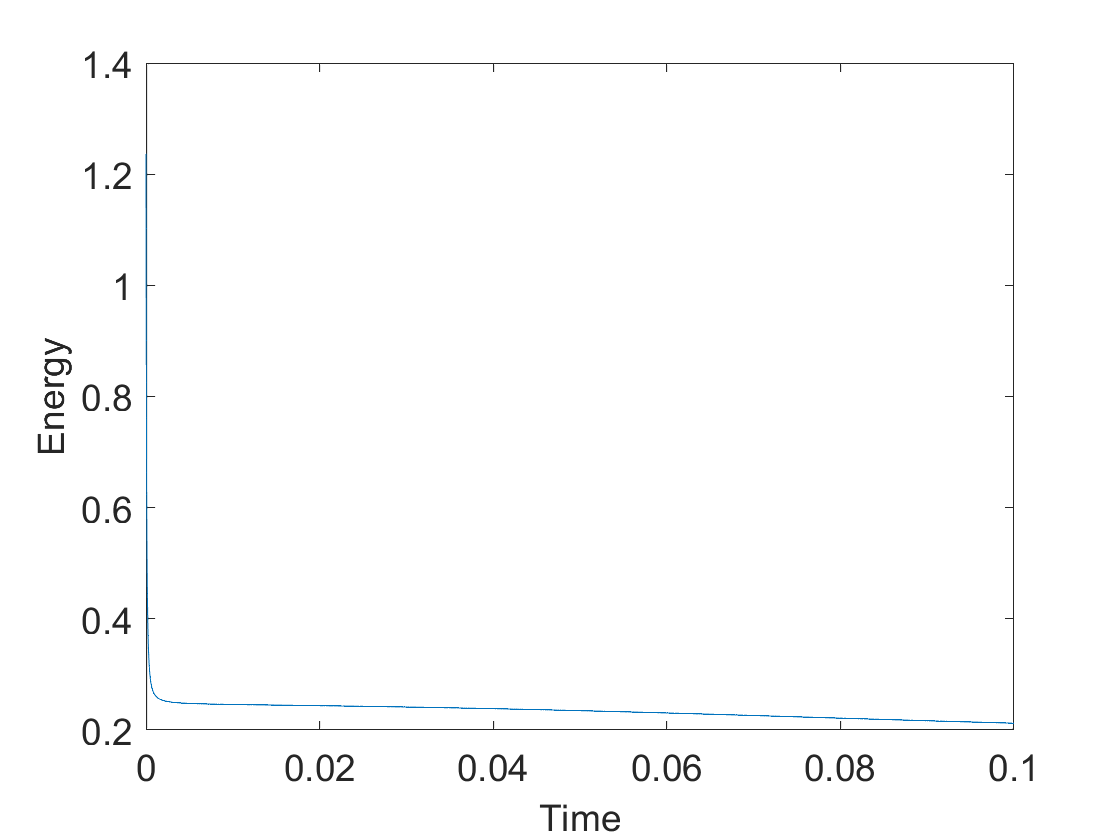}}
		\subfigure[]{\includegraphics[width = 0.3 \textwidth]{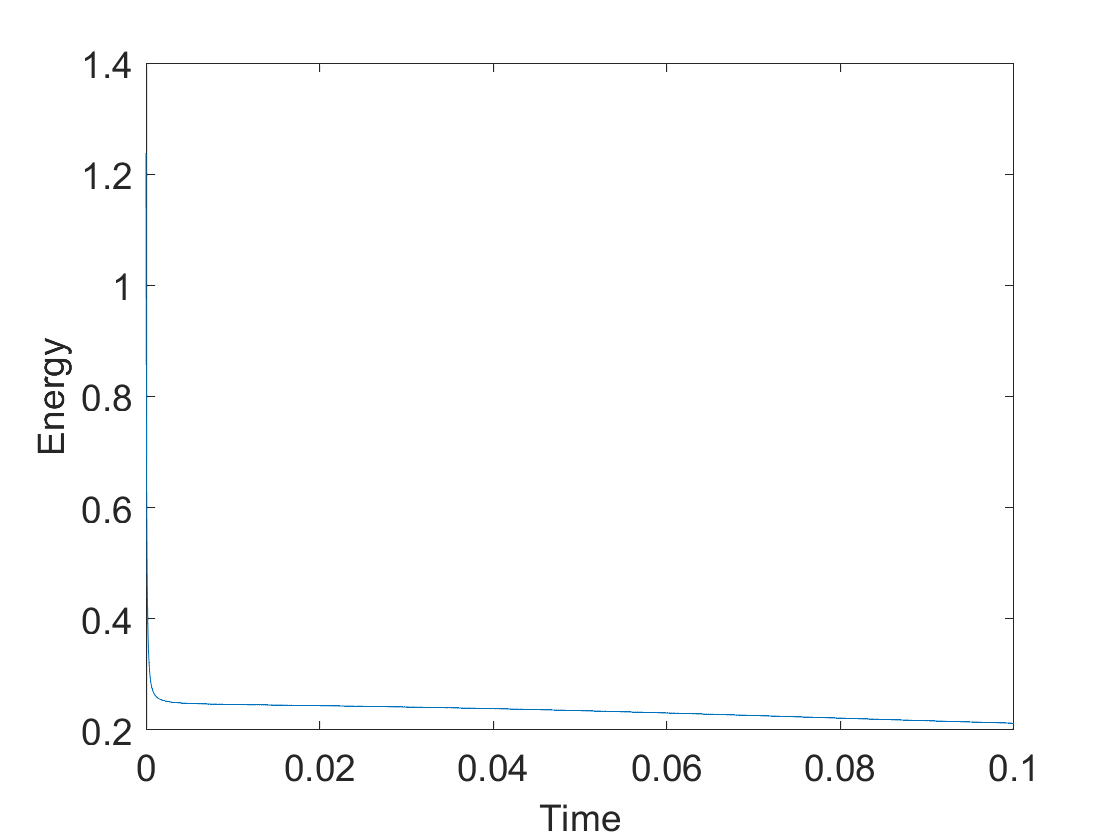}}
		\caption{Comparison of energy evolution between the Cahn-Hilliard model and the OPBDE Cahn-Hilliard model. (a) The Cahn-Hilliard model; (b) The OPBDE Cahn-Hilliard model with $\varepsilon = 10^{-2}$; (c) The OPBDE Cahn-Hilliard model with $\varepsilon = 2\times10^{-3}$. The curves of energy decay are nearly identical.}
		\label{Energy1}
	\end{figure}
	
	Figure \ref{Energy1} shows the energy evolution obtained from the Cahn-Hilliard model and the OPBDE Cahn-Hilliard model, with the energy decay over time being consistent with Thm. \ref{Thm2}.
	
	\begin{figure}[H]
		\centering
		\subfigure[]{\includegraphics[width = 0.3 \textwidth]{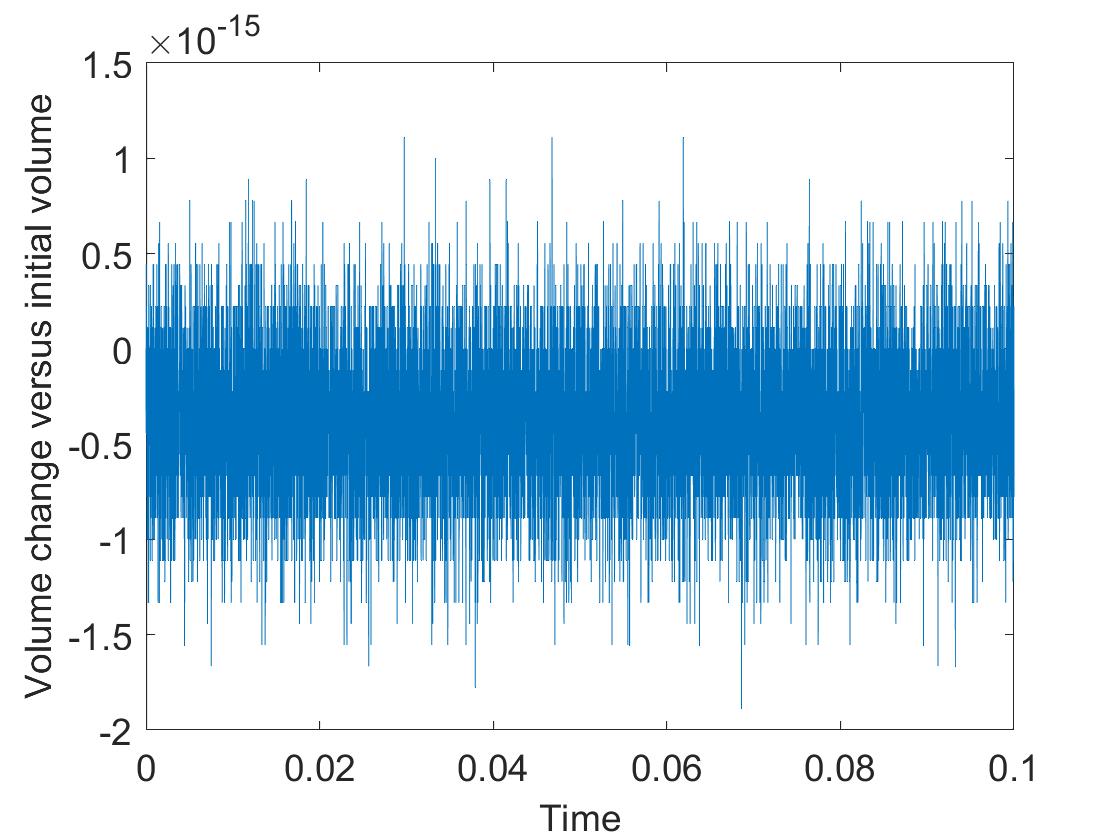}}
		\subfigure[]{\includegraphics[width = 0.3 \textwidth]{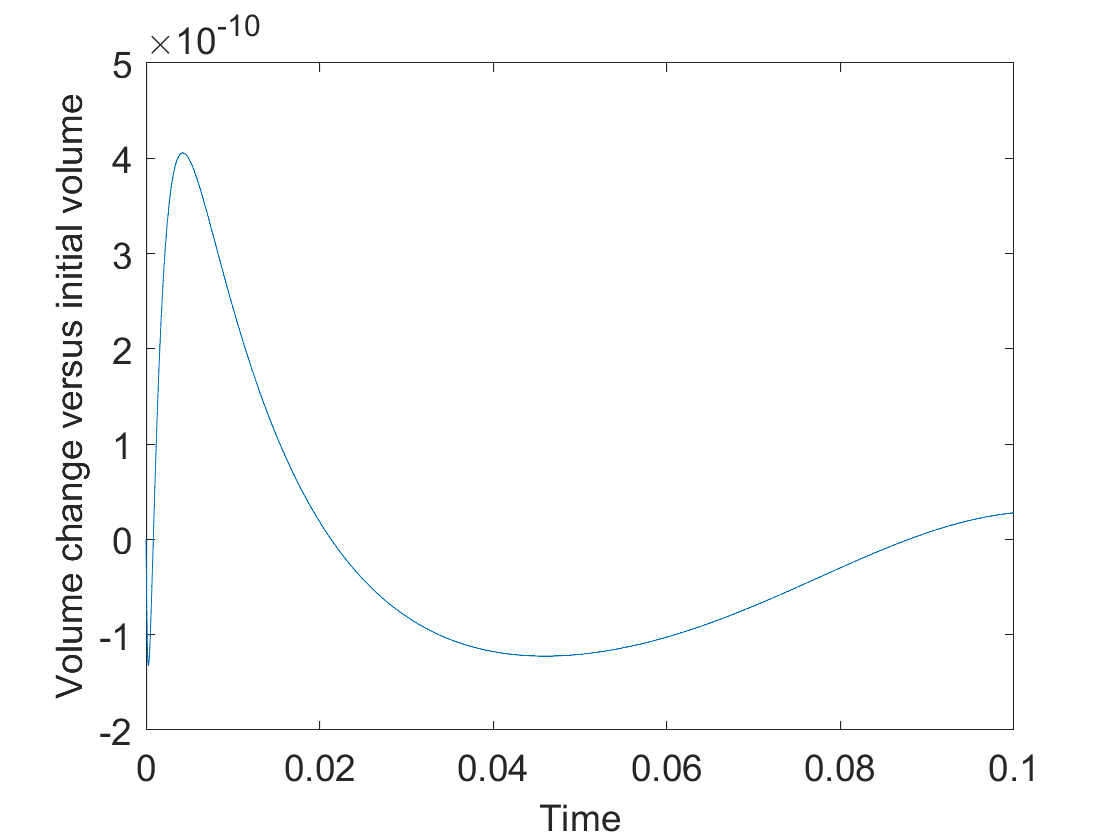}}
		\subfigure[]{\includegraphics[width = 0.3 \textwidth]{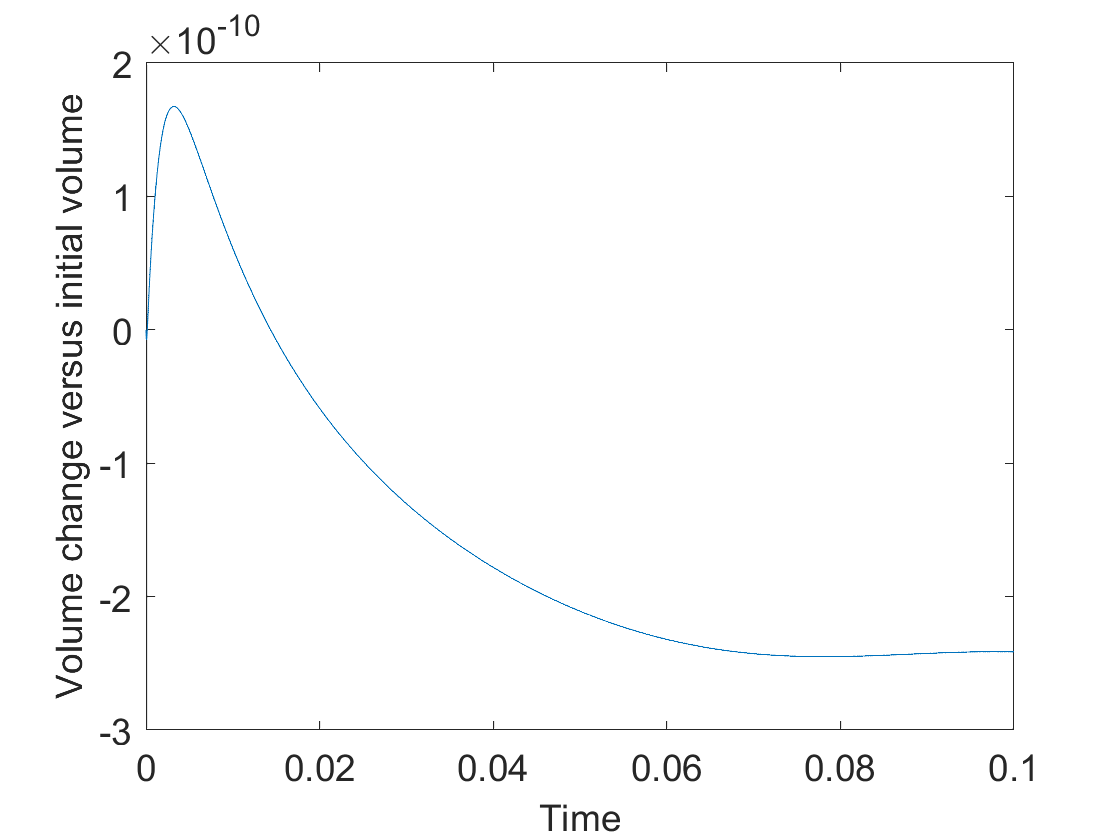}}
		\caption{Comparison of volume change versus initial volume between the Cahn-Hilliard model and the OPBDE Cahn-Hilliard model. (a) The Cahn-Hilliard model; (b) The OPBDE Cahn-Hilliard model with $\varepsilon = 10^{-2}$; (c) The OPBDE Cahn-Hilliard model with $\varepsilon = 2\times10^{-3}$.}
		\label{Volume1}
	\end{figure}
	
	Figure \ref{Volume1} shows the volume change versus initial volume, obtained from the Cahn-Hilliard model and the OPBDE Cahn-Hilliard model. Due to the no flux boundary condition, the results from the Cahn-Hilliard model show machine accuracy. The results from the OPBDE Cahn-Hilliard model show a slight increase in volume change, a consequence of the regularization of $\chi_{\varepsilon}$. 
% In  fact, if we use $\int_{\Omega_1}\frac{1}{\chi_{\varepsilon}}\tilde{\phi}d\bx$ for the calculations, volume variations would still occur at the machine accurate level. 
It is noted that the magnitude of the computed volume change in $\int_{\Omega}\psi_{\varepsilon}\tilde{\phi}d\bx$, on the order of $\cO(10^{-10})$, is negligible in comparison to the magnitude of the truncation error $\cO(\Delta x^2)$.

	Below we simulate the coarsening dynamics in an irregular domain using the two extended models. The irregular domain is constructed with properties such as corner points and inner holes, which may produce different effects in different extended models. Here we only use $\varepsilon = 2\times10^{-3}$ to avoid the formation of a diffuse layer that is too wide around the corner points and inner holes. All the other parameters remain unchanged.
	
	\begin{figure}[H]
		\centering
		\subfigure[]{
			\includegraphics[width = 1.5in]{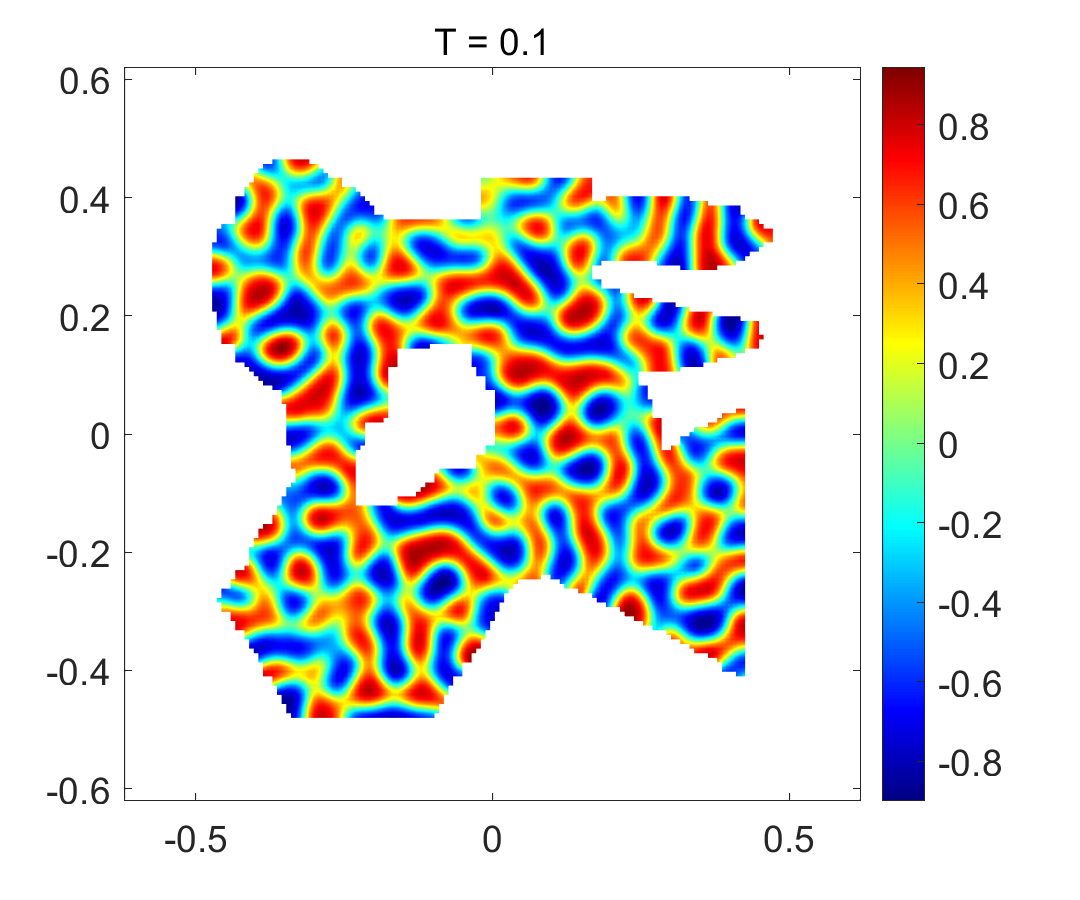}
			\includegraphics[width = 1.5in]{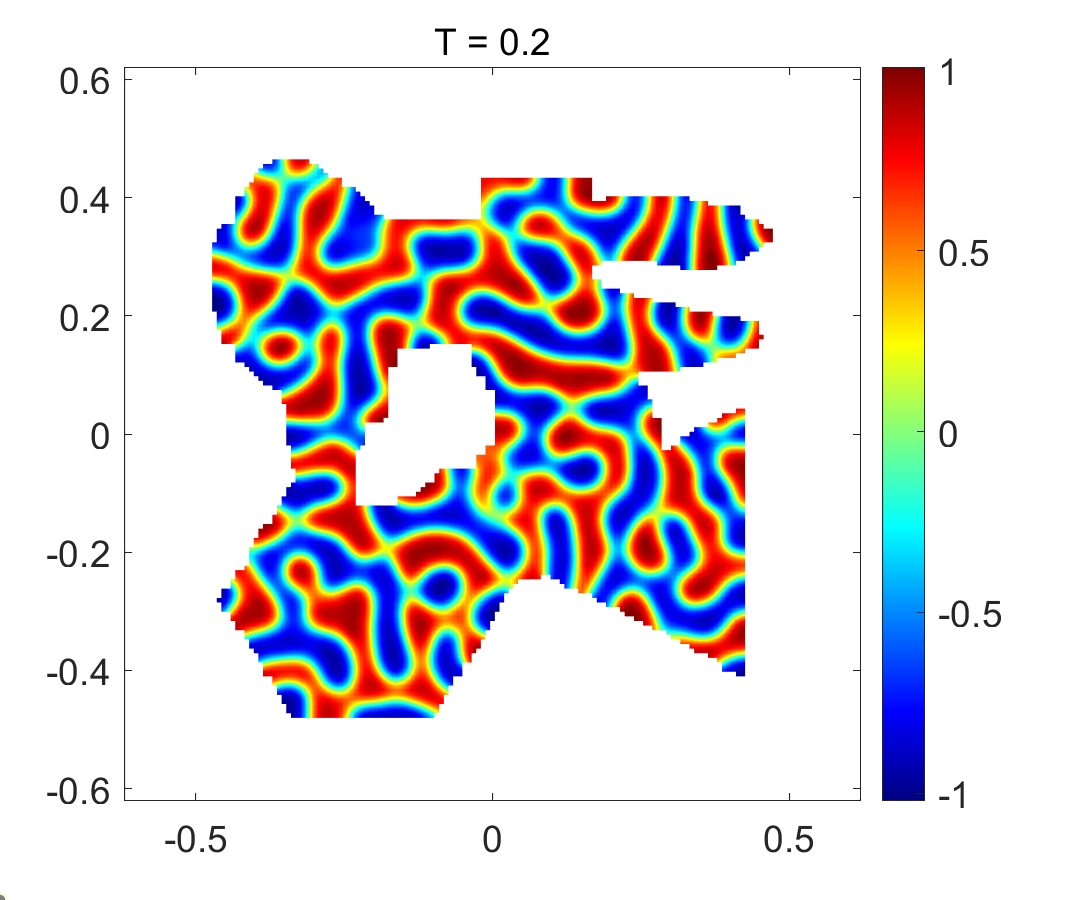}
			\includegraphics[width = 1.5in]{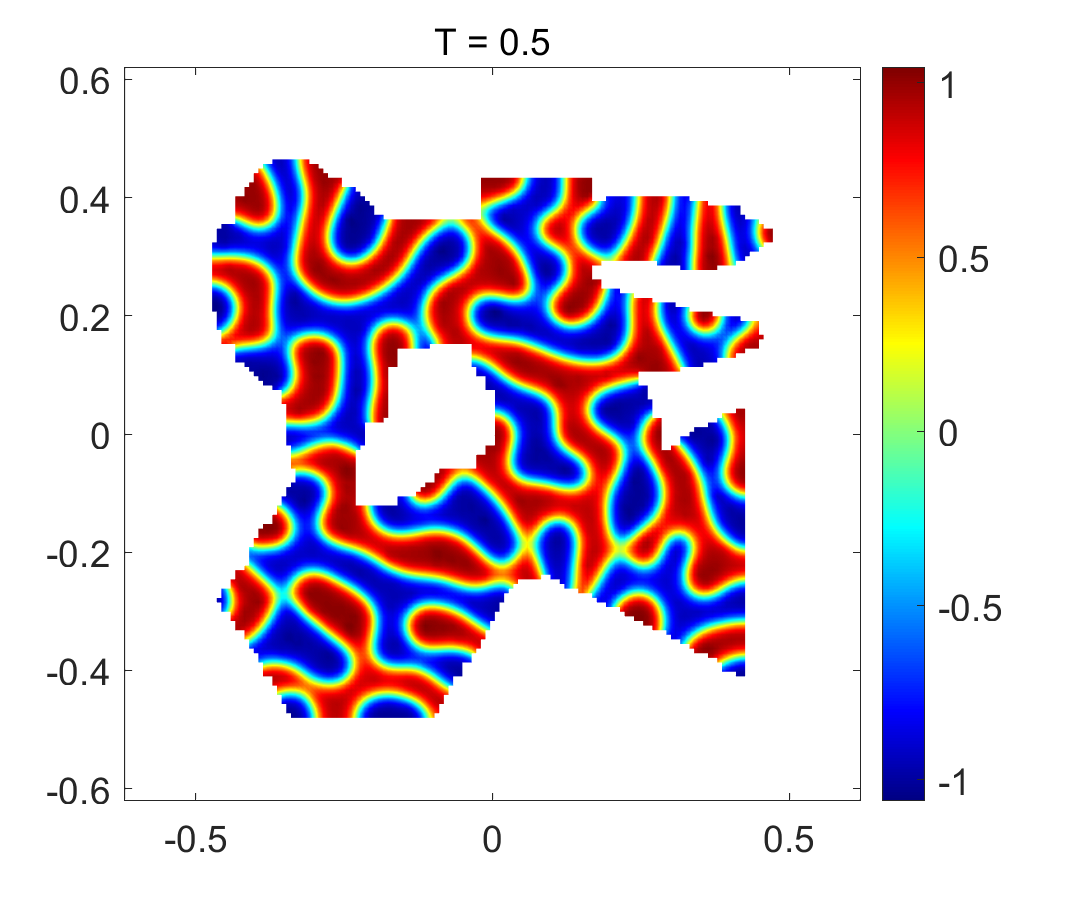}
			\includegraphics[width = 1.5in]{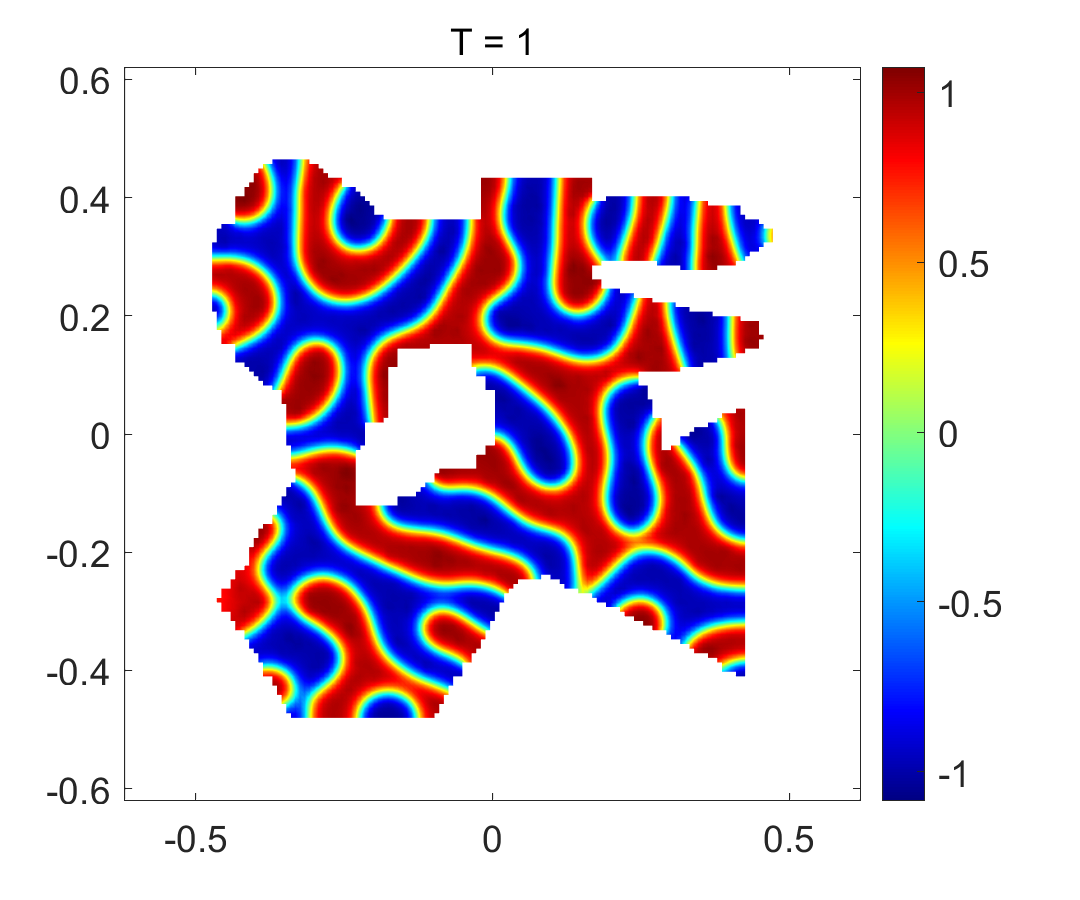}
		}
		\subfigure[]{
			\includegraphics[width = 1.5in]{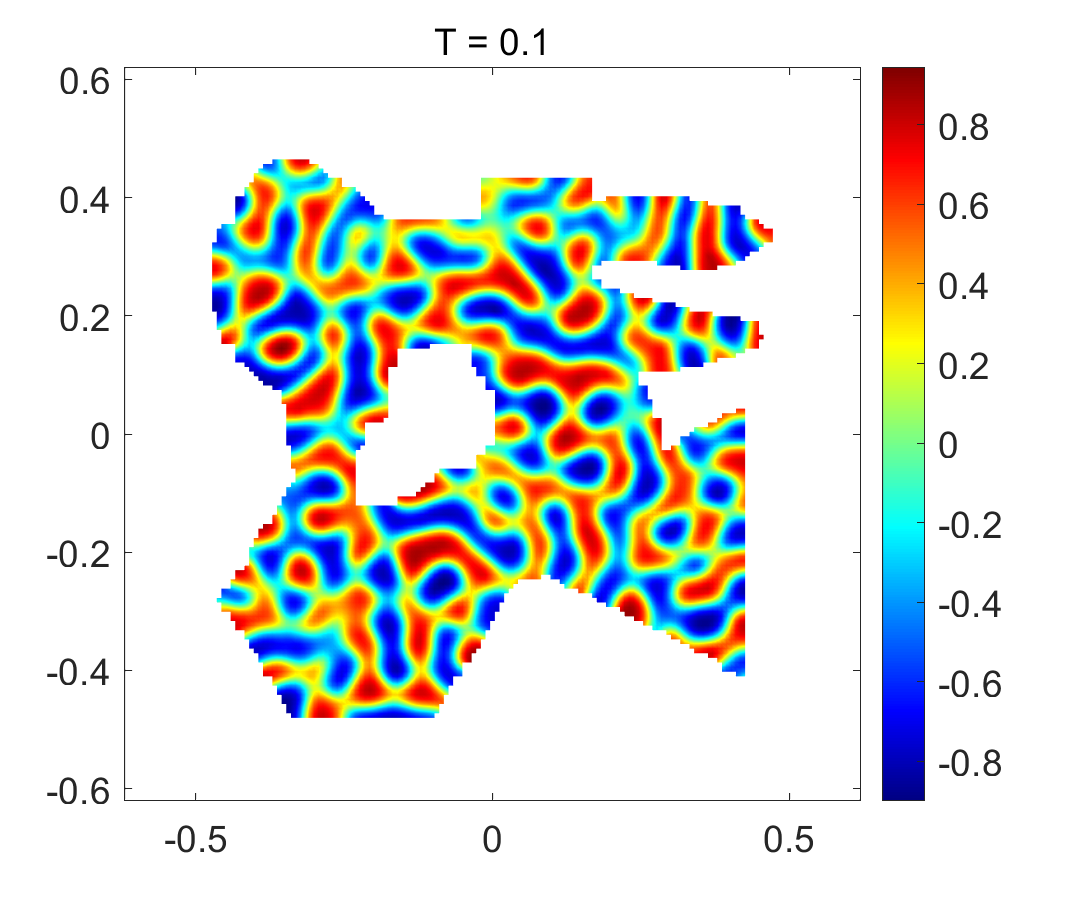}
			\includegraphics[width = 1.5in]{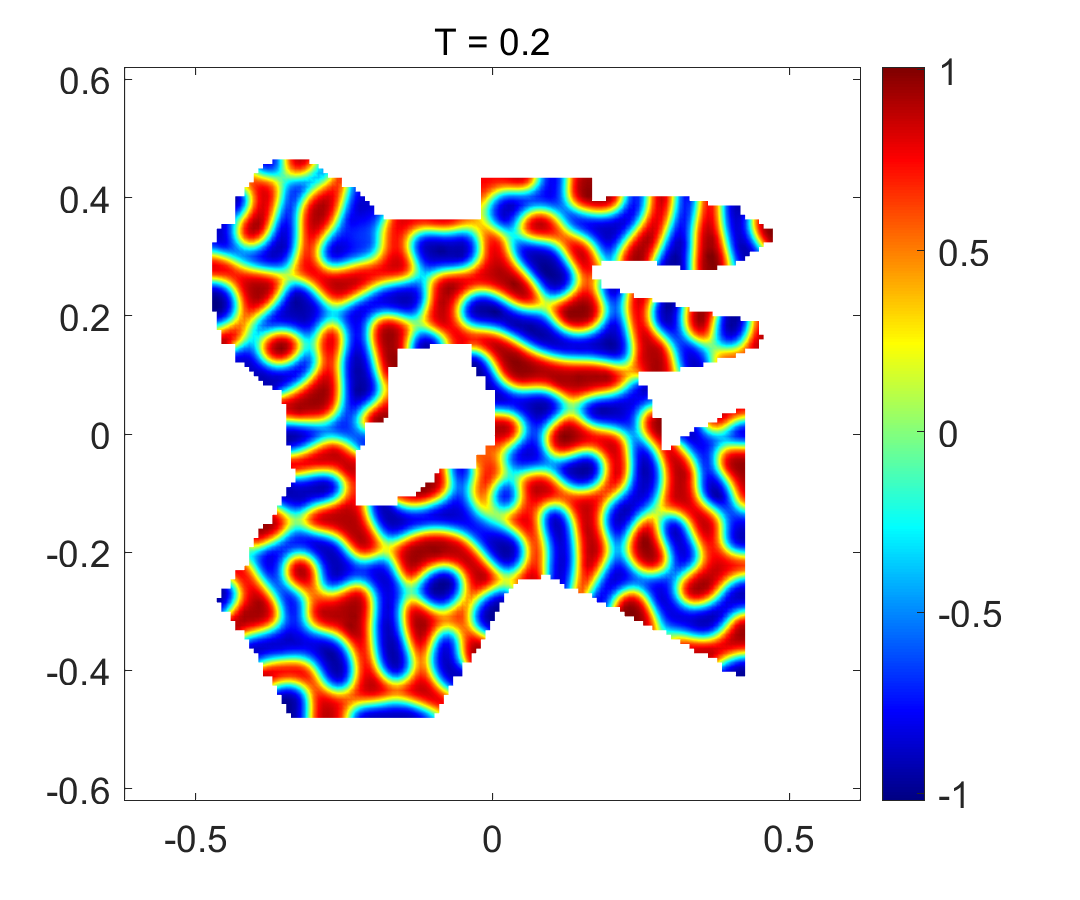}
			\includegraphics[width = 1.5in]{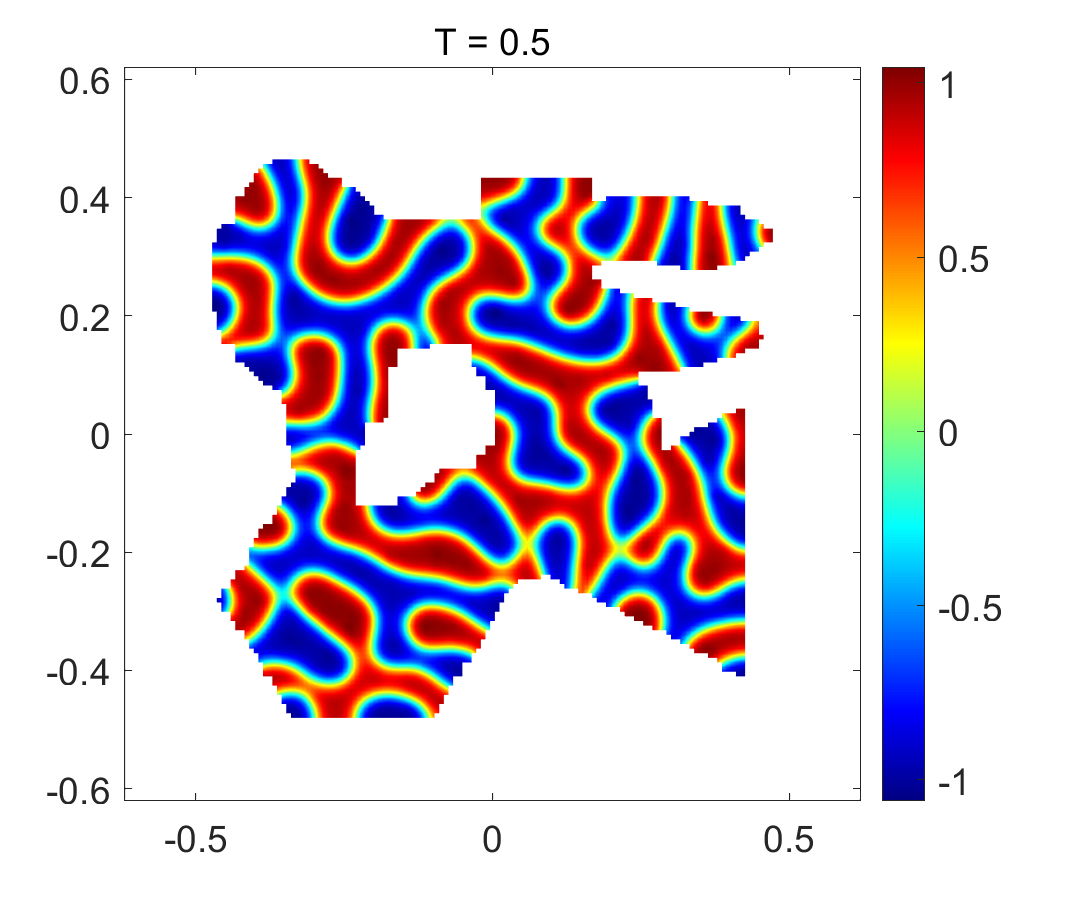}
			\includegraphics[width = 1.5in]{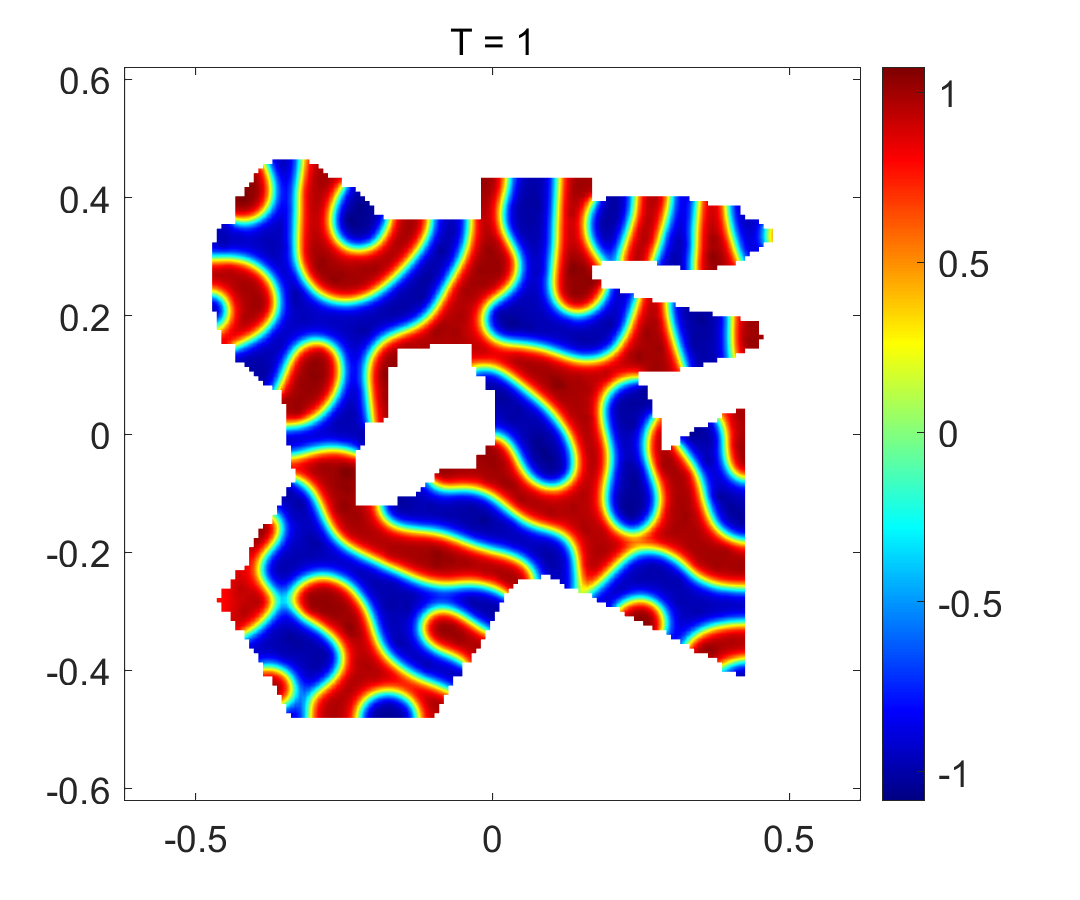}
		}
		\caption{Coarsening dynamics simulated using the OPBDE Cahn-Hilliard model and the DDM Cahn-Hilliard model. Profiles of $\tilde{\phi}$ are shown at
			time instants $t = 0.1$, $0.2$, $0.5$, $1$. (a) The OPBDE Cahn-Hilliard model results; (b) The DDM Cahn-Hilliard model results.}
		\label{Coarsening2}
	\end{figure}
	
	Figure \ref{Coarsening2} shows the coarsening dynamics produced from the OPBDE Cahn-Hilliard model and the DDM Cahn-Hilliard model using the same random initial values in an arbitrary irregular domain. In this case, we do not have the numerical results from the original model in the irregular $\Omega_1$. Therefore, comparison is made between the results from the two extended models, i.e., the OPBDE Cahn-Hilliard model and the DDM Cahn-Hilliard model. The profiles of $\tilde{\phi}$ in Fig. \ref{Coarsening2}(a) and (b) are almost identical.
	
	\begin{figure}[H]
		\centering
		\subfigure[]{\includegraphics[width = 0.45 \textwidth]{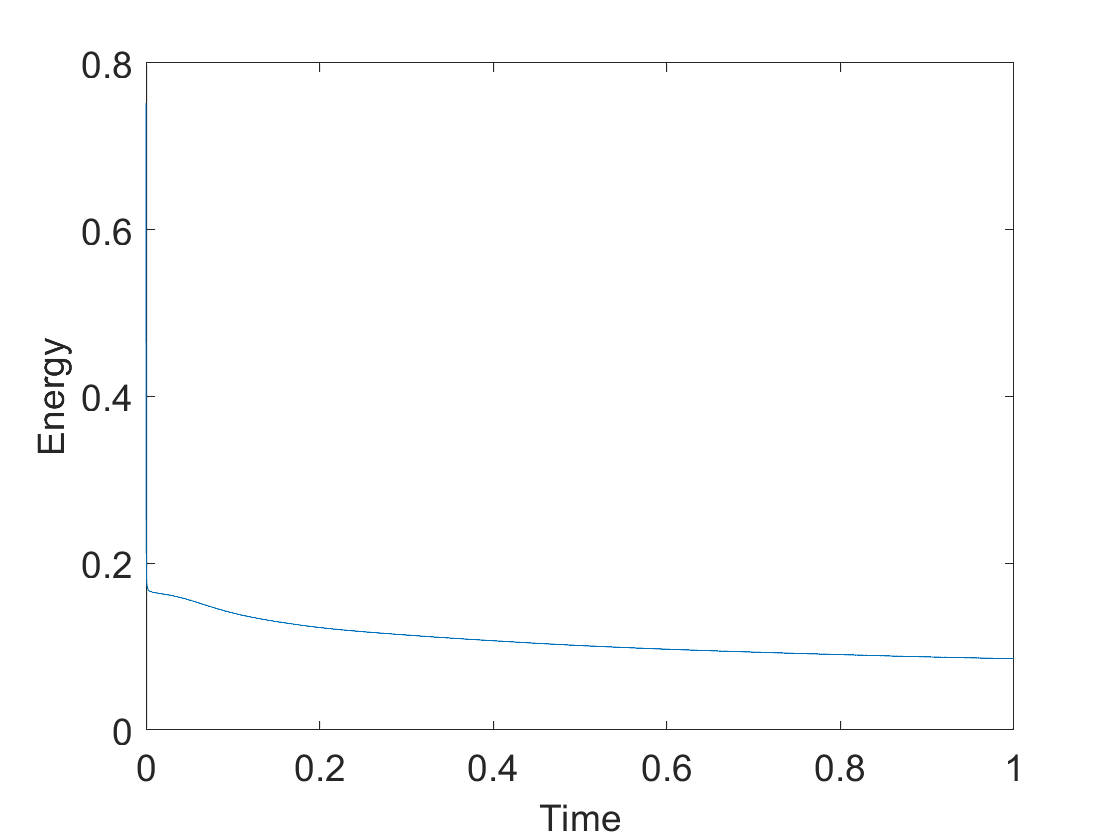}}
		\subfigure[]{\includegraphics[width = 0.45 \textwidth]{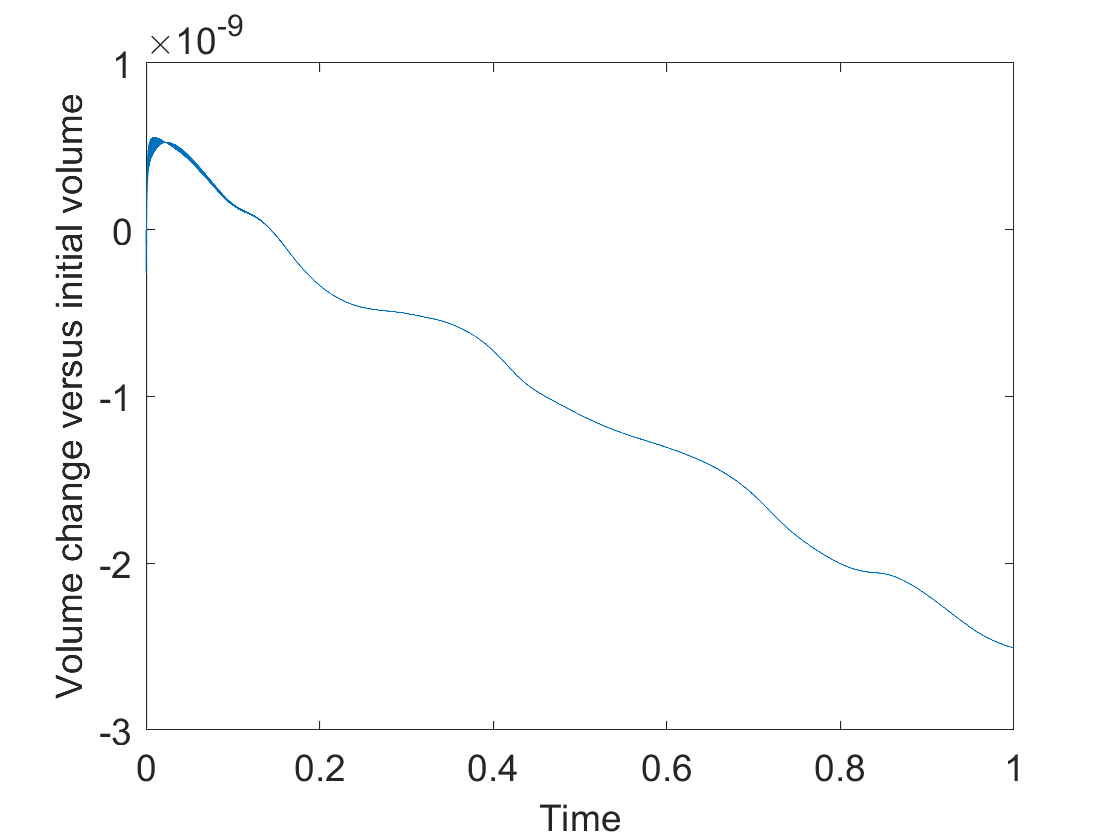}}
		\caption{Energy evolution and volume change versus initial volume simulated using the OPBDE Cahn-Hilliard model. (a) Energy evolution; (b) Volume change versus initial volume.}
		\label{Energy and Volume1}
	\end{figure}
	Figure \ref{Energy and Volume1} presents the energy evolution and volume change versus initial volume obtained from the OPBDE Cahn-Hilliard model. It is readily observed that the energy decreases in time and the volume remains conserved within the error range for $h_3 = 0$. These results meet the modeling requirements set forth. The energy and volume change from the DDM Cahn-Hilliard model have not been presented here since this model has not explicitly defined its volume and can not incorporate a definition of the free energy functional in the extended domain.
	
	\subsection{Droplet spreading on substrates}
	We perform simulations to show the drop spreading governed by the Cahn-Hilliard model with the dynamic boundary condition $\phi_t = -\Gamma(\bn_1 \cdot K\nabla \phi)$ and the no flux boundary condition $\bn \cdot M \nabla \mu= 0$ applied at $\partial \Omega_1$. At equilibrium, $\bn_1 \cdot \nabla \phi = 0$, and hence the equilibrium contact angle equals $90^\circ$. We use $f(\phi) = \frac{1}{4}(\phi^2-1)^2$, $A = 0$, $K = 10^{-4}$, $M = 0.01$, $\varepsilon = 2\times10^{-3}$, $\Delta x = \Delta y = \frac{1}{128}$, and $\Delta t = 10^{-3}$. The original domain is $\Omega_1 = [-0.5,0.5]\times [0,0.5]$, where the droplet spreading on the substrate $y=0$ is simulated using the original Cahn-Hilliard model. The domain is then extended to $\Omega = [-0.625,0.625]\times [-0.625,0.625]$, where the droplet spreading is simulated using the OPBDE Cahn-Hilliard model \eqref{OPBDE CH}. For the original model, the initial value for $\phi$ is given by
	\begin{equation}
		\phi_{0} = \tanh\left(\frac{0.2-\sqrt{x^2+(y-0.2)^2}}{0.01}\right).
	\end{equation}
	For the OPBDE Cahn-Hilliard model, the initial value for $\tilde{\phi}$ is
    \begin{equation}
        \tilde{\phi}_0 = \psi \times
        \begin{cases}
          \phi_0, & \mbox{if $(x,y) \in \Omega_1$},\\
          0, & \mbox{otherwise}.
        \end{cases}
    \end{equation}
	
	\begin{figure}[H]
		\centering
		\subfigure[]{
			\includegraphics[width = 1.2in]{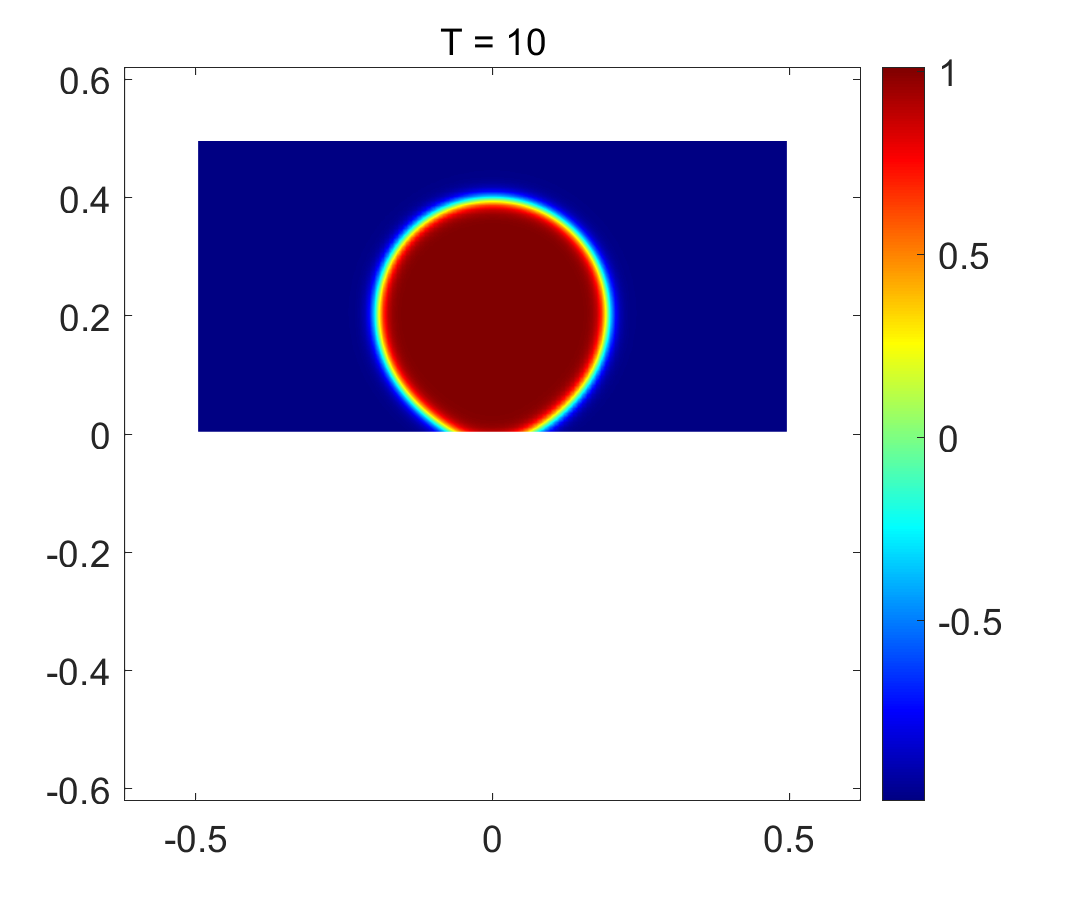}
			\includegraphics[width = 1.2in]{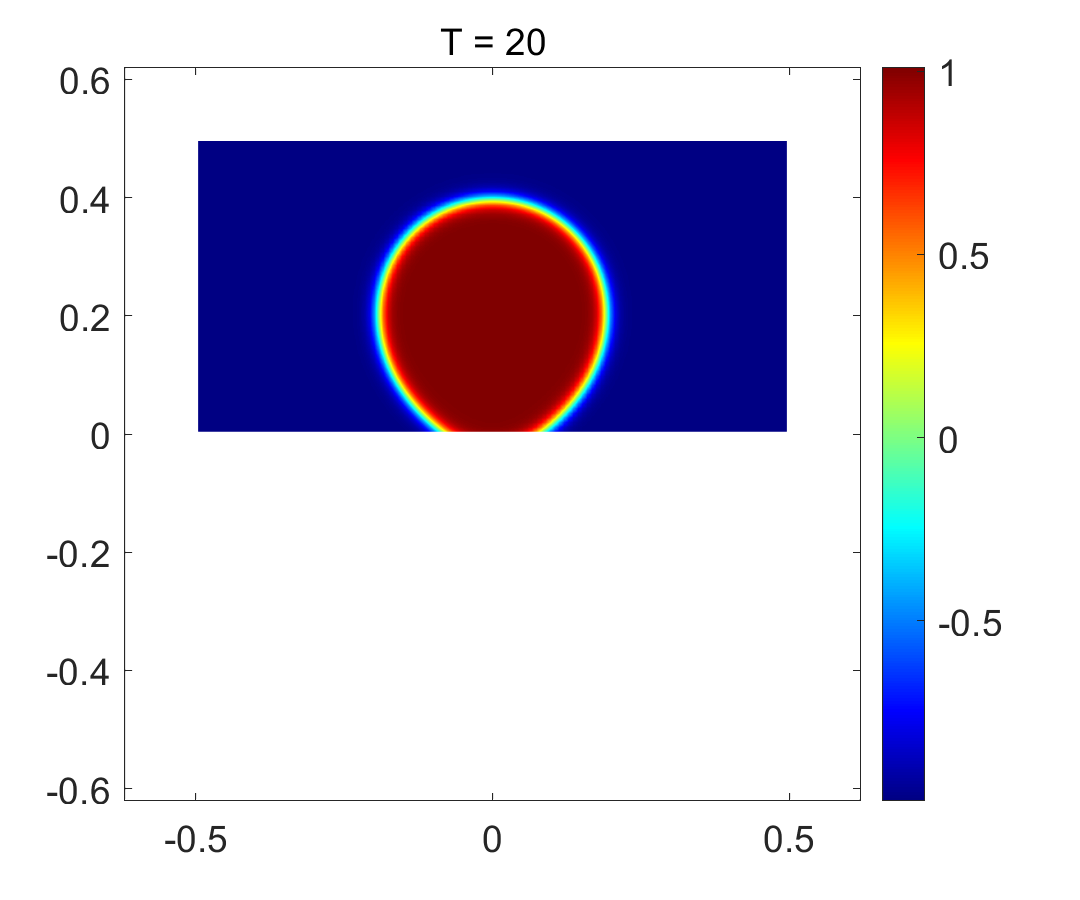}
			\includegraphics[width = 1.2in]{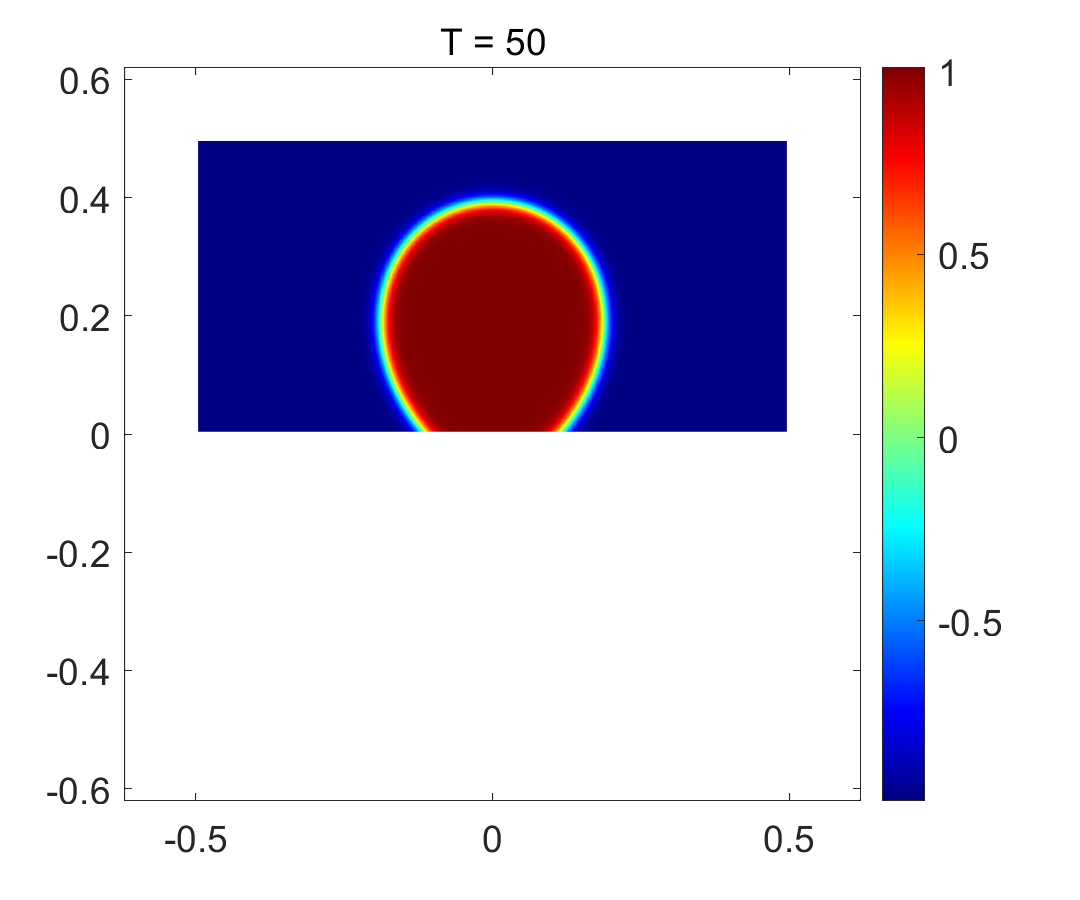}
			\includegraphics[width = 1.2in]{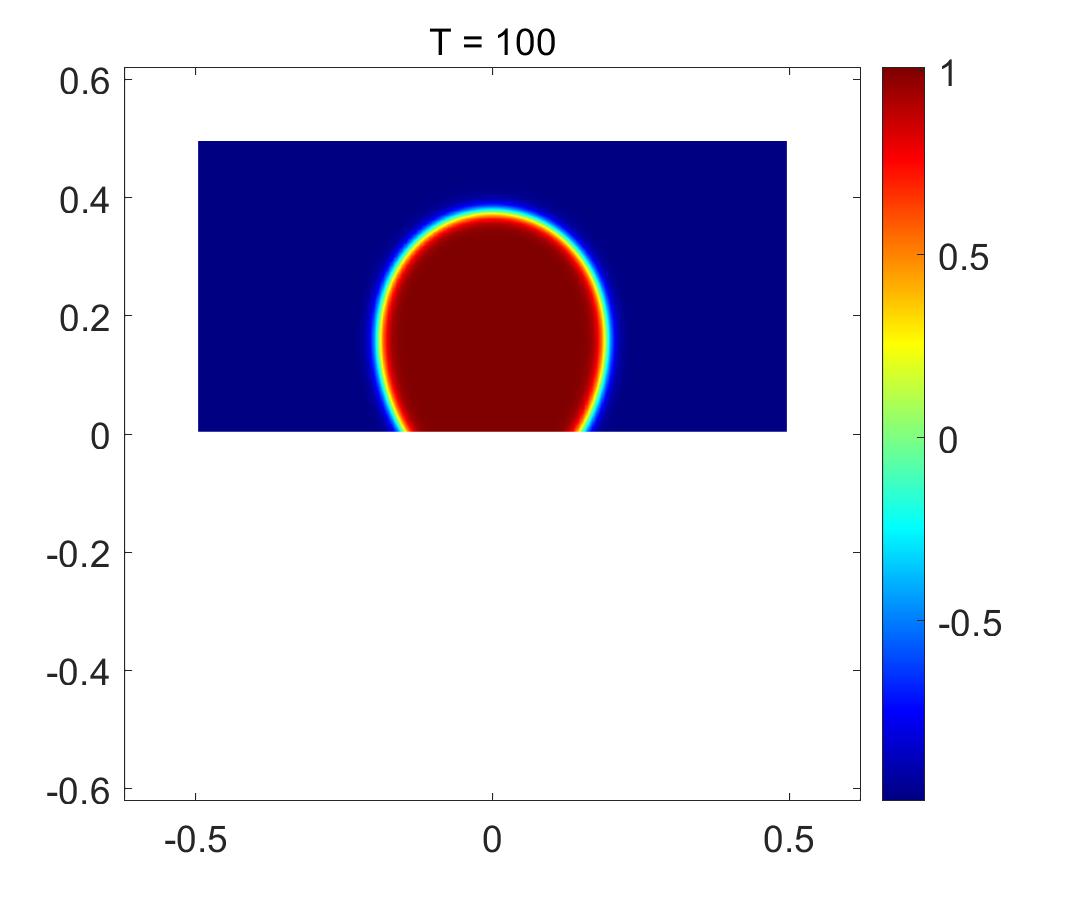}
			\includegraphics[width = 1.2in]{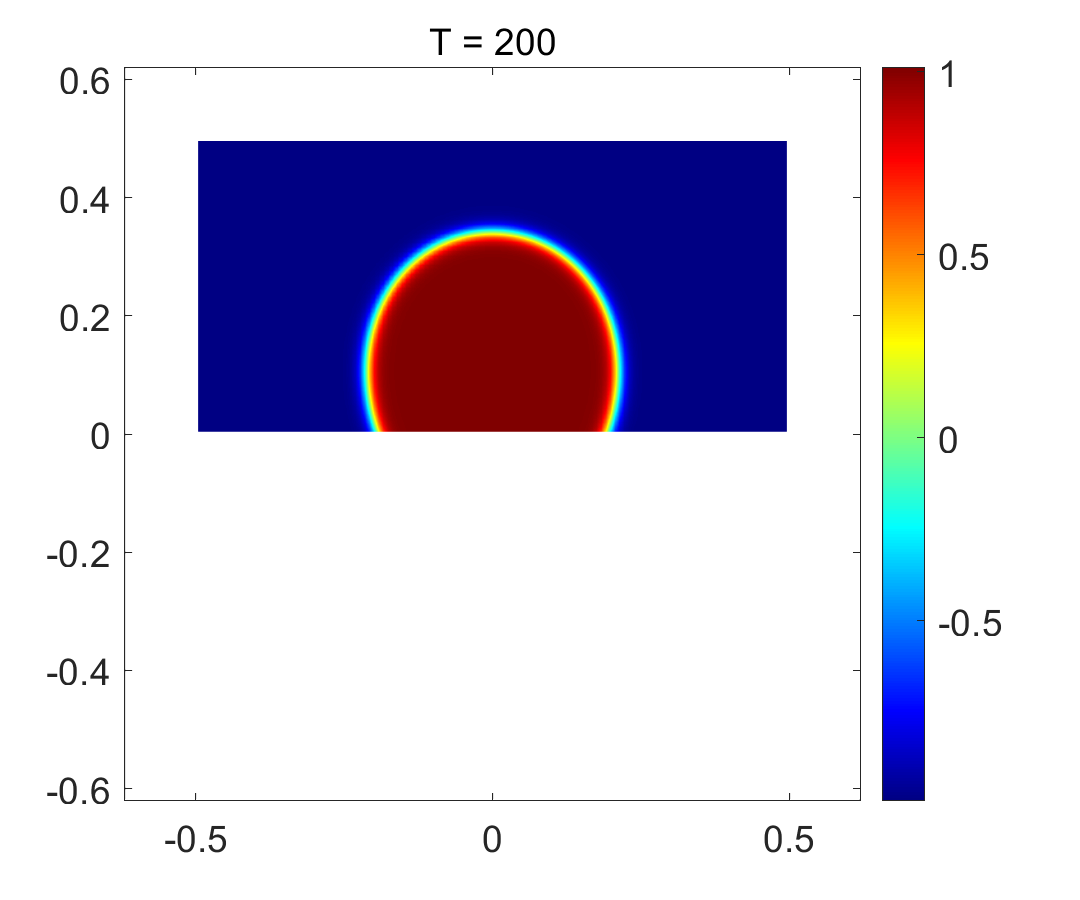}
		}
		\subfigure[]{
			\includegraphics[width = 1.2in]{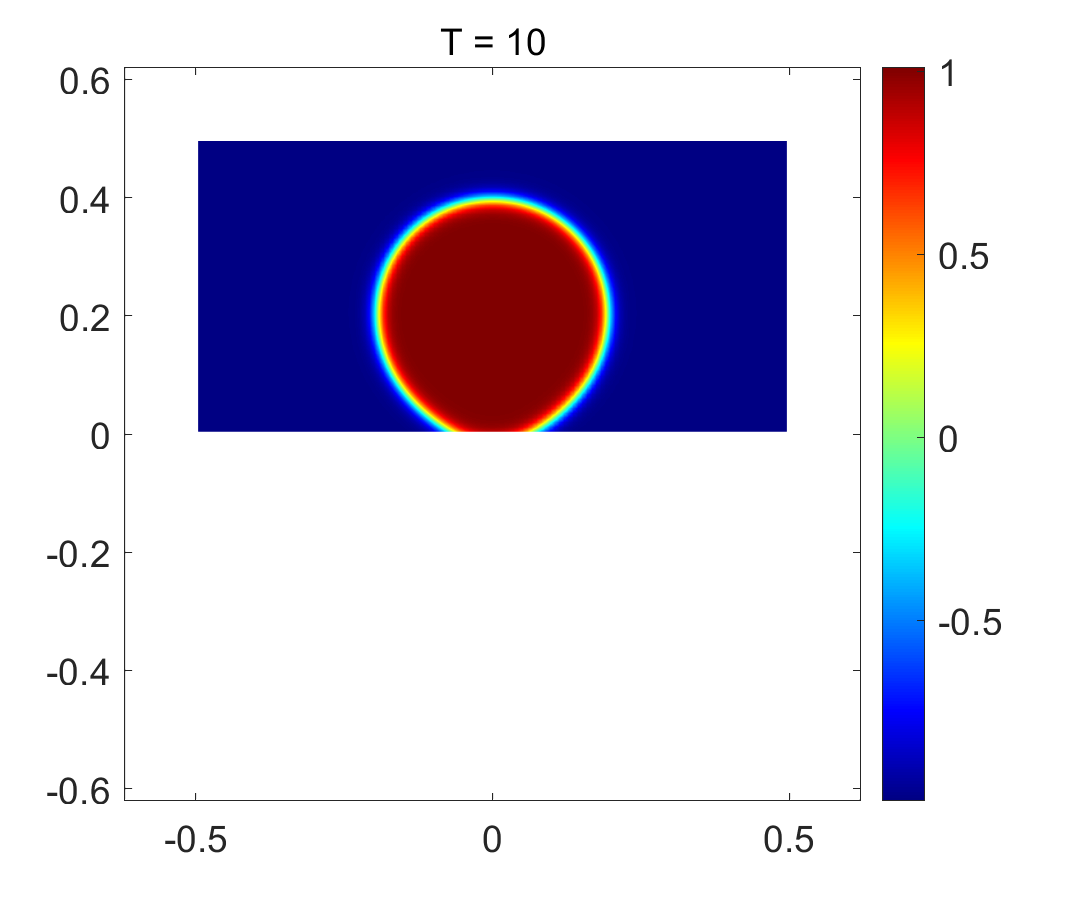}
			\includegraphics[width = 1.2in]{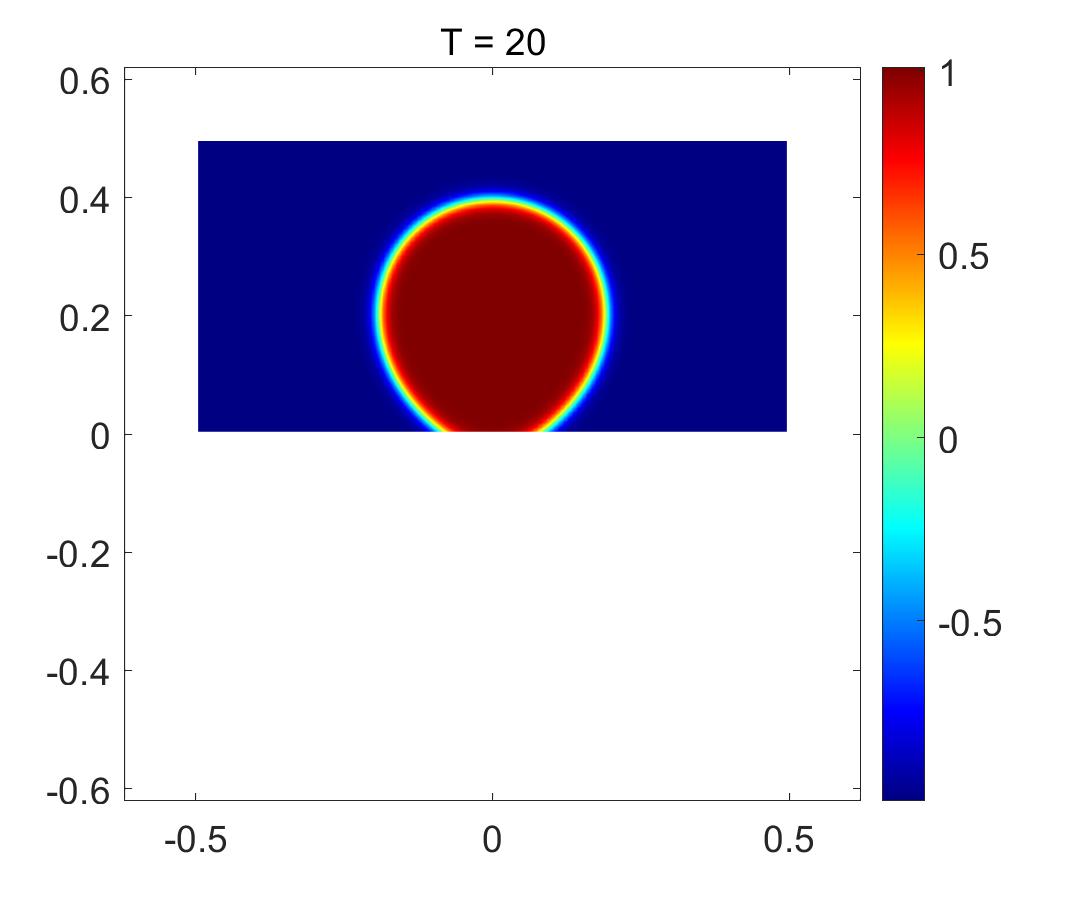}
			\includegraphics[width = 1.2in]{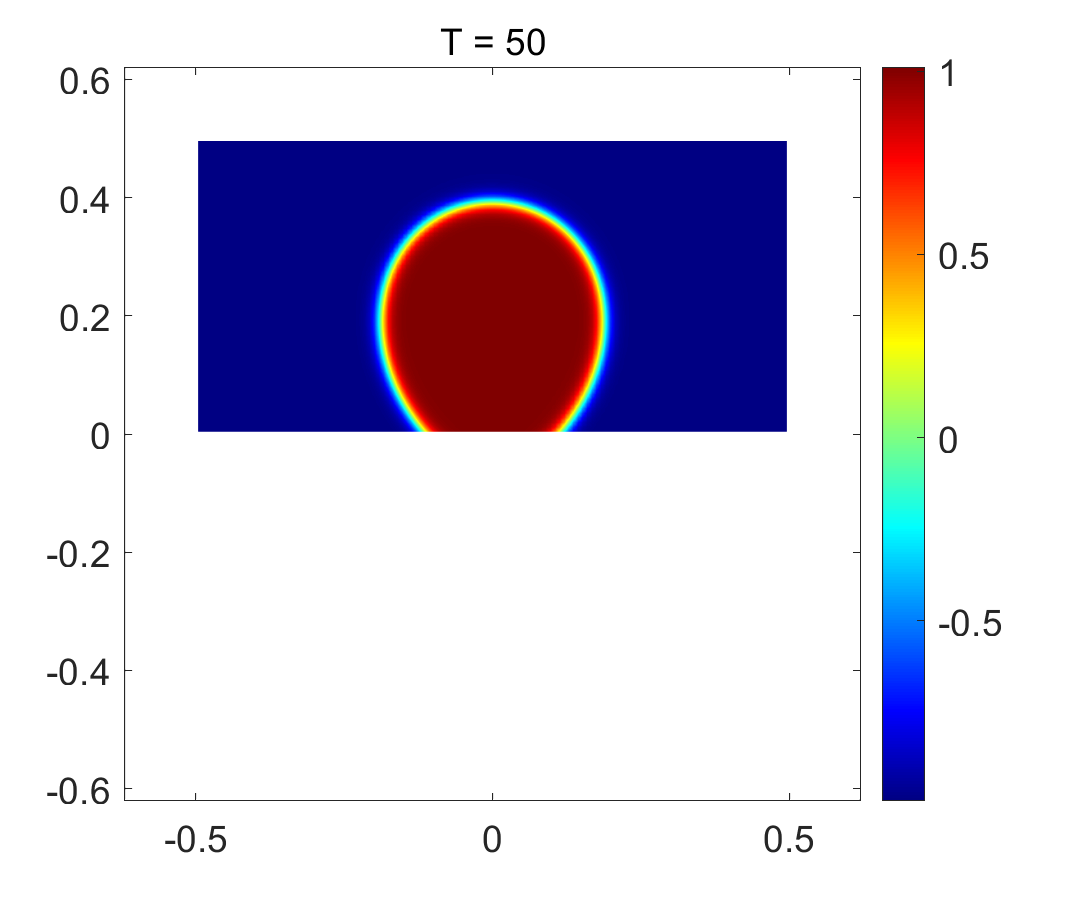}
			\includegraphics[width = 1.2in]{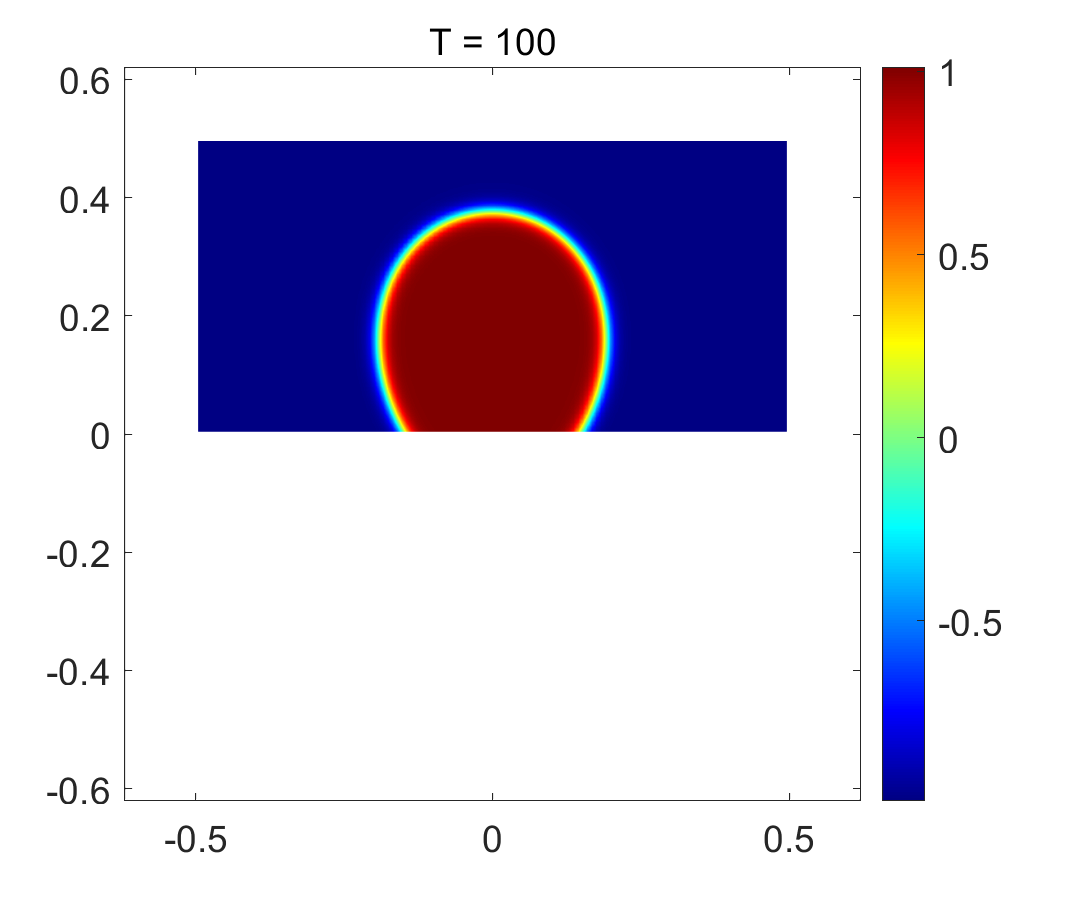}
			\includegraphics[width = 1.2in]{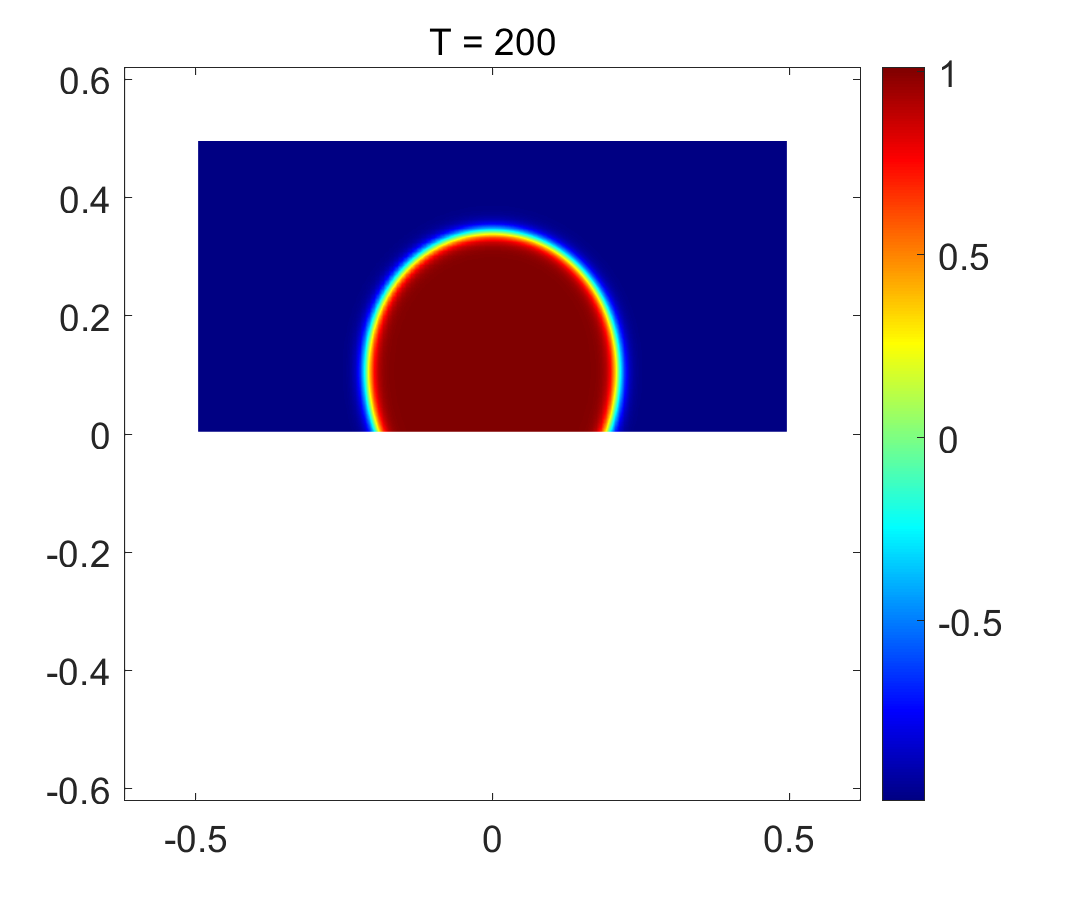}
		}
		\caption{Droplet spreading simulated using the Cahn-Hilliard model and the OPBDE Cahn-Hilliard model. Profiles of $\phi$ and $\tilde{\phi}$ are shown at time instants $T = 10$, $20$, $50$, $100$, $200$ for $\Gamma = 10$. (a) The Cahn-Hilliard model's $\phi$ simulated in $\Omega_1$; (b) The OPBDE Cahn-Hilliard model's $\tilde{\phi}$ simulated in $\Omega$. }
		\label{Droplet spreading1}
	\end{figure}
	
	\begin{figure}[H]
		\centering
		\subfigure[]{
			\includegraphics[width = 1.2in]{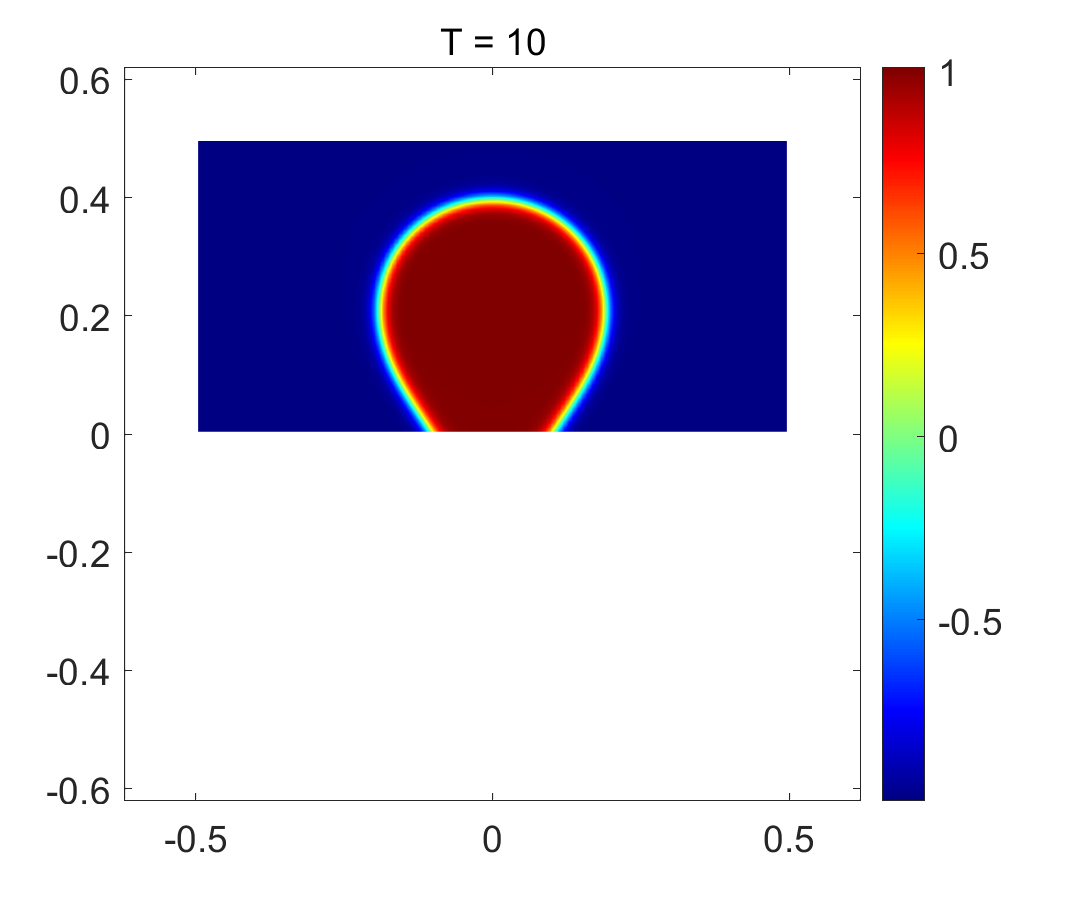}
			\includegraphics[width = 1.2in]{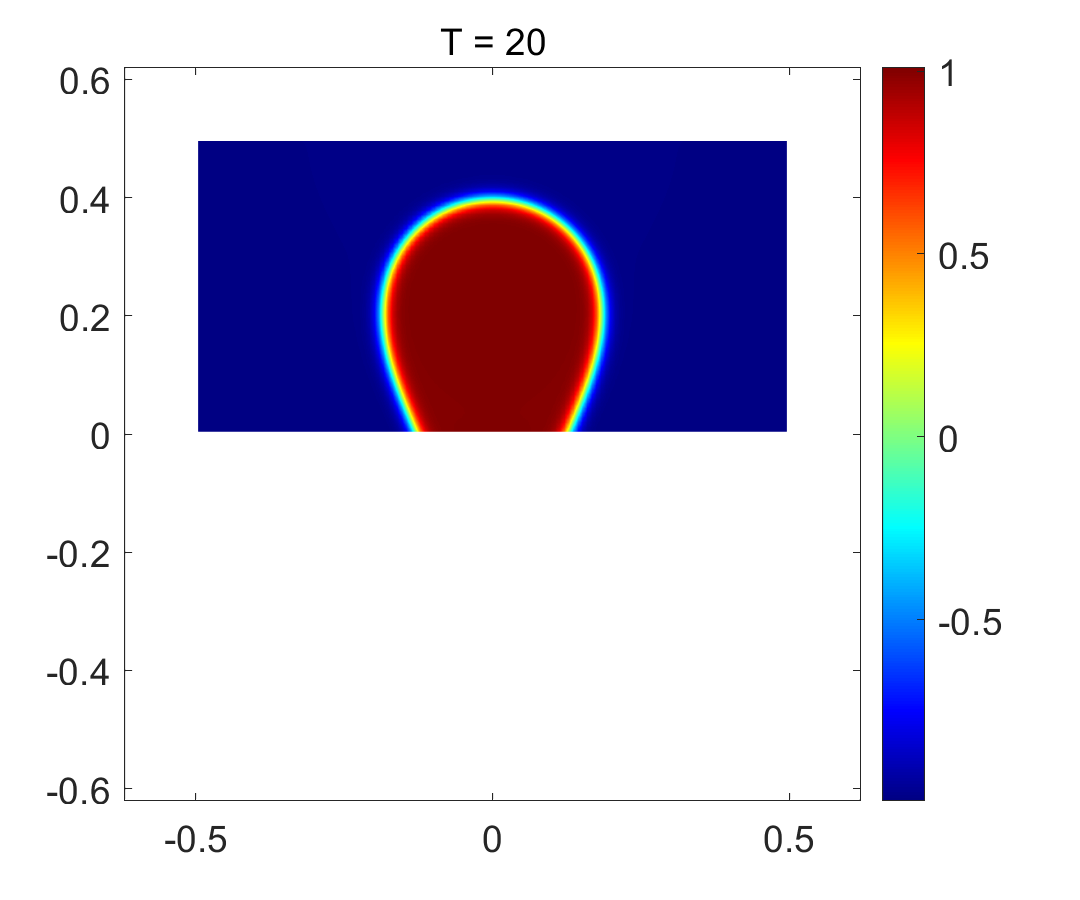}
			\includegraphics[width = 1.2in]{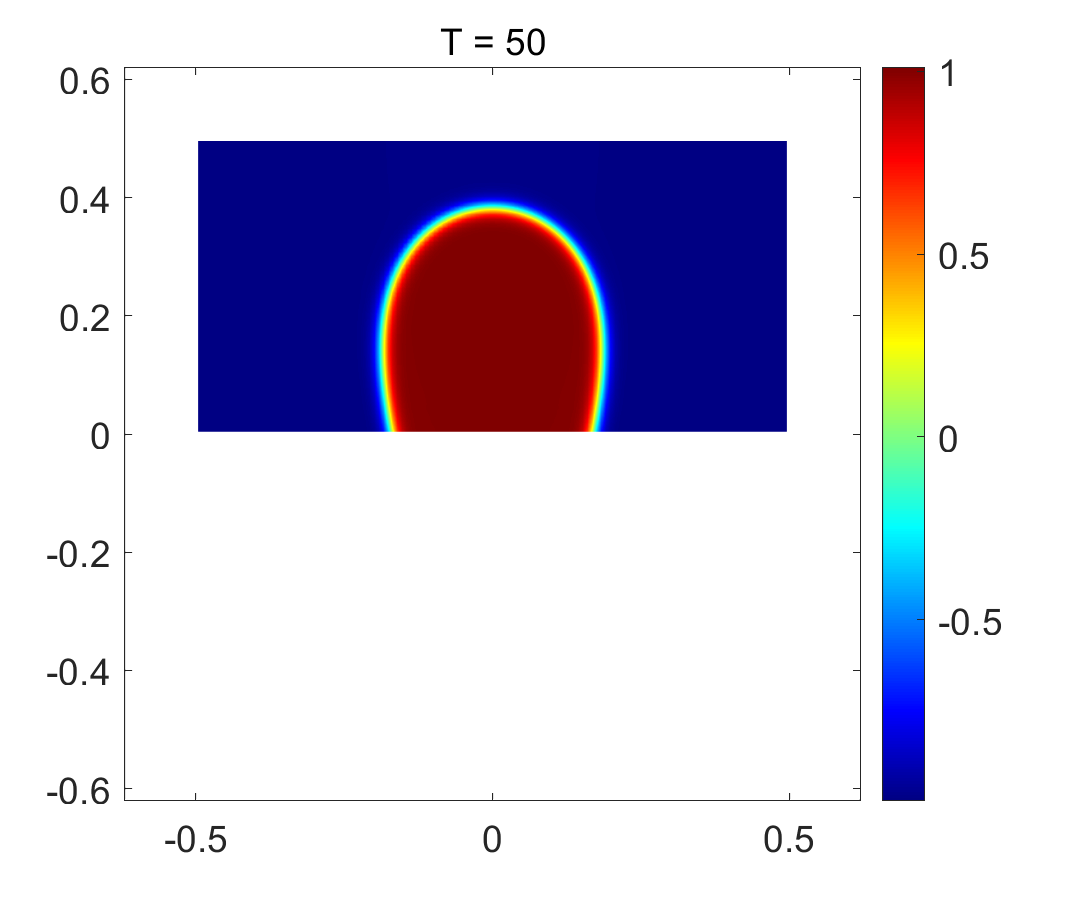}
			\includegraphics[width = 1.2in]{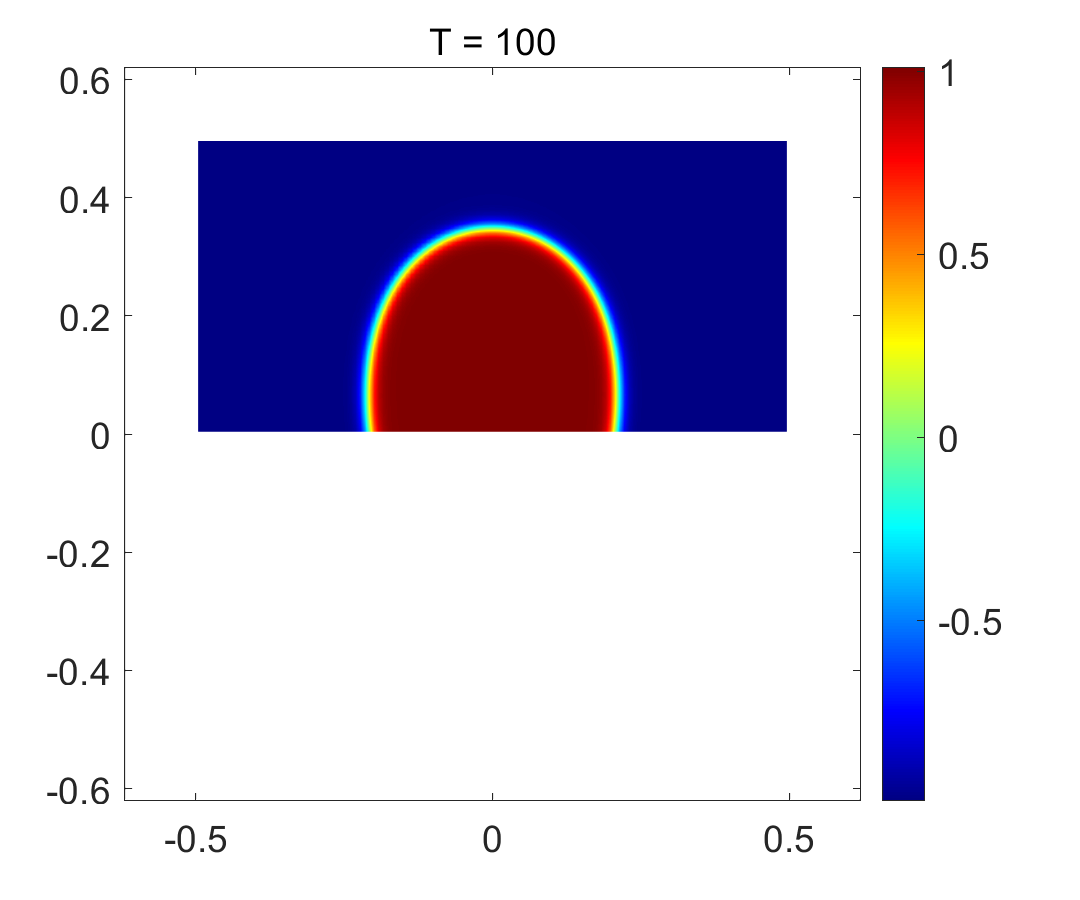}
			\includegraphics[width = 1.2in]{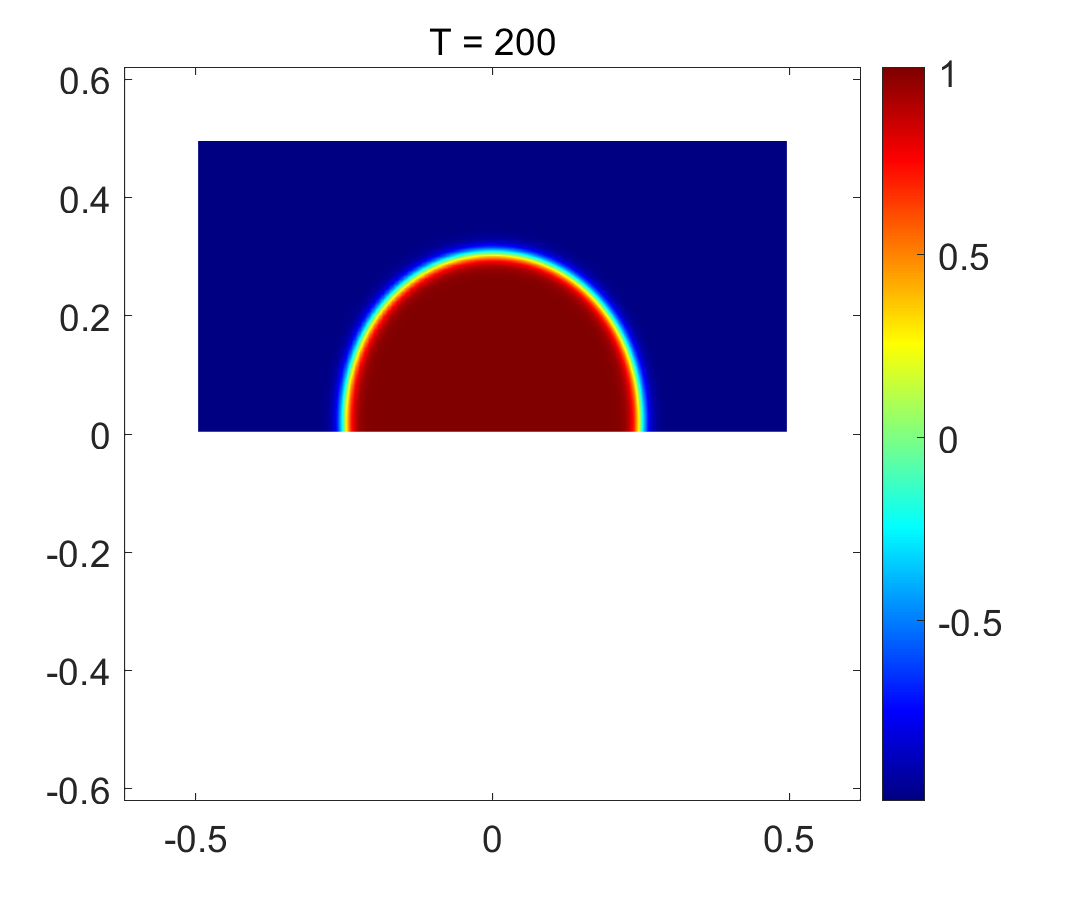}
		}
		\subfigure[]{
			\includegraphics[width = 1.2in]{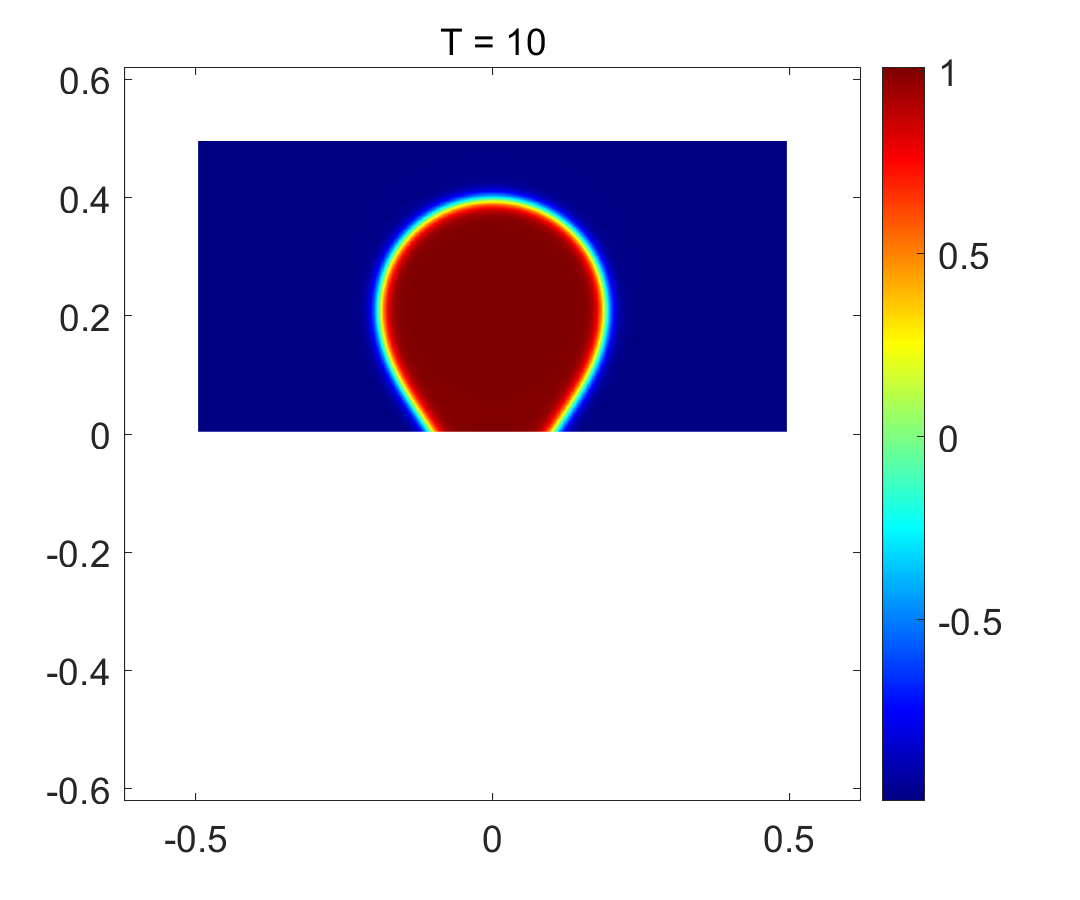}
			\includegraphics[width = 1.2in]{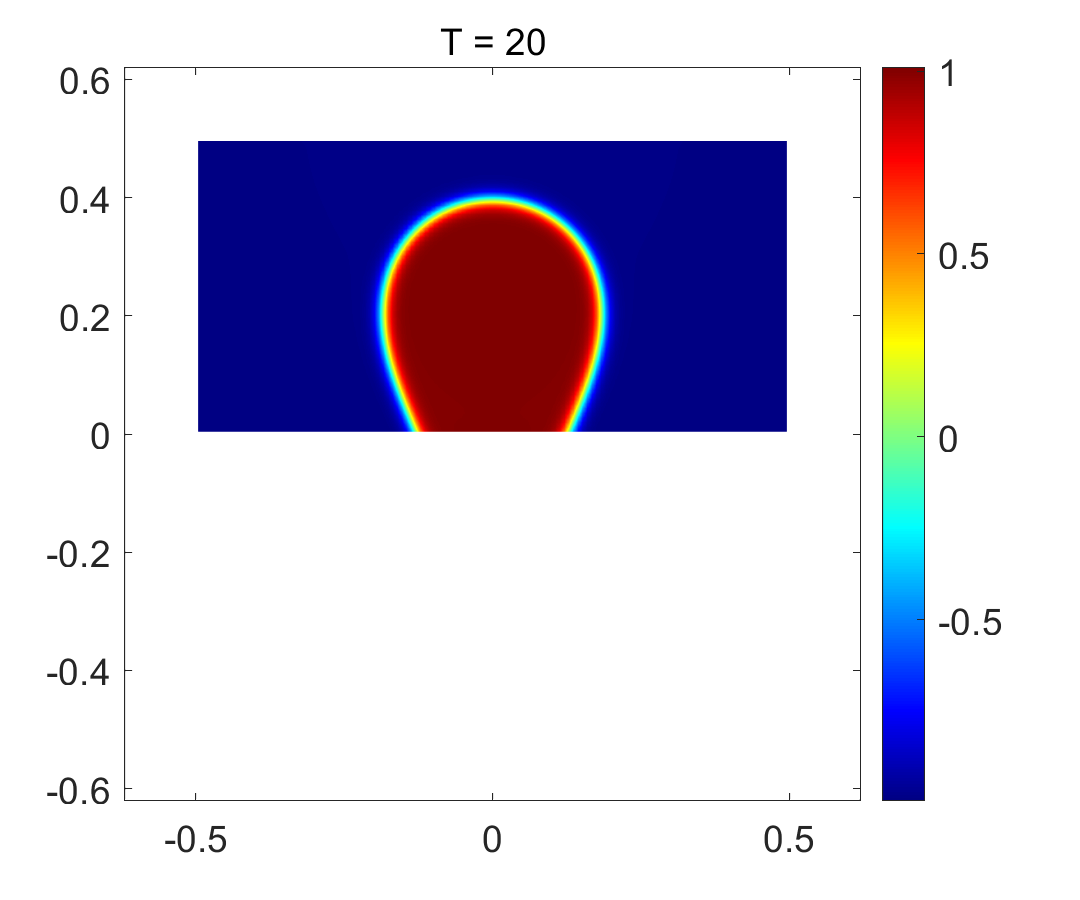}
			\includegraphics[width = 1.2in]{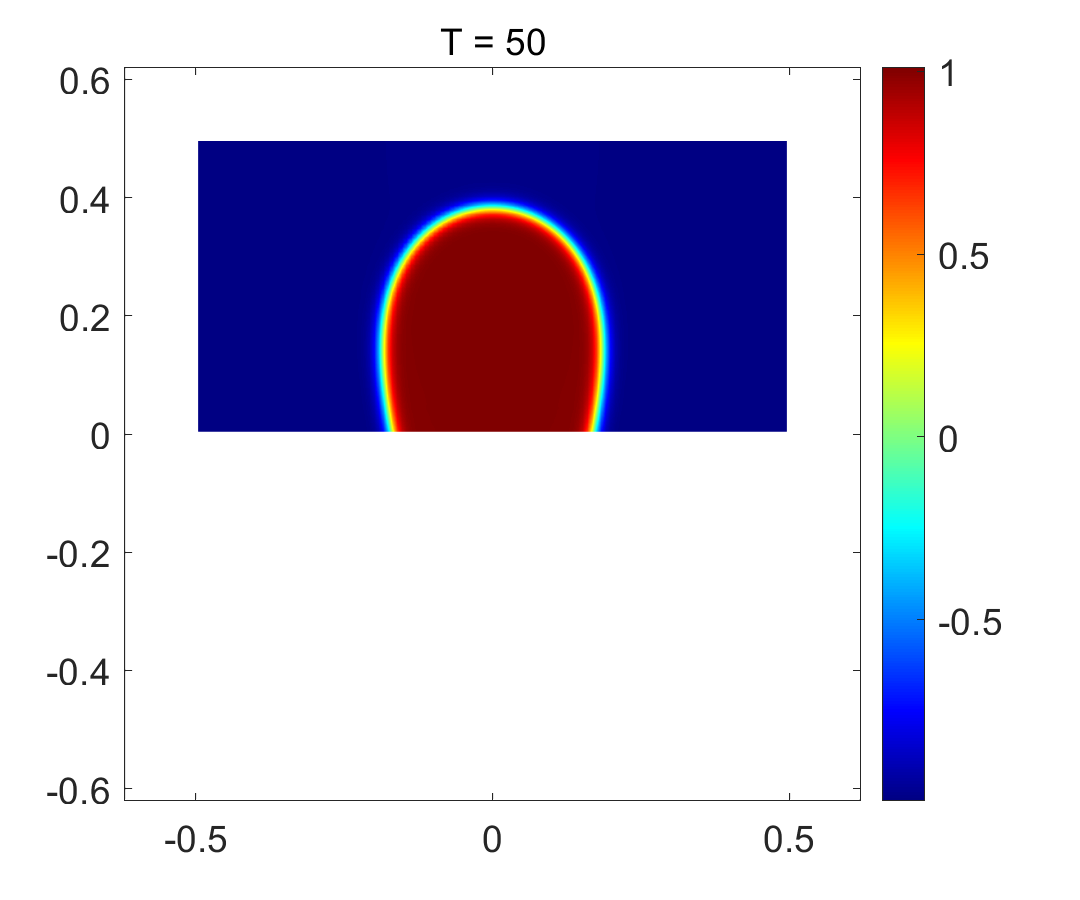}
			\includegraphics[width = 1.2in]{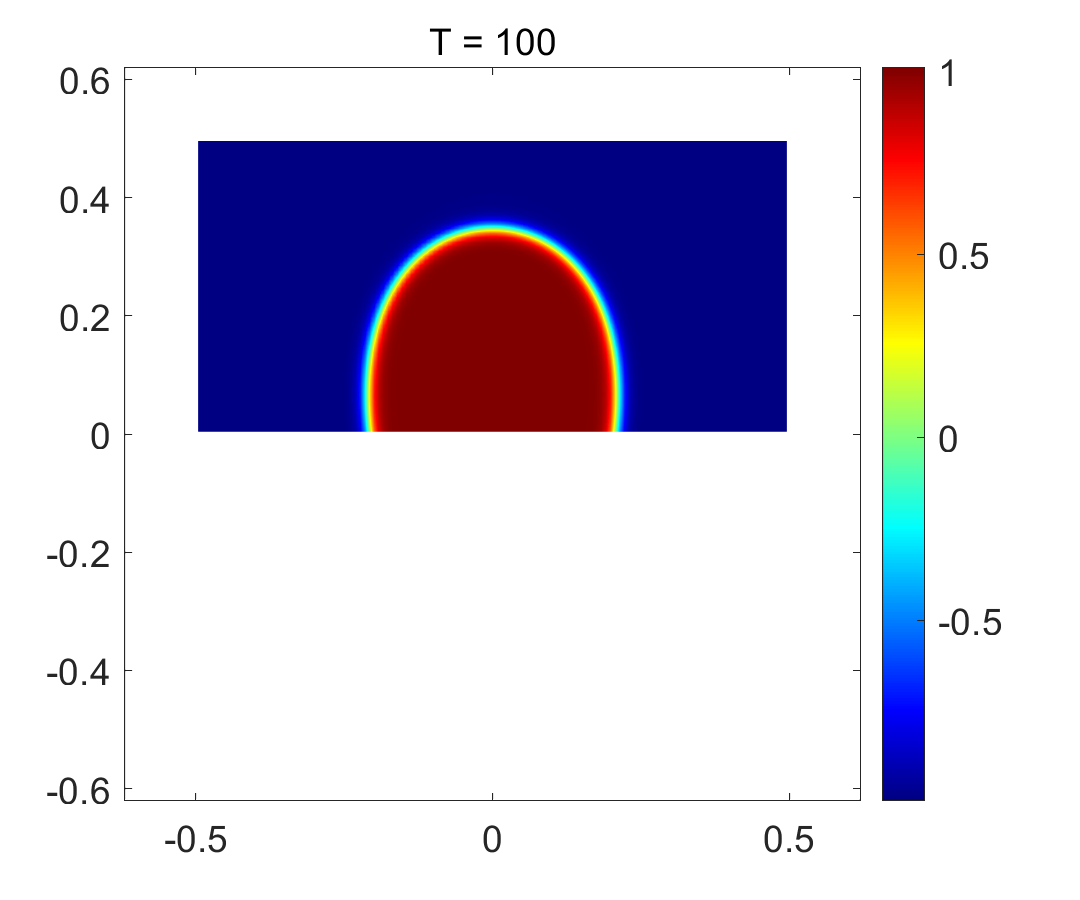}
			\includegraphics[width = 1.2in]{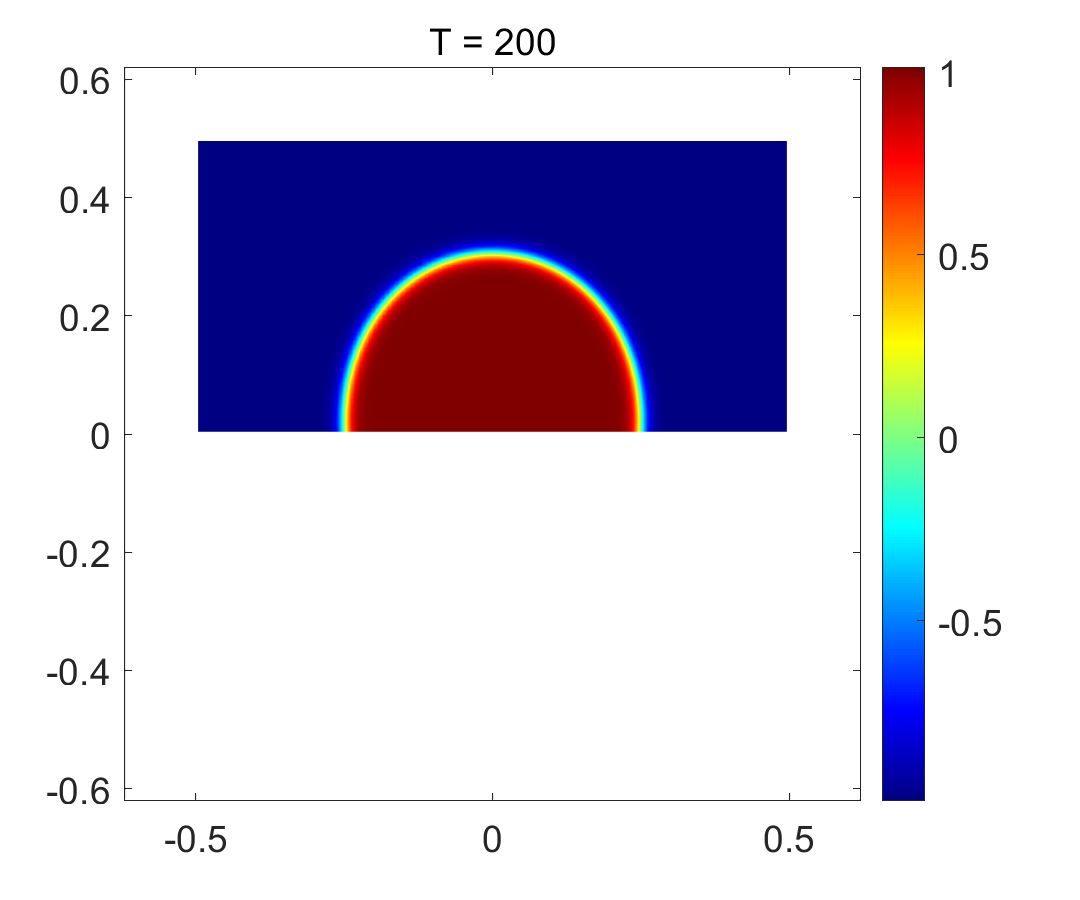}
		}
		\caption{Droplet spreading simulated using the Cahn-Hilliard model and the OPBDE Cahn-Hilliard model. Profiles of $\phi$ and $\tilde{\phi}$ are shown at time instants $T = 10$, $20$, $50$, $100$, $200$ for $\Gamma = 50$. (a) The Cahn-Hilliard model's $\phi$ simulated in $\Omega_1$; (b) The OPBDE Cahn-Hilliard model's $\tilde{\phi}$ simulated in $\Omega$.}
		\label{Droplet spreading2}
	\end{figure}
	
	\begin{figure}[H]
		\centering
		\subfigure[]{
			\includegraphics[width = 1.2in]{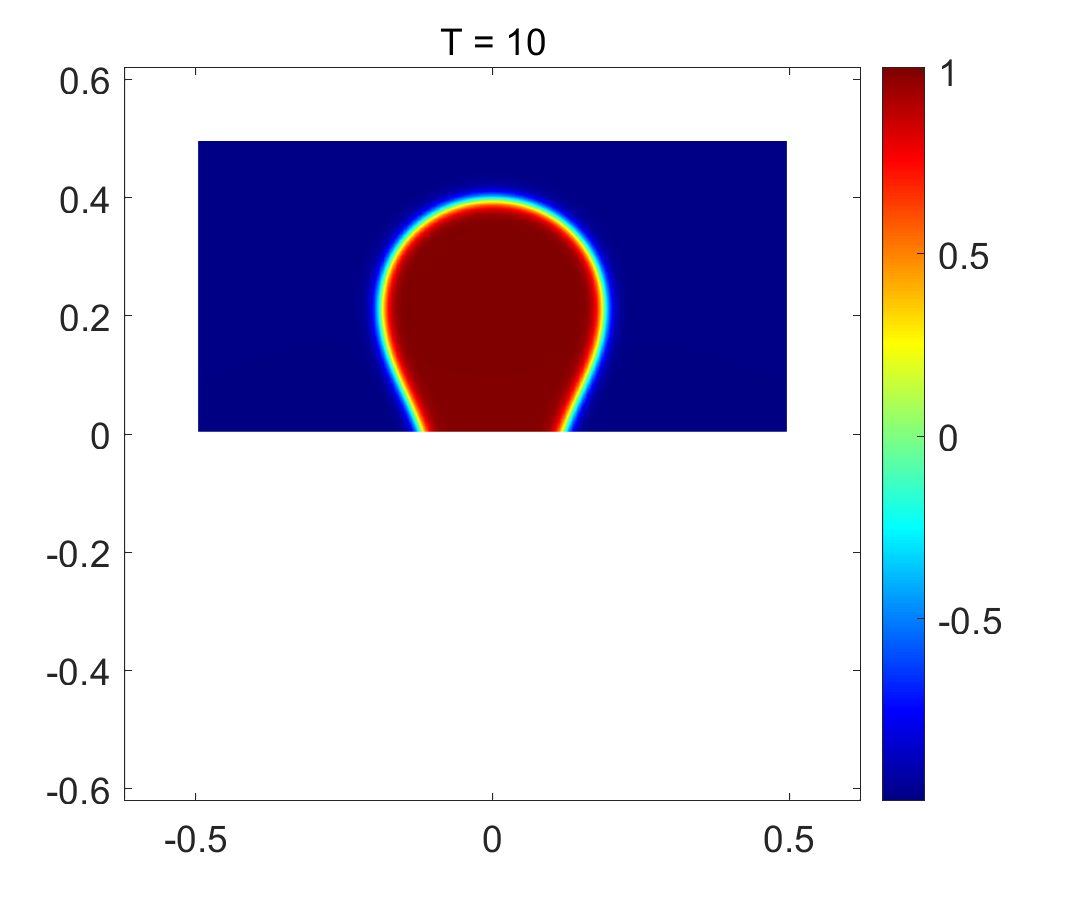}
			\includegraphics[width = 1.2in]{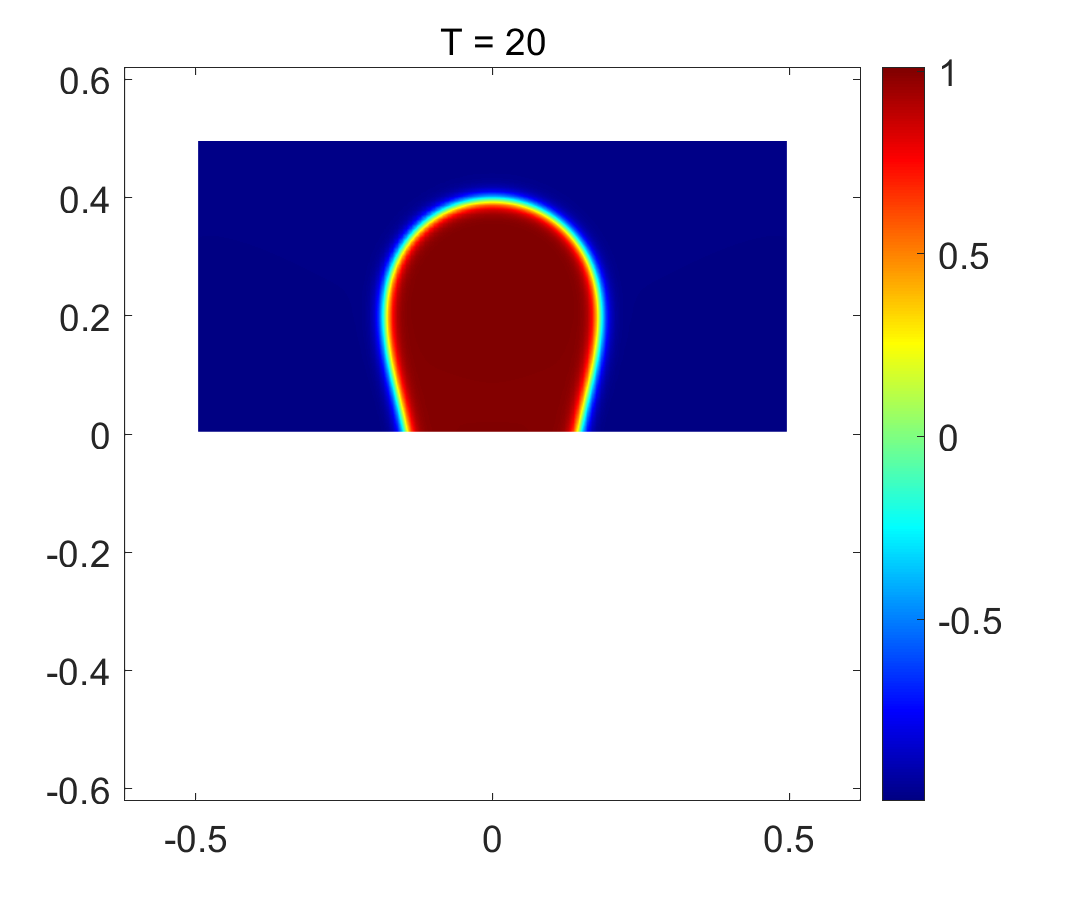}
			\includegraphics[width = 1.2in]{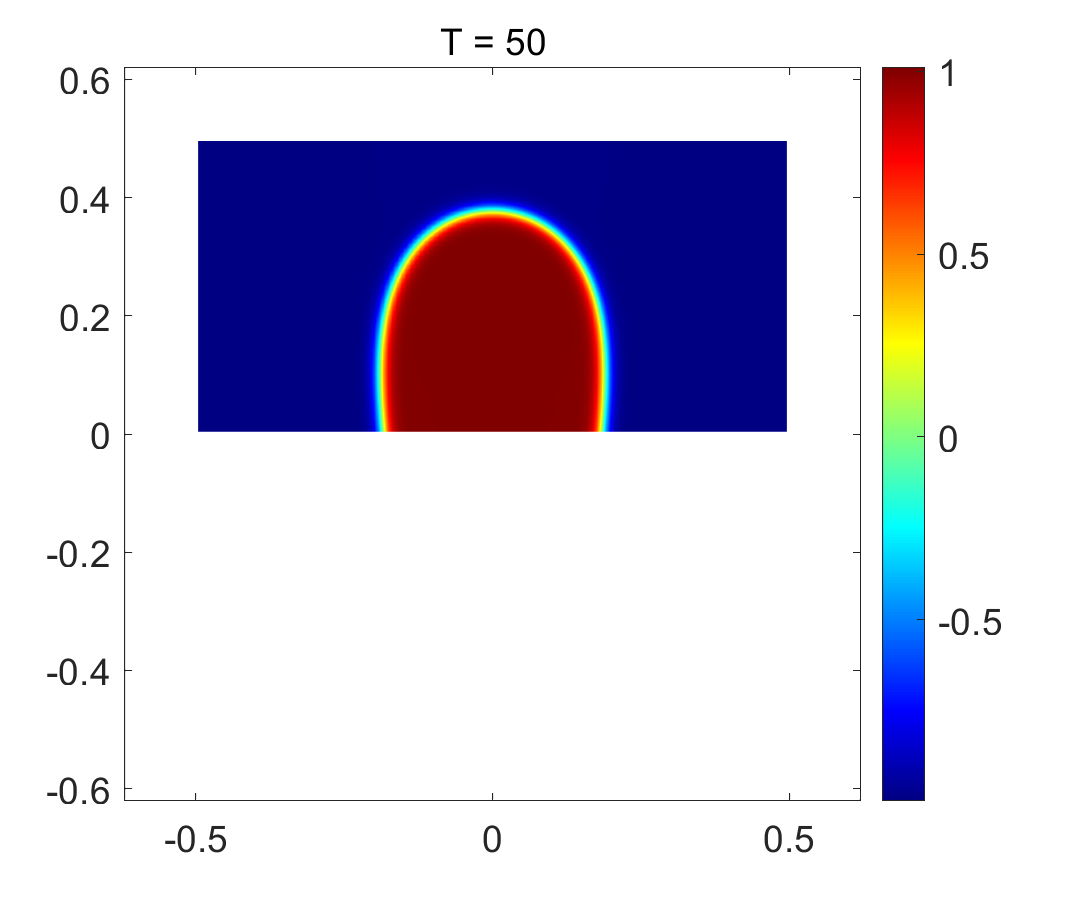}
			\includegraphics[width = 1.2in]{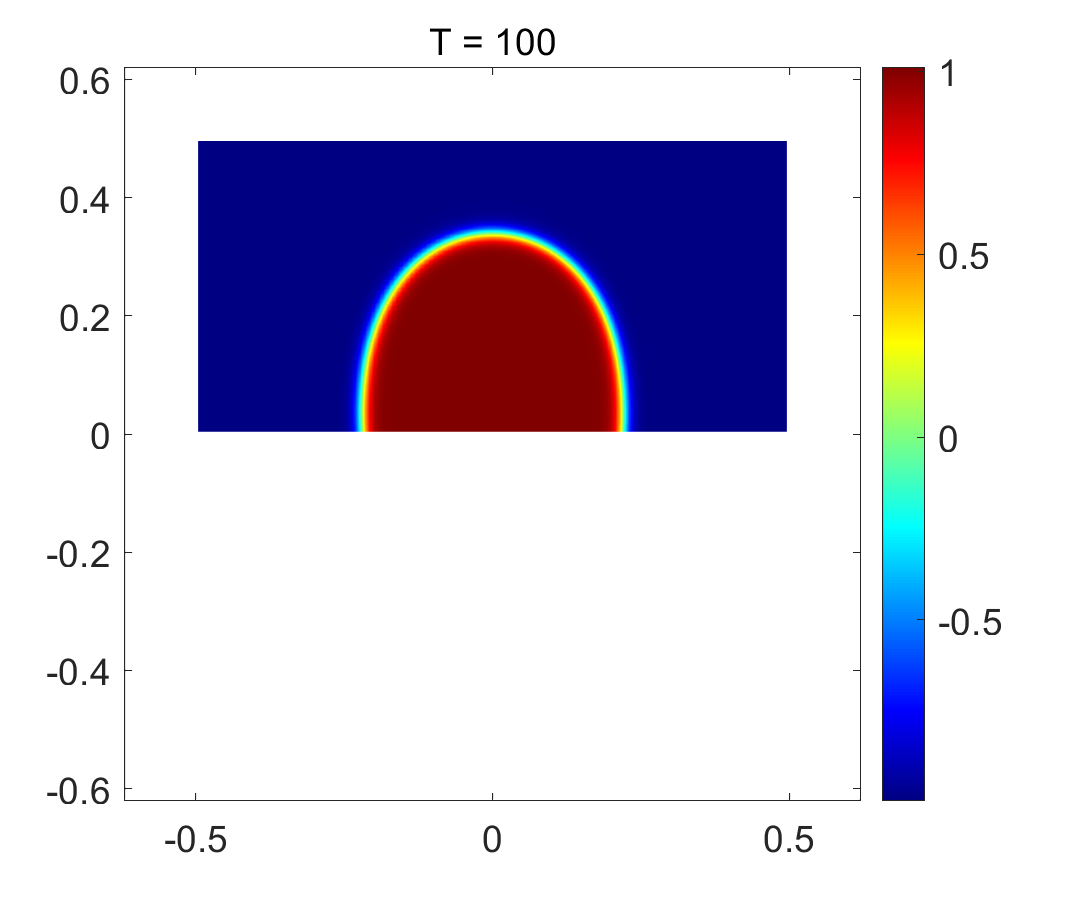}
			\includegraphics[width = 1.2in]{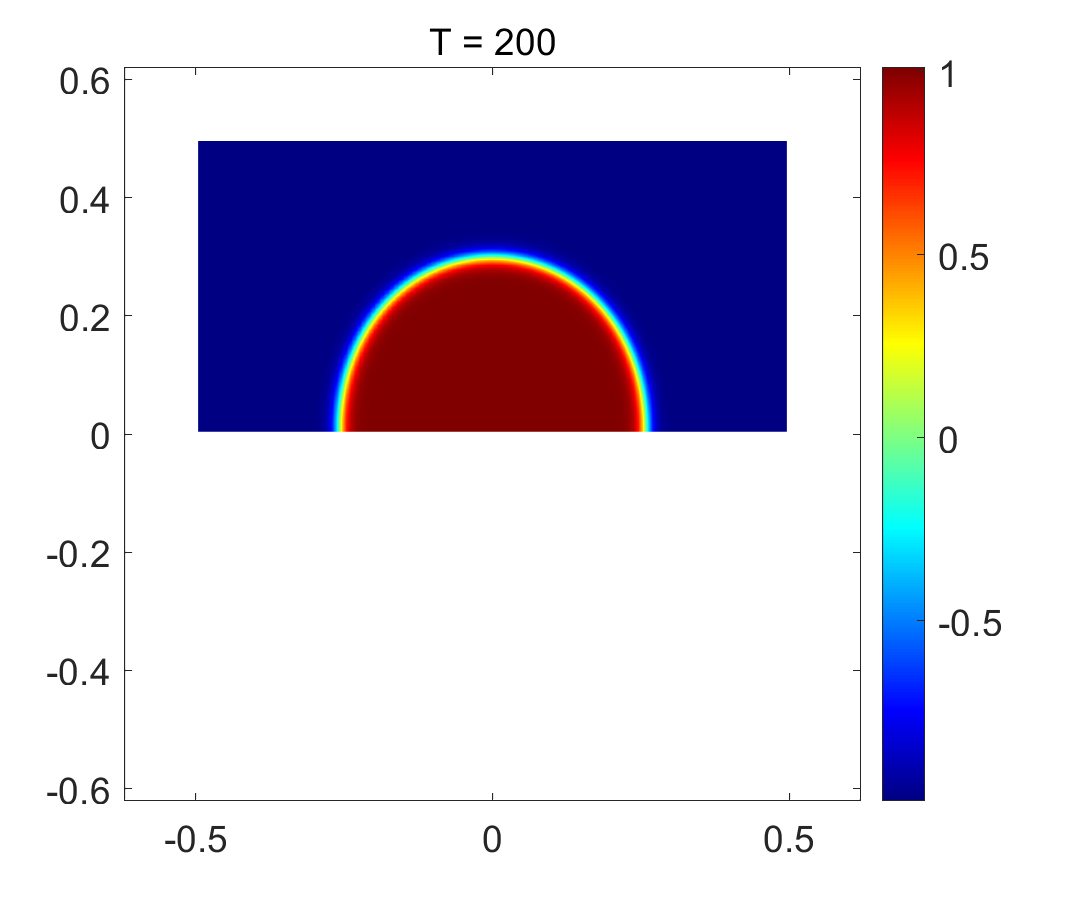}
		}
		\subfigure[]{
			\includegraphics[width = 1.2in]{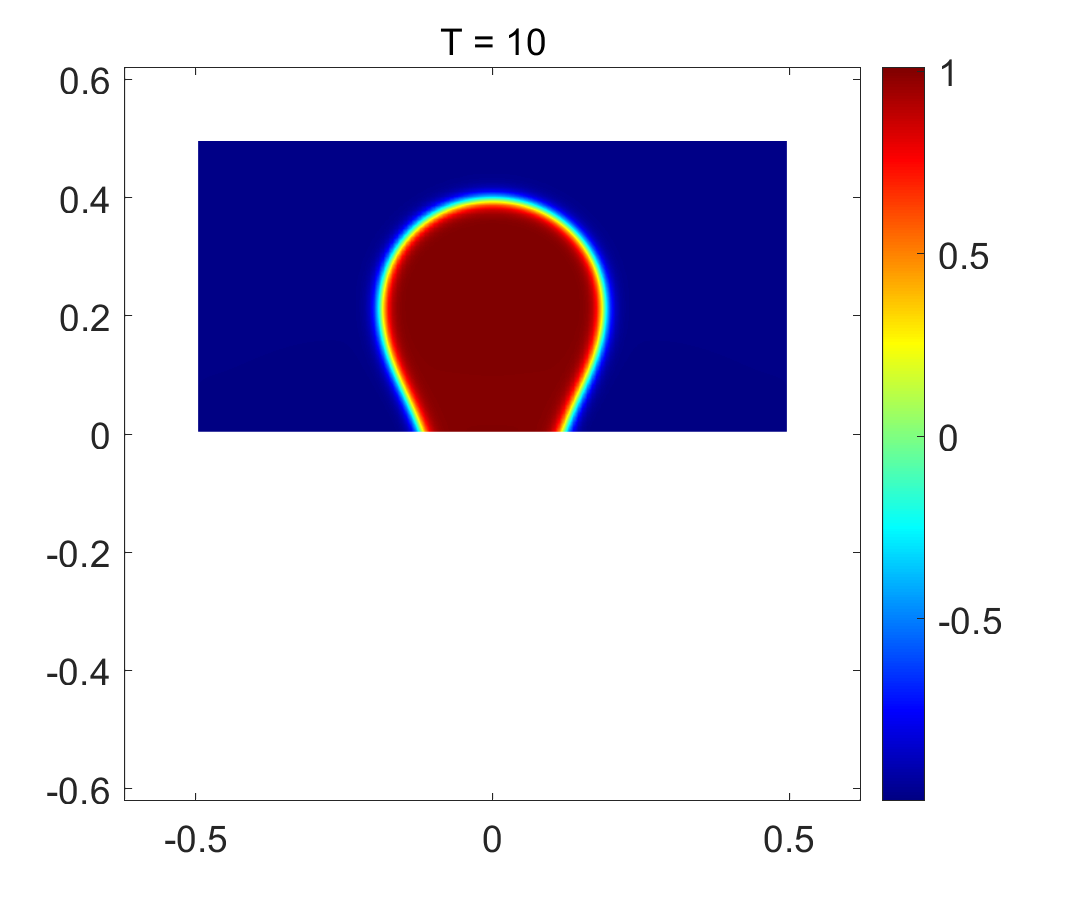}
			\includegraphics[width = 1.2in]{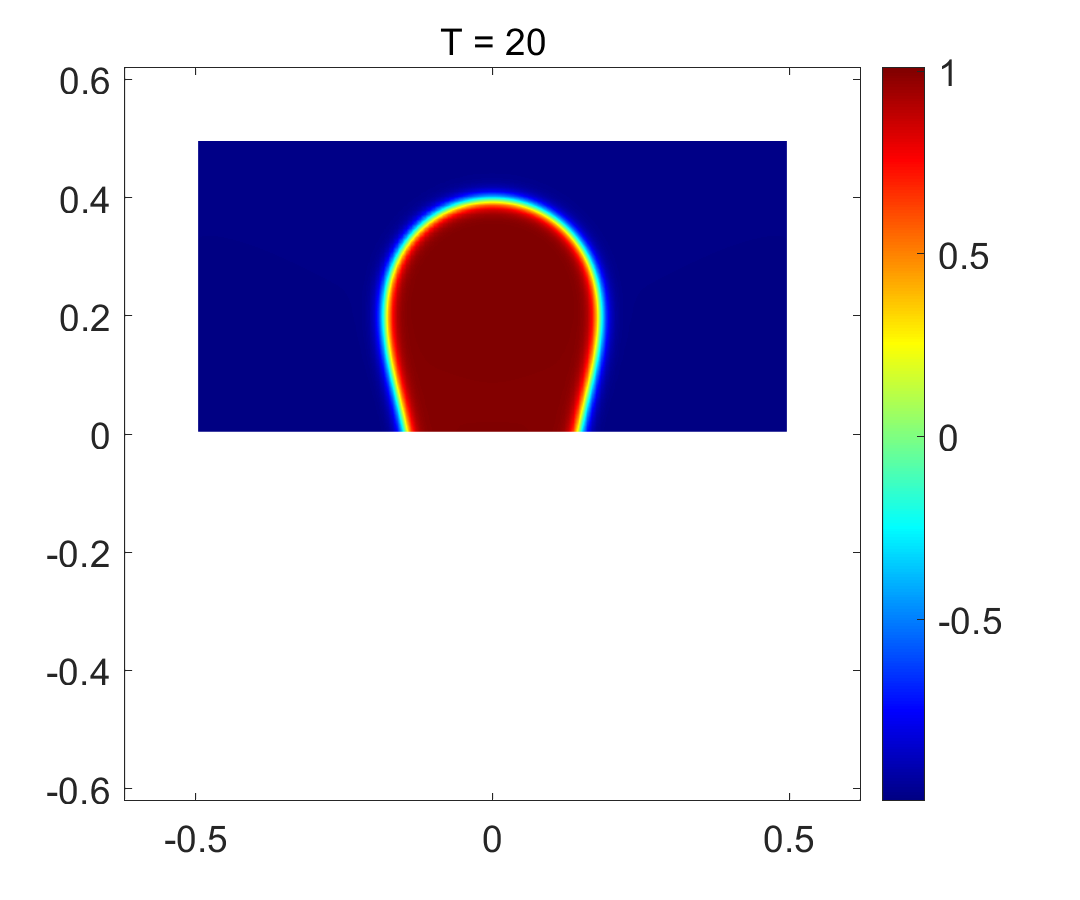}
			\includegraphics[width = 1.2in]{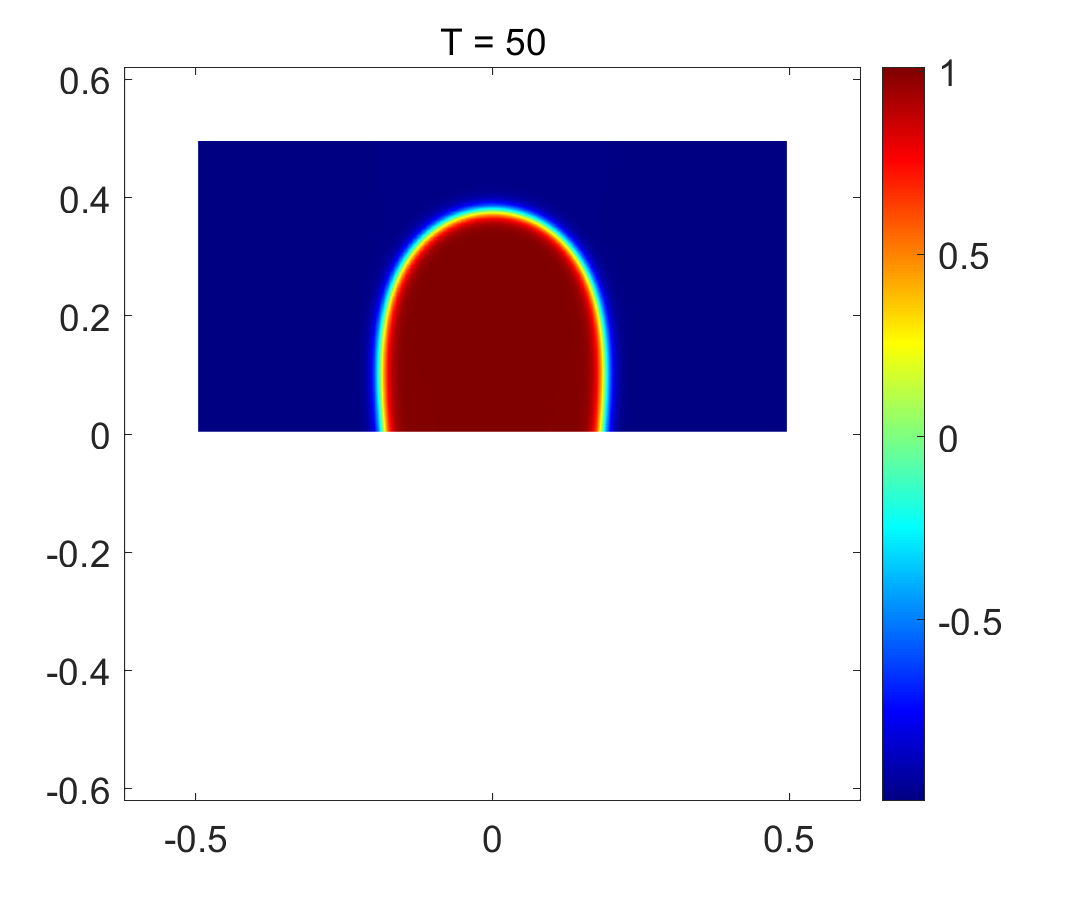}
			\includegraphics[width = 1.2in]{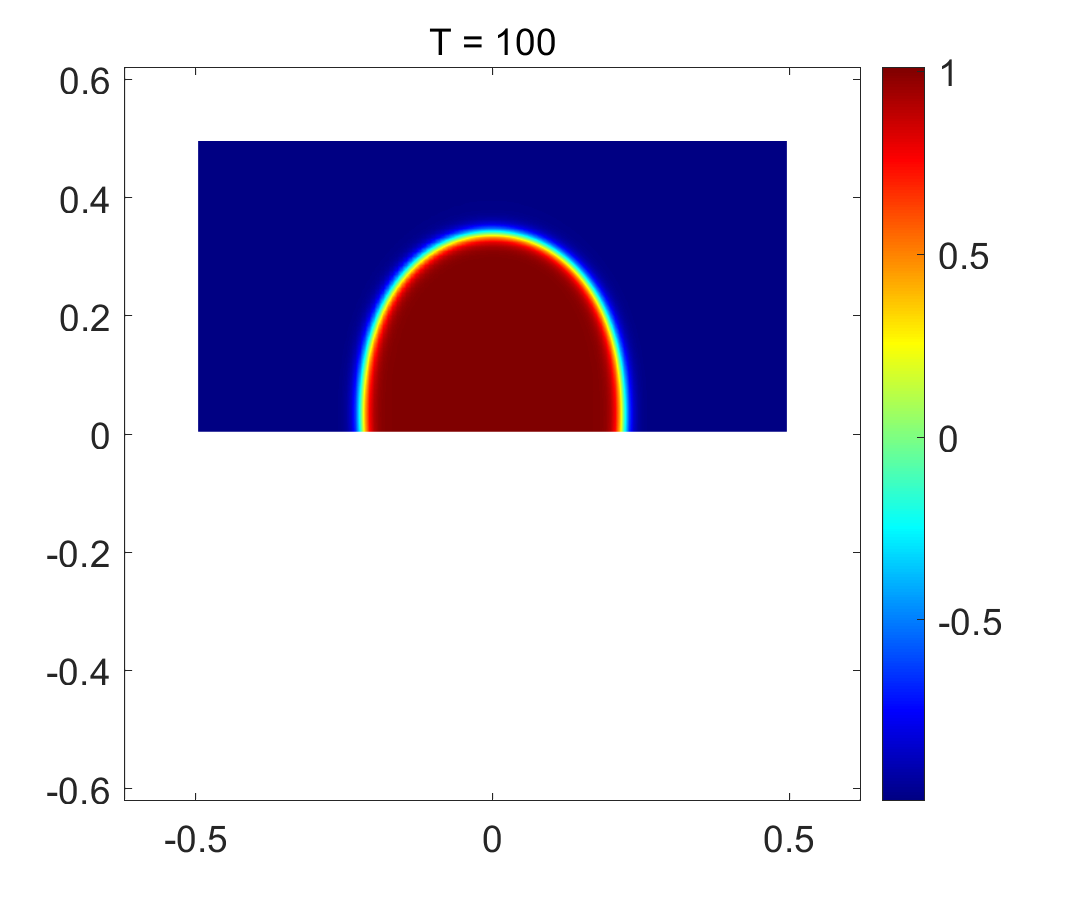}
			\includegraphics[width = 1.2in]{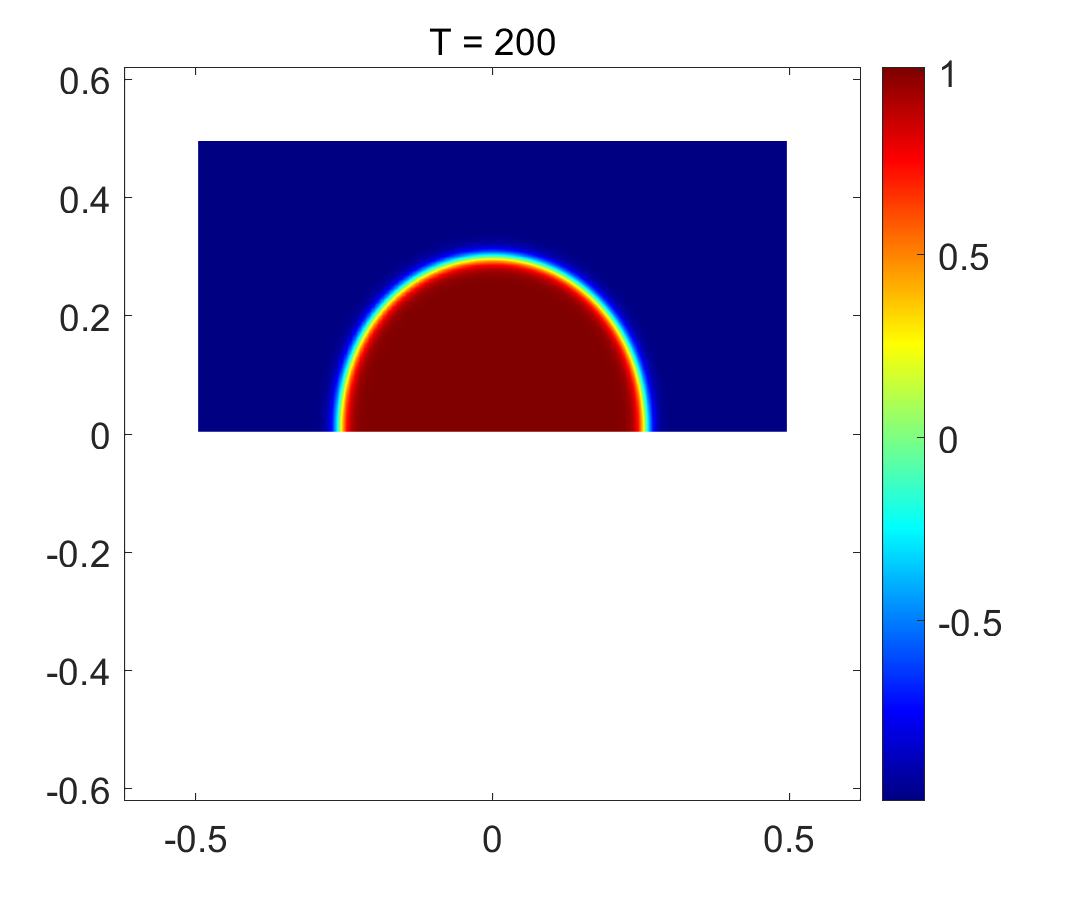}
		}
		\caption{Droplet spreading simulated using the Cahn-Hilliard model and the OPBDE Cahn-Hilliard model. Profiles of $\phi$ and $\tilde{\phi}$ are shown at time instants $T = 10$, $20$, $50$, $100$, $200$ for $\Gamma = 100$. (a) The Cahn-Hilliard model's $\phi$ simulated in $\Omega_1$; (b) The OPBDE Cahn-Hilliard model's $\tilde{\phi}$ simulated in $\Omega$.}
		\label{Droplet spreading3}
	\end{figure}
	
	Figures \ref{Droplet spreading1} to \ref{Droplet spreading3} show the droplet spreading simulated using the Cahn-Hilliard model and the OPBDE Cahn-Hilliard model. For each value of $\Gamma$, the two models produce nearly identical results. Furthermore, an increase in $\Gamma$ leads to a faster relaxation on the substrate with the contact angle getting closer to $90^\circ$.
	
	\begin{table}[H]
		\centering
		\begin{tabular}{cccc}
			\toprule[1.5pt]
			&$\Gamma = 10$ &$\Gamma = 50$ &$\Gamma = 100$ \\
			\midrule[1pt]
			T = 10 & $2.1861\times 10^{-5}$ & $1.8654\times 10^{-5}$ & $8.8009\times 10^{-6}$\\
			T = 20 & $3.7552\times 10^{-5}$ & $1.8201\times 10^{-5}$ & $2.4684\times 10^{-5}$\\
			T = 50 & $9.7414\times 10^{-5}$ & $8.1467\times 10^{-5}$ & $7.7564\times 10^{-5}$\\
			T = 100 &$2.2358\times 10^{-4}$ & $8.5598\times 10^{-5}$ & $5.6424\times 10^{-5}$\\
			T = 200 &$4.5011\times 10^{-4}$ & $3.1299\times 10^{-4}$ & $2.8396\times 10^{-4}$\\
			\bottomrule[1.5pt]
		\end{tabular}
		\caption{Errors in $L^2$ norm for the OPBDE Cahn-Hilliard model results versus the original Cahn-Hilliard model results at time instants $T = 10$, $20$, $50$, $100$, $200$ for $\Gamma = 10$, $50$, $100$.}
		\label{Error2}
	\end{table}
	
	Table \ref{Error2} presents the $L^2$ error between the Cahn-Hilliard model and the OPBDE Cahn-Hilliard model. In the presence of boundary relaxation at finite $\Gamma$, the greatest error is on the order of $10^{-4}$ at $T = 200$. Considering that the terminal time instant here is $T = 200$ compared to the previous results for the terminal time instant $T = 0.1$ in Table \ref{Error1}, we note that there is some accumulation of error in time but still within an acceptable range.
	
	\begin{figure}[H]
		\centering
		\subfigure[]{\includegraphics[width = 0.3 \textwidth]{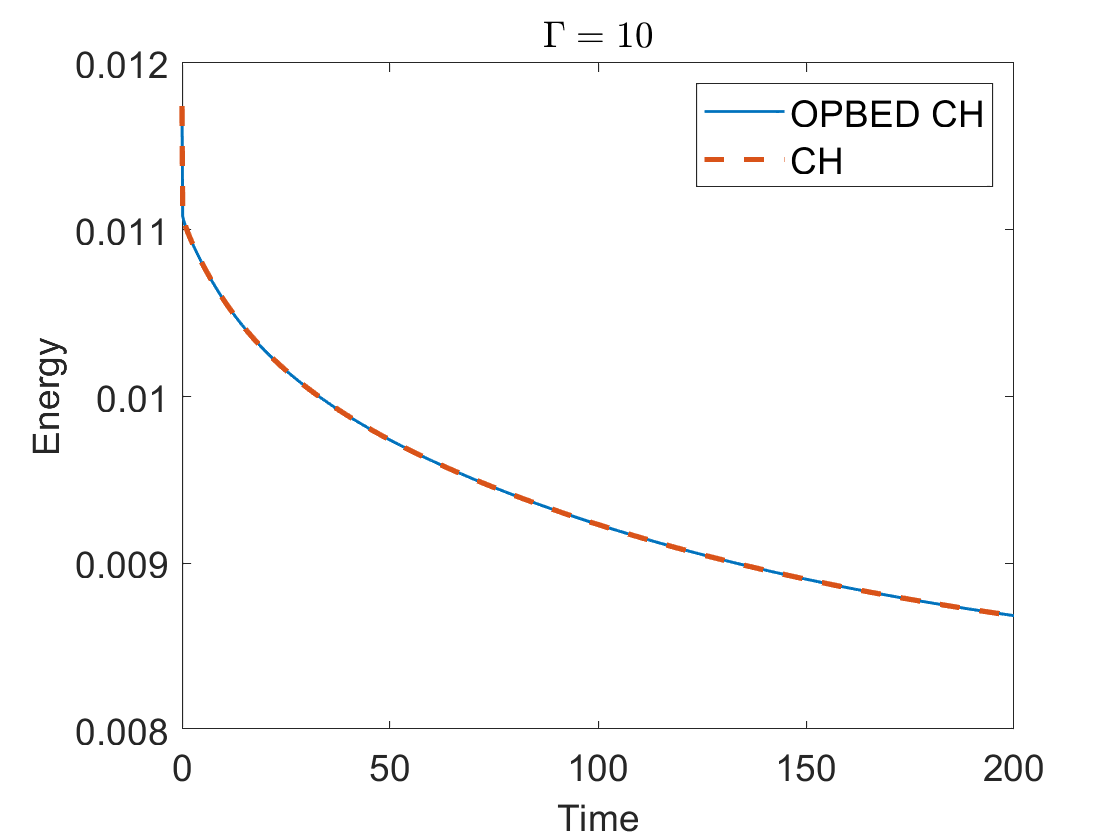}}
		\subfigure[]{\includegraphics[width = 0.3 \textwidth]{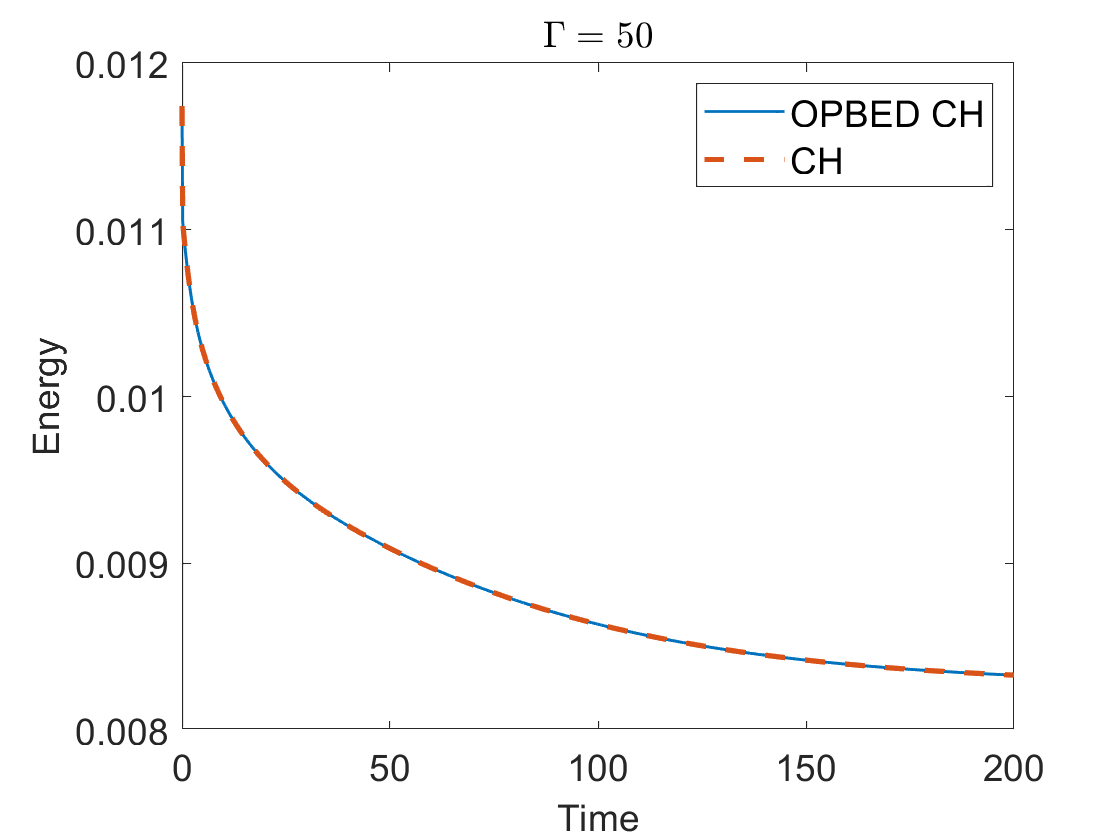}}
		\subfigure[]{\includegraphics[width = 0.3 \textwidth]{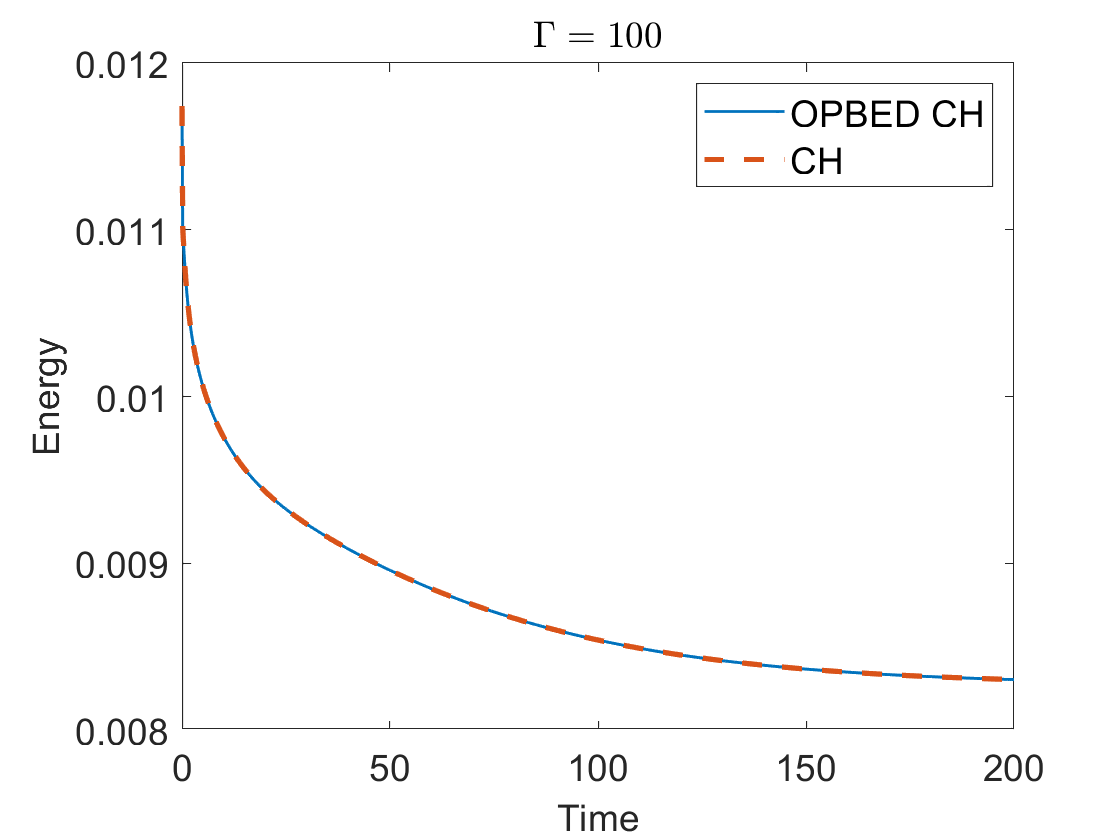}}
		\caption{Comparison of energy evolution between the Cahn-Hilliard model and the OPBDE Cahn-Hilliard model. (a) $\Gamma = 10$; (b) $\Gamma = 50$; (c) $\Gamma = 100$. Larger $\Gamma$ leads to faster energy decay. }
		\label{Energy2}
	\end{figure}
	
	\begin{figure}[H]
		\centering
		\subfigure[]{
			\includegraphics[width = 0.3 \textwidth]{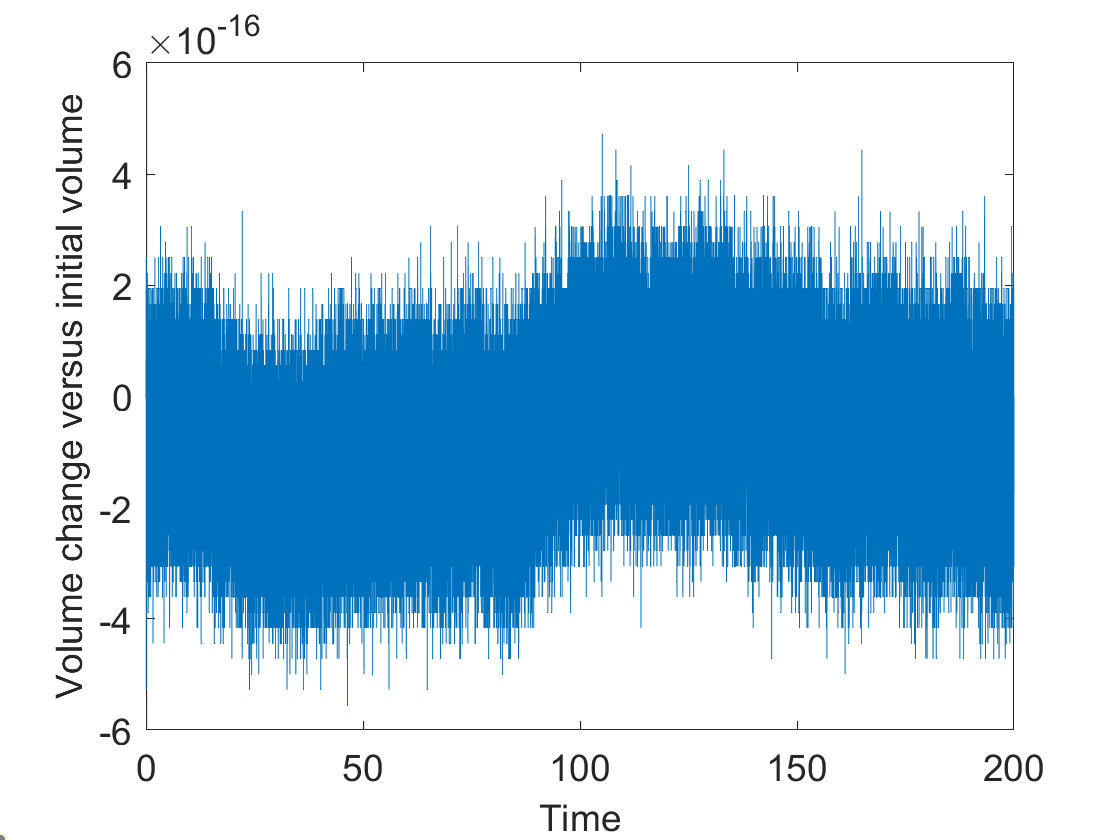}
			\includegraphics[width = 0.3 \textwidth]{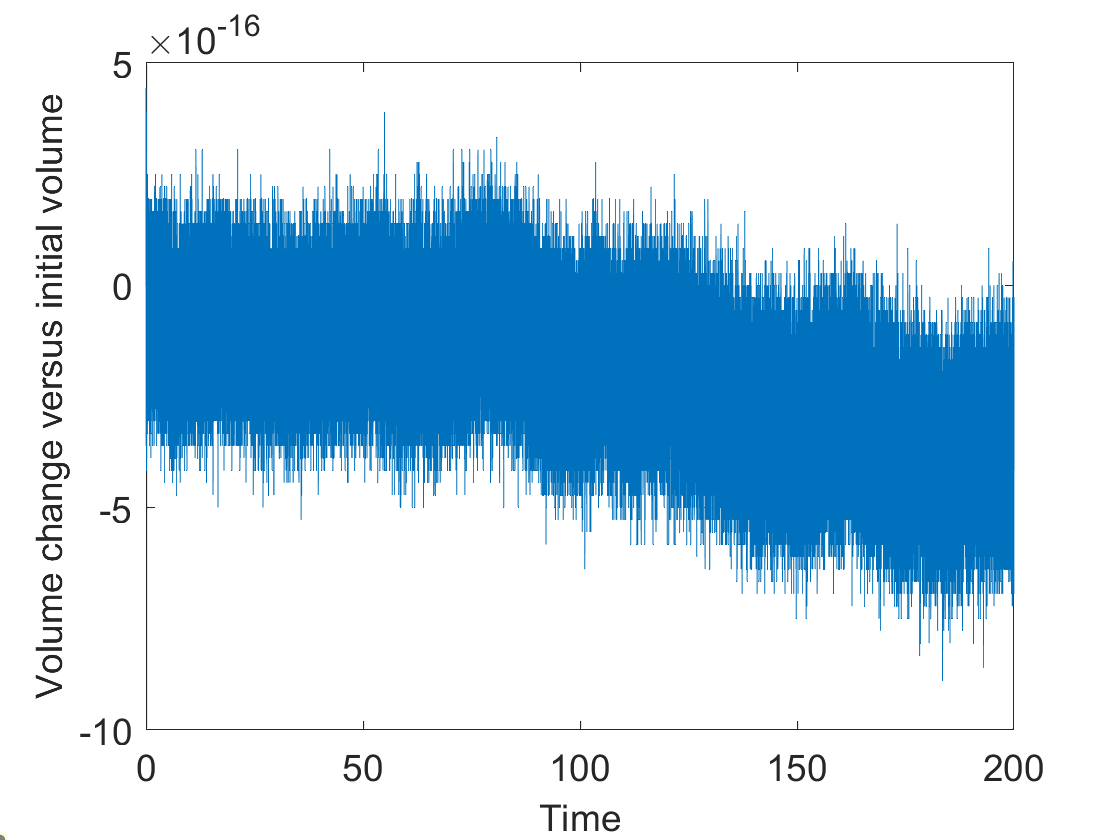}
			\includegraphics[width = 0.3 \textwidth]{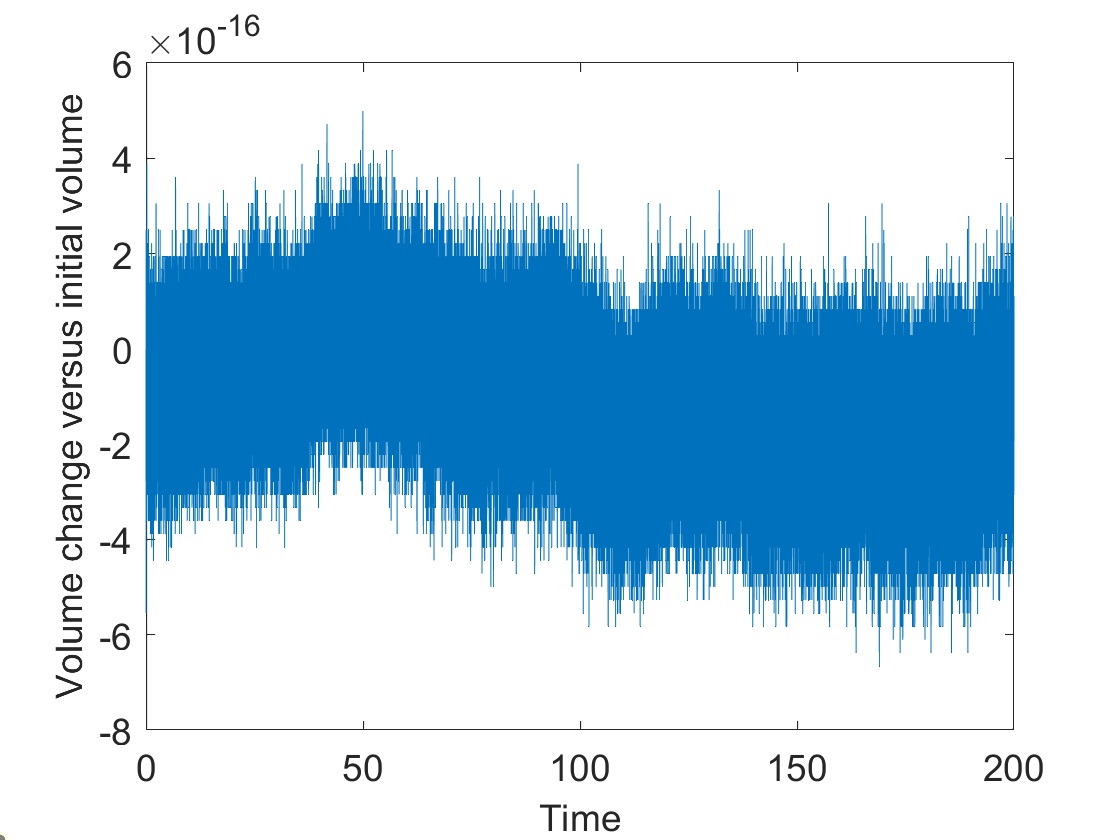}
		}
		
		\subfigure[]{
			\includegraphics[width = 0.3 \textwidth]{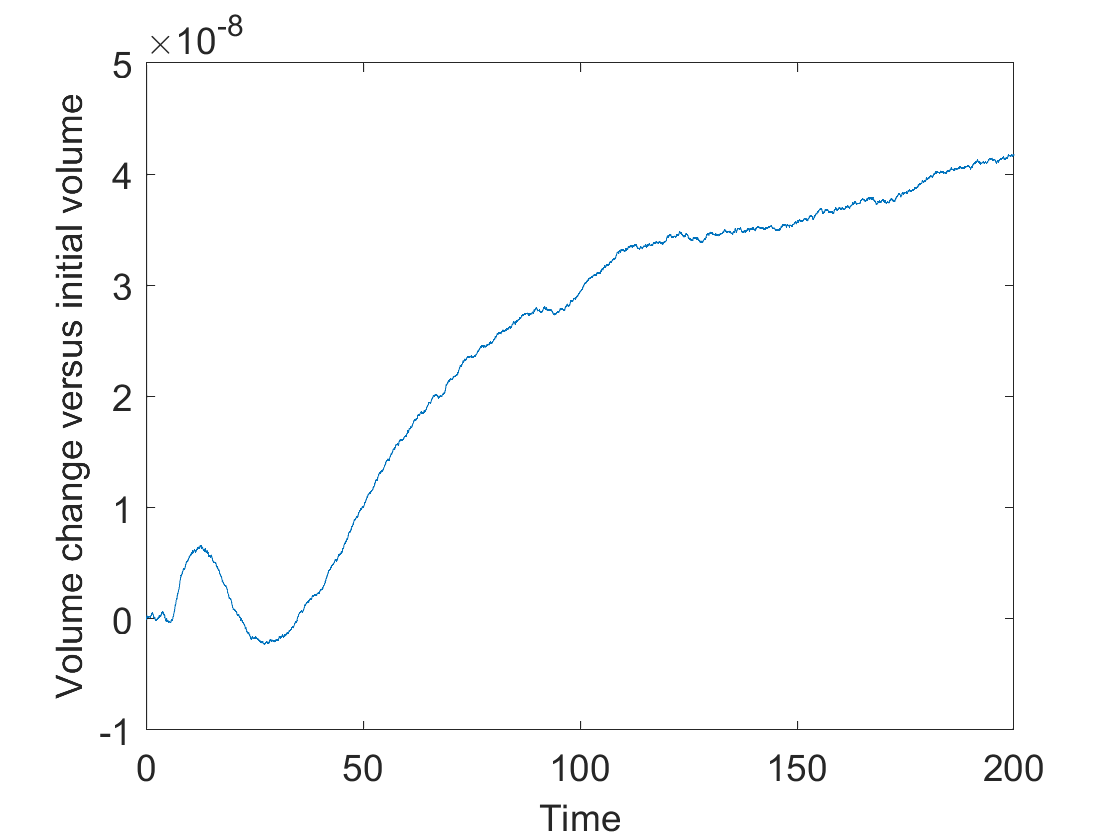}
			\includegraphics[width = 0.3 \textwidth]{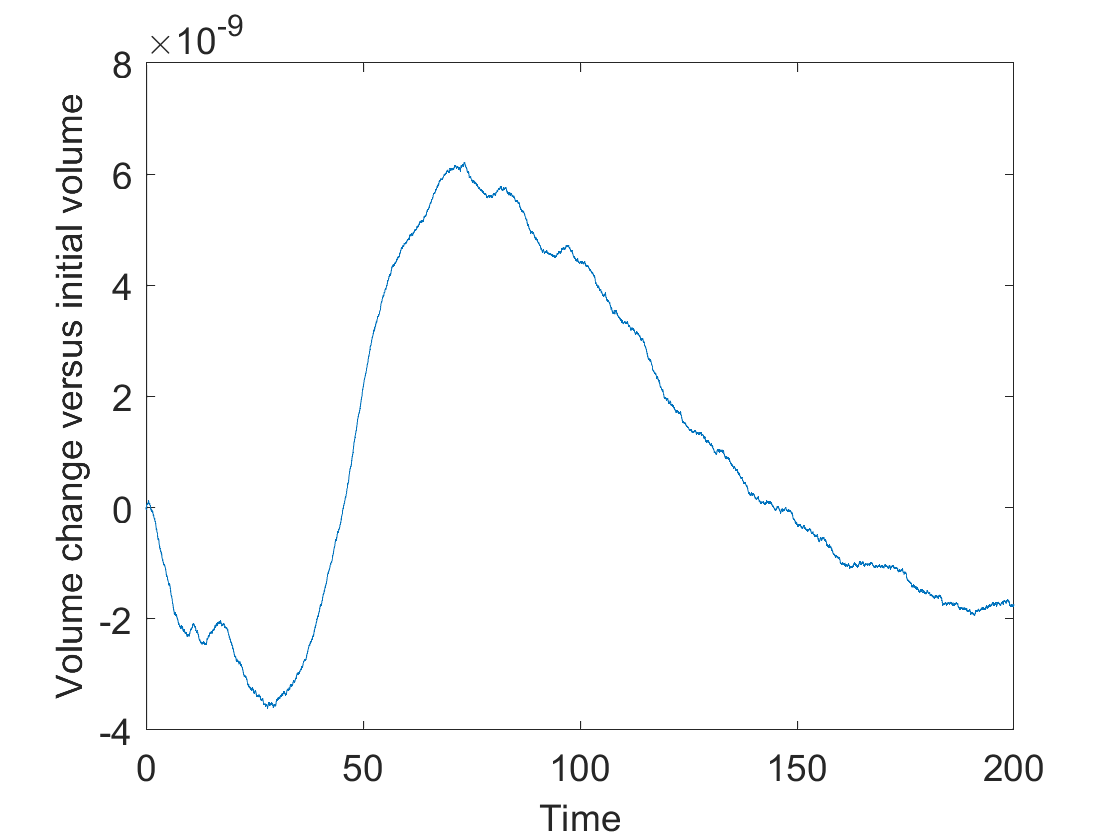}
			\includegraphics[width = 0.3 \textwidth]{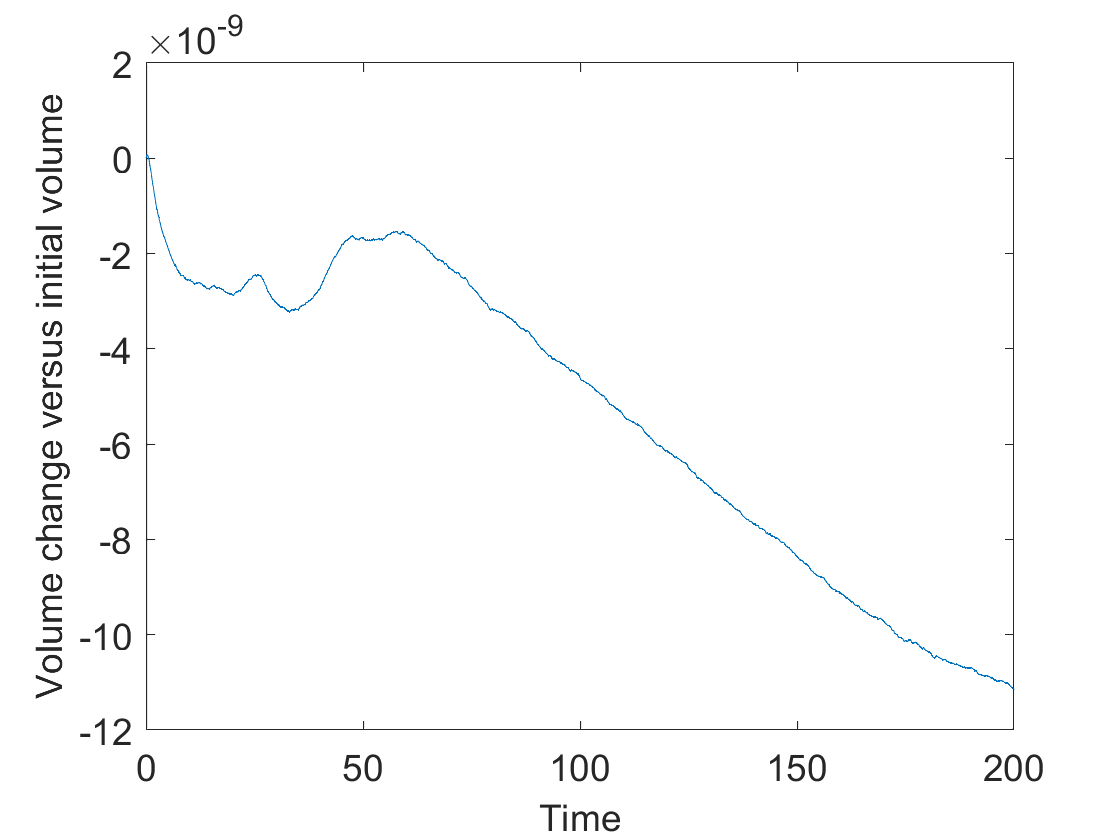}
		}
		\caption{Volume change versus initial volume as time proceeds. (a) The Cahn-Hilliard model; (b) The OPBDE Cahn-Hilliard model. From the left to the right are results for $\Gamma = 10$, $50$, $100$.}
		\label{Volume2}
	\end{figure}
	
	Figures \ref{Energy2} and \ref{Volume2} show the energy decay and volume conservation produced from the Cahn-Hilliard model and the OPBDE Cahn-Hilliard model for $\Gamma = 10$, $50$, $100$. Remarkable agreement between the two models is obtain for the energy decay. As to the volume change, the OPBDE Cahn-Hilliard model produces results within an acceptable range. These are consistent with theorems \ref{Thm1} and \ref{Thm2}.
	
	Finally we simulate the drop spreading on a curved substrate, where the shape of a droplet deforms to some extent in response to the shape of the substrate. All parameter values remain the same except for the profile of $\psi$ which describes the curved substrate. Simulations are performed using the OPBDE Cahn-Hilliard model for $\Gamma = 20$, $50$, $100$, respectively.
	
	\begin{figure}[H]
		\centering
		\includegraphics[width = 1.2in]{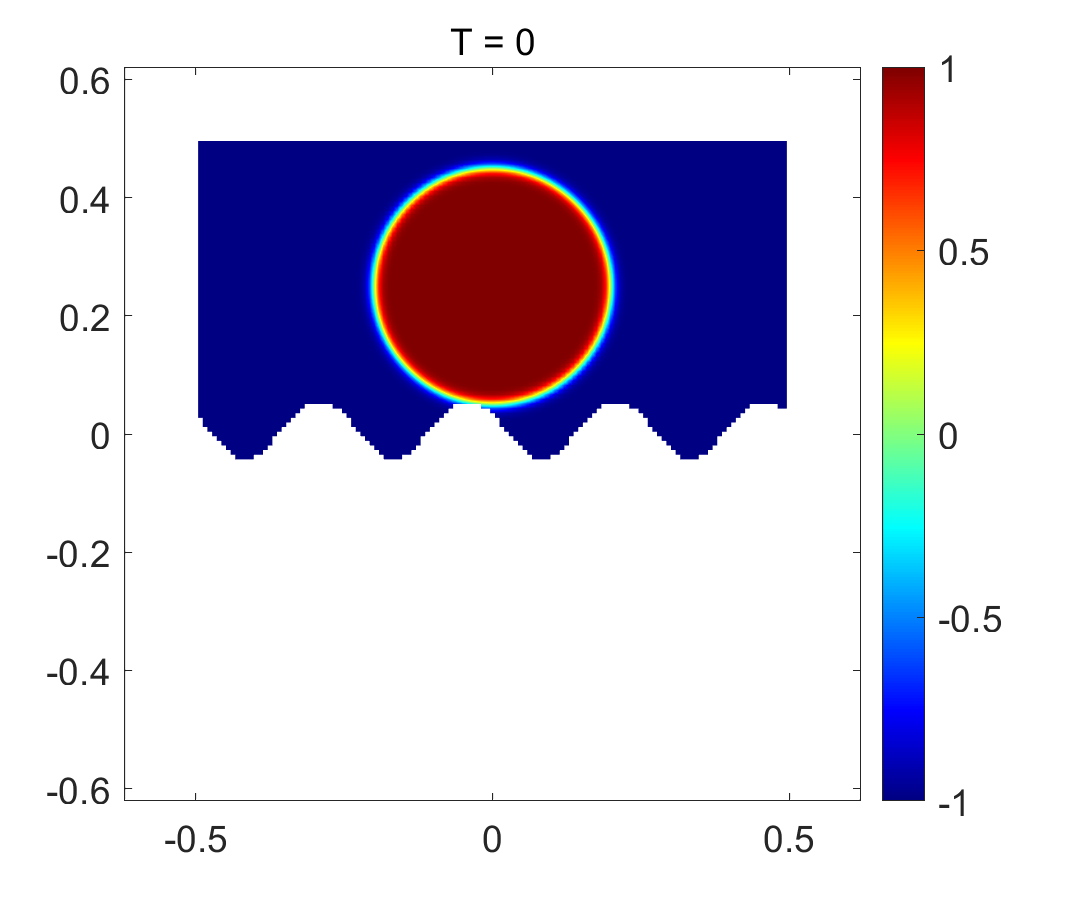}
		\includegraphics[width = 1.2in]{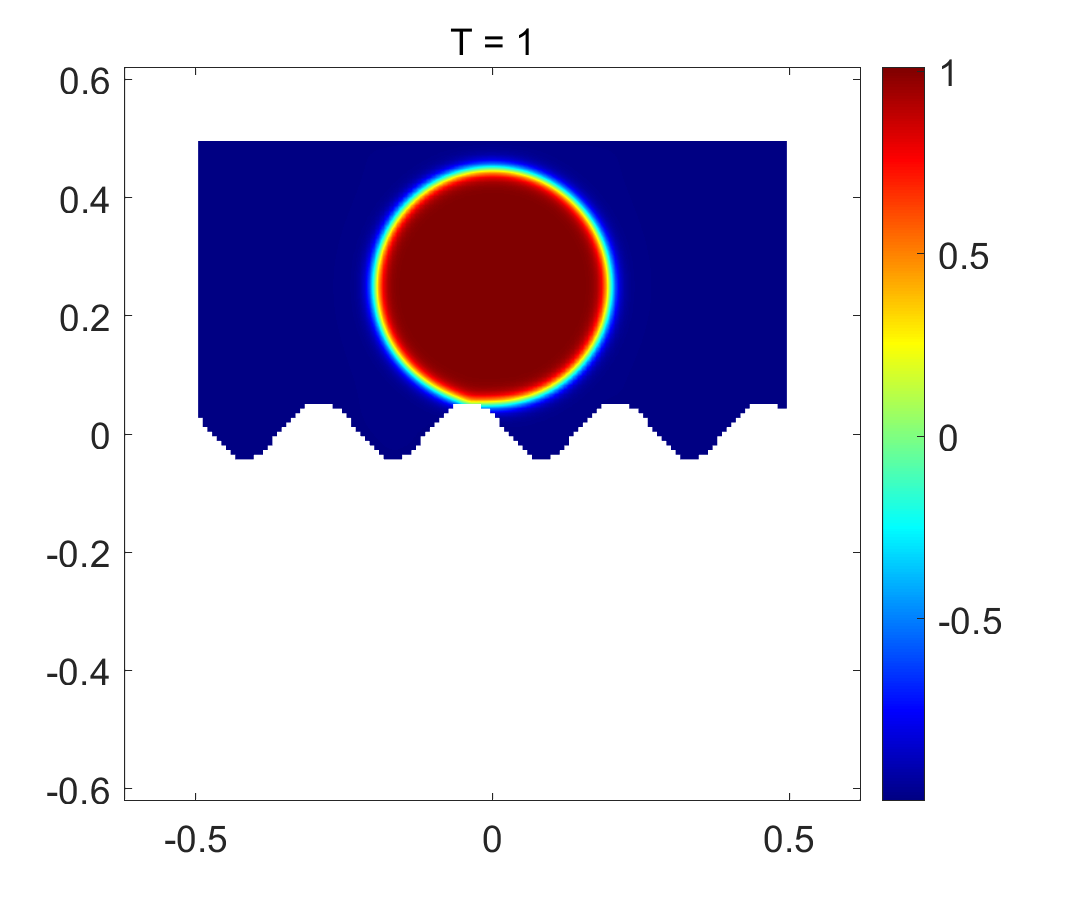}
		\includegraphics[width = 1.2in]{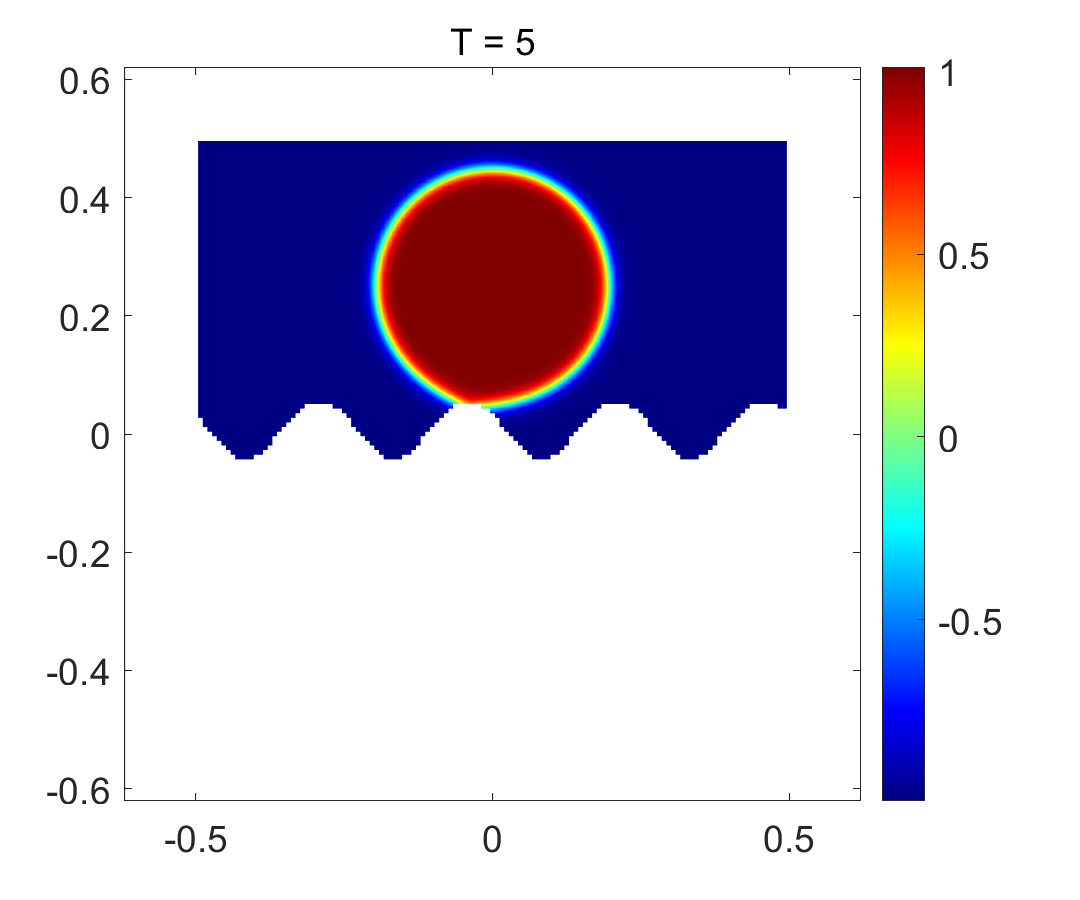}
		\includegraphics[width = 1.2in]{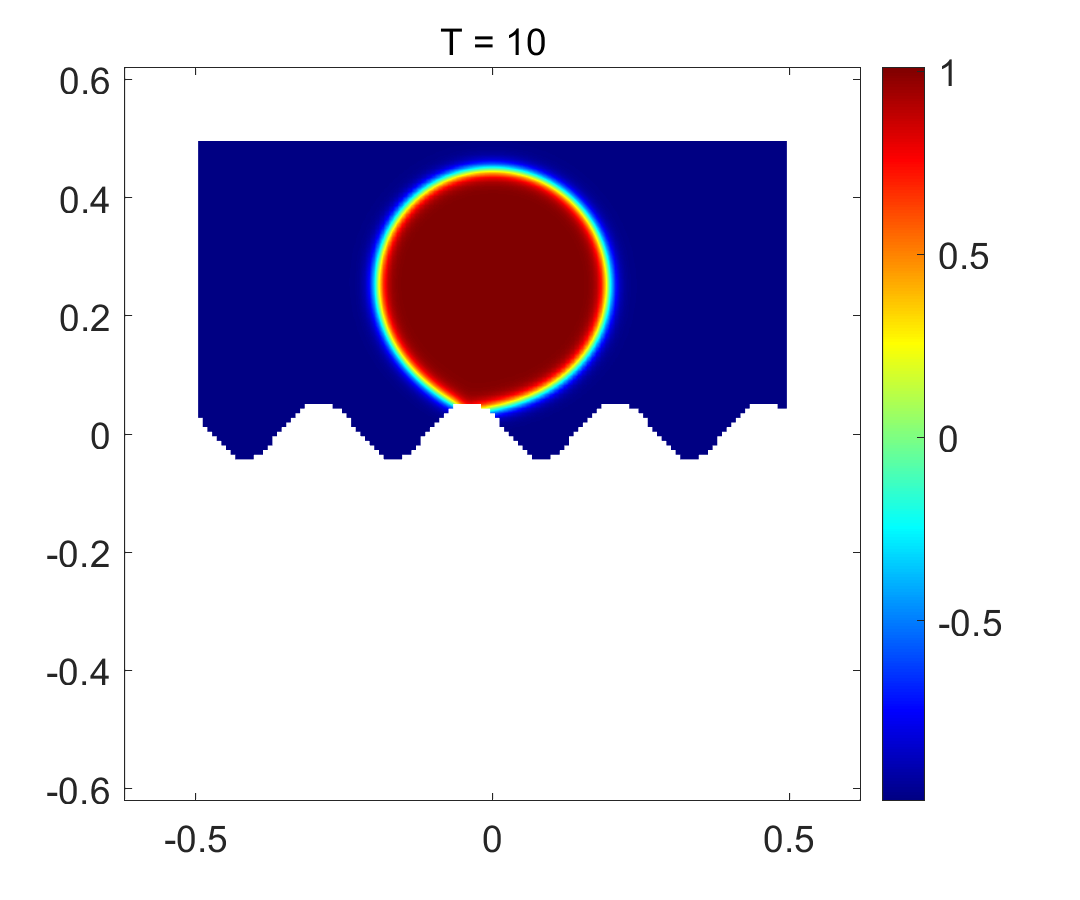}
		\includegraphics[width = 1.2in]{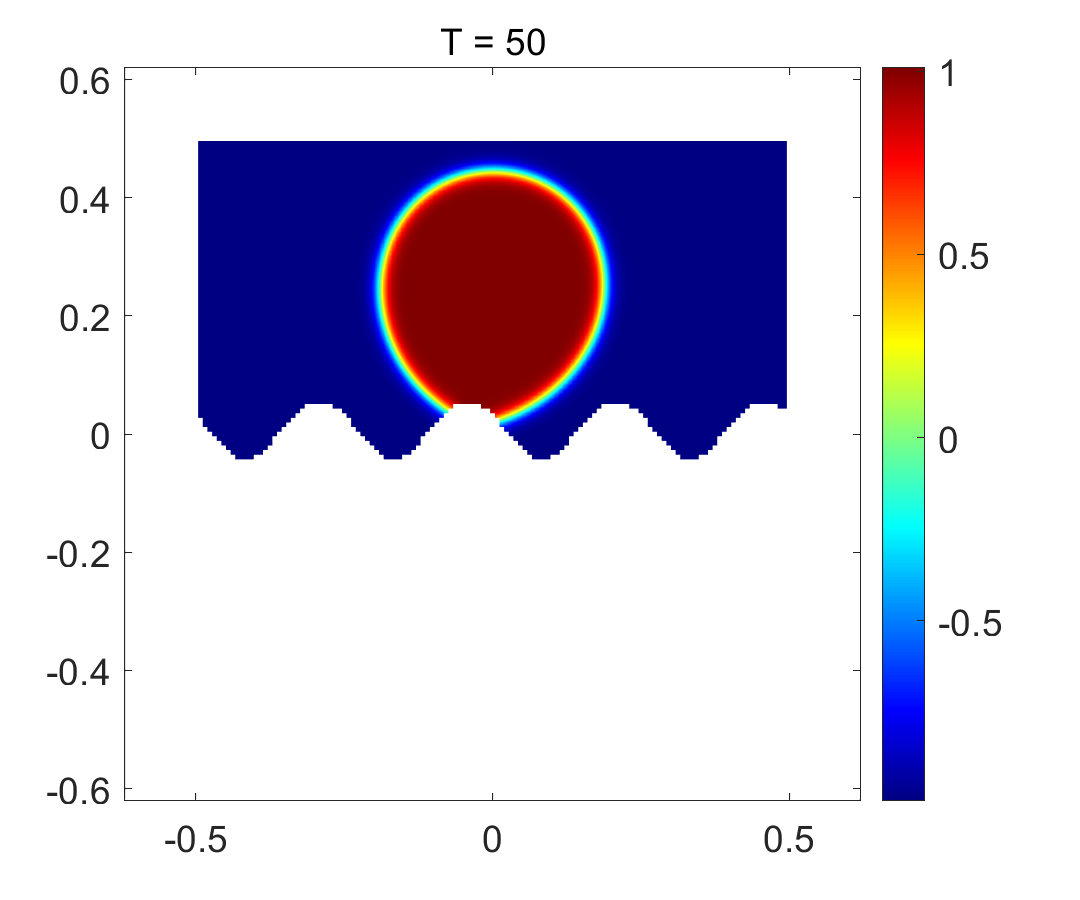}
		
		\includegraphics[width = 1.2in]{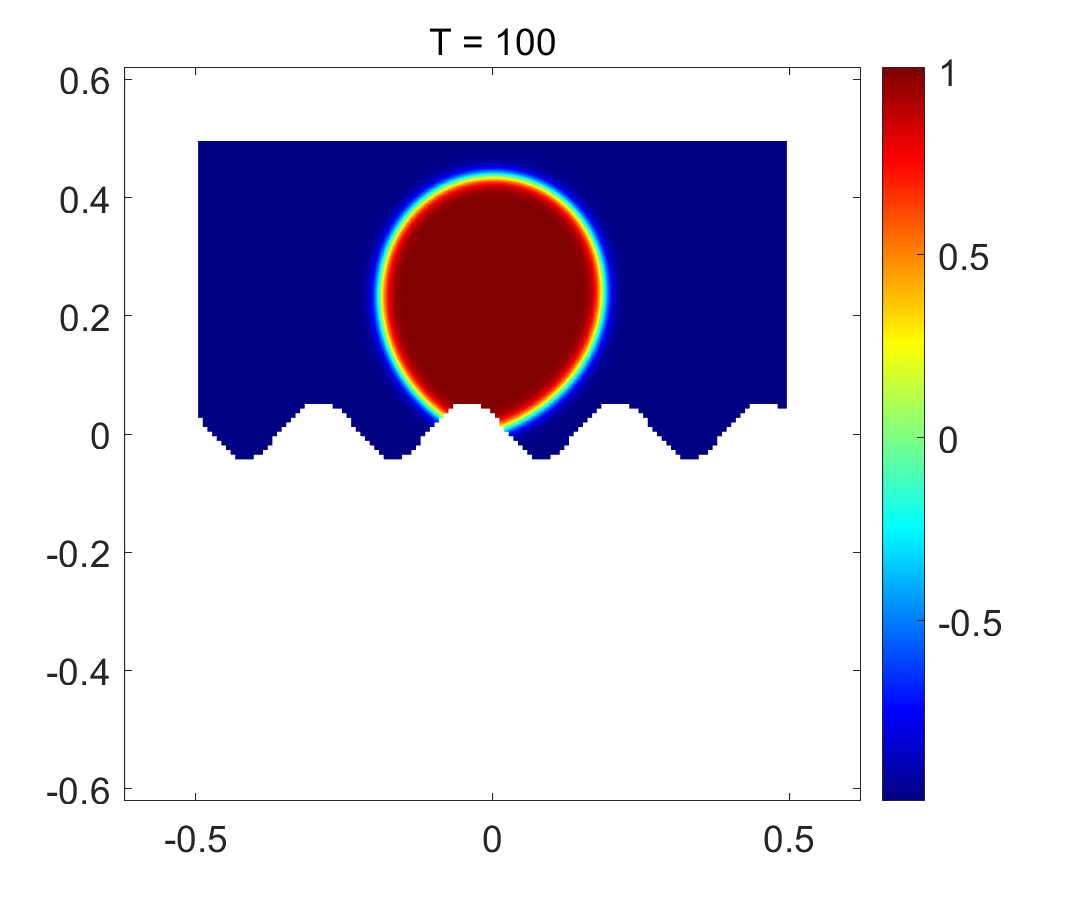}
		\includegraphics[width = 1.2in]{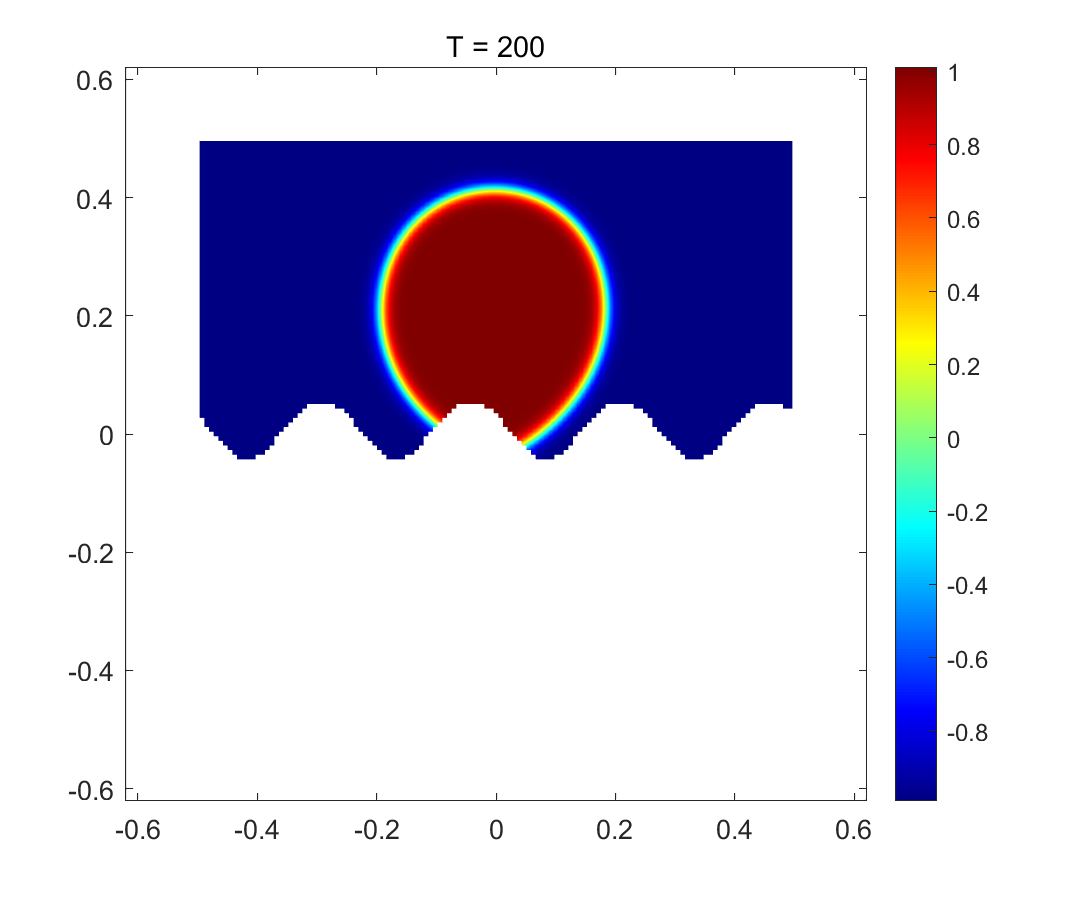}
		\includegraphics[width = 1.2in]{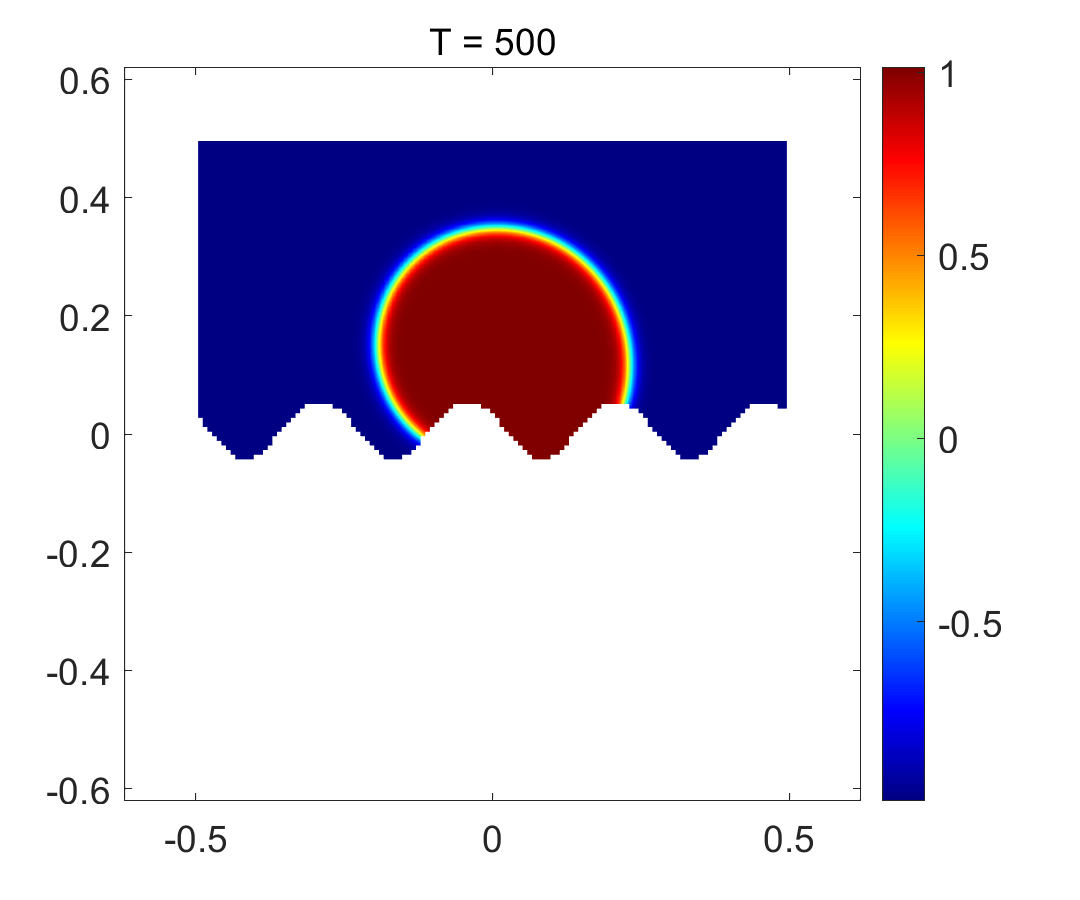}
		\includegraphics[width = 1.2in]{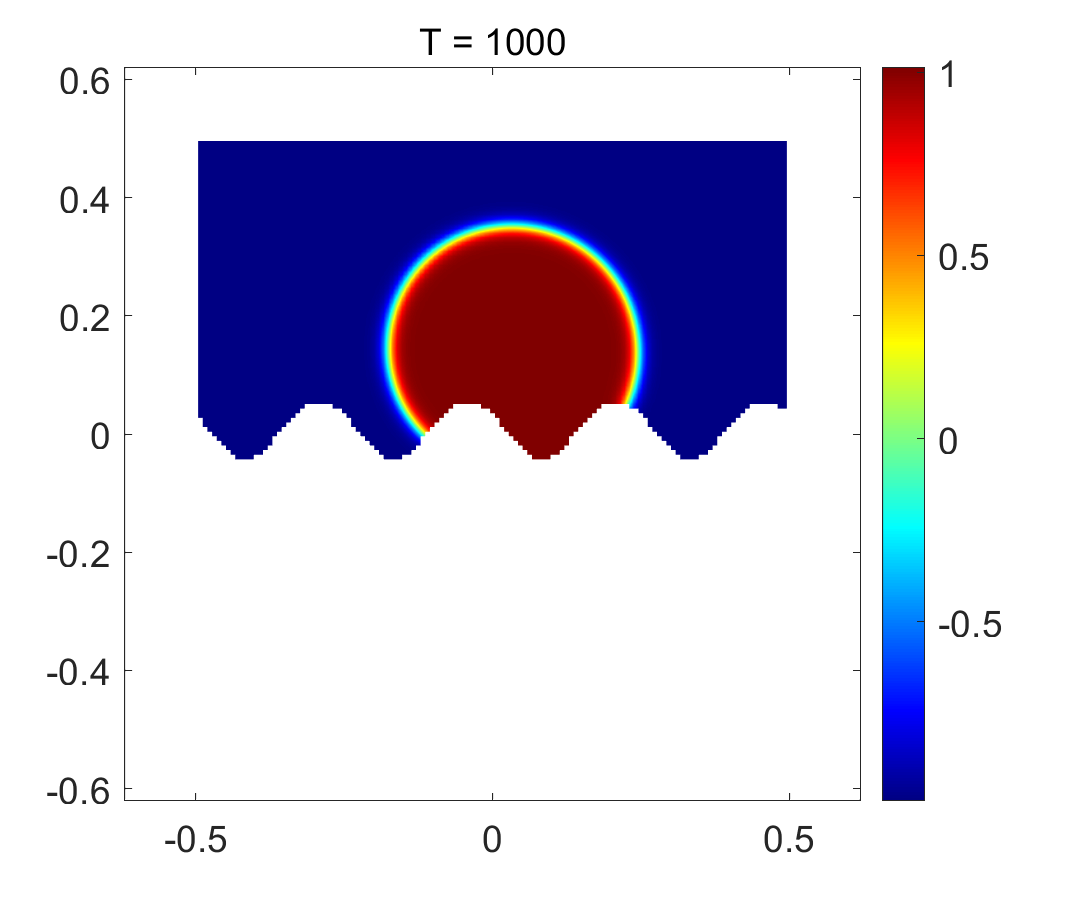}
		\includegraphics[width = 1.2in]{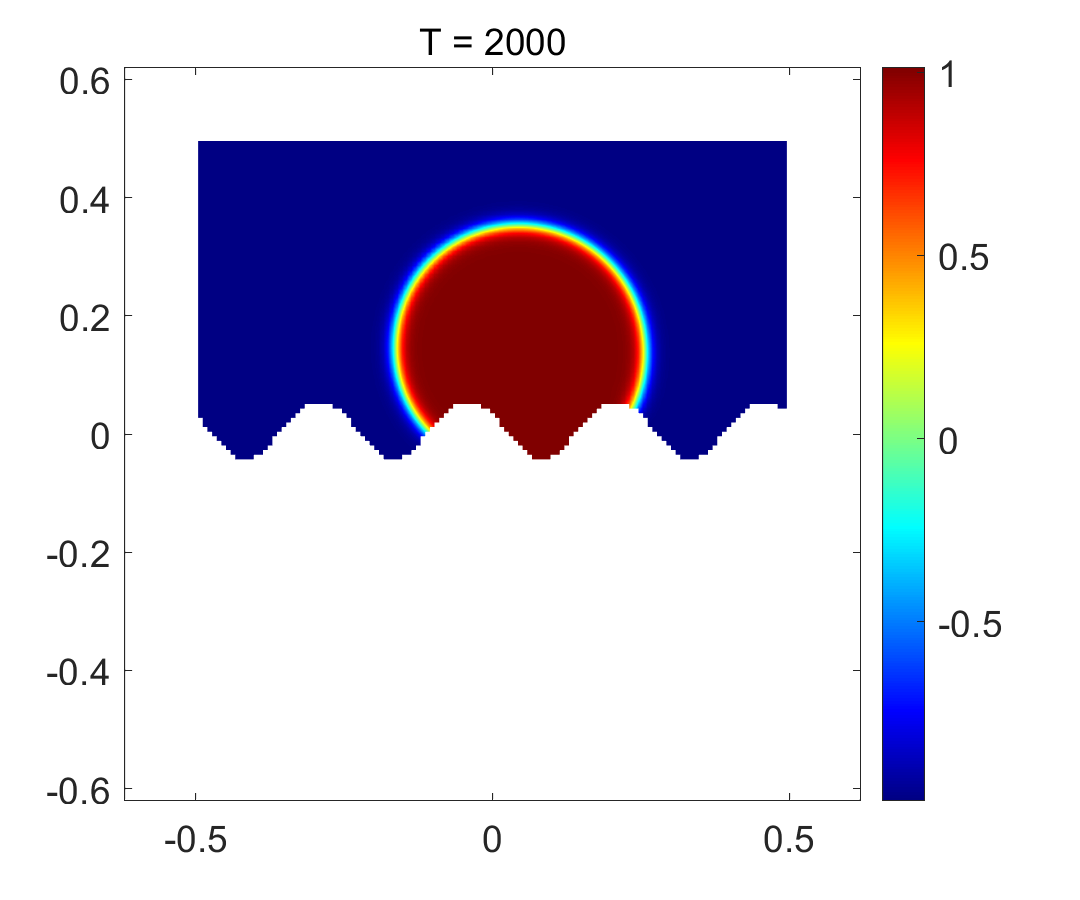}
		\caption{ Droplet spreading simulated using the OPBDE Cahn-Hilliard model for $\Gamma = 20$. Profiles of $\tilde{\phi}$ are shown at time instants $T = 0$, $1$, $5$, $10$, $50$, $100$, $200$, $500$, $1000$, $2000$.}
		\label{Droplet spreading41}
	\end{figure}
	\begin{figure}[H]
		\centering
		\includegraphics[width = 1.2in]{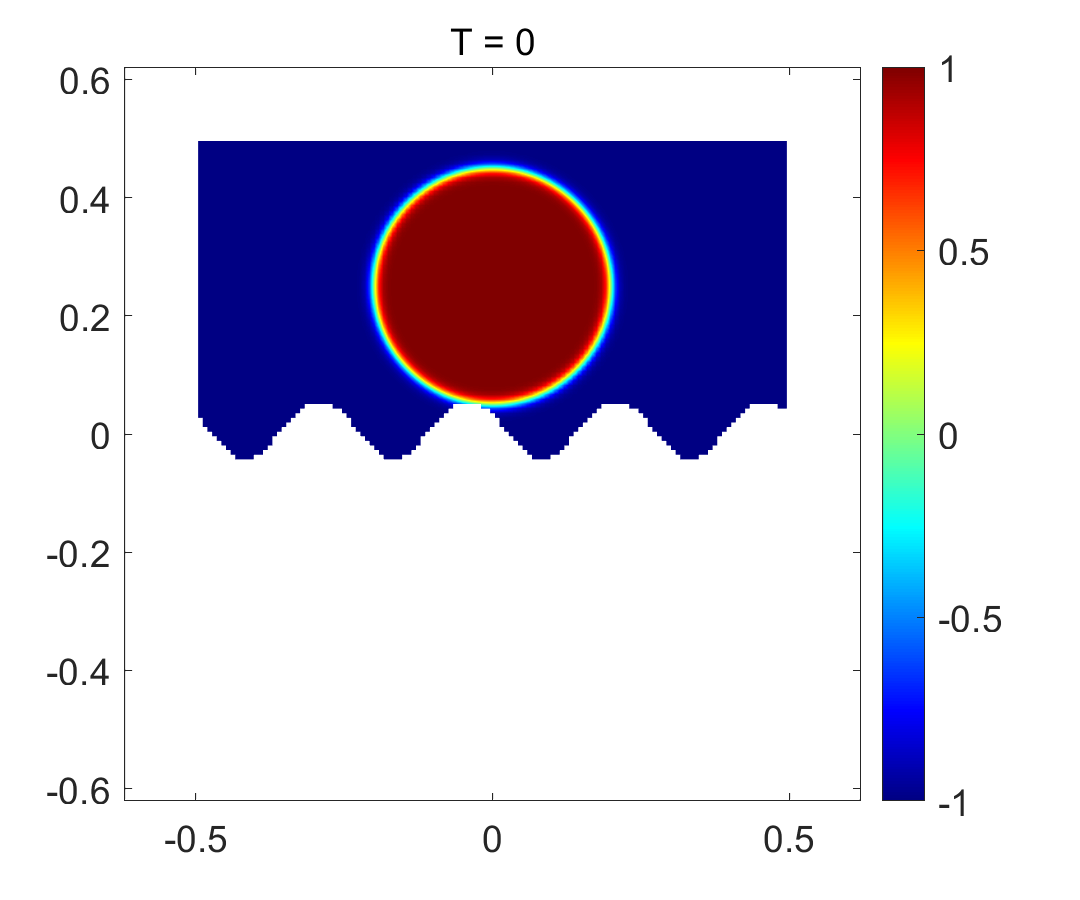}
		\includegraphics[width = 1.2in]{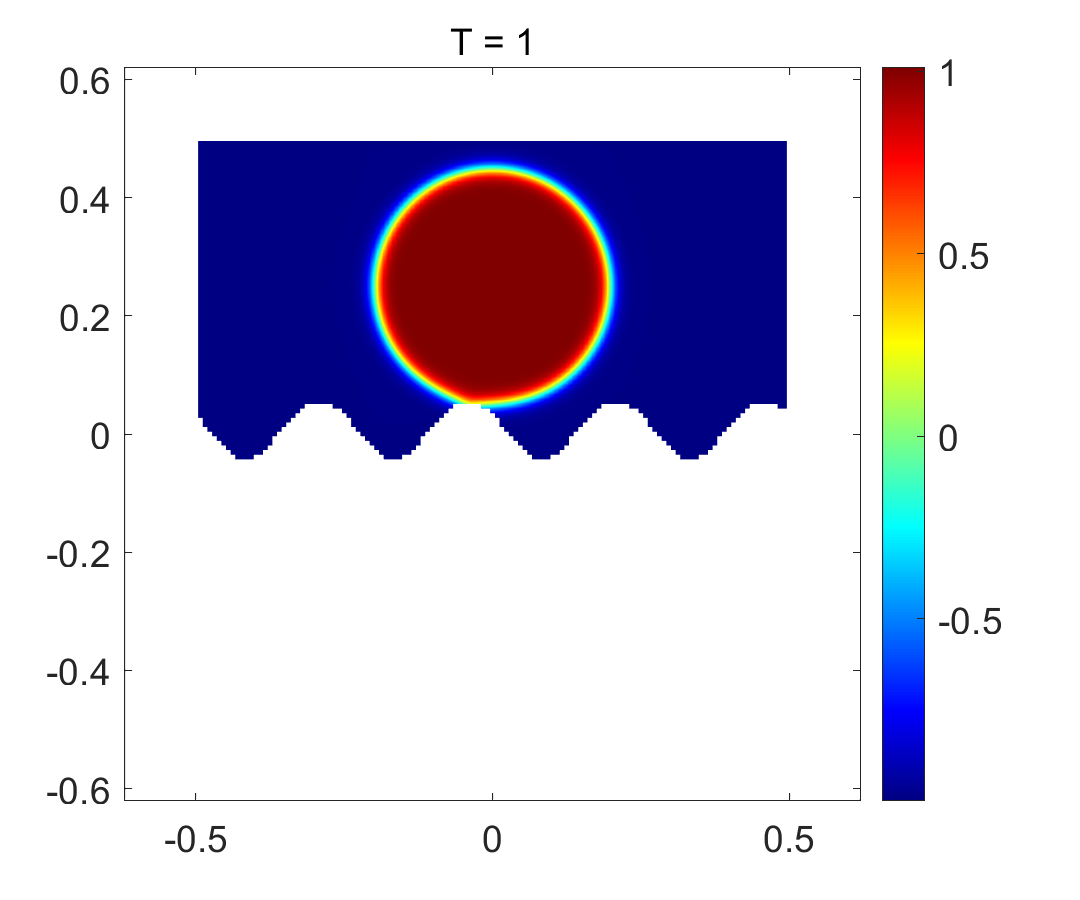}
		\includegraphics[width = 1.2in]{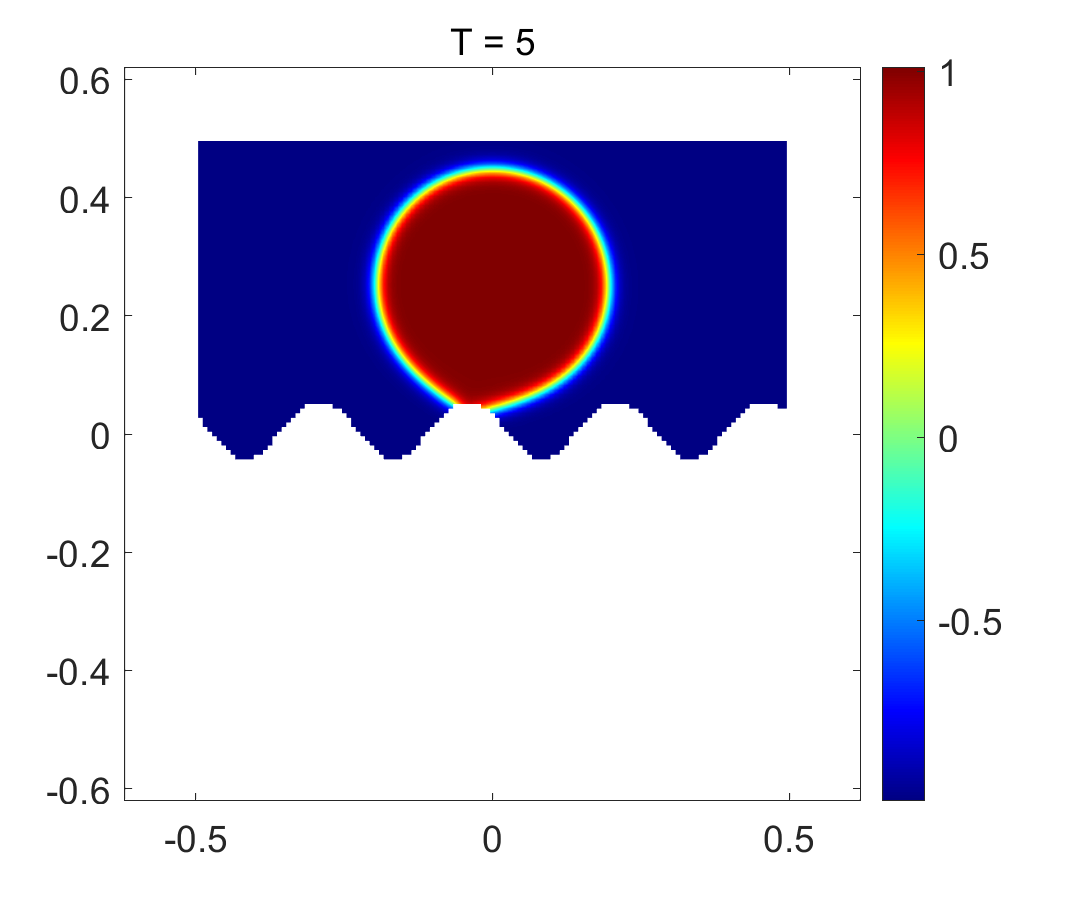}
		\includegraphics[width = 1.2in]{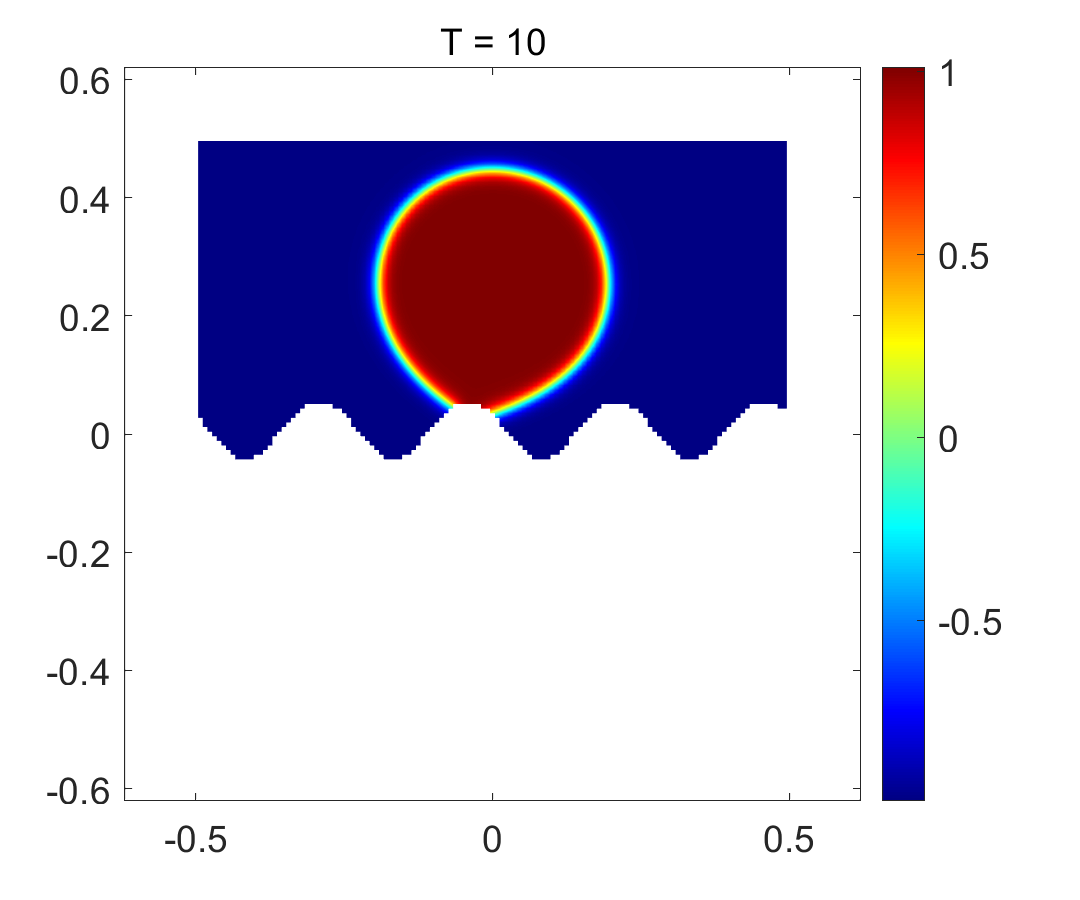}
		\includegraphics[width = 1.2in]{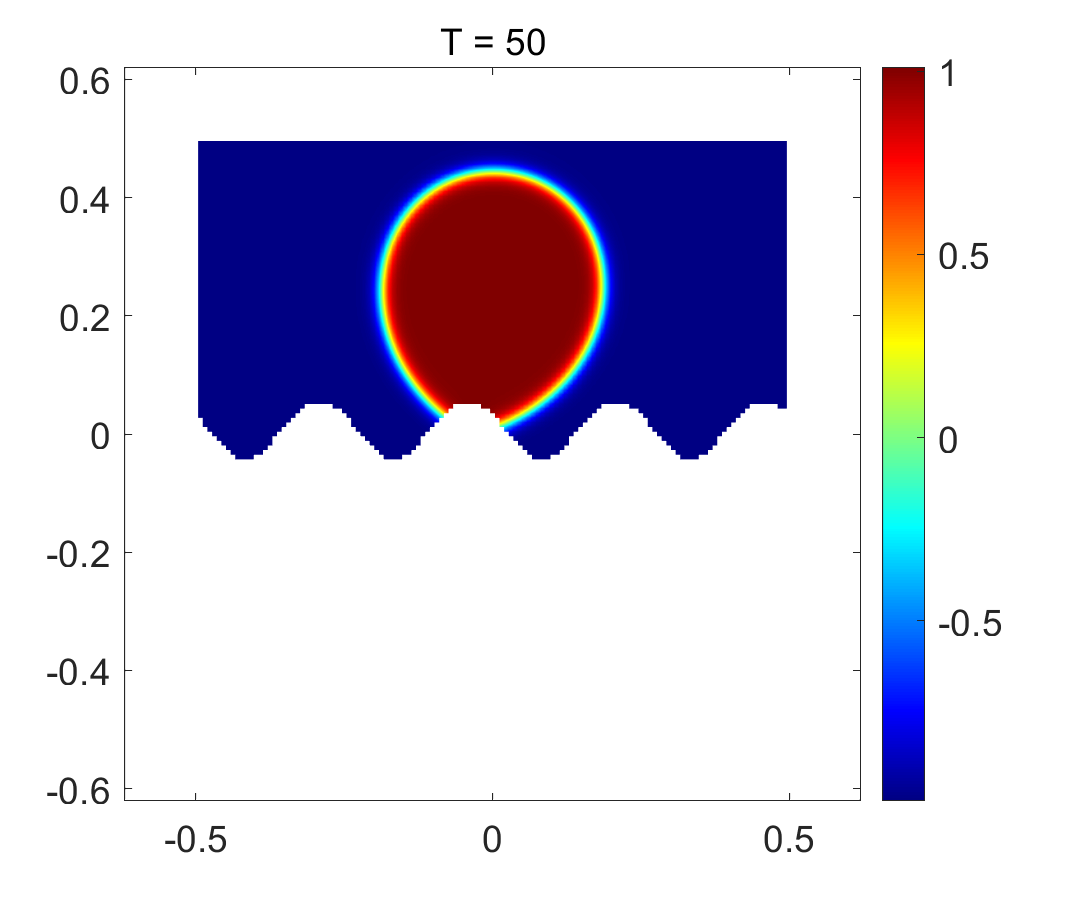}
		
		\includegraphics[width = 1.2in]{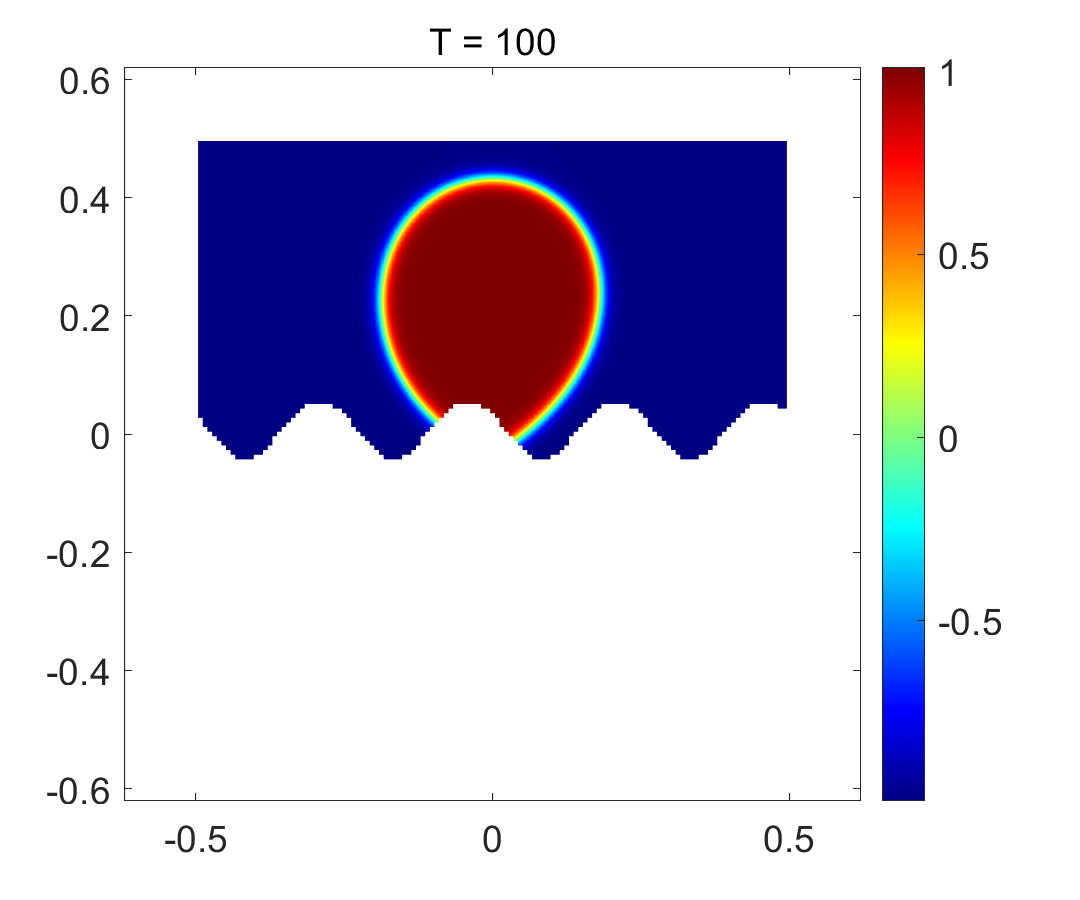}
		\includegraphics[width = 1.2in]{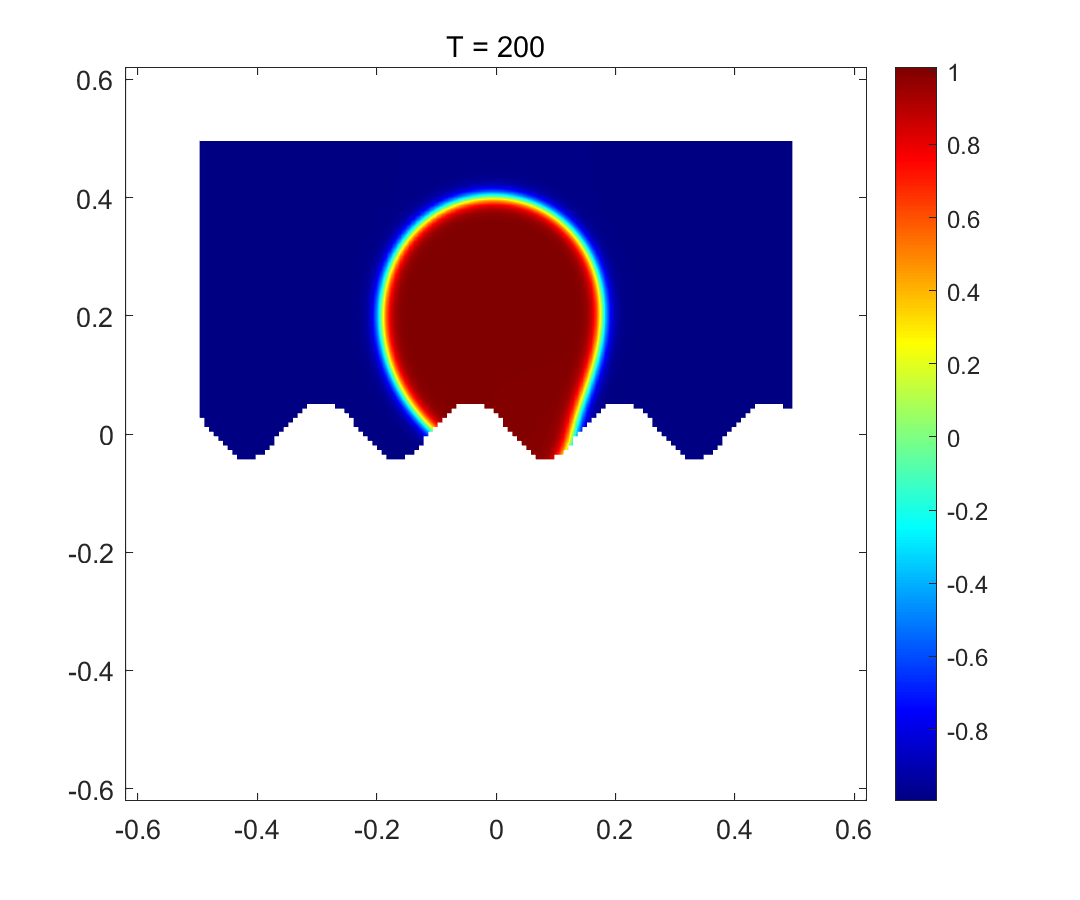}
		\includegraphics[width = 1.2in]{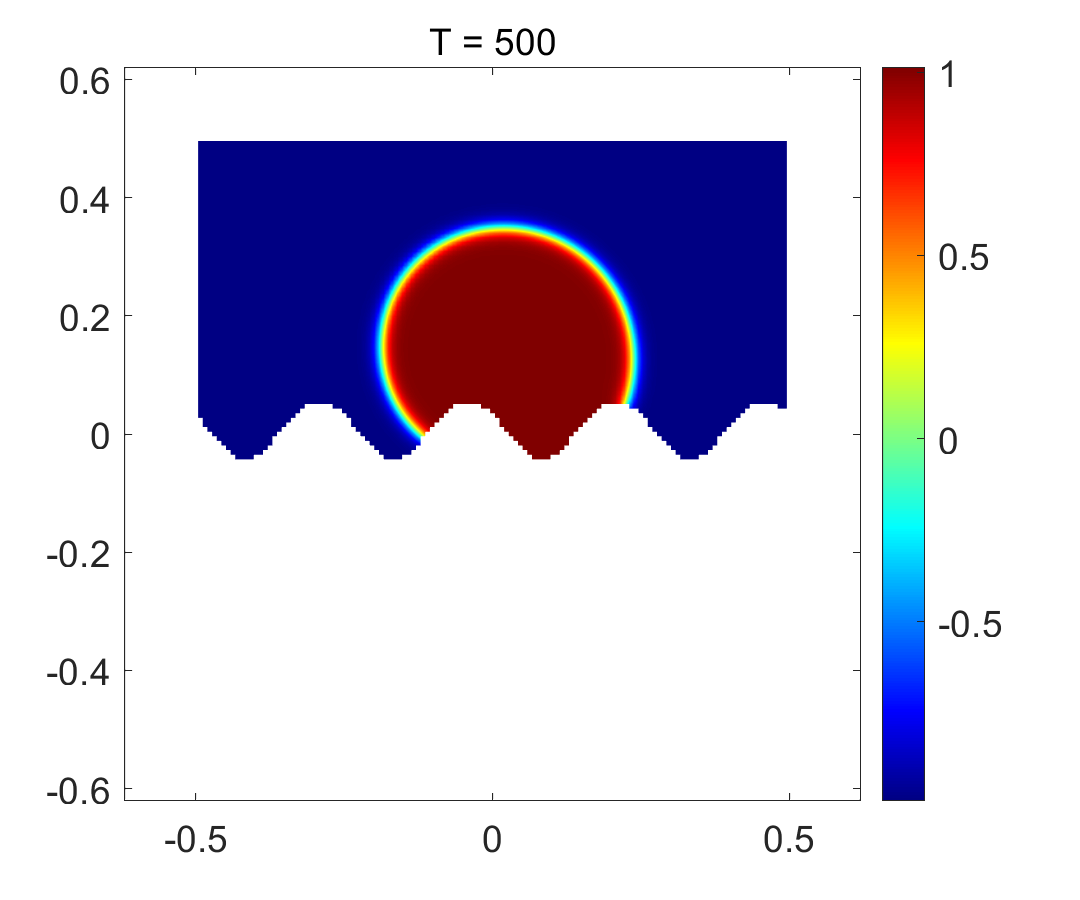}
		\includegraphics[width = 1.2in]{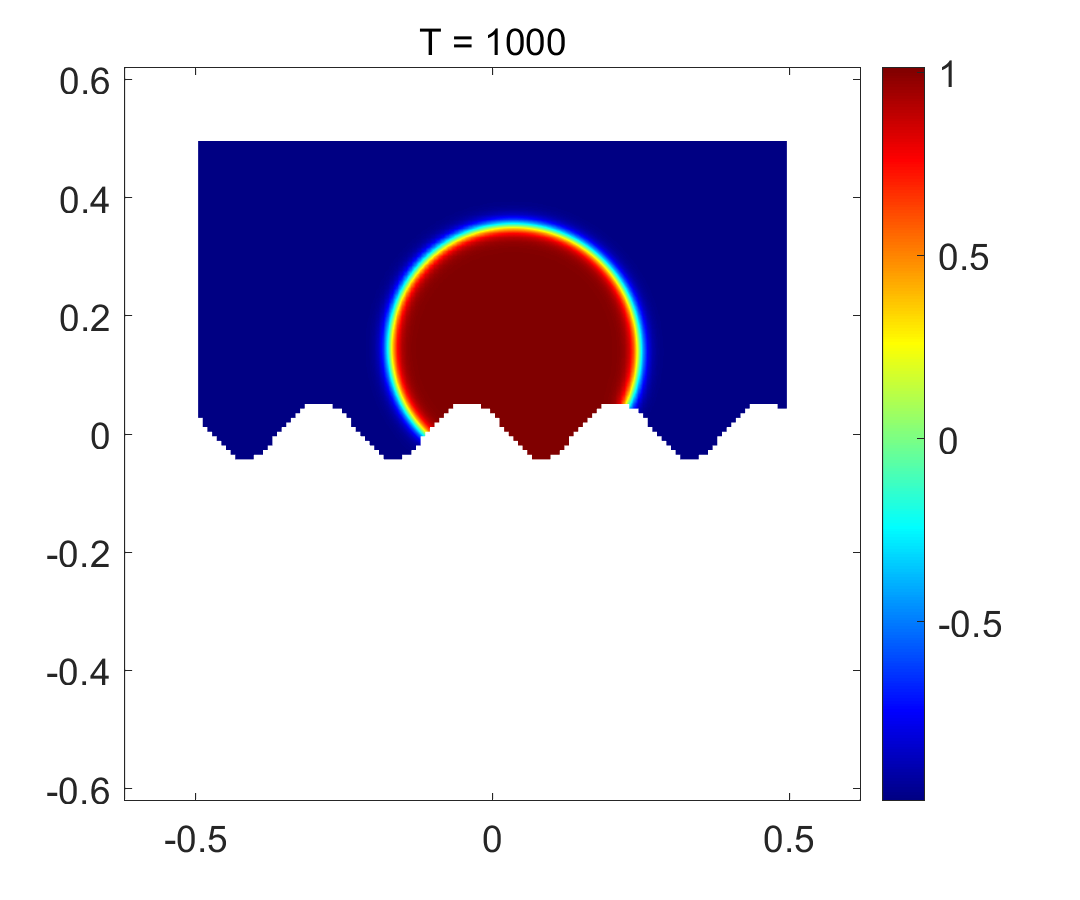}
		\includegraphics[width = 1.2in]{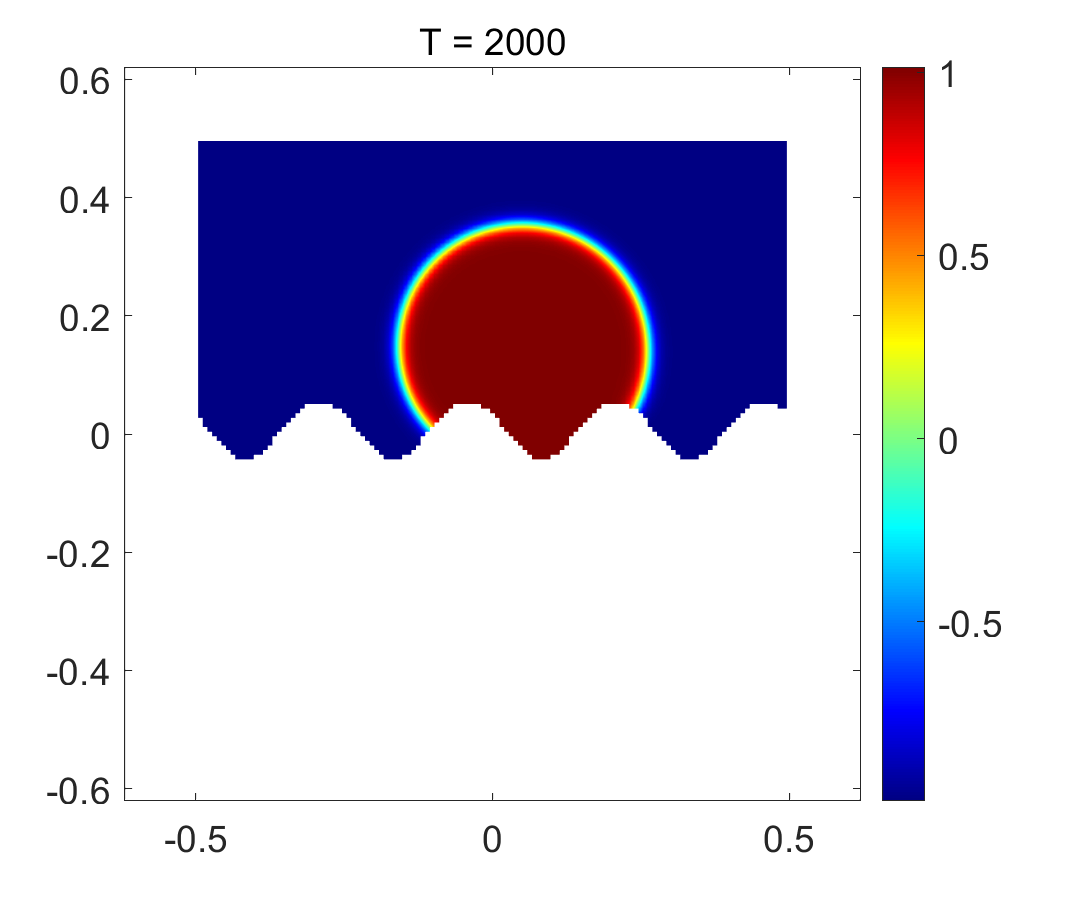}
		\caption{ Droplet spreading simulated using the OPBDE Cahn-Hilliard model for $\Gamma = 50$. Profiles of $\tilde{\phi}$ are shown at time instants $T = 0$, $1$, $5$, $10$, $50$, $100$, $200$, $500$, $1000$, $2000$.}
		\label{Droplet spreading42}
	\end{figure}
	\begin{figure}[H]
		\centering
		\includegraphics[width = 1.2in]{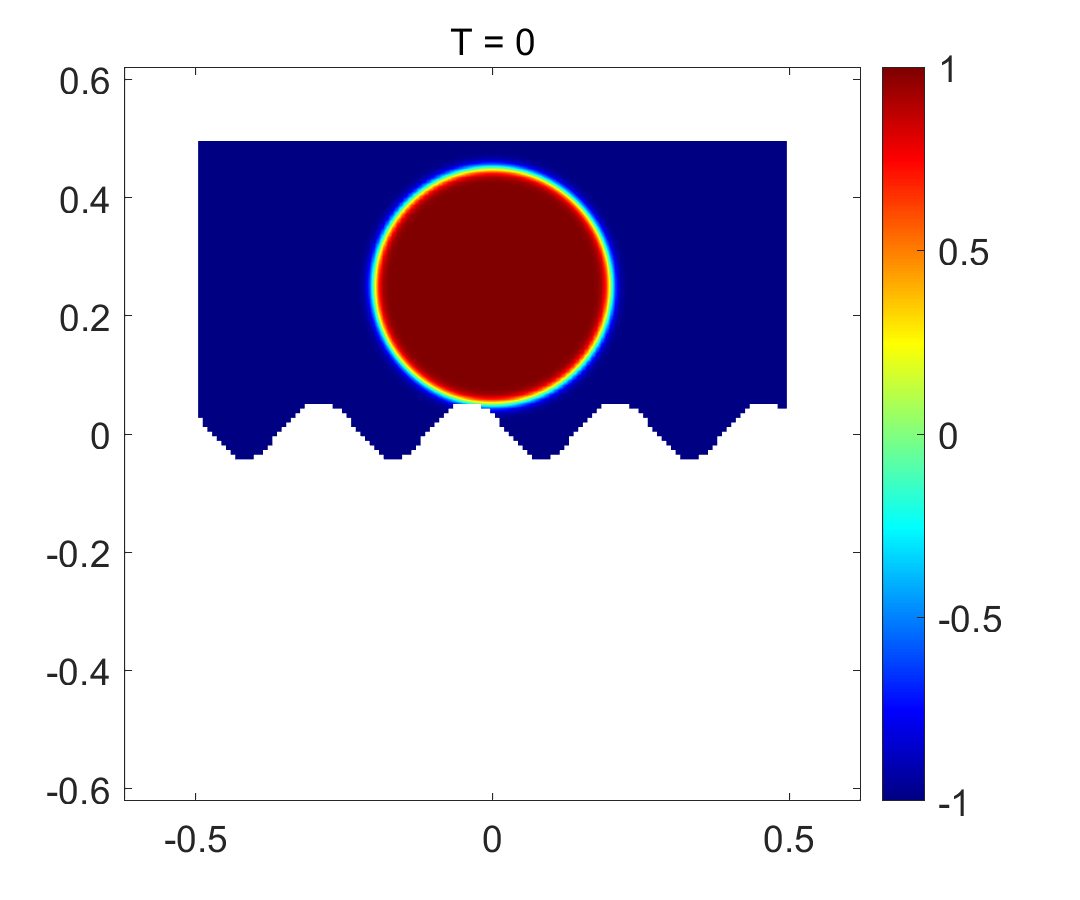}
		\includegraphics[width = 1.2in]{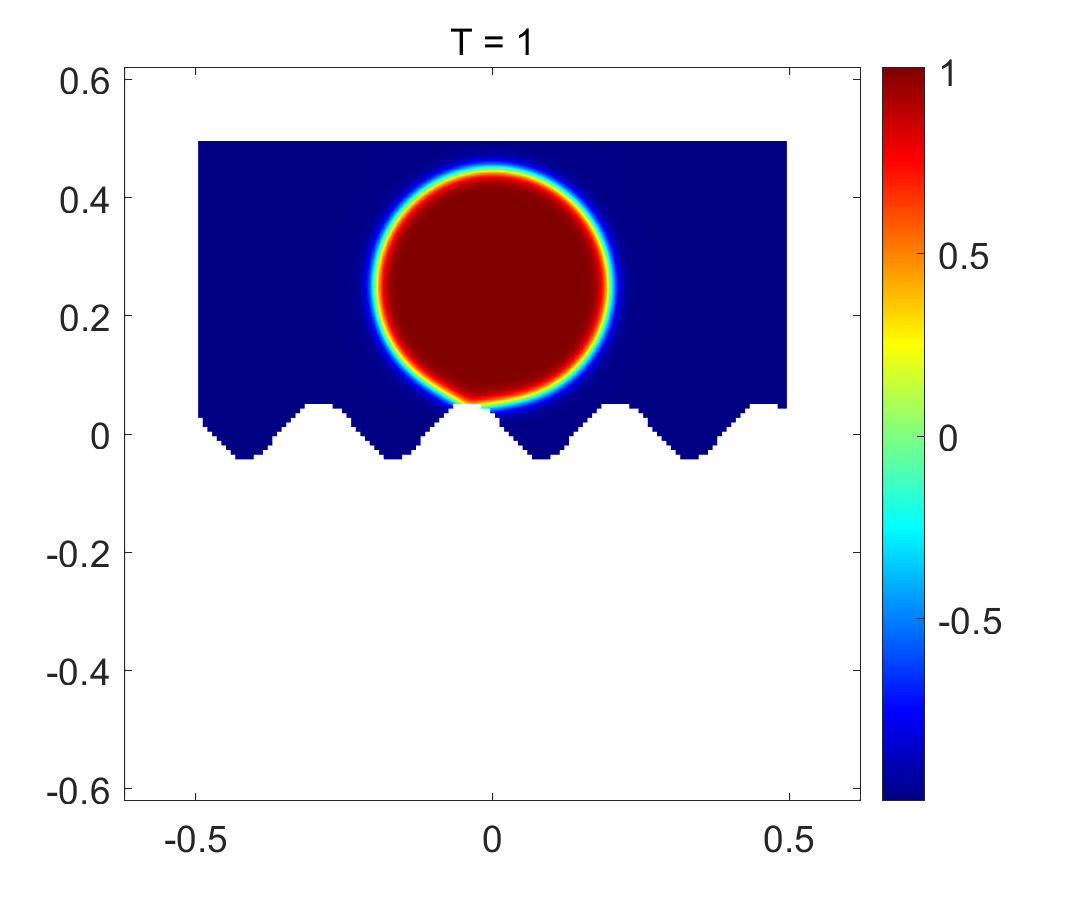}
		\includegraphics[width = 1.2in]{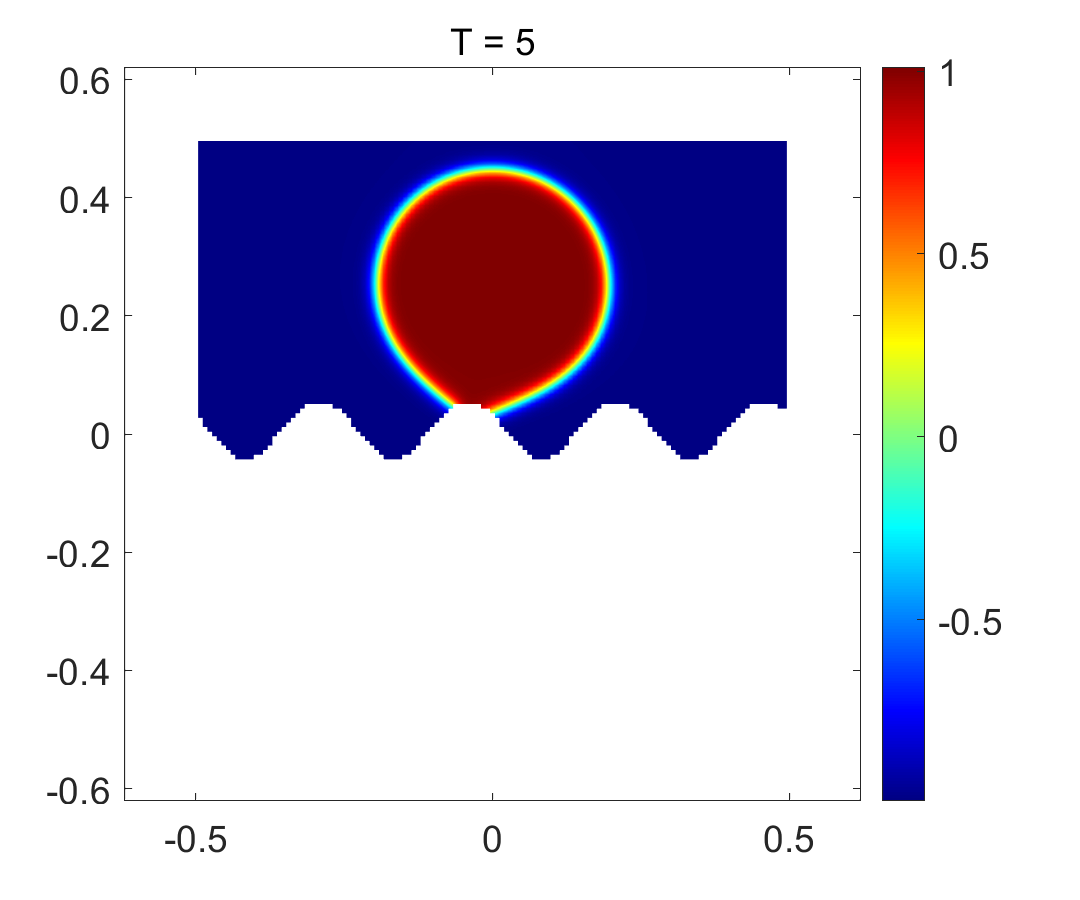}
		\includegraphics[width = 1.2in]{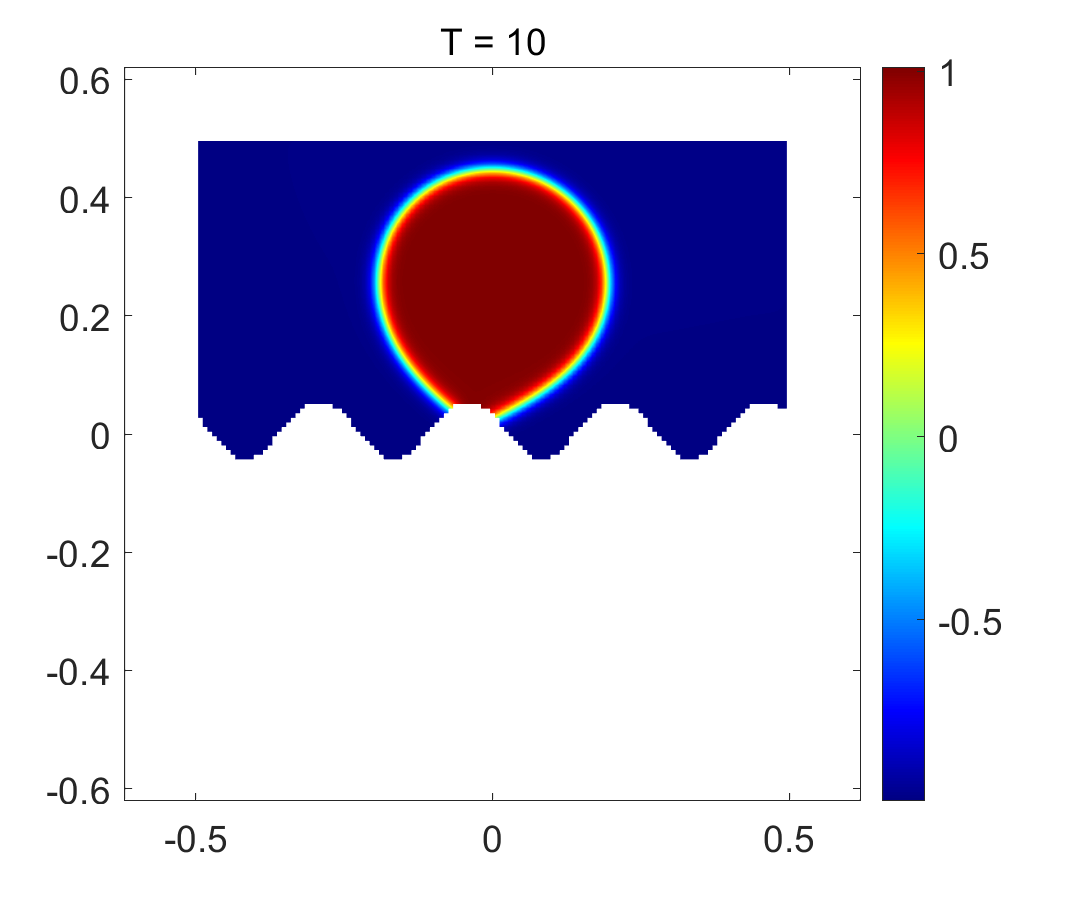}
		\includegraphics[width = 1.2in]{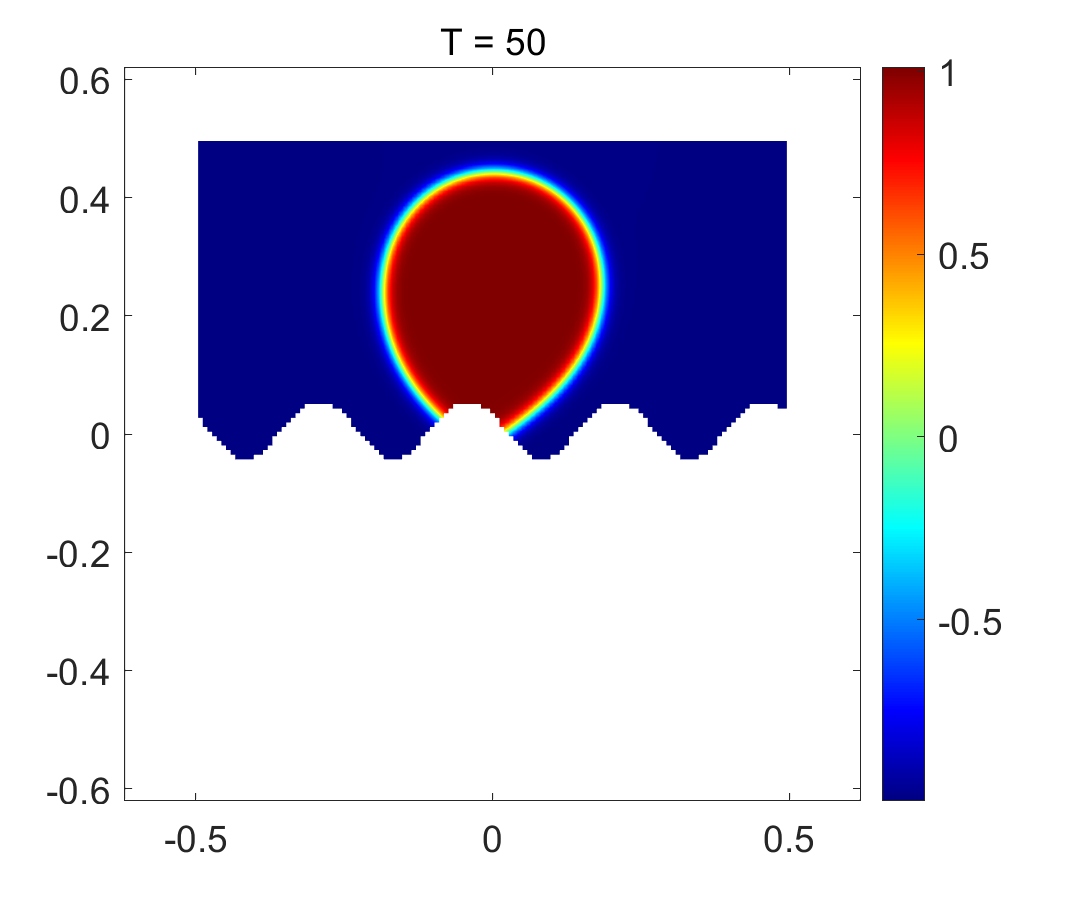}
		
		\includegraphics[width = 1.2in]{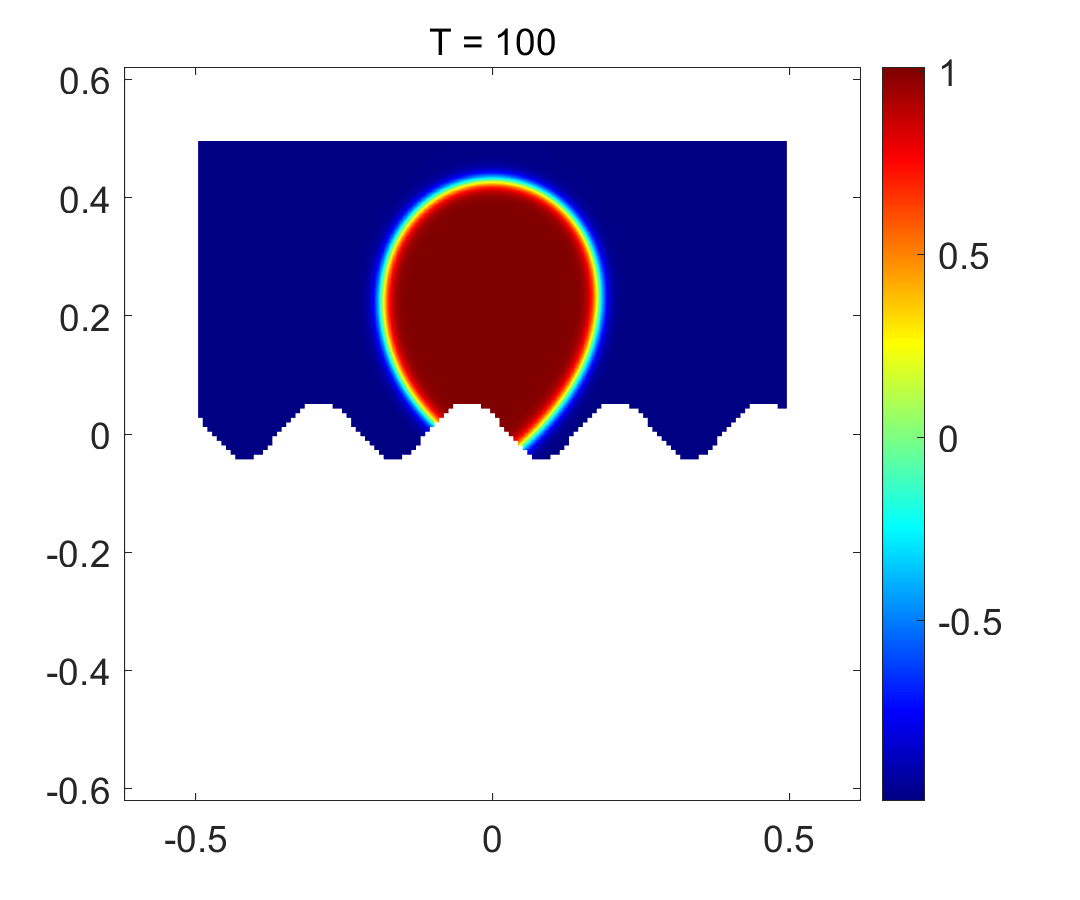}
		\includegraphics[width = 1.2in]{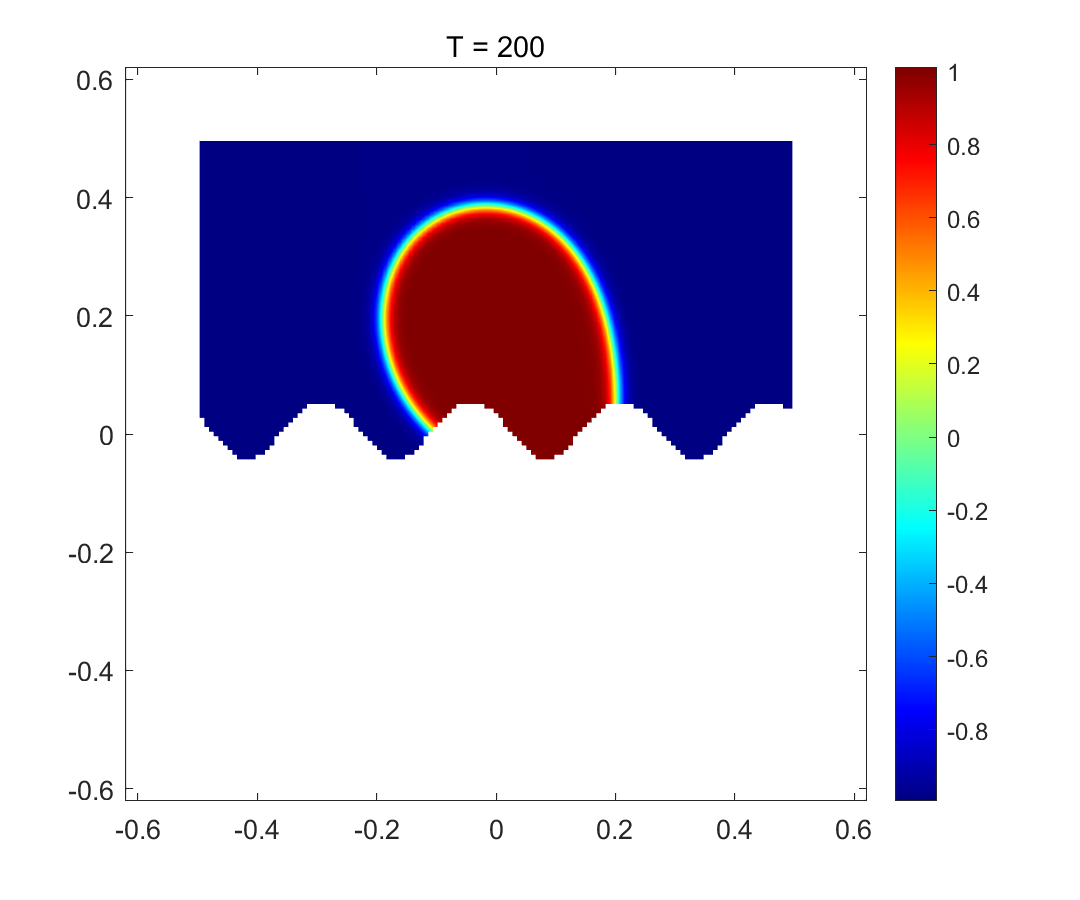}
		\includegraphics[width = 1.2in]{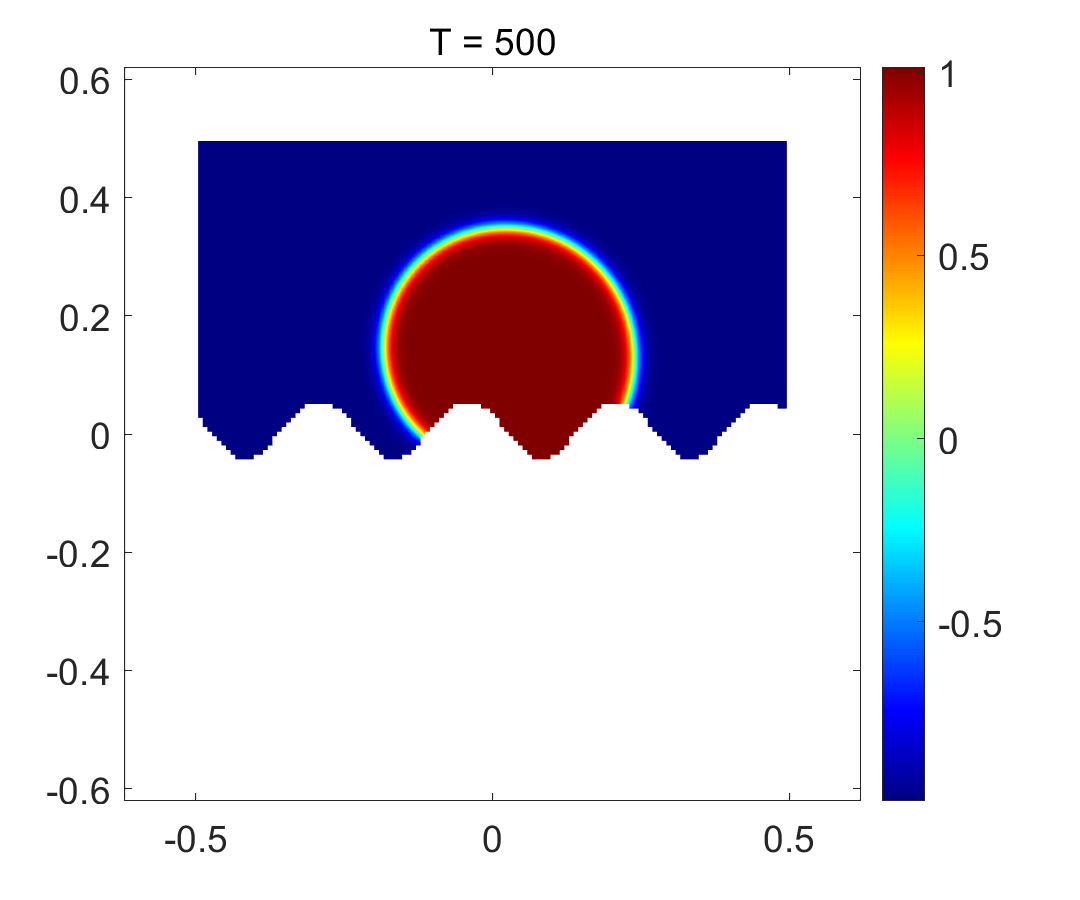}
		\includegraphics[width = 1.2in]{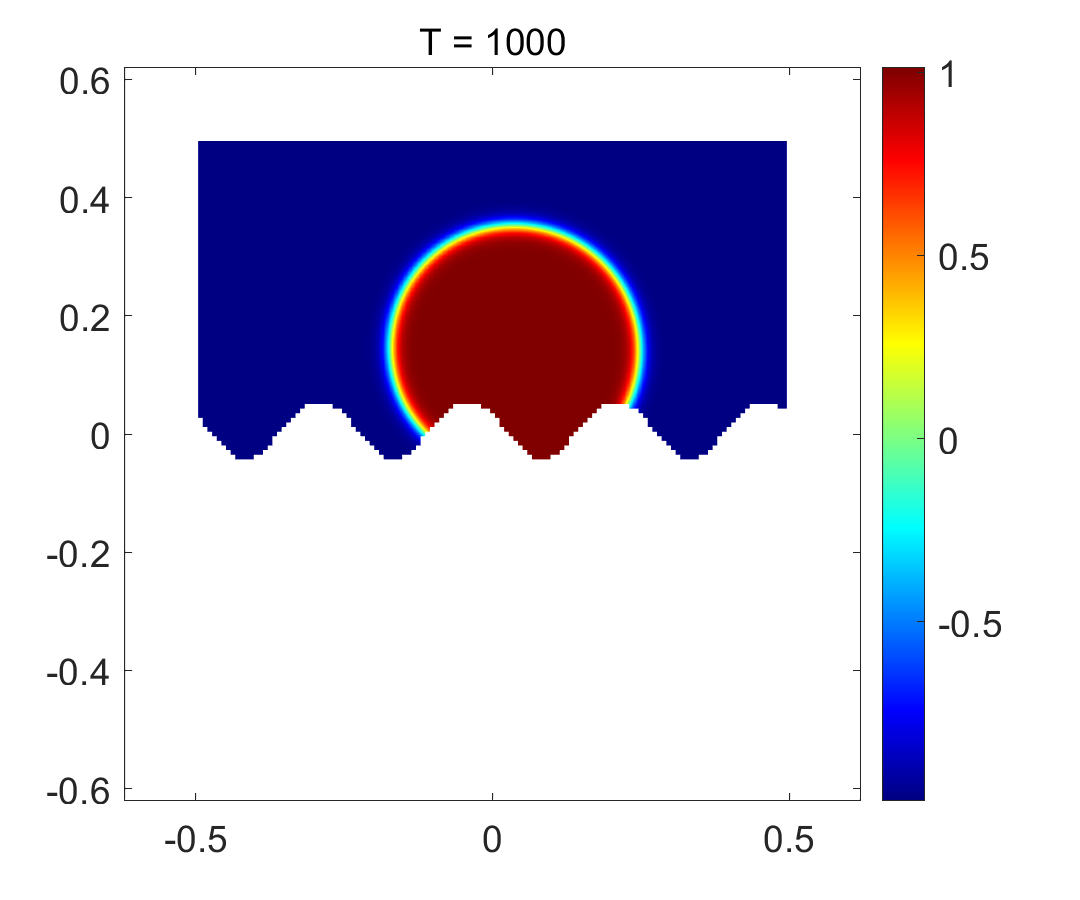}
		\includegraphics[width = 1.2in]{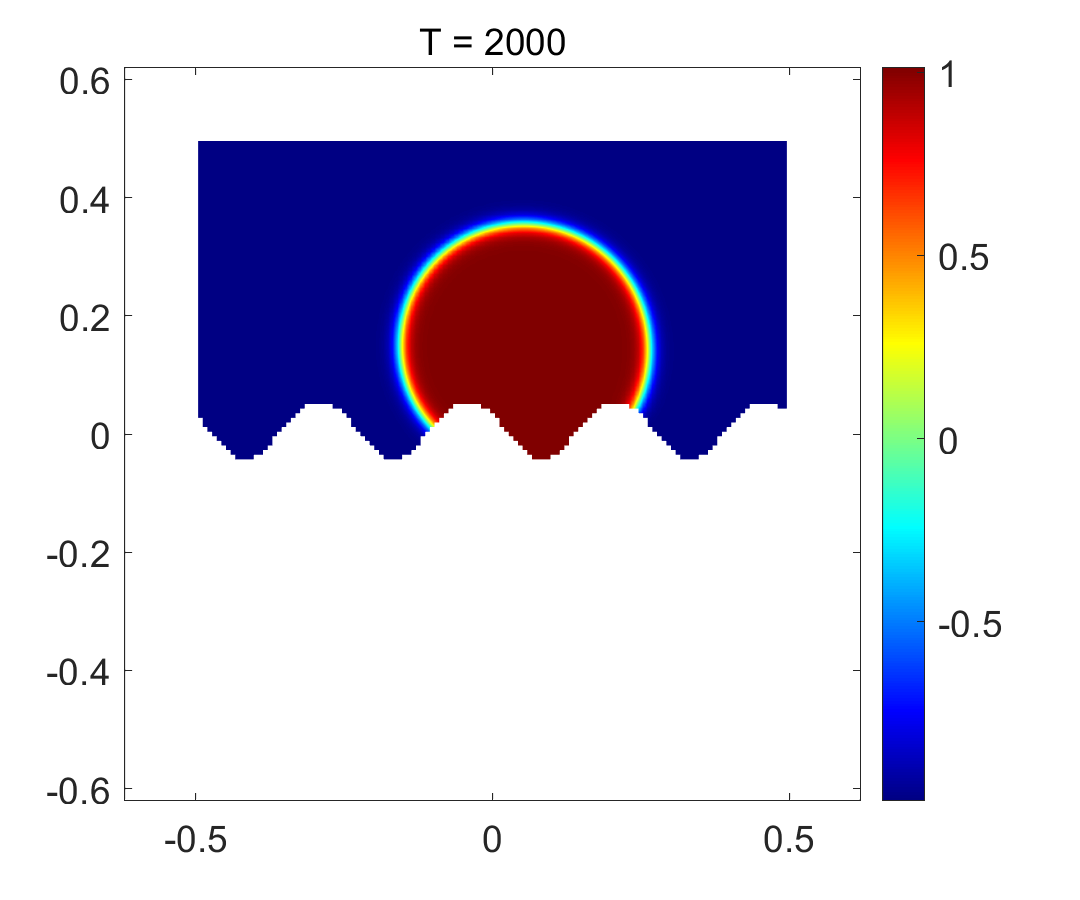}
		\caption{ Droplet spreading simulated using the OPBDE Cahn-Hilliard model for $\Gamma = 100$. Profiles of $\tilde{\phi}$ are shown at time instants $T = 0$, $1$, $5$, $10$, $50$, $100$, $200$, $500$, $1000$, $2000$.}
		\label{Droplet spreading43}
	\end{figure}
	
	Figures \ref{Droplet spreading41} to \ref{Droplet spreading43} show the droplet spreading dynamics for $\Gamma = 20$, $50$, $100$. The initial position of the droplet is slightly to the right of a peak on the substrate curve. The free interface evolves under the influence of the substrate geometry, leading to a rightward droplet movement. At time instants $T=50$, $100$, $200$, figure \ref{Droplet spreading43} shows that the left contact line remains relatively stationary, while the right contact line exhibits a rapid slip to the right. At the same time instants, figures \ref{Droplet spreading41} and \ref{Droplet spreading42} also show similar behavior though less significant for smaller $\Gamma$. It is observed that as $\Gamma$ increases, the boundary dynamics becomes faster, in consistency with the model prediction.
	
	\begin{figure}[H]
		\centering
		\subfigure[]{
			\includegraphics[width = 0.3 \textwidth]{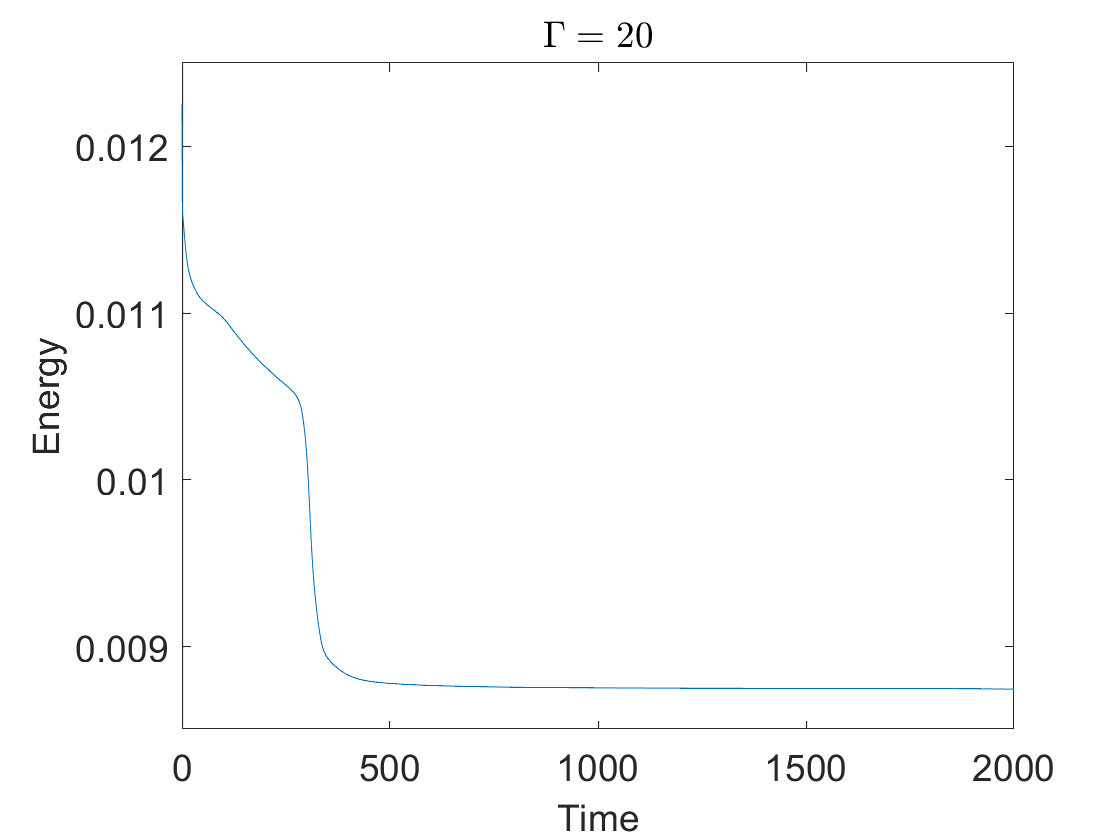}
			\includegraphics[width = 0.3 \textwidth]{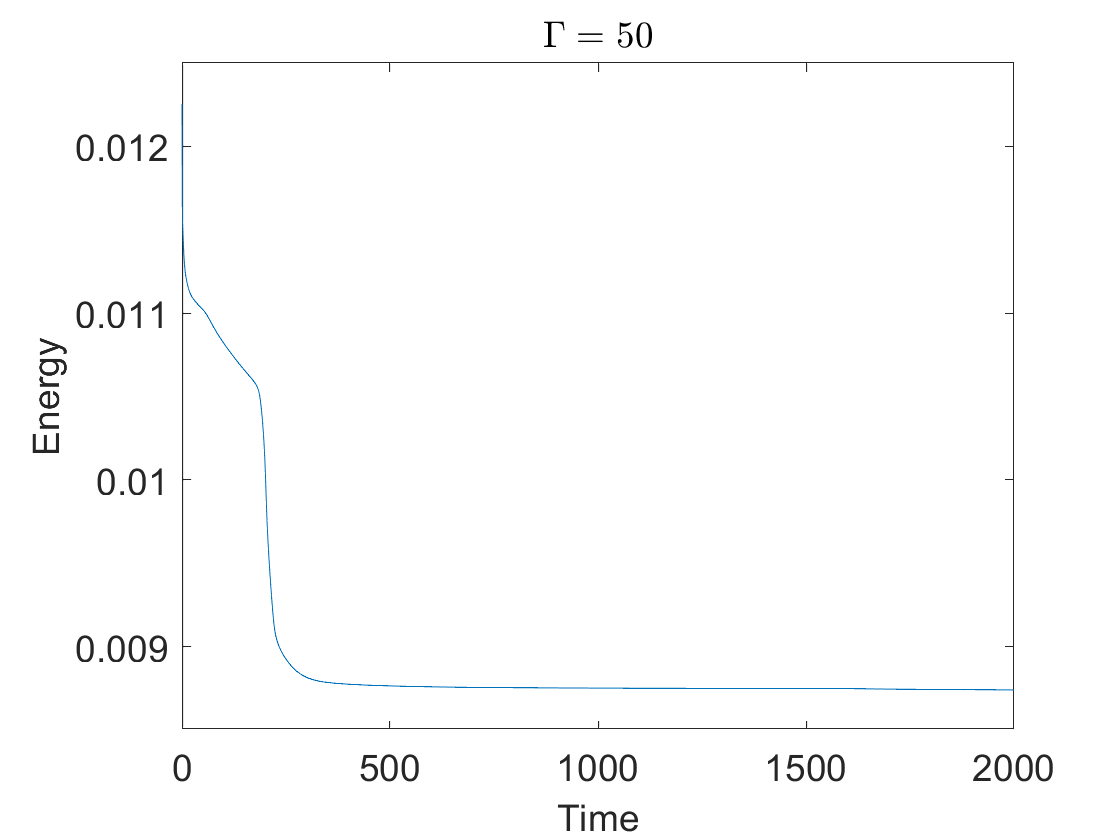}
			\includegraphics[width = 0.3 \textwidth]{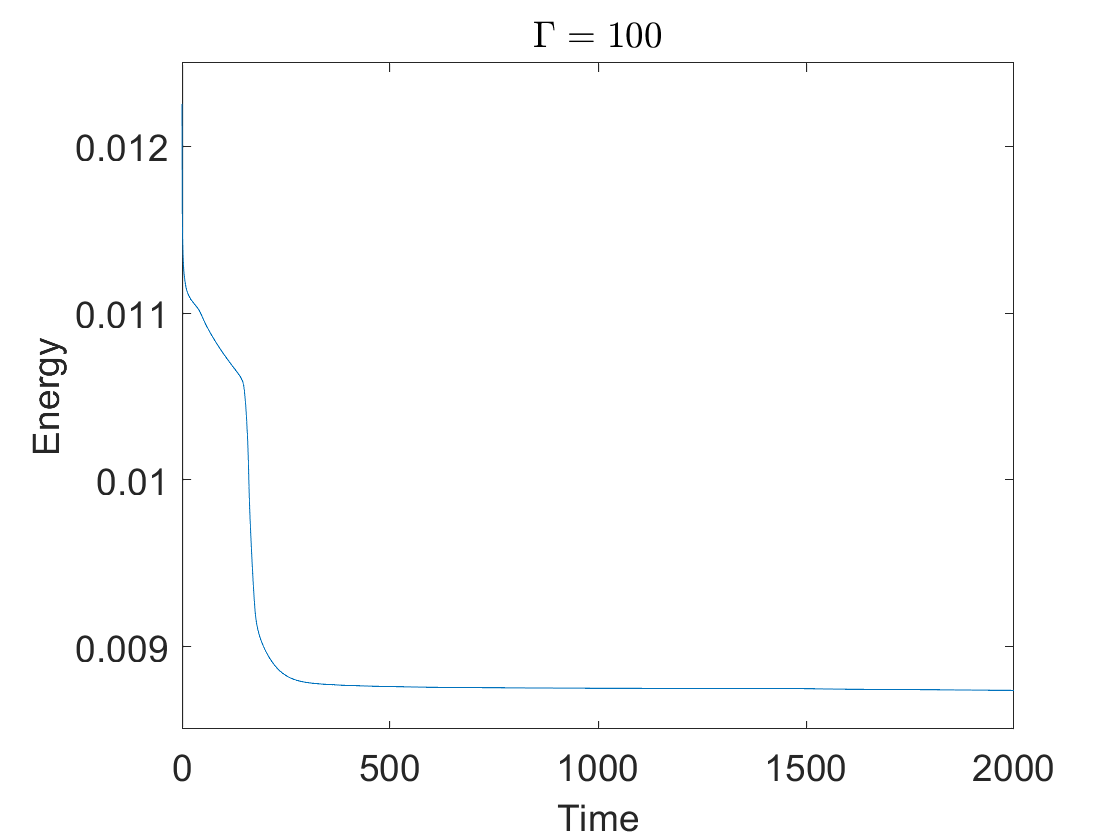}
		}
		
		\subfigure[]{
			\includegraphics[width = 0.3 \textwidth]{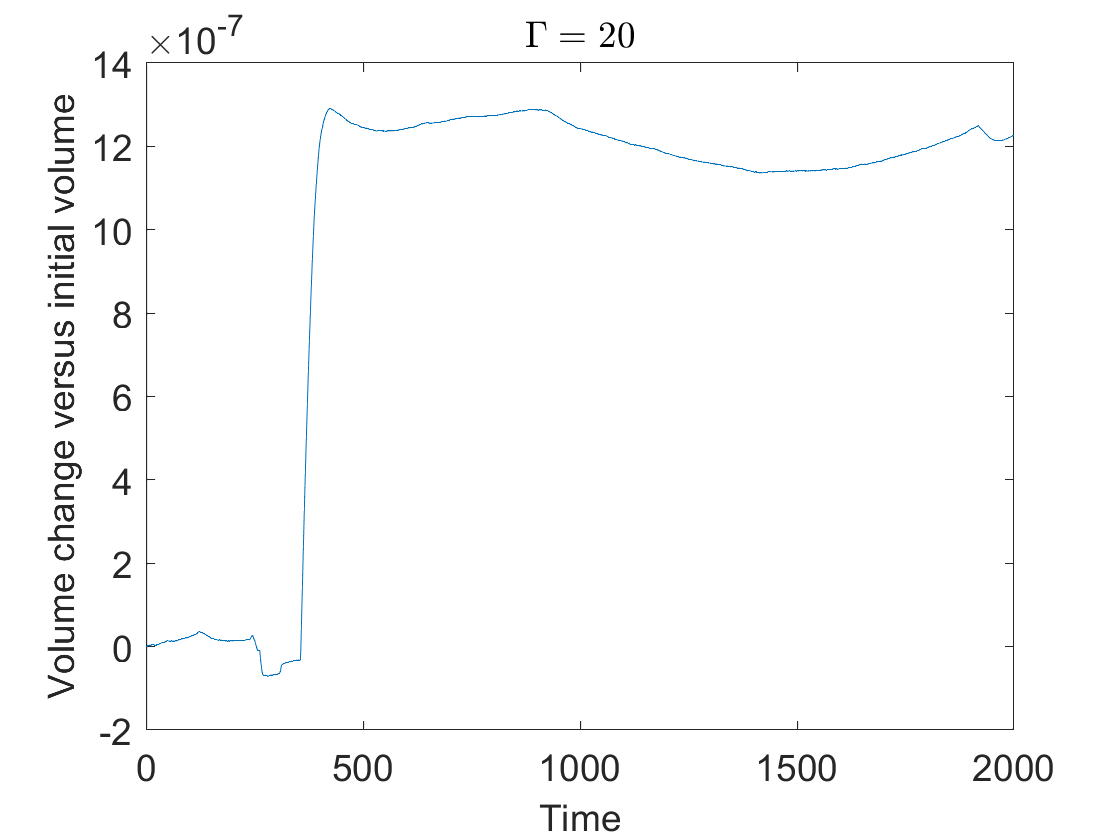}
			\includegraphics[width = 0.3 \textwidth]{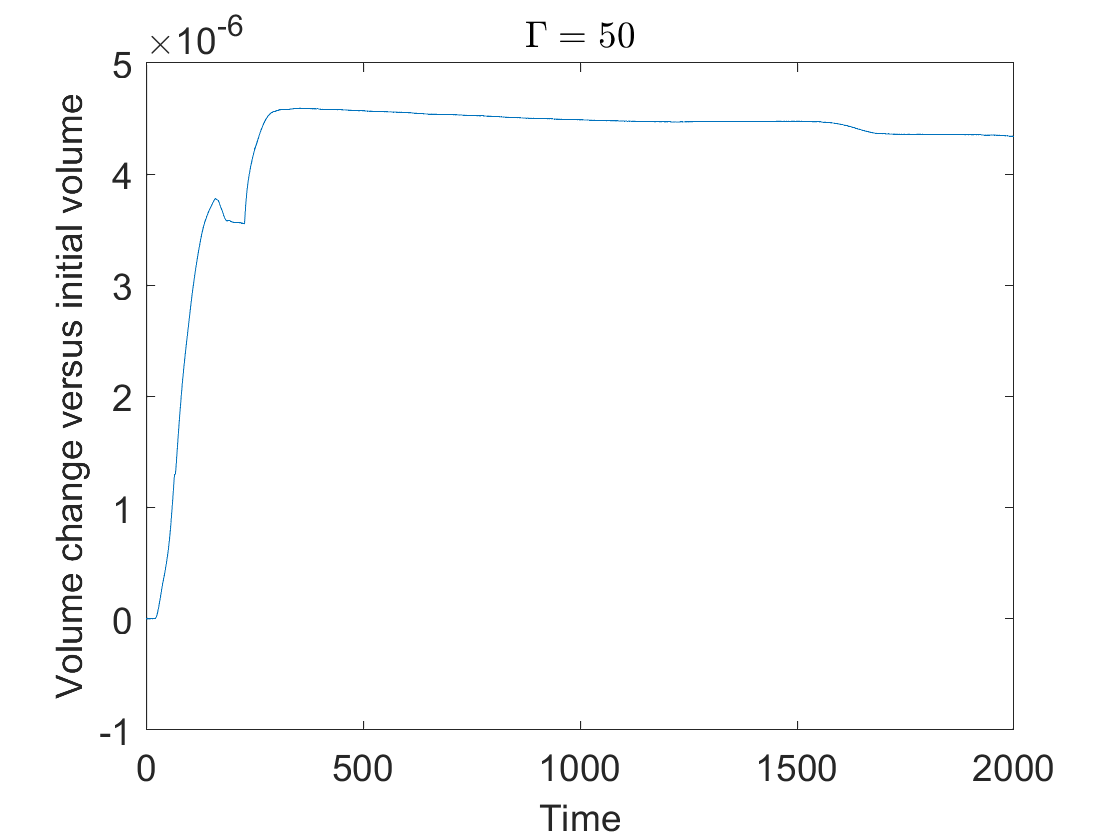}
			\includegraphics[width = 0.3 \textwidth]{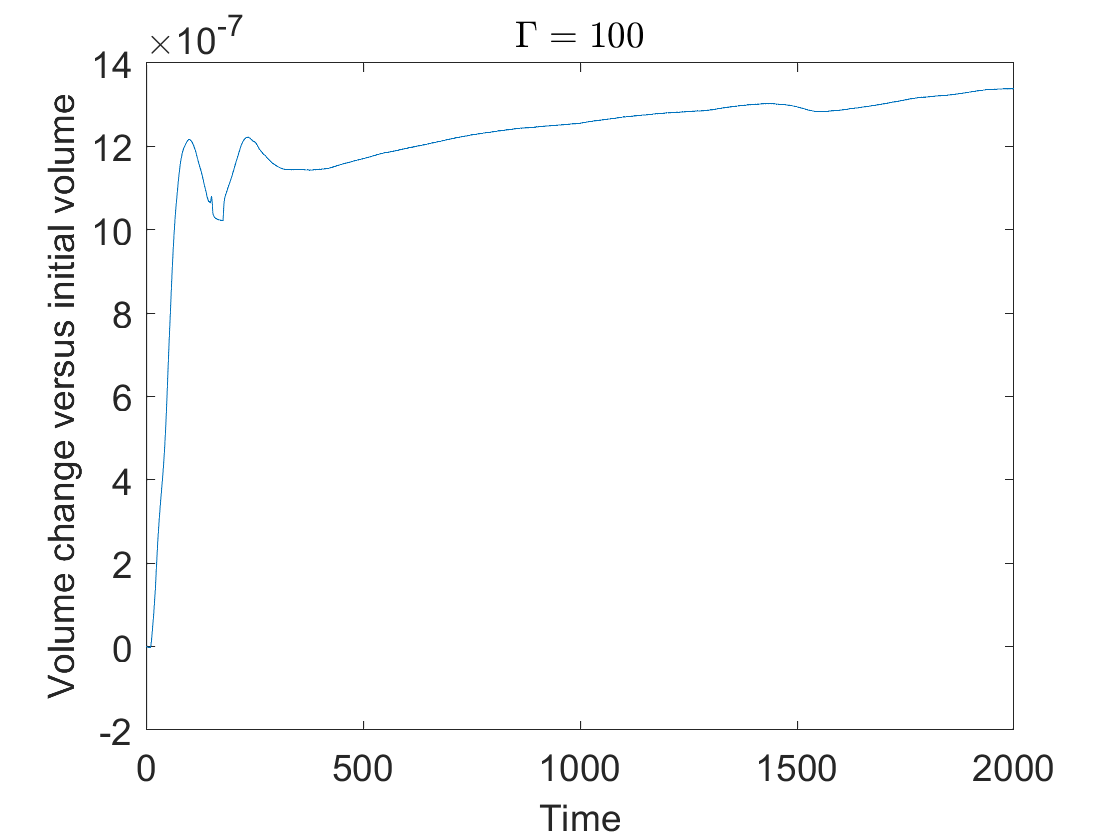}
		}
		\caption{Energy evolution and volume change versus initial volume, computed for droplet spreading on curved substrate. (a) Energy decay; (b) Volume change versus initial volume. From the left to the right are results for $\Gamma = 20$, $50$, $100$.}
		\label{Energy and Volume2}
	\end{figure}
	
	Figure \ref{Energy and Volume2} shows the energy decay and volume conservation simulated using the OPBDE Cahn-Hilliard model with $\Gamma = 20$, $50$, $100$. The energy decay becomes faster for larger $\Gamma$, and the relative volume change is within an acceptable range.
	
	\section{Concluding remarks}
	For the Cahn-Hilliard model in an arbitrary domain, an extended Cahn-Hilliard-type model in a larger, regular domain has been derived based on the OPBDE method. By the use of a source term, a modified conservation law is employed to incorporate the flux at the original boundary. In the framework of the variational Onsager principle, the free energy functional, the dissipation functional, and the rate of the free energy pumped into the system have been modified consistently from the original arbitrary domain to the extended regular domain. An asymptotic analysis has been carried out for the OPBDE Cahn-Hilliard model to show that the original model, including its boundary conditions, can be fully recovered. The consistency between the original and extended models is therefore established in the framework of the variational Onsager principle by which thermodynamic consistency is ensured. It is worth noting that, even without a prior knowledge on the specific form of the rate of free energy pumped into the system, the Onsager principle remains effective and instrumental in deriving the constitutive equation of the extended system, subject to a intrinsic property that can be validated later. The OPBDE Cahn-Hilliard model can be solved by a structure-preserving numerical scheme that upholds volume conservation and energy dissipation properties. The physical authenticity and numerical stability of the discretized system are therefore demonstrated. Numerical results from the extended OPBDE model have been compared with those from the original model and the DDM model with remarkable agreement, demonstrating the accuracy, effectiveness and robustness of the extended OPBDE model and its implementation in handling arbitrary domain geometries.
	
	This work has developed a general strategy for extending gradient flow models with physical boundary conditions from arbitrary domains to piecewise smooth, regular domains. In the framework of the OPBDE method, thermodynamic consistency is preserved, efficient structure-preserving algorithms can be developed, and physical authenticity is ensured. Our modeling and simulation results convincingly add the OPBDE method into the computational toolkit for computing initial-boundary value problems in complex geometries.

	\section*{Acknowledgement}
Zhen Zhang was partially supported by National Key R\&D Program of China
(2023YFA1011403), the NSFC grant (92470112), the Shenzhen Sci-Tech Inno-Commission Fund
(20231120102244002), and the Guangdong Provincial Key Laboratory of Computational Science
and Material Design (No. 2019B030301001).		
Tiezheng Qian was partially supported by the Hong Kong RGC General Research Fund (Grant No. 16306121) and the Key Project of the National Natural Science Foundation of China (Grant No. 12131010).

	\appendix
	\section*{Appendix I. Model derivation through the use of $\tilde{\phi}_t$ as flux variable }
	In this part we will show that the extended model can be derived as well if we use $\tilde{\phi}_t$ as the only flux variable for variation. For the extended model in $\Omega$, the energy dissipation rate of energy $\tilde F$ is given by
	\begin{equation}
		\tilde{F}_t = \int_{\Omega} \tilde{\mu}\tilde{\phi}_t d\bx+\int_{\partial \Omega} \bn \cdot \psi_{\varepsilon} K \nabla \tilde{\phi}\tilde{\phi}_t ds = \int_{\Omega} \frac{\delta \tilde{F}}{\delta P}P_t d\bx+\int_{\partial \Omega} \bn \cdot \psi_{\varepsilon} K \nabla \tilde{\phi}\tilde{\phi}_t ds.
	\end{equation}
	With the modified conservation law $P_t = -\nabla\cdot \bQ-|\nabla \psi_{\varepsilon}|h_3$, the dissipation functional $\Phi_{\tilde{F}}$ can be written as
	\begin{equation}
		\begin{split}
			\Phi_{\tilde{F}}
			&= \int_{\Omega} (\psi_{\varepsilon} \frac{\tilde{\bJ}^2}{2M}+|\nabla \psi_{\varepsilon}| \frac{\tilde{\phi}_t^2}{2\Gamma})d\bx \\
			&= \int_{\Omega} (\frac{\bQ^2}{2\psi_{\varepsilon}M}+|\nabla \psi_{\varepsilon}| \frac{P_t^2}{2\psi_{\varepsilon}^2\Gamma})d\bx \\
			&= \int_{\Omega} \left\{\frac{[(-\nabla\cdot)^{-1}(P_t + |\nabla \psi_{\varepsilon}|h_3)]^2}{2\psi_{\varepsilon}M}+|\nabla \psi_{\varepsilon}| \frac{\tilde{\phi}_t^2}{2\psi_{\varepsilon}^2\Gamma}\right\}d\bx \\
			&= \int_{\Omega} \half(P_t + |\nabla \psi_{\varepsilon}|h_3)\{-\nabla\cdot[\psi_{\varepsilon} M\nabla(\cdot)]\}^{-1}(P_t + |\nabla \psi_{\varepsilon}|h_3)+|\nabla \psi_{\varepsilon}| \frac{P_t^2}{2\psi_{\varepsilon}^2\Gamma})d\bx.
		\end{split}
	\end{equation}
		With the boundary condition $\bn \cdot K \nabla \tilde{\phi} = 0$, the Rayleighian is defined as
	\begin{equation}
		\begin{split}
			\tilde{R} &= \tilde{F}_t + \Phi_{\tilde{F}} - \tilde{W}_t \\
			&= \int_{\Omega}\frac{\delta \tilde{F}}{\delta P} P_t d\bx + \int_{\Omega} \half(P_t + |\nabla \psi_{\varepsilon}|h_3)\{-\nabla\cdot[\psi_{\varepsilon} M\nabla(\cdot)]\}^{-1}(P_t + |\nabla \psi_{\varepsilon}|h_3)+|\nabla \psi_{\varepsilon}| \frac{P_t^2}{2\psi_{\varepsilon}^2\Gamma})d\bx\\
			&- \int_{\Omega}(\tilde{\mu}\beta - \chi_{\varepsilon}|\nabla \psi_{\varepsilon}|h_3|\nabla \psi_{\varepsilon}|\Gamma^{-1}\beta) d\bx.
		\end{split}
	\end{equation}
	Applying the variational Onsager principle, we have
	\begin{equation}
		\begin{split}
			\frac{\delta \tilde{R}}{\delta P_t} = 0 
			&\implies \frac{\delta \tilde{F}}{\delta P} + [- \nabla\cdot[\psi_{\varepsilon} M\nabla(\cdot)]]^{-1}(P_t + |\nabla \psi_{\varepsilon}|h_3) + |\nabla \psi_{\varepsilon}|\Gamma^{-1}\frac{P_t}{\psi_{\varepsilon}^2\Gamma} = 0\\
			&\implies P_t = \nabla\cdot [\psi_{\varepsilon} M \nabla(\frac{\delta \tilde{F}}{\delta P} +|\nabla \psi_{\varepsilon}| \frac{P_t}{\psi_{\varepsilon}^2\Gamma})]- |\nabla\psi_{\varepsilon}|h_3,
		\end{split}
	\end{equation}
	which is identical to \eqref{Transport equation1}. Here we want to point out that the explicit form for $\tilde{W}_t$ used here is not necessary as
	\begin{equation}
		\frac{\delta \tilde{W}_t}{\delta P_t} = \chi_{\varepsilon} \frac{\delta \tilde{W}_t}{\delta \tilde{\phi}_t} = 0.
	\end{equation}

	\section*{Appendix II. The OPBDE Cahn-Hilliard model derived by using the Lagrange method}
	In this part we show a brief derivation of model \eqref{OPBDE CH} based on Lagrange method.
	Firstly, the rate of free energy change over time is given by
	\begin{equation}
		\tilde{F}_t = \int_{\Omega} \tilde{\mu}\tilde{\phi}_t d\bx+\int_{\partial \Omega} \bn \cdot \psi_{\varepsilon} K \nabla \tilde{\phi}\tilde{\phi}_t ds = \int_{\Omega} \frac{\delta \tilde{F}}{\delta P}P_t d\bx+\int_{\partial \Omega} \bn \cdot \psi_{\varepsilon} K \nabla \tilde{\phi}\tilde{\phi}_t ds.
	\end{equation}
	With the boundary condition $\bn \cdot K \nabla \tilde{\phi} = 0$ at $\partial \Omega$, we consider the variational Onsager principle with the Rayleighian $\tilde{R}$ defined by
	\begin{equation}
		\tilde{R} = \tilde{F}_t+\Phi_{\tilde{F}} - \tilde{W}_t - \lambda (P_t + \nabla\cdot \bQ + |\nabla \psi_{\varepsilon}|h_3).
	\end{equation}
	where $\lambda$ is the Lagrange multiplier to locally enforce the modified conservation law as a constraint. We use the same dissipation functional
	\begin{equation}
		\Phi_{\tilde{F}} = \int_{\Omega} (\psi_{\varepsilon} \frac{\tilde{\bJ}^2}{2M}+|\nabla \psi_{\varepsilon}| \frac{\tilde{\phi}_t^2}{2\Gamma})d\bx = \int_{\Omega} ( \frac{\bQ^2}{2\psi_{\varepsilon}M}+|\nabla \psi_{\varepsilon}| \frac{P_t^2}{2\psi_{\varepsilon}^2\Gamma})d\bx,
	\end{equation}
	then the model is given by
	\begin{align}
		\frac{\delta \tilde{R}}{\delta \bQ} = 0 &\implies \frac{\bQ}{\psi_{\varepsilon}M} + \nabla \lambda = 0, \label{eqn4}\\
		\frac{\delta \tilde{R}}{\delta P_t} = 0 &\implies \frac{\delta \tilde{F}}{\delta P} + |\nabla \psi_{\varepsilon}|\frac{P_t}{\psi_{\varepsilon}^2\Gamma} -\lambda = 0.\label{eqn5}
	\end{align}
	Similar to the previous derivations, here $\frac{\delta \tilde{W}_t}{\delta P_t} = 0$ and $\frac{\delta \tilde{W}_t}{\delta \bQ} = 0$ have been utilized.
	Combining \eqref{eqn4} and \eqref{eqn5} to eliminate $\lambda$, we have
	\begin{equation}
		\bQ = - \psi_{\varepsilon} M \nabla\lambda = -\psi_{\varepsilon}M\nabla(\frac{\delta \tilde{F}}{\delta P} +|\nabla \psi_{\varepsilon}| \frac{P_t}{\psi_{\varepsilon}^2\Gamma}),
	\end{equation}
	which is consistent with \eqref{Constitutive equation1}. Here $\lambda$ plays the role of $\chi_{\varepsilon}\mu_*$ introduced in \eqref{mu_star}.
	
	We omit the subsequent derivation because it is of no difference from the previous section with $\tilde{\bJ}\cdot \bn = 0$ at $\partial \Omega$. This derivation using the Lagrange multipliers' method is considerably concise.
	
	\bibliographystyle{plain}
	\bibliography{myref}
	
\end{document}